\documentclass[10pt,a4paper,reqno]{amsart}

\usepackage{a4wide}
\usepackage{amsmath,amsthm,amssymb, amsfonts, amsbsy, mathrsfs}
\usepackage[foot]{amsaddr}
\usepackage{bbm}
\usepackage{enumerate}
\usepackage{xparse}
\usepackage{filecontents} 
\usepackage{todonotes}
\usepackage{tikz}
\usetikzlibrary{calc}
\usetikzlibrary{decorations.markings}
\usepackage{hyperref} 
\usepackage{cite}
\usepackage{tcolorbox}
\usepackage{appendix}

\numberwithin{equation}{section}

\newcommand{\Pv}{\mathbb{P}}
\newcommand{\Ev}{\mathbb{E}}
\newcommand*{\be}{\begin{equation}}
\newcommand*{\ee}{\end{equation}}
\newcommand*{\ba}{\begin{aligned}}
	\newcommand*{\ea}{\end{aligned}}
\newcommand*{\barr}{\begin{array}{c}}
	\newcommand*{\earr}{\end{array}}
\def \toinp    {\buildrel {\Pv}\over{\longrightarrow}}
\def \toindis  {\buildrel {d}\over{\longrightarrow}}

\newcommand{\wit}{\widetilde}

\newcommand{\GIRG}{\mathrm{GIRG}_{W,L}(n)}

\newcommand{\Ilambda}{\mathrm{IGIRG}_{W,L}(\lambda)}

\newcommand{\Ione}{\mathrm{IGIRG}_{W,L}(1)}
\newcommand{\BGIRG}{\mathrm{BGIRG}_{W,L}(n)}
\newcommand{\SFPWL}{\mathrm{SFP}_{W,L}}

\newcommand*{\ind}{\mathbbm{1}}

\newcommand{\SB}{\mathrm{SB}}

\newcommand{\CB}{\mathcal {B}}
\newcommand{\CC}{\mathcal {C}}

\newcommand{\CE}{\mathcal {E}}

\newcommand{\CG}{\mathcal {G}}
\newcommand{\CI}{\mathcal {I}}

\newcommand{\CN}{\mathcal {N}}

\newcommand{\CS}{\mathcal {S}}

\newcommand{\CV}{\mathcal {V}}

\newcommand{\N}{\mathbb{N}}

\newcommand*{\wih}{\widehat}

\newcommand*{\la}{\lambda}

\newcommand*{\de}{\delta}

\newcommand*{\ve}{\varepsilon}

\newcommand*{\al}{\alpha}

\newcommand*{\un}[1]{\underline{#1}}
\newcommand*{\cki}{c_{k,i}}

	\newcommand{\calX}{\mathcal{X}}

	\newcommand{\norm}[1]{\left\|#1\right\|}
	\newcommand{\Z}{\mathbb{Z}}
	\newcommand{\lhigh}{\overline{\ell}}
	\newcommand{\llow}{\underline{\ell}}

	\newcommand{\saw}{T_\mathsf{SAW}^{\le t_0}}
	\newcommand{\Exp}{\textrm{Exp}}

\newcommand{\E}{\mathbb{E}}
\newcommand{\e}{\mathrm{e}}
\renewcommand{\Pr}{\mathbb{P}}

\newcommand{\ceil}[1]{\lceil #1 \rceil}
\newcommand{\floor}[1]{\lfloor #1 \rfloor}

\newcommand{\R}{\mathbb{R}}

\newcommand{\calA}{\mathcal{A}}
\newcommand{\calB}{\mathcal{B}}
\newcommand{\calC}{\mathcal{C}}
\newcommand{\calE}{\mathcal{E}}
\newcommand{\calG}{\mathcal{G}}
\newcommand{\calI}{\mathcal{I}}

\newcommand{\thin}{_\mathrm{thin}}
\newcommand{\Ext}{_\mathrm{ext}}
\newcommand{\eps}{\varepsilon}
\renewcommand{\epsilon}{\eps}

\newcommand{\cbb}{\color{black}}

\newtheorem{theorem}{Theorem}[section]
\newtheorem*{theorem*}{Theorem}
\newtheorem{lemma}[theorem]{Lemma}
\newtheorem{corollary}[theorem]{Corollary}
\newtheorem{definition}[theorem]{Definition}

\newtheorem{assumption}[theorem]{Assumption}

\newtheorem{claim}[theorem]{Claim}

\newtheorem*{rep@theorem}{\rep@title}
\newcommand{\newreptheorem}[2]{%
	\newenvironment{rep#1}[1]{%
		\def\rep@title{#2 \ref{##1} (restated)}%
		\begin{rep@theorem}}%
		{\end{rep@theorem}}}
\newreptheorem{theorem}{Theorem}
\newreptheorem{lemma}{Lemma}

\title[Weighted distances with penalty]{Stopping explosion by penalising transmission to hubs in scale-free spatial random graphs}
\author{J\'ulia Komj\'athy$^*$}
\address{\footnotemark{*} Eindhoven University of Technology, Eindhoven, The Netherlands.}
\author{John Lapinskas$^\dag$}
\address{\footnotemark{\dag} University of Bristol, Bristol, UK.}
\author{Johannes Lengler$^\ddag$}
\address{\footnotemark{\ddag} ETH Z\"urich, Z\"urich, Switzerland.}
\date{\today}
\thanks{The research leading to these results has received funding from the European Research Council under the European Union's Seventh Framework Programme (FP7/2007-2013) ERC grant agreement no.\ 334828. The paper reflects only the authors' views and not the views of the ERC or the European Commission.  The European Union is not liable for any use that may be made of the information contained therein.}

\begin{document}
	\maketitle

\begin{abstract}
We study the spread of information in finite and infinite inhomogeneous spatial random graphs. We assume that each edge has a transmission cost that is a product of an i.i.d.\! random variable $L$ and a penalty factor: edges between vertices of expected degrees $w_1$ and $w_2$ are penalised by a factor of $(w_1w_2)^{\mu}$ for all $\mu >0$. We study this process for scale-free percolation, for (finite and infinite) Geometric Inhomogeneous Random Graphs, and for Hyperbolic Random Graphs, all with power law degree distributions with exponent $\tau >1$. For $\tau <3$, we find a threshold behaviour, depending on how fast the cumulative distribution function of $L$ decays at zero. If it decays at most polynomially with exponent smaller than $(3-\tau)/(2\mu)$ then explosion happens, i.e., with positive probability we can reach infinitely many vertices with finite cost (for the infinite models), or reach a linear fraction of all vertices with bounded costs (for the finite models). On the other hand, if the cdf of $L$ decays at zero at least polynomially with exponent larger than $(3-\tau)/(2\mu)$, then no explosion happens. This behaviour is arguably a better representation of information spreading processes in social networks than the case without penalising factor, in which explosion always happens unless the cdf of $L$ is doubly exponentially flat around zero. 
Finally, we extend the results to other penalty functions, including arbitrary polynomials in $w_1$ and $w_2$. In some cases the interesting phenomenon occurs that the model changes behaviour (from explosive to conservative and vice versa) when we reverse the role of $w_1$ and $w_2$. Intuitively, this could corresponds to reversing the flow of information: gathering information might take much longer than sending it out.
\end{abstract}

	\section{Introduction}
	Many real-world social and technological networks share a surprising number of fundamental properties, including a heavy-tailed degree distribution, strong clustering, and community structures~\cite{albert2002statistical,boccaletti2006complex,newman2011structure,newman2018networks}. These features are known to have opposing effects on the spread of information or infections in such networks. On the one hand, nodes of large degree (also called hubs, super-spreaders, or influencers) contribute to fast dissemination, and foster explosive propagation of information or infections~\cite{gruhl2004information,pastor2015epidemic,pastor2007evolution,dorogovtsev2008critical}. On the other hand, clustering and community structures provide natural barriers that slow down the process~\cite{karsai2011small,merler2015spatiotemporal,bajardi2011dynamical,janssen2017rumors,isham2011spread}. 
	
	The interplay of these effects is complex, but until recently there were no appropriate random graph models in which to study it; many models exhibited heavy-tailed degree distributions, strong clustering, or community structures individually, but none combined the three. Recently, this problem has been solved by a family of inhomogeneous spatial random graph models which do combine these features, namely Scale-free Percolation (SFP)~\cite{DeiHofHoo13} and continuum scale-free percolation \cite{DepWut18}, (finite and infinite) Geometric Inhomogeneous Random Graphs (GIRGs)~\cite{BriKeuLen19,bringmann2016average}, and Hyperbolic Random Graphs (HRGs)~\cite{BogPapaKriou10, papadopoulos2010greedy, GugPanPet12} (see also \cite{BogPasDia04, SerKriBog08} for earlier versions of the model). 
	These models are closely related, and in fact the results will apply to all of them.
		
	Previous work studying infection processes in these models~\cite{HofKom18,julia-girg} focused on the first passage percolation (FPP) infection process, including the variant in which transmission costs follow an arbitrary probability distribution with support starting at $0$ (see below for a more detailed discussion). Essentially, for any reasonable choice of parameters, either a constant proportion of vertices (for finite models) or infinitely many vertices (for infinite models) may be infected in constant time. This does not match reality. While some processes can indeed spread this fast, there are others which do not, such as the spread of diseases through physical social networks or the spread of behaviours~\cite{centola2010spread}. 
	
	In this paper, we follow the approach of~\cite{feldman2017high,karsai2006nonequilibrium,giuraniuc2006criticality}, and assume that high-weight vertices have higher expected transmission times. This reflects the fact that even large-degree nodes have a limited time budget and cannot scale up their number of contacts per time unit arbitrarily, as has been observed in real-world communication~\cite{miritello2013time} and disease spreading~\cite{nordvik2006number}. By doing so, we will recover the rich variation in behaviour we might expect. We prove a precise phase transition between the case in which infinitely many vertices may be infected in constant time and the case in which they may not, depending on the parameters of the random graph, the distribution of possible transmission times, and the transmission penalty for high-weight vertices.
	
	We will define the GIRG, SFP and HRG models in detail in Sections~\ref{sec:infinite-models} and~\ref{sec:finite-result}. In a nutshell, each vertex $v$ in a graph $G=(V,E)$ following these models has a (possibly random) location in a geometric space and a weight $W_v$ which models its popularity. Then each pair of vertices is connected with a probability that depends on their geometric distance and on their weights. The probability of connecting decreases polynomially with their distance, and increases polynomially with their respective weights; $W_v$ is equal to the expected degree of $v$ up to constant factors.
	
	\subsection{A simple example: Infinite geometric inhomogeneous random graphs (IGIRGs) with symmetric monomial penalties} 
In the IGIRG model, the vertex set $\CV$ is given by a homogeneous Poisson Point Process with intensity $1$ on $\R^d$.
	The weights $(W_v)_{v\in \CV}$ of the vertices are i.i.d.~copies of a random variable $W$ with polynomially decaying tail\footnote{The full model is more flexible, see Definition~\ref{def:GIRG}. E.g., a slowly-varying correction factor is allowed in~\eqref{eq:pl1}.}, 
\be\label{eq:pl1}
\Pv(W \ge x) = 1/x^{\tau-1}.
\ee
Such distributions, for $\tau\in (2,3)$, are called \emph{power laws}\footnote{We will also include the cases $\tau \in(1,2]$ and $\tau \geq 3$ into our analysis, but this may lead to infinite vertex degrees or, in some models, to graphs without giant components.} and $\tau$ is called the \emph{power-law exponent}. Between every two vertices $u$ and $v$ with weights $W_u$ and $W_v$ (respectively), we independently add an edge with probability $1\wedge c (W_uW_v/\|u-v\|^d)^\alpha$ for some $c >0$. Here $\alpha \in (1,\infty)$ is the \textit{long-range parameter} of the model, and governs the prevalence of edges between geographically-distant high-weight vertices; we require $\alpha > 1$ to avoid infinite vertex degrees. The expected degree of a vertex $v$, conditioned on its weight $W_v$, then coincides with $W_v$ up to a constant factor. The constant $c$ governs the edge-density.

Two example graphs of this model\footnote{Or rather, a finite, rescaled version of the model on the unit cube called GIRG, in which the number of vertices is fixed to be $n$, so the density is $n$ instead of $1$, see Definitions~\ref{def:GIRG} and~\ref{def:bgirg}.} are shown in Figure~\ref{fig:GIRG}. 

\begin{figure}[t]\begin{center}
		\includegraphics[width=0.45\columnwidth]{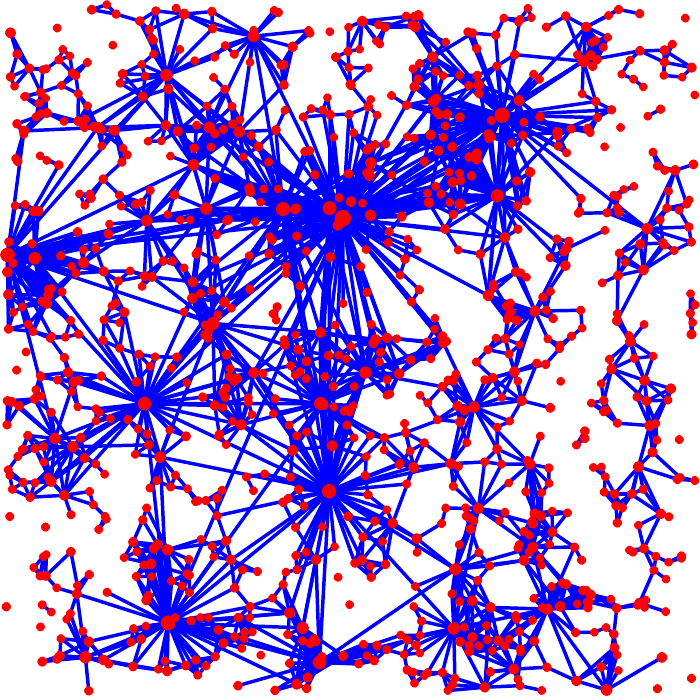}
		\includegraphics[width=0.45\columnwidth]{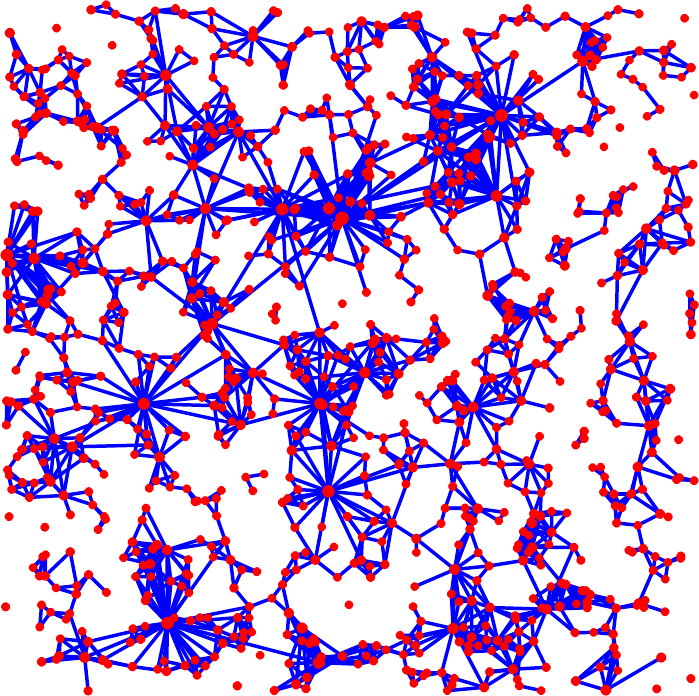}
		\caption{{\small Two examples of Geometric Inhomogeneous Random Graphs (GIRGs). The $n = 1000$ vertices are placed randomly into a unit cube of dimension $d=2$ and each draws a random weight from a power law distribution with exponent $\tau = 2.9$ (on the left) and $\tau=3.5$ (on the right). 
		We used the same vertex set and the same underlying uniform variables to simulate vertex weights in both cases: for a uniform variable $U_v$, we set the weight of vertex $v$ to $W^{(2.9)}_v:=U_v^{-1/1.9}$, for the left picture, while  $W^{(3.5)}_v:=U_v^{-1/2.5}$ for the right picture.  
		Each pair of vertices with positions $x_1,x_2$ and weights $w_1,w_2$, respectively, is connected with probability $p^{(\tau)} = \min\left(1, 0.1 (w_1^{(\tau)} w_2^{(\tau)} |x_1-x_2|^{-d}/n)^\alpha\right)$, where $\alpha = 4$. Connections are again generated in a coupled way, using the same set of uniform variables for the two pictures, thresholded at $p^{(2.9)}$ and $p^{(3.5)}$, respectively.
		The pictures were generated by the open-source software \cite{JorrGIRG2}.}}

		\label{fig:GIRG}
\end{center}\end{figure}

In defining the information- or infection-spreading process, we assume that each edge $e=(u,v)$ has a \emph{transmission cost} (or transmission time) $C_e$, comprised of an i.i.d.\ random component and a deterministic weight penalty. The \emph{random component} is a non-negative random variable $L_e$ associated with the edge $e$; these variables are i.i.d.\ copies
of a non-negative random variable $L$. The \emph{deterministic weight penalty} is a function of the weights $W_u$ and $W_v$ of the endpoints $u$ and $v$ of the edge, and we form the cost by multiplying this by $L_e$. We define the cost along any path as the sum of the costs of all edges on the path, which gives a quasimetric on the graph\footnote{A quasimetric is a distance function that satisfies all axioms of a metric except symmetry. Symmetric penalty functions result in metrics, while non-symmetric penalty functions result in quasimetrics.}. 
For this example, we fix a parameter $\mu > 0$ that we call the \emph{penalty strength}, and set the cost of an edge $e=\{u,v\}$ to be 
\be\label{eq:cost} C_e:=L_e (W_u W_v)^\mu,\ee
resulting in a metric on IGIRG. Later we will generalize the results to arbitrary polynomial weight penalties. 

We consider single-sourced spreading of information and investigate how long it takes for the information to spread to another vertex, and how many vertices are reachable from the source within a given cost $T$.
Since the underlying graph is infinite, it may happen that for some \emph{finite cost} $T<\infty$, the number of vertices reachable from the source with cost at most $T$ is infinite. This phenomenon is called \emph{explosion}, and the infimum of such costs $T$ is called the \emph{explosion time} of the source vertex.

In this paper, we show that for all power law exponents $\tau \in (2,3)$, all long-range parameters $\alpha \in (1,\infty)$, and all penalty strengths $\mu >0$, explosion occurs with positive probability if and only if the cumulative distribution function $F_L$ of $L$ is sufficiently steep at the origin. More formally, we prove a phase transition. Suppose $F_L$ grows polynomially at the origin, so that there exists a small interval $[0,t_0]$ and constants $c_1$, $c_2$ and $\beta > 0$ such that 
\begin{equation}\label{eq:steep}
c_1 t^\beta \le F_L(t) \le c_2 t^\beta \mbox{ for all }t \in [0,t_0].
\end{equation}
Then the main result of this paper implies the following phase transition.
\begin{tcolorbox}
\begin{theorem}\label{thm:toy}[Main Case of Theorem~\ref{thm:mu-classification} for IGIRG graphs]
\end{theorem}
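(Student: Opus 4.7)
\end{tcolorbox}

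\medskip
\noindent\textbf{Proof strategy.} The proof naturally splits into two halves: explosion when $\beta < (3-\tau)/(2\mu)$ and non-explosion when $\beta > (3-\tau)/(2\mu)$. The threshold arises from balancing the penalty $(W_uW_v)^\mu$ on each edge against the gain from choosing the cost-minimising edge out of a large pool of candidate neighbours whose size is a polynomial in the endpoint weight.

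For the \emph{explosive} regime, my plan is to exhibit, with positive probability, an infinite path $v_0,v_1,v_2,\ldots$ from a fixed vertex $v_0$ of large weight along which the total cost $\sum_i C_{\{v_i,v_{i+1}\}}$ is finite. Following the greedy-path template standard in scale-free percolation, I pick an exponent $a>1$ and aim for $W_{i+1}\approx W_i^a$; within a ball of radius $r$ satisfying $r^d\approx W_i^{1+a}$, for $\tau\in(2,3)$ the expected number of neighbours of $v_i$ of weight in $[W_i^a,2W_i^a]$ that are connected to $v_i$ is of order $N_i\approx W_i^{1+a(2-\tau)}$. The penalty-adapted twist is that I do not pick an arbitrary such neighbour but the one minimising the i.i.d.\ factor $L_e$: since $F_L(t)\asymp t^\beta$ near $0$, this minimum is of order $N_i^{-1/\beta}$, so the cost of step $i$ is of order $N_i^{-1/\beta}(W_iW_{i+1})^\mu\approx W_i^{\,-(1+a(2-\tau))/\beta\,+\,\mu(1+a)}$. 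Setting this exponent negative and optimising over $a$ yields the threshold: the supremum of $(1+a(2-\tau))/(\mu(1+a))$ over $a>1$ is $(3-\tau)/(2\mu)$, attained as $a\to1^+$. Thus for any $\beta<(3-\tau)/(2\mu)$, choosing $a$ slightly above $1$ makes the step-costs geometrically decreasing in $i$, and the path is explosive.

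For the \emph{non-explosive} regime, my plan is a first-passage-percolation-style barrier argument showing that the cost-metric ball of radius $T$ about any vertex is a.s.\ finite for every $T<\infty$. I would group potential paths by a ``skeleton'' encoding the sequence of weight classes and dyadic annuli visited, and bound the expected count of such paths with cost $\le T$. For a candidate edge between weights $w_1,w_2$ at Euclidean distance $r$, the joint probability of its being present and having cost below $\eta$ is at most $(w_1w_2/r^d)^\alpha\cdot(\eta/(w_1w_2)^\mu)^\beta$ for small $\eta$, by the upper bound in~\eqref{eq:steep} and the GIRG connection rule. Integrating against the Poisson intensity on locations and the power-law weight distribution~\eqref{eq:pl1} on both endpoints, the regime $\beta>(3-\tau)/(2\mu)$ is exactly where these expected counts are summable across weight classes, so a Borel--Cantelli bound forces any infinite path to have cost $+\infty$ almost surely.

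The hardest step will be the non-explosion direction: the number of candidate paths of length $n$ grows super-exponentially, so naive union bounds cannot close. My plan would be to combine a dyadic weight-class decomposition with a branching-process or truncated self-avoiding-walk reduction in the spirit of~\cite{HofKom18,julia-girg}, in which the path combinatorics is absorbed into a careful bookkeeping that separately controls small, moderate, and very high-weight vertices. The precise interplay between geometric distances, weight growth along the path, and penalty cost is where the real technical work lies; I would also expect to bootstrap the infinite model from, or at least compare it with, a finite-volume truncation via a standard coupling to keep estimates uniform.
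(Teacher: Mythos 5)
Your explosive-direction sketch is essentially the paper's argument: the paper also builds a greedy path along vertices whose weights grow doubly exponentially (leaders of $\delta$-good sub-boxes in annuli of doubly-exponential volume, with $W_{k+1}\approx W_k^{C}$, $C=1+\delta$ playing the role of your $a\to 1^+$), takes the edge minimising $L_e$ among the polynomially-many (in the current weight) available leaders in the next annulus, and obtains the same exponent balance, with the threshold $(3-\tau)/(2\mu)$ emerging exactly as in your optimisation. The annulus/leader structure is what makes your heuristic rigorous: it supplies disjoint, unexplored regions for each step (so the candidate pools are conditionally independent of the past), a uniform lower bound on the connection probability between consecutive leaders, and a positive-probability attachment of the origin to the first leader; these are points your ball-of-radius-$r$ version glosses over but they are repairable along the same lines.

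The conservative direction, however, has a genuine gap. You propose to bound, for \emph{every} $T<\infty$, the expected number of paths of total cost at most $T$ via a skeleton/weight-class union bound, and you yourself note that the path count grows super-exponentially so the naive bound cannot close; appealing to the machinery of~\cite{HofKom18,julia-girg} does not fill this hole, since those papers treat $\mu=0$ where the conservative criterion is of a completely different (doubly-exponential) nature, and the branching-process route (Grey's argument) relies on independent subtrees that do not exist in these spatial models. The missing idea is the paper's Lemma~\ref{lem:short-expl}: if lengthwise explosion occurs with positive probability, then for every $t_0>0$ it occurs with total cost at most $t_0$ with positive probability. This reduces the problem to ruling out, for one sufficiently small fixed $t_0$, any infinite path from the origin \emph{all of whose edges} have cost at most $t_0$. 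For such paths the expectation factorises edge by edge, each edge contributing a factor of order $t_0^{\beta}\,\E[W^{2-2\mu\beta}]$ (finite precisely when $\mu\beta>(3-\tau)/2$, which is the moment computation your per-edge bound correctly identifies), and choosing $t_0$ small makes this factor less than $1$, giving exponential decay in the path length and a Borel--Cantelli conclusion (Lemma~\ref{lemma:decay}). Without such a reduction your scheme fails quantitatively: for large $T$ the per-edge factor $T^{\beta}\E[W^{2-2\mu\beta}]$ exceeds $1$, and distributing the total budget $T$ over the edges of a path reintroduces exactly the super-exponential combinatorics you flagged. One also needs to rule out sideways explosion, i.e.\ to show that each vertex has only finitely many incident edges of cost at most $t$ (Claim~\ref{claim:costdeg} and Lemma~\ref{lem:opt-real} in the paper); in the main case $\tau\in(2,3)$, $\alpha>1$ this is comparatively easy, but it is a needed ingredient before the path-counting argument applies.
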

Let $\tau \in (2,3)$, let $\alpha \in (1,\infty)$, and let $L$ be a non-negative random variable satisfying~\eqref{eq:steep}. In IGIRG graphs with degree power-law exponent $\tau$, long-range parameter $\alpha$, and edge weights given by~\eqref{eq:cost}, explosion occurs with positive probability if $\beta < \beta_c(\mu) := (3-\tau)/2\mu$, and almost surely does not occur if $\beta> \beta_c(\mu)$. 
\end{tcolorbox}
Note that explosion only depends on the behaviour of $F_L$ close to $0$, or equivalently the tail behaviour of the random variable $1/L$. Indeed, for all $z\ge 0$, $\Pv(1/L \ge z)=F_L(1/z)$, so~\eqref{eq:steep} is equivalent to the condition that $c_1 z^{-\beta} \le \Pv(1/L \ge z) \le c_2 z^{-\beta}$ for all $z \ge 1/t_0$. Intuitively, this theorem says that explosion is determined entirely by three factors. The first is the tail behaviour of $1/L$ as captured by $\beta$; the second is the strength of the weight penalty $\mu$; and the third is the exponent $\tau$ of the power law of the degree distribution.\medskip

\subsection{Extensions} We defer a formal statement of the main results to Section~\ref{sec:results}, where we generalise the example above. Firstly, the results also apply to related random graph models, including the infinite SFP model and the finite HRG and GIRG models. These finite graphs are typically not connected, but  with high probability they have a single giant (linear-size) component as long as  their degree distributions follows a power law with exponent $\tau \in (2,3)$. In finite graphs, the concept of explosion translates to the cost-distance between two uniformly-chosen vertices in the giant not tending to infinity as the size of the network grows (see Theorem~\ref{thm:GIRG1} and Corollary~\ref{cor:HRG}). We show that explosion follows the same phase transition as in Theorem~\ref{thm:toy}. Moreover, under some additional conditions, that apply, among others, to hyperbolic random graphs, we show that this cost-distance converges in distribution to the sum of two independent copies of the explosion time of a related infinite model (Theorem~\ref{thm:GIRG-fine} and Corollary~\ref{cor:HRG}).

We also consider \emph{other penalty functions}, such as the maximum or the sum of the weights of the incident vertices, or arbitrary (finite) polynomials of those two weights. For all $\mu>0$, the cases $\wit C_e:=L_e \max\{W_u, W_v\}^\mu$, and $\hat C_e:=L_e (W_u+W_v)^\mu$ behave similarly to the product penalty $C_e = (W_uW_v)^\mu$ of~\eqref{eq:cost}; the main difference is that the critical penalty-strength $\beta_c(\mu)$ changes from $(3-\tau)/2\mu$ to $(3-\tau)/\mu$. In general, for an arbitrary polynomial penalty function $f$ with degree $\deg(f)$, the threshold in $\beta$ occurs at $\beta_c(f)=(3-\tau)/\deg(f)$ whenever $\tau\in(2,3)$.

The results also cover the \emph{whole parameter space in $\tau$ and $\alpha$}, not just the ranges $\tau \in (2,3)$ and $\alpha \in (1,\infty)$, but also  the ``$\alpha=\infty$'' (threshold) case when an edge is present when the Euclidean distance between the vertices is less then a threshold value depending on the vertex-weights. For $\tau >3$, it follows from known results that explosion almost surely does not occur in SFP~\cite{HofKom18}, and we expect other models to exhibit the same behaviour.  More interestingly, for the infinite models the results also allow for $\tau \in (1,2]$, when every vertex has infinitely many neighbors almost surely \cite{DeiHofHoo13}. Thus without a weight penalty, explosion happens trivially. Nevertheless, a strong enough penalty factor can prevent explosion even in this case. In fact, if $f$ is a \emph{symmetric} polynomial penalty function, then the threshold in $\beta$ coincides with the $\tau \in (2,3)$ case: $\beta_c(f)=(3-\tau)/\deg(f)$. 

For \emph{asymmetric polynomial penalty} functions, the picture is a little more complicated and we no longer prove a full phase transition in all cases (see Theorem~\ref{thm:poly-classification}). However, for \emph{monomial}  penalty functions, such as $C_e = L_e W_u^\mu W_v^\nu$, we do prove a full phase transition and the critical value of $\beta$ increases to $\beta_c(f)=\max\{(3-\tau)/(\mu + \nu), (2-\tau)/\nu\}$; thus explosion becomes easier. By contrast, when $\alpha \in (0,1]$, explosion occurs almost surely.

Note that considering asymmetric penalty functions such as $C_e = L_e W_u^\mu W_v^\nu$ raises an interesting issue. Explosion, in its original definition, means \emph{outwards} explosion. That is, infinitely many vertices are reachable from a fixed vertex $v$ within some finite cost $T$, with positive probability. However, one can  also consider   \emph{inwards} explosion, in which a fixed vertex $v$  is reachable from infinitely many vertices within cost $T$, with positive probability. The threshold for inwards explosion is the same as the threshold for the reversed penalty function $C_e = L_e W_u^\nu W_v^\mu$, which is $\beta_c^{\text{inwards}}=\beta_c(f_{\nu,\mu})=\max\{(3-\tau)/(\mu+\nu), (2-\tau)/\mu\}$. Interestingly, this implies that there are penalty functions which exhibit inwards explosion but not outwards explosion and vice versa. This could be interpreted as an asymmetry between the two possible directions of information flow: it is much quicker to send out information than to gather it, or vice versa. We emphasise that this phenomenon only arises when $\tau\in(1,2]$.\medskip

\subsection{Comparison to First Passage Percolation} The case that $\mu=0$, i.e., there is no weight penalty and $C_e=L_e$, is also known as \emph{first-passage percolation} (FPP)~\cite{hammersley1965first}. This process has been studied in classical scale-free networks like the configuration model~\cite[Theorem 4]{baroni2017nonuniversality} and~\cite[Theorem 2.4]{adriaans2018weighted}, and also for the networks considered in this paper, SFP, GIRGs, and HRGs~\cite{HofKom18,julia-girg}. 
 When the empirical degree distribution has  exponent $\tau\in(2,3)$, the authors in \cite{julia-girg, HofKom18} showed that explosion happens \emph{if and only if} the random variable $L$ representing the edge-costs is such that 
\be\label{eq:sumL} \mathbf{I}(L):=\sum_{k=1}^{\infty} F_L^{(-1)}\Big(1/\e^{\e^k}\Big)<\infty,\ee
where $F_L^{(-1)}(y)=\inf\{t\in \R: \Pv(L\le t)\ge y\}$ is the generalised inverse of the cdf $F_L$ of $L$. One can check that $\mathbf{I}(L)$ is finite for almost every well-known distribution with support starting at $0$, in fact, $F_L(t)$ has to be \emph{doubly-exponentially flat}\footnote{More precisely, on some small interval $[0, t_0]$ the distribution function  $F_L(t)$  has to satisfy $F_L(t)\le \exp\{-C_1\exp\{C_2/t^{\eta}\}\}$ for some $\eta>1$ and positive constants $C_1, C_2$. This corresponds to $F_L^{(-1)}(y)\ge 1/(\log \log (1/y))^{1/\eta}$ which makes the sum infinite when the sequence $1/\exp(\e^k)$ is substituted for $y$.} around $0$ for $\mathbf{I}(L)$ to be infinite and thus for explosion not to happen. Observe also that the sum $\mathbf{I}(L)$ does not depend on $\tau$, only on $L$. This is a counterintuitive phenomenon as it suggests that explosion does not depend on vertex degrees. The results suggest that allowing $\mu>0$ is a good way to fix this issue; the critical case occurs when $F_L$ is polynomially flat at the origin, rather than doubly-exponentially flat as for FPP.

\subsection{Proof techniques} Assume for simplicity that $F_L(t)$ grows at a polynomial rate around the origin\footnote{We allow for slowly varying function correction terms.}, i.e., $F_L(t)\asymp t^{\beta}$ in some small interval $[0, t_0]$. We show in Section~\ref{sec:expl} that explosion occurs in (continuum and ordinary) SFP and IGIRGs in the regime where $\mu\beta< (3-\tau)/2$ by constructing a path with infinitely many vertices and finite total cost. We do this by constructing an infinite sequence of annuli centred around the source vertex whose volumes grow doubly-exponentially. Each annulus contains doubly-exponentially many vertices that we call `leaders', which have weight that is doubly-exponential in the index of the given annulus. We show that any leader within an annulus is connected to double-exponentially many leaders in the next annulus. We then construct a finite-cost infinite path greedily by repeatedly choosing a least-cost edge to a leader in the next annulus. This construction succeeds when $\mu\beta< (3-\tau)/2$, since in this case the penalty for transmission through an edge between the leaders is not too high compared to the minimum of doubly-exponentially many copies of $L$. This argument is similar to the one used in~\cite{julia-girg}.

We show in Sections~\ref{sec:sideways} and~\ref{sec:cons} that explosion cannot occur in SFP or IGIRGs when $\mu\beta>(3-\tau)/2$. The argument is novel. We have to exclude both \emph{sideways explosion} and \emph{lengthwise explosion}. By sideways explosion, we mean that there are infinitely many vertices within finite cost reachable using finite-length paths; this naturally requires that some vertices have infinite degree. By lengthwise explosion we mean that there is a path of infinitely many edges with finite total cost. We exclude sideways explosion by showing that for any finite cost $t$, each vertex has only finitely many edges attached to with cost less than $t$ when $\mu\beta>(3-\tau)/2$. 
 To exclude lengthwise explosion, we show that if lengthwise explosion can happen at all, then it can happen arbitrarily quickly. That is, writing $T_\mathsf{exp}$ for the explosion time,
 if $T_\mathsf{exp} < \infty$ with positive probability, then for all $t_0 > 0$ we also have $T_\mathsf{exp} < t_0$ with positive probability. A similar phenomenon was previously observed in branching processes, where it arises due to the independence of the subtrees of the root. In spatial random graphs, the proof is more subtle. 
From here we argue by contradiction and show that when $t_0$ is sufficiently small, the probability that there is a vertex within graph distance $k$ and cost-distance $t_0$ of the source decays exponentially in $k$. Hence, almost surely for some $k$ no such vertex exists, and thus explosion does not occur.

Before we extend the results to finite GIRGs and HRGs, in Section~\ref{s:giant} we give a novel proof that these models contain a unique linear-sized giant component (see Theorem~\ref{thm:giant}). This is necessary since we work under milder assumptions on the edge connection probabilities than so far assumed in the literature (e.g.\ in \cite{DeiHofHoo13, BriKeuLen19,bringmann2016average, BodFouMul15, FouMul18, HeyHulJor17}). The argument is based on a bottom-to-top approach: the space is divided into boxes of growing size, and we show that each box contains, independently of each other and with positive probability, a linear-sized ``local giant''.  These local giants are then merged into a single linear-sized largest component via paths through the leaders as used in the explosive case above. Uniqueness is shown by a standard sprinkling argument. 

Finally, in Section~\ref{sec:finite-result} we extend the results to finite GIRGs and HRGs. This process is rather subtle. Because of the polynomial transmission penalties, many of the methods developed in~\cite{julia-girg} break down, and we must develop a new argument for connecting two uniformly chosen vertices within their respective explosion times plus a negligible cost. This argument depends crucially on the fact that the explosion time of any vertex is mostly determined by large-but-finite neighborhoods of vertices with bounded weight. If we carefully maintain independence, edges between high-weight vertices can then be used to establish the necessary low-cost connection between the two neighborhoods.

	\section{Notation}\label{sec:notation}
	We write rhs and lhs for right-hand side and left-hand side respectively, wrt for with respect to, rv for random variable, i.i.d.\! for independent and identically distributed, and cdf for cumulative probability distribution function. 
	Generally, we write $F_X$ for the cdf of an rv $X$, and $F^{(-1)}_X$ for its generalised inverse function, defined as $F^{(-1)}_X(y):=\inf\{t\in\mathbb{R}:F_X(t)\geq y\}$.
	We say that an event $A$ happens almost surely (a.s.) when $\Pv(A)=1$ and a sequence of events $(A_n)_{n\in\mathbb{N}}$ holds with high probability (whp) when $\lim_{n\rightarrow \infty}\Pv(A_n)=1$. 
	We say a sequence of rvs $(X_n)_{n\in\N}$ is \emph{tight} if for every $\eps>0$ there exists a $K_\eps>0$ such that $\Pv(|X_n|> K_\eps)<\eps$ for all $n$. We say that a real function $f$ \emph{varies slowly at infinity} if for all $c>0$, $\lim_{x\to\infty} f(cx)/f(x) = 1$; note in particular that by Potter's bound \cite{BinGolTeu89} this implies that as $x\to\infty$, $f(x)=o(x^\delta)$ and $\omega(x^{-\delta})$ for all $\delta>0$.
	We say that the positive random variable $X$  \emph{has power-law tails} with exponent $\tau$ if for all sufficiently large $x$,
	\be \Pv(X\ge x)=\ell(x)/x^{\tau-1}\ee
	for some function $\ell(x)$ that varies slowly at infinity. 
	
We write $\R^+ = (0,\infty)$ and $\Z^+$ for the set of positive integers. For $n\in\Z^+$, let $[n]:=\{1,\ldots,n\}$. If two functions $f,g$ have range $\R^+$ and any of the domains $\R$, $\Z$, or $[a,\infty)$ for some $a\in \R$, then we use the standard Landau notation $f=O(g)$, $f= o(g)$, $f = \Omega(g)$, $f = \omega(g)$ and $f=\Theta(g)$ as in~\cite[Section 1.2]{JLR}. We also abbreviate $f=\Theta(g)$ with $f \asymp g$. For all $x \in \R$, we denote by $\lfloor x\rfloor$ and $\lceil x\rceil$ the lower and upper integer parts of $x\in\mathbb{R}$, respectively. We write $x\wedge y:=\min\{x,y\}$ and $x\vee y:=\max\{x,y\}$. 
	We denote a graph by $G= (\mathcal V, \mathcal E)$, where $\mathcal V$ is the vertex set and $\mathcal E \subseteq \mathcal{V}^2$ is the edge set. We will assign a geometric position $x_v\in \R^d$ to each vertex $v \in \mathcal V$, and for a subset $A\subseteq \R^d$ we will write $\mathcal{V} \cap A := \{v\in \mathcal{V} \mid x_v \in A\}$, by slight abuse of notation. 
	For two vertices  $u,v$, let $u\leftrightarrow v$ denote the event that $u$ and $v$ are connected by an edge $e=(u,v)$. All these graphs are undirected, i.e., $(u,v) \in \mathcal E$ if and only if $(v,u)\in \mathcal E$, for all $u, v\in \mathcal V$. However, when we wish to consider transmission along an edge, its direction will matter, so we define $e_-:=u, e_+:=v$. If two or more vertices are chosen uniformly at random from a set $S \subseteq \mathcal{V}$, then we say that they are \emph{typical vertices} in $S$. A \emph{walk} in $G$ is a finite sequence of vertices $\pi = (\pi_0,\dots,\pi_k)$ connected by edges $(\pi_i,\pi_{i+1})$, and a \emph{path} is a walk in which all vertices are distinct. We call $|\pi| = k-1$ the \emph{length} of a walk. As usual, the \emph{graph distance} is defined by
	\[
		d_G(A,B) := \inf \big(\{|\pi| \colon \pi = (\pi_0,\dots,\pi_k)\mbox{ is a path with }\pi_0 \in A \mbox{ and } \pi_k \in B\} \cup \{\infty\}\big).
	\]
	If $A$ or $B$ contains only a single vertex, we omit the surrounding braces, writing e.g.\ $d_G(u,v)$ instead of $d_G(\{u\},\{v\})$. We denote balls in this metric by $B^G(v,r) := \{u \in \mathcal{V} \colon d_G(u,v) \le r\}$ for all $v \in \mathcal{V}$ and $r \ge 0$, and we denote their boundaries by $\partial B^G(v,k) := B^G(v,k) \setminus B^G(v,k-1)$ for all integers $k \ge 1$.
	
For an integer $d \ge 1$, we write $\calX_d := [-1/2,1/2]^d, \calX_d(n):=[-n^{1/d}/2,n^{1/d}/2]^d$. We write $\nu_d$ for $d$-dimensional Lebesgue measure, and $\|x\|$ for the Euclidean norm of $x$. We denote Euclidean balls by $B^2(x,r) := \{y \in \R^d \colon \|y-x\| \le r\}$ for all $x \in \R^d$ and all $r \ge 0$.

	\section{Formal definitions and statements of results}\label{sec:results}
	
	\subsection{Definitions of infinite models}\label{sec:infinite-models}
	
	We start by defining the two infinite models, scale-free percolation (SFP) and Infinite Geometric Inhomogeneous Random Graphs (IGIRG). This latter contains, as a special case, continuum SFP \cite{DepWut18}. Later, in Section \ref{sec:finite-result}, we define finite-sized variants and discuss how the results on the infinite models carry through to their finite counterparts. 
	SFP was introduced by Deijfen, van der Hofstad and Hooghiemstra in \cite{DeiHofHoo13} as an extension of long-range percolation~\cite{Trap10, Bisk04}. First passage percolation on SFP was studied in \cite{HofKom18}, and behaviour of random walks on SFP was studied in \cite{HeyHulJor17}. We consider a version of the model which allows more general edge-connection probabilities. 
	
	\begin{definition}[Generalised Scale-free Percolation]\label{def:SFP}Let $h_S\colon \R^d \times \R_+ \times \R_+ \to [0,1]$ be a function, let $W \ge 1$ and $L \ge 0$ be random variables, and let $d\ge 1$ be an integer. For each vertex $v \in \Z^d$ we draw a random vertex weight $W_v$, which is an i.i.d.\ copy of $W$. All pairs of orthogonally adjacent vertices are joined by an edge. Conditioned on $(W_i)_{i\in\Z^d}$, all other edges are present independently with probability
		\be\label{SFP}
		\mathbb{P}\left(u\leftrightarrow v\mid \|u-v\|>1, \left(W_i\right)_{i\in\mathbb{Z}^d}\right)=h_{\text{S}}(u-v, W_u, W_v).		\ee
		Finally, we assign to each present edge $e$ an edge-length $L_e$,  an i.i.d.\ copy of $L$. We denote the resulting random graph on $\Z^d$ by $\mathrm{SFP}_{W,L}$.
	\end{definition}	
	In \cite{DeiHofHoo13} the function $h_{\mathrm{S}}$ was defined, for a \emph{long-range} parameter $\alpha_S>d$ and a \emph{percolation} parameter $\lambda>0$, to be
	\be \label{eq:h-orig}h_{\text{S}}^{\text{orig}}(x, w_1, w_2)=	1-\exp\left(-\lambda w_1 w_2/ \|x\|^{\alpha_S} \right).\ee
	This parametrisation of SFP is not very natural, since a vertex with weight $W_v$ has degree approximately $W_v^{d/\al_S}$ \cite[Proposition 2.3, Proof of Theorem 2.2]{DeiHofHoo13} rather than $W_v$, and the exponent of the degree distribution's power law is different from that of $W$'s power law. To remedy this, we re-parametrise by taking $W^\text{new} = W^{d/\al_S}$ and $\alpha = \alpha_S/d$, so that $h_S(x,w_1,w_2) = 1-\exp(-\lambda (w_1w_2/\|x\|^{d})^\alpha) \asymp 1\wedge (w_1w_2/\|x\|^{d})^\alpha$ and each vertex $v$ has degree approximately $W_v$. We actually allow significantly more general choices of $h_S$, which we will set out momentarily in Assumption~\ref{assu:h}.

The second model we consider is IGIRG. 
The main difference between IGIRG and SFP is that the vertex set of IGIRG is given by a Poisson Point Process on $\R^d$ instead of the grid $\Z^d$. This model is the generalisation of  (finite) Geometric Inhomogeneous Random Graphs \cite{BriKeuLen19} (GIRGs) to infinite space $\R^d$, and contains continuum SFP from \cite{DepWut18} as a special case.
\begin{definition}[Infinite  Geometric Inhomogeneous Random Graphs]\label{def:IGIRG} Let $h_\mathrm{I}:\R^d\times \R_+\times\R_+\to [0,1]$ be a function, let $W\ge1, L\ge 0$ be random variables, let $d \ge 1$ be an integer, and let $\la>0$. We define the infinite random graph model $\Ilambda$ as follows. 
		Let $\CV_\lambda$ be a homogeneous Poisson Point Process (PPP) on $\R^d$ with intensity $\la$, forming the positions of vertices. For each $v\in \CV_\lambda$ draw a random weight $W_v$, an i.i.d. copy of $W$. Then, conditioned on $(z, W_z)_{z\in \CV_\lambda}$,  edges are present  independently with probability 
		\be\label{eq:EGIRG-prob}  \Pv(u \leftrightarrow v \text{ in } \Ilambda \mid (z,W_z)_{z\in \CV_\la}):= h_{\mathrm{I}}( u-v, W_{u}, W_v).  \ee
		Finally, we assign to each present edge $e$ an edge-length $L_e$, an i.i.d.\ copy of a random variable $L\ge 0$.
		We write $(\CV_\la, \CE_\la)$ for the vertex and edge set of the resulting graph, which we denote by $\Ilambda$. \end{definition}
The edge-connectivity functions $h_{\mathrm{S}}$ and $h_{\mathrm{I}}$ as stated in Definitions~\ref{def:SFP} and~\ref{def:IGIRG} are too general, so  we require the following additional assumption.
Write  for some $c_2>0, \gamma\in(0,1)$,
\be\label{eq:ellw-111} l_{c_2,\gamma}(w):=\exp(-c_2 \log (w)^{\gamma}).\ee
\begin{assumption}[Edge-connection bounds]\label{assu:h} We assume that there exist parameters $\al \in (0,\infty]$ and $\gamma\in(0,1)$, and constants $\underline c, \overline c, \underline c_1, \overline c_1, c_2>0$, such that for each $\mathrm{q}\in\{\mathrm{S,I}\}$, $h_{\mathrm{q}}:\R^d\times \R\times \R\mapsto [0,1]$ (considered as a deterministic function) satisfies the following bounds. For $\alpha <\infty$, we require	
\be\label{eq:connection-new}
		\underline c\left(l_{c_2,\gamma}(w_1)l_{c_2,\gamma}(w_2)\wedge \big(w_1 w_2/\|x\|^d\big)^\al \right)\leq h_{\mathrm{q}}(x, w_1, w_2) \leq \overline c\left(1\wedge \big(w_1 w_2/\|x\|^d\big)^\al\right).
		\ee
		For $\al=\infty$,\footnote{In the related graph models GIRG and HRG, this is called the \emph{threshold case}. We refrain from this terminology, to avoid confusion with other thresholds.} we require
\be\label{eq:connection-new-threshold}
		\underline c\left(l_{c_2,\gamma}(w_1)l_{c_2,\gamma}(w_2)\wedge \ind_{\{\underline c_1 w_1 w_2 \geq \|x\|^d\}} \right)\leq h_{\mathrm{q}}(x, w_1, w_2) \leq \overline c\cdot\ind_{\{\overline c_1w_1 w_2 \geq \|x\|^d\}}.
	\ee
	Unless otherwise mentioned, we will also require that $W$ has power-law tails (as defined in Section~\ref{sec:notation}).
		\end{assumption}

In order to formally define explosion, we must first define the cost of a walk. The notation here is analogous to the notation we use for graph distance and Euclidean distance (see Section~\ref{sec:notation}).

	\begin{definition}[Distances and metric balls]\label{def:distances}
		Let $\CG = (V,E)$ be a graph. Let $\{L_e \colon e \in E\}$ be an associated family of \emph{edge lengths}, where $L_e \in (0,\infty)$ for all $e$. Let $\{W_v \colon v \in V\}$ be an associated family of \emph{vertex weights}, where $W_v \in [1,\infty)$ for all $v \in V$. Let $f\colon [1,\infty)^2 \to \R^+$ be a function, which we call the \emph{weight penalty function}.
		For all directed edges $e = (u,v) \in E$, we define the \emph{cost} $C_e$ of $e$ to be $L_e f(W_u,W_v)$. For all walks $\pi = (\pi_0,\dots,\pi_k)$ in $G$, we define the \emph{cost} of $\pi$ to be
		\[
			|\pi|_{f,L} := \sum_{i=1}^k C_{(\pi_{i-1},\pi_i)} = \sum_{i=1}^k f(W_{\pi_{i-1}},W_{\pi_i})L_{(\pi_{i-1},\pi_i)}
		\]
		For all sets $A,B\subseteq V$, we define the \emph{cost-distance} from $A$ to $B$ by
		\[
			d_{f,L}(A,B) := \inf \big(\{|\pi|_{f,L} \colon \pi = (\pi_0,\dots,\pi_k)\mbox{ is a path with }\pi_0 \in A \mbox{ and } \pi_k \in B\} \cup \{\infty\}\big).
		\]
		As with graph distance, we write e.g.\ $d_{f,L}(u,v) := d_{f,L}(\{u\},\{v\})$. We denote balls by $B^{f,L}(v,r) := \{u \in V \colon d_{f,L}(v,u) \le r\}$ for all $v \in V$ and $r \ge 0$. For the special case $f(x,y)=(xy)^{\mu}$ with $\mu \geq 0$, we replace $f$ by $\mu$ in the definitions above, writing $|\pi|_{f,L}=|\pi|_{\mu,L}, d_{\mu,L}\left(A, B\right)=d_{f,L}\left(A, B\right)$, and $B^{\mu,L}\left(v,r\right)=B^{f,L}\left(v,r\right)$.
	\end{definition} 

	Note that $d_{f,L}$ is a metric only if $f$ is symmetric. For asymmetric $f$, we interpret $d_{f,L}(v,u)$ as the transmission cost from $v$ to $u$.
	To define explosion time, we single out a vertex $v\in \CV_\la$ and study the set of vertices that can be reached from $v$ within some cost distance $T$. To simplify the notation, we always condition on the event that an IGIRG's PPP has a vertex at the origin $0$, and then choose $v=0$. Conditioned on $0 \in \CV_\la$, the set $\CV_\la \setminus \{0\}$ is still a PPP of intensity $\lambda$, and by translation invariance, the results generalise to arbitrary fixed vertices $v \in \CV_\la$.
	\begin{definition}[Explosion time]\label{def:explosiontime}
		Consider $\mathrm{IGIRG}_{W,L}(\la)$ from Definition \ref{def:IGIRG}, and let $v\in \CV_\la$.  Let $f$ be a weight penalty function, and define $\sigma_f^{\mathrm{I}}(v,k):=\inf\{t: |B^{f,L}(v,t)|>k \text{ in } \Ilambda\}$. Let the \emph{explosion time} of a vertex $v$ (wrt cost-distance) be defined as the (possibly infinite) limit:
		\be Y_f^{\mathrm{I}}(v):=\lim_{k\to \infty} \sigma_f^{\mathrm{I}}(v,k).\ee 
		The explosion time of the origin, $Y_f^{\mathrm{I}}(0)$, is defined analogously when we condition on $0\in \CV_\la$. We call $\Ilambda$ \emph{explosive} wrt to the cost-distance generated by $f$ and $L$ if $\Pv(Y_f^{\mathrm{I}}(0)< \infty)>0$, otherwise we call it \emph{conservative}. For short, we write that $\Ilambda$ is $(f,L)$-explosive or $(f,L)$-conservative, and for $f=(xy)^\mu$, we write $(\mu, L)$-explosive vs.\ $(\mu, L)$-conservative, respectively.  We call any infinite path $\pi$ with $|\pi|_{f,L}<\infty$ \emph{an explosive path}. 
		We define the same quantities analogously in the model $\SFPWL$, indicating the different model by replacing the superscript $\mathrm I$ with $\mathrm S$: thus we write $\sigma_f^{\mathrm{S}}(v,k)$ and $Y_f^{\mathrm{S}}(v)$.
	\end{definition}
	In other words, $\sigma_f^\mathrm{I}(v,k)$ is the smallest cost $t$ such that $k$ other vertices are reachable within cost $t$ from $v$; similarly, the explosion time $Y^{\mathrm{I}}_f(v)$ is the infimum\footnote{This infimum is a minimum if all degrees are finite.} of all costs $t$ such that the ball $B^{f,L}(v,t)$ contains infinitely many vertices. Thus $Y_f^{\mathrm I}(v)$ is finite if and only if infinitely many vertices are reachable within bounded cost from $v$.
	
 Explosion can either happen as \emph{lengthwise explosion}, in which there is an explosive path from the origin. Or (non-exclusively) there may be \emph{sideways explosion}, in which there is a finite path from the origin to a vertex from which there are infinitely many incident edges of bounded cost. Actually, we will show in Lemma~\ref{lem:opt-real} that whenever sideways explosion occurs in $\Ilambda$ or $\SFPWL$, then with positive probability from the origin itself there are already infinitely many incident edges of bounded cost. 
	
	When the weight penalty function $f$ is asymmetric, so that $d_{f,L}$ is only a quasimetric, we call the phenomenon described in Definition~\ref{def:explosiontime} \emph{outwards} explosion. We define \emph{inwards} explosion analogously, requiring that infinitely many vertices have bounded-cost paths to $0$ instead of the other way around. Thus we say that $\Ilambda$ is \emph{$(f,L)$-inwards explosive} if 
	\[
		\lim_{k\to\infty} \big( \inf \{t \colon |\{u \in \CV_\lambda \colon d_{f,L}(u,0) \le t\}| > k \}\big) < \infty.
	\]
	The corresponding definition for $\SFPWL$ is analogous.

\subsection{Results for infinite models}
 	
	With the definition of explosion at hand, we are now able to state the main results of this paper. 
	We first consider the special case where the weight penalty function is given by $f(w_u,w_v) = (w_uw_v)^\mu$ for some $\mu > 0$. It is already known~\cite{HofKom18} that when the exponent $\tau$ of the vertex weights' power law is greater than $3$, $\SFPWL$ is $(0,L)$-conservative for all distributions $L$ satisfying $\Pv(L=0) = 0$; this implies that $\SFPWL$ is also $(\mu,L)$-conservative for all $\mu > 0$, since increasing $\mu$ only increases the cost of each path. We expect $\Ilambda$ to exhibit the same behaviour, so we focus on the $\tau < 3$ regime.
	
	\begin{theorem}\label{thm:mu-classification}
		Consider the models $\Ilambda$ and $\SFPWL$. 
		Suppose the vertex-weight distribution $W$ has power-law tails with exponent $\tau\in(1,3)$, and that the connection functions $h_{\mathrm{I}}$ and $h_{\mathrm{S}}$ satisfy Assumption \ref{assu:h} for some $\alpha \in (0,\infty]$. Let the weight penalty function be $f(w_1, w_2):=(w_1w_2)^\mu$, for some $\mu>0$. Then the following statements hold.
		\begin{enumerate}[(i)]
			\item Suppose $\alpha \le 1$. Then $\Ilambda$ and $\SFPWL$ are $(\mu,L)$-sideways explosive, and moreover explosion occurs almost surely.
			\item Suppose $\alpha > 1$ and that
			\be\label{eq:beta1} \beta^+ := \limsup_{t\to 0} \big(\log F_L(t)/\log t\big)<(3-\tau)/(2\mu).\ee
			Then $\Ilambda$ and $\SFPWL$ are $(\mu,L)$-lengthwise explosive.
			\item Suppose $\alpha > 1$ and that
			\be\label{eq:beta2} \beta^- := \liminf_{t\to 0} \big(\log F_L(t)/\log t\big)>(3-\tau)/(2\mu).\ee
			Then $\Ilambda$ and $\SFPWL$ are $(\mu,L)$-conservative.
		\end{enumerate}
	\end{theorem}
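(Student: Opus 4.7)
The plan is to treat the three regimes by three different arguments.

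Part (i) is essentially immediate: when $\alpha\le 1$, the lower bound in Assumption~\ref{assu:h} fails to be integrable at infinity on $\R^d$ (equivalently summable over $\Z^d$), so conditional on any finite weight $W_0$ the expected degree of the origin is infinite, yielding infinitely many incident edges a.s.; the logarithmic correction $l_{c_2,\gamma}$ is negligible for this conclusion. Since the attached $L_e$'s are i.i.d.\ with $\Pr(L_e\le t)>0$ for every $t>0$ in the support, infinitely many incident edges have cost at most any prescribed $t$, which is sideways explosion a.s.\ in both $\Ilambda$ and $\SFPWL$.

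For part (ii), I would construct an explosive path by a doubly-exponential annulus scheme, adapting the strategy of~\cite{julia-girg} to the presence of the penalty. Fix $a,b>1$, and call a vertex a \emph{level-$k$ leader} if it lies in $B^2(0, a^{b^{k+1}})\setminus B^2(0, a^{b^k})$ with weight in $[b^{b^k}, 2b^{b^k}]$. Tuning $a$ as a function of $b,\tau,d$ makes the number of leaders per annulus doubly exponential, and the lower bound in Assumption~\ref{assu:h} guarantees that each level-$k$ leader is joined, with very high probability, to a doubly-exponential number $N_{k+1}$ of level-$(k+1)$ leaders. The minimum of $N_{k+1}$ i.i.d.\ copies of $L$ is then of order $N_{k+1}^{-1/\beta^++o(1)}$ by the assumption $F_L(t)\ge t^{\beta^++o(1)}$ near $0$, while the weight penalty on such an edge is at most $b^{2\mu b^{k+1}}$. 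The strict inequality $\beta^+<(3-\tau)/(2\mu)$ is exactly what makes the resulting series summable, so the path built greedily by selecting a cheapest next-level leader at every step has finite total cost.

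Part (iii) is the heart of the matter and requires excluding both sideways and lengthwise explosion. Sideways explosion, by a localisation lemma to be established, is equivalent to the origin having infinitely many incident edges of bounded cost; this is ruled out by showing that
\[
\int_{\R^d}\!\!\int_1^\infty w^{-\tau}\bigl(1\wedge (W_0 w/\|x\|^d)^\alpha\bigr) F_L\bigl(t/(W_0 w)^\mu\bigr)\,dw\,dx
\]
is finite under $\beta^->(3-\tau)/(2\mu)$ (in fact already under the weaker bound $\mu\beta^->2-\tau$ naturally produced by the integral when $\tau\in(1,3)$). For \emph{lengthwise} explosion, set $Y:=Y_f^{\mathrm{I}}(0)$ and assume for contradiction that $\Pr(Y<\infty)>0$. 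A bootstrap using translation invariance of the PPP, together with the observation that the tail of an explosive path through a very high-weight vertex looks like a fresh copy of $Y$, promotes this to $\Pr(Y<t_0)>0$ for every $t_0>0$. Bounding $p_k(t_0):=\Pr\bigl(\partial B^G(0,k)\cap B^{f,L}(0,t_0)\neq\emptyset\bigr)$ by a union bound over walks of length $k$, and integrating out positions and weights using the same kind of integral as in the sideways computation, yields a per-step factor that is strictly less than $1$ for $t_0$ small; thus $p_k(t_0)\le\rho^k$ for some $\rho<1$, and Borel--Cantelli contradicts the assumption. The principal obstacle is the bootstrap: for branching processes self-similarity comes for free from independence of subtrees, but here the head and tail of an explosive path share the underlying PPP and weight environment, so the `fresh restart' needs a careful sprinkling or conditioning argument producing a high-weight vertex near the origin whose forward explosion time is distributed like an independent copy of $Y$.
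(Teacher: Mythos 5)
Your overall architecture coincides with the paper's: part (i) via infinite degree from far-away bounded-weight neighbours plus Borel--Cantelli, part (ii) via a doubly-exponential annulus/leader scheme with a greedy cheapest-edge path whose summability criterion is exactly $\beta^+<(3-\tau)/(2\mu)$, and part (iii) via finiteness of the truncated cost-degree, a ``if explosion happens at all it happens arbitrarily fast'' bootstrap, and a path-counting bound with per-step factor $<1$ killed by Borel--Cantelli. Two small repairs: in (i) you must restrict to neighbours whose weight lies in a fixed interval of positive probability, so that the penalty $(W_0W_v)^\mu$ is bounded along the infinitely many edges you keep (this is how the paper gets \emph{bounded} cost); and in (iii) the threshold $(3-\tau)/(2\mu)$ does not come from your single-edge integral (which only needs $\mu\beta^->2-\tau$) but from the path count, where each internal vertex contributes a factor $w^{1-\mu\beta^-}$ from each of its two incident edges, so the per-step factor is finite precisely when $\E[W^{2-2\mu\beta^-}]<\infty$, i.e.\ $\mu\beta^->(3-\tau)/2$; you should make that moment condition explicit. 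Also, bounding $\Pr\bigl(\partial B^G(0,k)\cap B^{f,L}(0,t_0)\neq\emptyset\bigr)$ is not quite the right event; what the argument needs (and what the paper counts) is the expected number of length-$k$ self-avoiding paths from $0$ all of whose edges have cost at most $t_0$, which is what an infinite path of total cost $\le t_0$ forces to be $\ge 1$ for every $k$.

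The genuine gap is the bootstrap itself, which you correctly identify as the principal obstacle but do not prove, and the mechanism you sketch --- producing a high-weight vertex near the origin whose forward explosion time is ``a fresh independent copy of $Y$'' --- does not work as stated: the event used to locate such a vertex is measurable with respect to the same PPP, weights and edge-lengths that determine its forward explosion time, so no fresh copy is available, and this is precisely the self-similarity that branching processes have and spatial graphs lack. The paper's route (Lemma~\ref{lem:short-expl}) avoids independence altogether: from $\Pr(Y<\infty)>0$ and the realisation of the explosion time by an optimal path (Lemma~\ref{lem:opt-real}), truncate that path to obtain, with probability at least $p_Y/2$, a vertex in a \emph{fixed} Euclidean ball $B^2(0,r_0)$ starting an infinite path of cost at most $t_0/2$ whose other vertices lie outside the ball; this event is determined by the point process, the weights, and the edges \emph{not internal} to the ball, and conditionally on any such configuration the origin connects to every vertex of the ball within cost $t_0/2$ with positive probability, using strict positivity of $h_{\mathrm I}$ when $\alpha<\infty$ and a two-round vertex-thinning exposure of the ball when $\alpha=\infty$ (for $\SFPWL$, nearest-neighbour edges of small cost suffice). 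Without supplying an argument of this kind, your part (iii) rests on an unproven lemma, so the proposal as written is incomplete at its most novel step.
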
	
	Thus when $\alpha \le 1$ both models are always explosive, and when $\alpha > 1$ and $\beta^- = \beta^+$ the hyperbola curve $\mu\beta = (3-\tau)/2$ is the threshold between the explosive and conservative regimes. Observe that a smaller $\beta$ means steeper cdf $F_L$ at the origin, while a larger $\beta$ means flatter behavior at the origin, hence the criterions \eqref{eq:beta1} and \eqref{eq:beta2} are quite natural. 
We expect $\beta^- = \beta^+$ for any reasonable choice of $L$; in particular, when $L\sim\Exp(1)$ we have $\beta^- = \beta^+ = 1$. Note that $\beta^+ =0$ and $\beta^- = \infty$ are both allowed in Theorem~\ref{thm:mu-classification}. The case $\beta^+ = 0$ occurs when $F_L(t)$ is steeper at $0$ than any polynomial, e.g.\ if $F_L(t) \asymp -1/\log(t)$ as $t\to 0$, and the case $\beta^- = \infty$ occurs when $F_L(t)$ is flatter at $0$ than any polynomial, e.g.\ if $F_L(t) \asymp \exp(-1/t)$ as $t\to 0$. Thus Theorem~\ref{thm:mu-classification} gives a partition of the parameter space into explosive and conservative regimes whenever $F_L$ is suitably well-behaved near the origin.
	
	Note that the parameter regimes $\alpha \le 1$ and $\tau \le 2$ are exceptional in the sense that $W_v$ does not correspond to the expected degree of $v$ up to constant factors. Rather, whenever $\alpha \le 1$ or $\tau \le 2$ then all vertex degrees are a.s.\! infinite in both $\Ilambda$ and $\SFPWL$ (see \cite[Theorem 2.1]{DeiHofHoo13} for the $\tau \le 2$ regime). It is therefore immediate that without a weight penalty (i.e.\ taking $\mu = 0$), explosion occurs in time $\inf \{t \colon F_L(t) > 0\}$ in these regimes. 
	Despite this, Theorem~\ref{thm:mu-classification}(ii) implies that the weight penalty is powerful enough that $\Ilambda$ and $\SFPWL$ may still be conservative when $\tau \le 2$, and indeed that the critical hyperbola is smooth at $\tau = 2$. The reason for this is that in the $\tau \le 2$ regime, the infinite vertex degrees come from edges to high-weight vertices. Thus when \eqref{eq:beta2} holds, the weight penalty ensures that for all costs $K$, there are only finitely many edges from zero with cost at most $K$; thus the cost-distance does not ``see'' the infinite degree. By contrast, when $\alpha \le 1$, the infinite vertex degrees come from low-weight neighbors and so the weight penalty does not matter. In fact, when $\alpha \le 1$ we prove explosion occurs under substantially weaker conditions than stated in Theorem~\ref{thm:mu-classification}(i); see Theorem~\ref{thm:explosive0} for details.
	
	We now comment on the critical case, where $\alpha > 1$ and $\beta^- = \beta^+ = (3-\tau)/2\mu$. In this case, the proof shows that both models are $(\mu,L)$-conservative whenever the moment $\Ev[W^{2-2\mu\beta}]=\Ev[W^{\tau-1}]$ is finite. This occurs when the slowly varying function in the precise tail of the vertex-weight distribution $W$ is sufficiently small, for example if $\Pv(W\ge x)=(\log(x))^{-2}/x^{\tau-1}$. When the moment $\Ev[W^{\tau-1}]$ is infinite, the proof technique we apply breaks down. This case is hard even without a weight penalty, i.e.\ when $\mu = 0$ and $\tau =3$: this threshold regime is not understood even in the simplest case of branching processes~\cite{AmiDev13}.

	Finally, we emphasise that the case $\mu=0$ (where there is no weight penalty and $C_e=L_e$) implies entirely different scaling from Theorem~\ref{thm:mu-classification}; in this case the criterion for both models to be explosive is for the sum $\mathbf{I}(L)$ in \eqref{eq:sumL} to be finite, a result that was established for SFP in \cite{HofKom18} and for IGIRG in \cite{julia-girg}.

{\bf Asymmetric and polynomial penalty functions.} Next, we generalise Theorem~\ref{thm:mu-classification} to (possibly asymmetric) polynomial penalty functions.
Let $f(w_1, w_2)$ be any polynomial of two variables with positive real coefficients and non-negative real exponents. Thus $f$ can be written in the form
\be\label{eq:pol} f(w_1, w_2)=\sum_{i\in \CI} a_i w_1^{\mu_i} w_2^{\nu_i}\ee
for some finite set $\CI$, where $a_i>0$ and $\mu_i, \nu_i\ge 0$ for all $i\in \CI$.
We define the \emph{degree} of $f$ to be 
\be\label{eq:poldeg} \deg(f)=\max_{i\in\CI}(\mu_{i}+\nu_{i}).\ee
Further, for some $\mu > 0$ we define $f_{\vee,\mu}:=(w_1\vee w_2)^\mu$ and $f_{+,\mu}(w_1,w_2):=(w_1+w_2)^\mu$.
As in the $f(w_u,w_v) = (w_uw_v)^\mu$ case, the results of~\cite{HofKom18} directly imply that $\SFPWL$ is conservative for any choice of $f$ when $\tau > 3$, so we focus on the $\tau < 3$ regime. Theorem~\ref{thm:mu-classification} becomes the following:
\begin{theorem}\label{thm:poly-classification}
	Consider the models $\Ilambda$ and $\SFPWL$. 
	Suppose the vertex-weight distribution $W$ has power-law tails with exponent $\tau\in(1,3)$, and that the connection functions $h_{\mathrm{I}}$ and $h_{\mathrm{S}}$ satisfy Assumption \ref{assu:h} for some $\alpha \in (0,\infty]$. Let the weight penalty function $f$ be a polynomial as in~\eqref{eq:pol}. Define $\beta^-$ and $\beta^+$ as in Theorem~\ref{thm:mu-classification}. Then the following statements hold.
	\begin{enumerate}[(i)]
		\item Suppose $\alpha \le 1$ or that for all $i \in \calI$, $\beta^+ < (2-\tau)/\nu_i$. Then $\Ilambda$ and $\SFPWL$ are $(f,L)$-sideways explosive, and moreover explosion occurs almost surely.
		\item Suppose $\alpha > 1$ and that $\beta^+ < (3-\tau)/\deg(f)$. Then $\Ilambda$ and $\SFPWL$ are $(f,L)$-lengthwise explosive.
		\item Suppose $\alpha > 1$, that $\beta^- > (3-\tau)/\deg(f)$, and that for some $i \in \calI$ with $\mu_i + \nu_i = \deg(f)$ we have $\beta^- > (2-\tau)/\nu_i$. Then $\Ilambda$ and $\SFPWL$ are $(f,L)$-conservative.
	\end{enumerate}	
\end{theorem}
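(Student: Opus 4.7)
The plan is to sandwich the polynomial penalty $f(w_1,w_2)=\sum_{i\in\calI}a_i w_1^{\mu_i}w_2^{\nu_i}$ between monomials and reduce to (a slight extension of) Theorem~\ref{thm:mu-classification}. Two sandwich bounds drive the whole proof: (a) the term-wise lower bound $f(w_1,w_2)\ge a_i w_1^{\mu_i} w_2^{\nu_i}$ for each $i\in\calI$; and (b) the upper bound $f(w_1,w_2)\le \bigl(\sum_{i\in\calI} a_i\bigr)(w_1\vee w_2)^{\deg(f)}$, which in the regime where $w_1$ and $w_2$ are of the same order --- as they are for consecutive ``leaders'' in the explosion construction --- simplifies to $f(w_1,w_2)\le C_f (w_1w_2)^{\deg(f)/2}$. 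Before applying these, I would first extend Theorem~\ref{thm:mu-classification} from the symmetric penalty $(w_1w_2)^\mu$ to asymmetric monomials $w_1^\mu w_2^\nu$, establishing the threshold $\beta_c=\max\{(3-\tau)/(\mu+\nu),(2-\tau)/\nu\}$ for the latter.

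For part~(ii), I reuse verbatim the explosion construction of Section~\ref{sec:expl}: annuli of doubly-exponentially growing volume, doubly-exponentially many leaders per annulus whose weights grow doubly-exponentially, and greedy selection of a cheapest edge between consecutive annuli. Because the weights $w_k,w_{k+1}$ of consecutive leaders differ only by a bounded ratio on the doubly-exponential scale, bound~(b) applies, giving edge costs of order $L_e(w_k w_{k+1})^{\deg(f)/2}$. Substituting $2\mu=\deg(f)$ into the summability computation from Theorem~\ref{thm:mu-classification}(ii) then yields a finite-cost infinite path whenever $\beta^+<(3-\tau)/\deg(f)$.

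For part~(iii), pick $i^*\in\calI$ with $\mu_{i^*}+\nu_{i^*}=\deg(f)$ and $\beta^->(2-\tau)/\nu_{i^*}$; the simultaneous availability of such an $i^*$ is exactly the theorem's hypothesis. Using bound~(a), every path has $f$-cost at least $a_{i^*}$ times its cost in the asymmetric-monomial model with penalty $w_1^{\mu_{i^*}}w_2^{\nu_{i^*}}$, so conservative behaviour of the latter transfers. The asymmetric monomial conservative result itself I would prove by adapting the two-part argument of Sections~\ref{sec:sideways} and~\ref{sec:cons}: sideways explosion is ruled out whenever $\beta^->(2-\tau)/\nu$ by showing, via an integral over positions and weights of a potential neighbour, that the expected number of edges incident to a fixed vertex with cost at most any finite $K$ is finite; lengthwise explosion is ruled out whenever $\beta^->(3-\tau)/(\mu+\nu)$ by the contradiction/rescaling argument of Section~\ref{sec:cons}, after separating path edges by which endpoint is heavier.

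Part~(i) is the easiest: when $\alpha\le 1$, the short proof of Theorem~\ref{thm:mu-classification}(i) --- infinitely many neighbours of the origin via cheap low-weight edges --- transfers directly, since a polynomial penalty with bounded arguments is bounded; when $\beta^+<(2-\tau)/\nu_i$ for every $i$, the edge-counting estimate used in part~(iii) shows that for each $i$ the number of vertices $v$ incident to the origin with $a_i W_0^{\mu_i}W_v^{\nu_i}L_{0v}\le K/|\calI|$ is a.s.\ infinite, and intersecting the resulting events over the finite set $\calI$ produces infinitely many incident edges with $f(W_0,W_v)L_{0v}\le K$. The main obstacle is the asymmetric-monomial conservative result invoked in part~(iii): in the symmetric case the proof exploits $f(w_1,w_2)=f(w_2,w_1)$ to obtain a single scalar estimate per edge, whereas in the asymmetric case one must carefully track, for each edge on a candidate explosive path, which endpoint carries the $\mu$-exponent and which the $\nu$-exponent. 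I expect this to require splitting candidate paths into segments of ``up-edges'' and ``down-edges'' in weight, controlling the two contributions separately, and recombining them through the $\beta^->(3-\tau)/(\mu+\nu)$ condition together with the sideways bound $\beta^->(2-\tau)/\nu$ applied to the heavy endpoint.
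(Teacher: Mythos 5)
Your overall strategy --- sandwiching $f$ by monomials, proving an asymmetric-monomial version of Theorem~\ref{thm:mu-classification}, and reusing the greedy construction of Section~\ref{sec:expl} and the path-count of Section~\ref{sec:cons} --- is essentially the paper's route. In particular, for (iii) the paper does exactly what you sketch: lower-bound $f$ by $a_{i^*}w_1^{\mu_{i^*}}w_2^{\nu_{i^*}}$ and rerun Lemma~\ref{lemma:decay}. One remark: no splitting into ``up-edges'' and ``down-edges'' is needed, because in the product over the edges of a path each internal vertex automatically collects the exponent $-(\mu_{i^*}+\nu_{i^*})b_\eps$ (one factor from each incident edge), while the terminal vertex collects only $-\nu_{i^*}b_\eps$; this is also why the condition $\beta^->(2-\tau)/\nu_{i^*}$ enters the \emph{lengthwise} estimate (finiteness of $\E[W^{1-\nu_{i^*}b_\eps}]$) and not only the sideways bound, a point your division of labour slightly misattributes but which your hypotheses cover.

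There are, however, two concrete flaws as written. First, in (ii) your premise that consecutive leaders have weights ``of the same order'' is false: by \eqref{eq:delta-good} the leader of $\Gamma_{k+1}$ has weight roughly the $C=(1+\delta)$-th \emph{power} of the leader of $\Gamma_k$, so $w_{k+1}/w_k$ diverges doubly exponentially and the inequality $f(w_k,w_{k+1})\le C_f(w_kw_{k+1})^{\deg(f)/2}$ fails along the greedy path; consequently ``substituting $2\mu=\deg(f)$ into Theorem~\ref{thm:mu-classification}(ii)'' is not justified. The conclusion survives, but only after redoing the step: either note that $|\pi|_{f,L}$ is a finite sum of monomial costs and bound each monomial $w_1^{\mu_i}w_2^{\nu_i}$ on the same greedy path (this is the paper's argument; Claims~\ref{cl:parameters} and~\ref{lem:cost-greedy} are already stated for asymmetric monomials), or keep the honest bound $f(w_k,w_{k+1})\le C_f\,w_{k+1}^{\deg(f)}$ and recheck that the exponent in the analogue of \eqref{eq:fini} is still negative for $D\in\mathcal I_\delta$ and $\delta$ small. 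Second, in (i) the final step is a non sequitur: intersecting over $i\in\calI$ the events ``infinitely many $v$ with $a_iW_0^{\mu_i}W_v^{\nu_i}L_{0v}\le K/|\calI|$'' does not yield infinitely many $v$ satisfying \emph{all} the constraints simultaneously, since different $v$ may witness different $i$. The correct (and the paper's) argument bounds $f$ above by a single monomial $a\,w_1^{\mu}w_2^{\nu}$ with $\mu=\max_i\mu_i$, $\nu=\max_i\nu_i$ (valid because $W\ge1$), and then the expected number of incident edges of cost at most $K$ is infinite precisely when $\beta^+<(2-\tau)/\nu$, which is your hypothesis since $\nu=\max_i\nu_i$.
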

If $f$ is a general asymmetric polynomial, then while the bounds of Theorem~\ref{thm:poly-classification}  still apply it need not give a partition of the entire parameter space. For example, Theorem~\ref{thm:poly-classification} does not apply (and the proof technique breaks down) if $\alpha > 1$, $\tau = 3/2$, $\beta^- = \beta^+ =: \beta$, and $f(w_u,w_v) = w_u^{7/4\beta} + w_v^{3/4\beta}$. However, we do recover a partition in many special cases:
\begin{itemize}
	\item When $\tau > 2$, the condition $\beta^- > (2-\tau)/\nu_i$ is automatically satisfied for all $i \in \calI$, so it can be dropped.
	\item If $f$ is a monomial, then conditions (i)--(iii) cover the whole of the parameter space.
	\item If $f$ is symmetric and $\tau \in (1,2]$, then there exists $i \in \calI$ with $\mu_i \le \nu_i$ and $\mu_i + \nu_i = \deg(f)$; thus
	\be \label{eq:compare-conditions}
		\frac{3-\tau}{\deg(f)} = \frac{3-\tau}{\mu_i + \nu_i} \ge \frac{2(2-\tau)}{2\nu_i} = \frac{2-\tau}{\nu_i}.
	\ee
	It follows that the condition $\beta^- > (2-\tau)/\nu_i$ can be safely removed from condition (iii).
\end{itemize}

In particular, Theorem~\ref{thm:poly-classification} yields a partition of the parameter space for $f_{*,\mu}(w_1,w_2) := w_1^\mu + w_2^\mu$. Moreover, since $(w_1^\mu +w_2^\mu)/2 \le (w_1 \vee w_2)^\mu \le (w_1 + w_2)^\mu \le 2^\mu(w_1^\mu + w_2^\mu)$, for any walk $\pi$ we have $|\pi|_{f_{*,\mu},L}/2 \le |\pi|_{f_{\vee,\mu},L} \le |\pi|_{f_{+,\mu},L} \le 2^\mu|\pi|_{f_{*,\mu},L}$. Hence any given instance of $\Ilambda$ or $\SFPWL$ explodes under $f_{\vee,\mu}$ and $f_{+,\mu}$ if and only if it explodes under $f_{*,\mu}$, and the same partition applies.

Finally, we note that Theorem~\ref{thm:poly-classification} implies --- perhaps surprisingly --- that an asymmetric penalty function can yield asymmetric explosive behaviour even in undirected models such as $\Ilambda$ and $\SFPWL$. For example, taking $\tau = 3/2$ and $\beta^+ = \beta^- = 1$, Theorem~\ref{thm:poly-classification}(i) and (iii) imply that $f(w_u,w_v) = w_u^3w_v^{1/4}$ explodes almost surely, but the reverse function $f^{\text{rev}}(w_u,w_v) = w_u^{1/4}w_v^3$ does not. Equivalently, $f$ demonstrates outwards explosion but not inwards explosion, and $f^{\text{rev}}$ demonstrates inwards explosion but not outwards explosion. We emphasise that this behaviour is only possible when $\tau \le 2$.
\subsection{Finite models and results}\label{sec:finite-result}
	In this section we define the finite version of the IGIRG model, as well as the Hyperbolic Random Graph model, and explain how the results carry over to these finite versions. We start with Geometric Inhomogeneous Random Graphs (GIRGs), introduced in~\cite{BriKeuLen15,BriKeuLen19}. Various aspects of the GIRG model have been studied, including average distances~\cite{bringmann2016average}, greedy routing~\cite{BriKeuLenYan17}, bootstrap percolation~\cite{KocLen16}, first passage percolation~\cite{julia-girg}, and how to sample from the graph model efficiently~\cite{BlaFriKat19}. Extensions to non-metric geometries were studied in~\cite{lengler2017existence}.
	
	\begin{definition}[Geometric Inhomogeneous Random Graph]\label{def:GIRG}  Let $n \in \Z^+$, and let $W^{(n)}\ge 1,\ L\ge 0$ be random variables.
		Let $V:=\left[n\right]$, and consider $\mathcal{X}=[-1/2,1/2]^d$ equipped with the Lebesgue measure $\nu$. Assign to each vertex $i\in [n]$  an i.i.d.\ position vector $x_i\in \mathcal{X}$ sampled from $\nu$, and a vertex-weight $W_i^{(n)}$,  an i.i.d.\ copy of   $W^{(n)}$. Then, conditioned on $(x_i, W_i^{(n)})_{i\in [n]}$  edges are present  independently. For any $u,v\in [n]$, we denote
		\be\label{eq:gn-intro}
		\mathbb{P}\big(u\leftrightarrow v \text{ in } \mathrm{GIRG}_{W,L}(n)\mid (x_i, W_i^{(n)})_{i\in [n]}\big)=:g_n^{u,v}\big(x_u,x_v,(W_i^{(n)})_{i\in [n]}\big),
		\ee
		and we require $g_n^{u,v}: \mathcal{X} \times \mathcal{X}\times (\mathbb{R}^+)^n \rightarrow [0,1]$ to be $\nu$-measurable. 
		Finally, assign to each present edge $e$ an edge-length $L_e$, an i.i.d.\ copy of $L$.
		We denote the resulting graph by $\mathrm{GIRG}_{W,L}(n)$.
	\end{definition} 

	We next set out the properties we assume for $g_n^{u,v}$ (Assumption~\ref{assu:mild-connect}) and for $W^{(n)}$ (Assumption~\ref{assu:mild-vertex}). In \cite{BriKeuLen19}, the authors assumed that
	there is a parameter $\alpha\in (1,\infty]$, and $0<\underline c, \overline c, \underline c_1, \overline c_1 < \infty$, such that for all $n$  and all $u,v\in[n]$, the edge-connectivity function $g_n^{u,v}$ satisfies, as a \emph{deterministic} function from $\mathcal{X}\times \mathcal{X}\times (\R^+)^n$ to $[0,1]$, that 
	\be
	\label{eq:GIRG-orig}
	\underline c  \le\frac{g_n^{u,v}\big(x_u,x_v,(w_i)_{i\in [n]}\big)}{1\wedge (w_u w_v/(\|x_u-x_v\|^d\sum_{i=1}^n w_i))^\alpha} \le \overline c \quad \text{for } 1<\alpha <\infty; 
	\ee
	while for  $\alpha=\infty$,
	\be
	\label{eq:GIRG-orig-threshold}
	\underline c \ind_{\{\underline c_1 w_u w_v \geq \|x_u-x_v\|^{d}\sum_{i=1}^n w_i \}} \le g_n^{u,v}\big(x_u,x_v,(w_i)_{i\in [n]}\big) \le \overline c \ind_{\{\overline c_1 w_u w_v \geq \|x_u-x_v\|^{d}\sum_{i=1}^n w_i\}}.
	\ee
The reason for the restriction $\alpha >1$ was that under the above conditions there are constants $c,C, n_0>0$ such that $\E[\deg(v) \mid W_i^{(n)} = w] \in [cw,Cw]$ holds in $\mathrm{GIRG}_{W,L}(n)$ for all $n\geq n_0$, $v\in [n]$ and $w \geq 1$. The same would not hold for $\alpha \leq 1$, where the expected degrees grow with $n$. 

Observe that conditions \eqref{eq:GIRG-orig} and~\eqref{eq:GIRG-orig-threshold} are very similar to Assumption \ref{assu:h}. There are two main differences. One is the factor $\sum_{i=1}^n w_i$ in the denominator. In the finite GIRG model, the weights are i.i.d.\! random  variables $W_i^{(n)}$. Writing $S:= \sum_{i=1}^n W_i^{(n)}$, whenever $\Ev[W^{(n)}]<\infty$ (including the power-law case with exponent $\tau >2$, considered in~\cite{BriKeuLen19}), there are constants $c,C > 0$ such that whp $S\in(c n, C n )$. Hence it is natural to replace the sum in \eqref{eq:GIRG-orig} and \eqref{eq:GIRG-orig-threshold} by $n$ and compensate by changing the constant prefactors. As we will see later, this factor of $n$ simply reflects the different scaling of the models: the infinite model uses a Poisson point process of intensity one, while the finite model places $n$ points in a cube of volume one. Thus the factor of $\sum_{i=1}^n w_i$ does not constitute a major difference from Assumption \ref{assu:h} if $\Ev[W^{(n)}]<\infty$.
	
The second difference is that the lower bound in Assumption \ref{assu:h} is milder: Assumption~\ref{assu:h} allows for a correction term $\exp(-c_2 (\log w_u)^\gamma)\exp(-c_2 (\log w_v)^\gamma)$. The weaker Assumption~\ref{assu:mild-connect}, stated below, incorporates this correction term. It does not change any of the qualitative behaviour of the model, but it will be important in proving our results, as we can discard edges from a GIRG satisfying Assumption~\ref{assu:mild-connect} independently at random and still recover a GIRG satisfying Assumption~\ref{assu:mild-connect}. This will allow us to use weight-dependent percolation, passing to a GIRG containing only low-cost edges in order to connect two high-weight vertices with a low-cost path.
		Let $l(w):=l_{c_2, \gamma}(w)$ from \eqref{eq:ellw-111}. 
	\begin{assumption}\label{assu:mild-connect} Consider $\GIRG$ in Definition \ref{def:GIRG}. 	We assume there exist parameters $\al \in (1,\infty]$ and $\gamma\in(0,1)$, and constants $0<\underline c \leq \overline c <\infty$ and $c_2>0$, such that for all $n\in\Z^+$, all $u,v \in [n]$, all sequences $(x_i)_{i \in [n]}$ in $\R^d$, and all sequences $(w_i)_{i \in [n]}$ in $[1,\infty)$, the function $g_n^{u,v}$ in \eqref{eq:gn-intro} satisfies the following. If $1<\alpha <\infty$, we require
		\begin{equation}\label{GIRGgeneral}\ba 
		\un c \cdot \bigg( l(w_u)l(w_v)\wedge \Big( \frac{w_u w_v}{n \|x_u-x_v\|^{d}}\Big)^\alpha\bigg) \le g_n^{u,v}\big(x_u,x_v, (w_i)_{i\in [n]}\big) \le   \overline c \cdot \bigg( 1\wedge \Big( \frac{w_u w_v}{n \|x_u-x_v\|^{d}}\Big)^\alpha\bigg).
		\ea\end{equation}
		If $\alpha = \infty$ then we require that for some constants $\un c_1, \overline c_1\in(0, \infty)$,

		\begin{equation}\label{GIRGgeneral-threshold}
		\un c \cdot \Big( l(w_u)l(w_v)\wedge\ind_{\{\un c_1 w_u w_v \geq n\|x_u-x_v\|^{d}  \}} \Big)\le g_n^{u,v}\big(x_u,x_v,(w_i)_{i\in [n]}\big) \le   \overline c \cdot \ind_{\{\overline c_1 w_u w_v \geq \{ n\|x_u-x_v\|^{d} \}}.
\end{equation}
	\end{assumption}
	
	Note that we allow the weight distribution $W^{(n)}$ to depend on $n$ in Definition~\ref{def:GIRG}. This is not generality for its own sake -- it will later allow us to extend the results to hyperbolic random graphs.
	In this paper, we make the following assumption on $(W^{(n)})_{n\ge 0}$; it is milder than assuming, for instance, i.i.d.\ power-law weights, and it is satisfied by HRG~\cite{julia-girg}.
	\begin{assumption}\label{assu:mild-vertex}	There exists $\tau> 1$ such that the following holds.
 Write $\ell^{(n)}(x):=\Pv(W^{(n)}\ge x) /x^{-(\tau-1)}$. Then there exists a sequence $(M_n\colon n\ge 1)$ of positive reals such that $\Pr(W^{(n)} > M_n) = o(1/n)$ as $n\to \infty$, and functions $\llow,\lhigh\colon\R^+\to\R^+$ varying slowly at infinity, such that $\llow(x) \le \ell^{(n)}(x) \le \lhigh(x)$ for all $n$ and all $x \in [1,M_n]$. 
\end{assumption}
In words, we assume that $W^{(n)}$ does not vary too severely from a power-law with an exponent $\tau> 1$ that does not depend on $n$. (These variations are captured by the functions $\ell^{(n)}$.)
		Note that this assumption trivially holds if $W^{(n)}\equiv W$ does not depend on $n$ and $W$ follows a power law. 
		
		In order to formulate the main results properly, we must first be sure that a linear-sized \textit{giant} component exists with high probability in the models we study under Assumptions~\ref{assu:mild-connect} and~\ref{assu:mild-vertex}, assuming $2 < \tau < 3$. An analogous result is already known for HRG~\cite{BodFouMul15, FouMul18} (see Definition~\ref{def:HRG}), which satisfies the stronger assumption~\eqref{eq:GIRG-orig} on its connection probabilities, and for SFP the question does not arise as the model has only a single component. For GIRG, the result was proved in~\cite{BriKeuLen19} under \eqref{eq:GIRG-orig},~\eqref{eq:GIRG-orig-threshold}, and Assumption~\ref{assu:mild-vertex}; recall that \eqref{eq:GIRG-orig},~\eqref{eq:GIRG-orig-threshold} are stronger than Assumption~\ref{assu:mild-connect}.
		
		\begin{theorem} \label{thm:giant}
		Consider $\mathrm{GIRG}_W(n)$ as in Definition \ref{def:GIRG}, with edge-connectivity functions $g_n^{u,v}$ satisfying Assumption \ref{assu:mild-connect} and weight distribution satisfying Assumption \ref{assu:mild-vertex} with $2 < \tau <3$. Then whp there exists a unique linear-sized giant component $\CC_{\max}$ in $\mathrm{GIRG}_W(n)$. 
		\end{theorem}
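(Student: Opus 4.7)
The plan is to carry out the bottom-up construction sketched in the introduction, and then apply a standard sprinkling step for uniqueness. Rescaling coordinates by $n^{1/d}$, we identify $\GIRG$ with a graph on $\calX_d(n)$ of intensity-$1$ vertices, matching the IGIRG scaling of Assumption~\ref{assu:h}. Partition $\calX_d(n)$ into axis-aligned boxes of a fixed large side length $s_0$, so each box contains $\Theta(s_0^d)$ vertices in expectation.

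Step 1 (local giants). Call a box \emph{good} if the subgraph induced on its vertices of bounded weight (say in $[1,C_0]$), using only intra-box edges, contains a connected component of size at least $\rho s_0^d$ for some fixed $\rho>0$. The $l(w_u)l(w_v)$ floor in Assumption~\ref{assu:mild-connect} gives an edge probability bounded below by a positive constant $q>0$ uniformly over pairs of bounded weight, so the induced graph stochastically dominates an Erd\H{o}s--R\'enyi graph $G(N_{\mathrm{box}},q)$ on its bounded-weight vertices. For $s_0$ large enough this graph is supercritical and hence contains a linear-sized component with probability $\ge p_0>0$, uniformly in $n$. Crucially, because weights, positions, and edges within disjoint boxes are mutually independent, the goodness indicators are i.i.d.\ Bernoulli$(p_0)$, and a Chernoff bound shows that at least a $p_0/2$ fraction of the $\Theta(n/s_0^d)$ boxes are good whp, jointly contributing $\Omega(n)$ vertices spread across local giants.

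Step 2 (merging via a leader backbone). To merge the local giants into one component we re-use the hierarchical construction of the explosive case. Choose weight thresholds $W_k=2^{2^k}$ and nested scales $R_k$ growing doubly-exponentially, and call a vertex a \emph{level-$k$ leader} if its weight is at least $W_k$. The power-law tail together with $\tau<3$ gives doubly-exponentially many level-$k$ leaders in any box of radius $R_k$ by a second-moment argument; the lower bound in~\eqref{GIRGgeneral} with $\alpha>1$ then yields edges between each level-$k$ leader and many level-$(k+1)$ leaders with probability tending to $1$. Concatenating the scales produces a connected backbone of leaders spanning $\calX_d(n)$. Using the $l(w_u)l(w_v)$ floor, each good box has positive probability of containing a level-$0$ leader inside its local giant, so a further Chernoff estimate attaches a constant fraction of good boxes to the backbone; the resulting component has size $\Omega(n)$.

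Step 3 (uniqueness by sprinkling). For small $\eps>0$, decompose $g_n^{u,v}$ via $1-g_n^{u,v}=(1-(1-\eps)g_n^{u,v})(1-h_n^{u,v})$ for a suitable $h_n^{u,v}$, exhibiting $\GIRG$ as the edge-union of two independent GIRGs that each still satisfy Assumption~\ref{assu:mild-connect} (with $\underline c$ rescaled). Run Steps 1--2 on the first layer to obtain a component $\CC_{\max}$ of size $\Omega(n)$. If a second component $\CC'$ of size $\Omega(n)$ survived in the full graph, then both $\CC_{\max}$ and $\CC'$ would contain $\Omega(n)$ bounded-weight vertices, and within $\Omega(n)$ many $s_0$-boxes the sprinkled connection probability between two such vertices is bounded away from $0$. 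A first-moment calculation then shows that the probability that no sprinkled edge joins $\CC_{\max}$ and $\CC'$ is exponentially small in $n$, yielding the contradiction. The main obstacle is Step 2: the leader hierarchy must be realised with high, rather than merely positive, probability in the \emph{finite} window $\calX_d(n)$, and one must carefully separate the edges used to build the backbone, to attach it to local giants, and in Step 1, in order to preserve independence throughout. In particular, the second-moment bound on the number of leaders and the cross-scale edge probabilities must hold uniformly across all scales $R_k\le n^{1/d}$, which is where the interplay between $\tau<3$, $\alpha>1$, and the $n$-dependent rescaling in~\eqref{GIRGgeneral} becomes delicate.
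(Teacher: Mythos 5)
Your overall architecture (local giants in boxes, merged through a hierarchy of high-weight vertices, uniqueness last) matches the paper's bottom-to-top strategy, but the proof is not complete: the entire load-bearing part is your Step~2, and you explicitly leave it as ``the main obstacle'' rather than carrying it out. Realising the leader hierarchy whp in the finite window is exactly the content of the paper's Lemma~\ref{lem:bipartite} together with Claim~\ref{cl:parameters}: one must choose the weight thresholds and spatial scales so that \emph{both} (a) each annulus/sub-box contains the claimed number of leaders of the prescribed weight and (b) consecutive-scale leaders satisfy $w_k w_{k+1}\gtrsim R_{k+1}^d$, so that the minimum in \eqref{GIRGgeneral} is attained at the $l(w_u)l(w_v)$ term; these two requirements are compatible precisely because $\tau<3$ (condition \eqref{eq:cd}), and the failure probabilities must be summable uniformly over all scales up to $k^\star(n,M)$. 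Note also that cross-scale edges do \emph{not} appear ``with probability tending to $1$'' pairwise: the floor $l(w_u)l(w_v)$ decays with the weights, and one only wins because the number of next-level leaders grows doubly exponentially (this is the $F_k^{(2)}$ event in Lemma~\ref{lem:bipartite}). Without this quantitative step, and without the attachment estimate from a moderate-weight vertex into the hierarchy (the paper's Lemma~\ref{lem:highw-connect}/successful vertices) and the connectivity of the top layer (Claim~\ref{claim:ER}), the proof does not go through.

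There are also concrete errors in the steps you do spell out. In Step~1, for two bounded-weight vertices at distance $\Theta(s_0)$ inside an $s_0$-box, the minimum in \eqref{GIRGgeneral} is attained at the polynomial term, so the uniform pairwise lower bound is of order $s_0^{-d\alpha}$, not a constant; the dominating Erd\H{o}s--R\'enyi graph on $\Theta(s_0^d)$ vertices then has expected degree $\Theta(s_0^{d(1-\alpha)})\to 0$ since $\alpha>1$, so ``supercritical for $s_0$ large enough'' is false. More fundamentally, under Assumption~\ref{assu:mild-connect} with small $\underline c$ the bounded-weight subgraph need not percolate at all: for $2<\tau<3$ the giant is driven by the hubs, which is why the paper produces its ``local giants'' inside polylogarithmic boxes by re-running the hub construction (Claim~\ref{claim:giant}) in each box, after conditioning on the box occupancies to restore independence --- your claim that the goodness indicators are exactly i.i.d.\ also fails, since the box counts are multinomial, not independent. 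Finally, your uniqueness step is circular as written: $\CC'$ is taken to be a component of the \emph{full} graph, so the sprinkled edges between $\CC_{\max}$ and $\CC'$ have already been examined and cannot be used as fresh randomness; a union bound over all $\binom{n}{\eta n}$ candidate vertex sets does not obviously beat the entropy either. The paper avoids this by a two-stage exposure by weight: whp all vertices of weight $\ge \wih w$ lie in one component, and any linear-sized component of the low-weight graph whp receives an edge from one of them, which suffices because any linear component of the full graph disjoint from $\CC$ would have to be a component of the low-weight graph.
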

		We prove Theorem \ref{thm:giant} in Section \ref{s:giant}. The proof is interesting in its own right, since it is novel and reveals the hierarchical structure of the graph. We sketch the core idea here: we call an arbitrary vertex $u$ \textit{successful} if it is connected by a path to a `reasonably' high-weight vertex $\wit u$ that is nearby (within a box that we specify). We show that a vertex is successful with strictly positive probability. We then show that starting from $\tilde{u}$, whp we can construct a path of vertices of increasing weight leading up to the highest-weight vertices in the graph. The graph induced by these highest-weight vertices is dominated below by an Erd\H{o}s-R\'{e}nyi random graph, as the minima in~\eqref{GIRGgeneral} and~\eqref{GIRGgeneral-threshold} remove all position-dependent terms from their respective lower bounds. It follows that all successful vertices lie in the same component.

		We then use a boxing structure: we call a box \textit{successful} if it contains linearly many successful vertices, and  spatial independence ensures that the number of boxes that are successful is linear, hence establishing the presence of the giant.  
		
		 A similar hierarchy was described in a top-to-bottom fashion for scale-free percolation in \cite{HeyHulJor17}. However, in scale free percolation, the connection probability gets arbitrarily close to one when $w_u w_v/\|u-v\|\gg 1$, and hence almost every hub is adjacent to every other hub that satisfies $w_u w_v/\|u-v\|\gg 1$. 

		 This fact is crucial for~\cite{HeyHulJor17}, and it fails when the weaker lower bound in \eqref{GIRGgeneral} is applied. Indeed, when two pre-selected hubs are no longer adjacent whp, a top-to-bottom hierarchy is hard to describe as we can say very little about an individual hub; this motivates the bottom-to-top approach used in the proof of Theorem~\ref{thm:giant}.
		 
	With Theorem~\ref{thm:giant} in place, we arrive at the first result on finite-sized models:
		\begin{theorem}[Cost-distances in GIRG]\label{thm:GIRG1}
		Consider $\text{GIRG}_{W,L}(n)$, satisfying Assumptions \ref{assu:mild-connect} and \ref{assu:mild-vertex} for some $\tau\in(2,3)$, and let $f$ be a polynomial as in~\eqref{eq:pol}. Let $v_n^1, v_n^2$  be two typical vertices in the giant component $\CC_{\max}$. Let $\beta^+$ and $\beta^-$ be defined as in Theorem~\ref{thm:mu-classification}.
				\begin{enumerate}
		\item Suppose the edge weight distribution $F_L$ satisfies $\beta^+ < (3-\tau)/\deg(f)$.
		Then
		$\big(d_{f,L}(v_n^1, v_n^2)\big)_{n\ge 1}$
	is a tight sequence of random variables. 
	\item Suppose the edge weight distribution $F_L$ satisfies $\beta^- > (3-\tau)/\deg(f)$. 
	Then 
	\be\label{eq:tight-2} 
		(d_{f,L}\left(v_n^1, v_n^2\right))_{n\ge 1} \toinp \infty.
		\ee
		\end{enumerate}
	\end{theorem}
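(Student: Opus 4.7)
The plan for both parts is to couple the finite model $\GIRG$ locally with an infinite IGIRG and invoke Theorem~\ref{thm:poly-classification} on the latter.

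\textbf{Part (2) (divergence).} This is the easier direction. By Theorem~\ref{thm:poly-classification}(iii), the infinite model $\Ilambda$ is $(f,L)$-conservative, so for every fixed $T>0$ the cost-ball $B^{f,L}(0,T)$ is a.s.\ finite. I would fix $T$ and couple a large-but-bounded spatial neighbourhood of a typical vertex $v_n^1$ in $\GIRG$ with an IGIRG rooted at $v_n^1$ after the standard rescaling of positions by $n^{1/d}$; locally the $n$ uniform positions look like a Poisson point process, and Assumptions~\ref{assu:mild-connect}--\ref{assu:mild-vertex} match the infinite model up to slowly varying corrections. Transferring a.s.\ finiteness through the coupling gives tightness of $|B^{f,L}(v_n^1,T)|$ in $n$: for every $\eps>0$ there is $K_\eps$ with $\Pv(|B^{f,L}(v_n^1,T)|>K_\eps)<\eps$ for all large $n$. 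Since $|\mathcal C_{\max}|=\Theta(n)$ by Theorem~\ref{thm:giant} and $v_n^2$ is uniform on $\mathcal C_{\max}$ independently of the neighbourhood of $v_n^1$, a union bound gives $\Pv(d_{f,L}(v_n^1,v_n^2)\leq T)\leq \eps + K_\eps/|\mathcal C_{\max}| = \eps + o(1)$, and letting $n\to\infty$, then $\eps\to 0$, then $T\to\infty$ yields $d_{f,L}(v_n^1,v_n^2)\toinp\infty$.

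\textbf{Part (1) (tightness).} By Theorem~\ref{thm:poly-classification}(ii), $\Ilambda$ is $(f,L)$-lengthwise explosive, and I would first strengthen this to the following \emph{controlled-leaders} statement, which follows by inspecting the doubly-exponential leader hierarchy of Section~\ref{sec:expl}: for every $\eps>0$ and every $K\in\N$ there exist $T_0<\infty$, weight thresholds $1 \ll w^\star \ll w^{\star\star}$, and a radius $R_0<\infty$ such that with probability at least $1-\eps$ the origin is joined by paths contained in $B^2(0,R_0)$ of total cost at most $T_0$ to a set of at least $K$ vertices whose weights lie in $[w^\star,w^{\star\star}]$. Coupling disjoint spatial neighbourhoods of $v_n^1$ and $v_n^2$ in $\GIRG$ with two independent IGIRGs and applying this statement to each, I obtain with probability at least $1-2\eps$ disjoint ``leader sets'' $\CH_1,\CH_2 \subset \mathcal C_{\max}$, each of size $K$, with $d_{f,L}(v_n^i,u)\leq T_0$ for every $u\in\CH_i$.

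The remaining step is to bridge $\CH_1$ and $\CH_2$ by an edge independent of the two local couplings. By Assumption~\ref{assu:mild-connect}, each of the $K^2$ pairs $(u,v)\in\CH_1\times\CH_2$ is present as an edge with probability at least a positive constant $q=q(w^\star)$ (the penalty-free part of the lower bound survives since weights are at least $w^\star$), and these events are independent of everything revealed in the local couplings provided the two neighbourhoods were taken spatially disjoint and the bridge edges kept aside. Hence whp there are at least $M\geq qK^2/2$ such bridge edges; the penalties $f(W_u,W_v)$ are all at most $P_{\max}:=f(w^{\star\star},w^{\star\star}) = O((w^{\star\star})^{\deg(f)})$; and the associated $L_{e_1},\dots,L_{e_M}$ are i.i.d.\ copies of $L$ independent of everything already revealed. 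The cheapest bridge edge has cost at most $P_{\max}\cdot \min_{j\leq M} L_{e_j}$, and the minimum of $M$ samples of $L$ is of order $F_L^{(-1)}(1/M)$, which by the definition of $\beta^+$ is at most $M^{-1/(\beta^++o(1))}$. Fixing $\eps$ small and then choosing $K$ very large (after $w^\star,w^{\star\star},P_{\max}$ are fixed) makes this cheapest connector cost $o(1)$, yielding $d_{f,L}(v_n^1,v_n^2)\leq 2T_0 + o(1)$ with probability at least $1-3\eps$, as required.

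\textbf{Main obstacle.} The crux is extracting the controlled-leaders statement in Part (1), namely showing that the explosive path in $\Ilambda$ can be terminated at a leader set of bounded weights lying inside a bounded region, and then carefully ordering the revelations so that the $K^2$ candidate bridge edges remain independent of the two local explosive constructions on $v_n^1$ and $v_n^2$. The sharp bound $\beta^+ \deg(f) < 3-\tau$ is precisely what is needed for the trade-off between the weight penalty $P_{\max}\sim (w^{\star\star})^{\deg(f)}$ and the minimum of $\sim K^2$ i.i.d.\ samples of $L$ to produce an $o(1)$ connector cost, matching the regime of Theorem~\ref{thm:poly-classification}. By contrast, the coupling, tightness, and union-bound arguments used in Part (2) are fairly routine.
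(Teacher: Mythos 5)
The central step of your Part (1) is wrong as stated. Your leader sets $\CH_1,\CH_2$ consist of vertices of \emph{constant} weight in $[w^\star,w^{\star\star}]$ lying near $v_n^1$ and $v_n^2$, which are typically at Euclidean distance $\Theta(1)$ in the unit cube, i.e.\ $n\|x_u-x_v\|^d=\Theta(n)$. Under Assumption~\ref{assu:mild-connect} the probability that such a pair is adjacent is at most $\overline c\,\big(w^\star w^{\star\star}/(n\|x_u-x_v\|^d)\big)^\alpha=O\big((w^{\star\star})^{2\alpha}n^{-\alpha}\big)$, not a constant $q(w^\star)$: the "penalty-free part of the lower bound'' is the \emph{minimum} of $l(w_u)l(w_v)$ and the polynomially decaying term, and at macroscopic distance the minimum is the latter. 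Hence with fixed $K$ the probability that \emph{any} of your $K^2$ bridge edges exists is $o(1)$, and the whole bridging step collapses. This is precisely why the paper does not stop the explosive construction at bounded weights: one must climb the weight hierarchy up to vertices of weight $n^{\Theta(1)}$ (Lemma~\ref{lem:cheap-connect}, with cost tending to $0$ as the starting weight grows) and connect two such "core'' vertices through the near-complete graph they span (Claim~\ref{claim:ER}, Lemma~\ref{lem:core}); only at polynomial weights is the connection probability non-negligible, and the bookkeeping that the resulting penalty $n^{\Theta(\deg f)}$ is beaten by the minimum of polynomially many copies of $L$ is exactly where $\beta^+\deg(f)<3-\tau$ enters. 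Repairing your argument by letting $K,w^\star,w^{\star\star}$ grow polynomially in $n$ essentially forces you to reprove these lemmas, together with the weight-ordered exposure argument you flag but do not carry out.

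A second gap affects both parts: Theorem~\ref{thm:GIRG1} assumes only Assumptions~\ref{assu:mild-connect} and~\ref{assu:mild-vertex}, under which the connection functions $g_n^{u,v}$ and the weights $W^{(n)}$ need not converge, so there is no limiting $\Ilambda$ to "couple locally'' with; one only has two-sided bounds. The paper therefore sandwiches $\GIRG$ between dominating models $\underline{\mathrm{BGIRG}}_{\underline W,L}(n)$ and $\overline{\mathrm{BGIRG}}_{\overline W,L}(n)$ satisfying the stronger convergence assumptions of~\cite[Assumptions 2.4, 2.5]{julia-girg} and applies Theorem~\ref{thm:GIRG-fine} to those. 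Crucially, edge/weight monotonicity and cost monotonicity pull in opposite directions (lowering weights removes edges but also \emph{decreases} the penalty $f$, and the correction $l_{c_2,\gamma}$ is decreasing in the weights), so the paper must perturb the penalty to $\overline f$ resp.\ $\underline f$ with $\pm\epsilon$ in the exponents, using the strict inequalities on $\beta^{\pm}$ to absorb the slowly varying discrepancies. Your phrase "Assumptions match the infinite model up to slowly varying corrections'' hides exactly this issue. Your Part (2) skeleton (conservativeness of the infinite model plus a union bound over a uniform $v_n^2$) is reasonable and close in spirit to the paper's lower-bound argument via the boundary of graph-distance balls, but as written it also needs the domination machinery above, and an argument that a cost-$T$ ball in the finite graph is captured by the local comparison (the paper uses the graph-distance-$k_n$ boundary as the interface for this).
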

The meaning of Part 1 of Theorem \ref{thm:GIRG1}  is that when $F_L(t)$ is sufficiently steep close to $0$, the typical cost-distance does not grow with the network size. This implies the following: For every $\ve>0$ and $p<1$ one can find a \emph{constant} $K_{\ve,p}$, depending on $\ve$ and $p$ but not on $n$, such that for sufficiently large $n$, with probability at least $p$ all but an $\ve$-proportion of vertices within the giant component $\CC_{\max}$ are within cost-distance $K_{\ve,p}$ from the (uniformly chosen) source vertex. This is the analogue of explosion in finite models. Part 2  tells us that when $F_L(t)$ is flatter at the origin, then the typical cost-distance does grow with the network size; this is the analogue of the conservative case. We remark that for power-law exponents $\tau \geq 3$, a giant (linear size) component need not exist when the edge-density is low, so we cannot hope for an analogue of Theorem~\ref{thm:GIRG1} for $\tau \geq 3$.

With some extra assumptions, we obtain a finer result in the explosive case: distributional convergence of the typical cost-distance.  Since the model is not projective (i.e., the GIRG model with $n+1$ vertices is not an extension of the model with $n$ vertices), this is best possible --- one cannot hope for e.g.\ almost sure convergence. Even for distributional convergence, one needs quite a few extra assumptions: we need that the edge-connection probabilities $g_n^{u,v}$ converge uniformly in $u,v$ to some limiting function $h$ satisfying Assumption \ref{assu:h} when the Euclidean distance between the two vertices $x_u, x_v$ under consideration is of order $n^{1/d}$. Moreover, the distributions of the sequence of vertex-weights $W^{(n)}$ must converge to a limiting distribution. The exact assumptions are rather technical since we want them to be general enough to include hyperbolic random graphs. Since the transfer from infinite models to finite models closely follows the proof in~\cite{julia-girg}, we omit the full details of the assumptions on the convergence of $W^{(n)}$ and $g_n^{u,v}$ and refer the reader to~\cite[Assumptions 2.4 and 2.5]{julia-girg}.  We  provide  the proof for the following theorem in Section~\ref{proof-finite}.

	\begin{theorem}[Cost-distances in GIRG, explosive case]\label{thm:GIRG-fine}
		Let $n \in \Z^+$, let $f$ be a polynomial as in~\eqref{eq:pol}, and consider $\text{GIRG}_{W,L}(n)$, satisfying~\cite[Assumptions 2.4 and 2.5]{julia-girg} with some $\tau\in(2,3)$. Let $v_n^1, v_n^2$  be two typical vertices  in the giant component $\CC_{\max}$. Suppose the edge weight distribution $F_L$ satisfies $\beta^+ < (3-\tau)/\deg(f)$, where $\beta^+$ is defined as in Theorem~\ref{thm:mu-classification}. 
		
		Let $\Ione$ be the corresponding infinite model, with connection probability function and weight distribution given by the limiting probability function and limiting weight distribution of $\text{GIRG}_{W,L}(n)$, respectively. Let $Y^{(1)}$ and $Y^{(2)}$ be two i.i.d.\ copies of the explosion time $Y_f^{\mathrm{I}}(0)$ of $\Ione$ (see Definition~\ref{def:explosiontime}), conditioned on $Y_f^{\mathrm{I}}(0) < \infty$. Then
		\be\label{eq:convergence}
		d_{f,L}(v_n^1, v_n^2)\toindis Y^{(1)}+Y^{(2)}.
		\ee
	\end{theorem}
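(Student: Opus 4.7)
\emph{Proof plan.} The strategy is to combine the tightness from Theorem~\ref{thm:GIRG1}(1) with a local weak convergence argument, separating an upper and a lower bound. Under~\cite[Assumptions 2.4 and 2.5]{julia-girg}, for a typical vertex $v_n^i$ in $\GIRG$, the neighbourhood of $v_n^i$ viewed in the rescaled space $\mathcal{X}_d(n)=n^{1/d}\mathcal{X}_d$ converges, in the local weak sense, to $\Ione$ centred at the origin, with limiting weight distribution $W$ and limiting connection probability $h_\mathrm{I}$. Under this coupling, the event $\{v_n^i\in\mathcal{C}_{\max}\}$ corresponds asymptotically to $\{Y_f^{\mathrm{I}}(0)<\infty\}$, since in the explosive regime the linear-sized giant of Theorem~\ref{thm:giant} is, up to a negligible fraction, exactly the set of vertices whose local neighbourhoods explode. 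Picking $v_n^1$ and $v_n^2$ independently, and using the fact that their rescaled positions are whp at Euclidean distance $\Theta(n^{1/d})$, gives that their local neighbourhoods are asymptotically independent copies of $\Ione$ conditioned on $Y_f^{\mathrm{I}}(0)<\infty$.

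For the upper bound, fix $\varepsilon>0$ and a large threshold $K$. Using the local convergence together with the explicit explosive path construction sketched in Section~\ref{sec:expl}, whp one can build in $\GIRG$ two (vertex-disjoint) paths $P_1, P_2$ with $|P_i|_{f,L}\le Y^{(i)}+\varepsilon/3$ whose endpoints $u_n^1, u_n^2$ have weight at least $K$, because the explosive path passes through vertices of doubly-exponentially growing weight and hence reaches weight $K$ in finite cost. It then remains to connect $u_n^1$ and $u_n^2$ at cost $\varepsilon/3$; this will give $d_{f,L}(v_n^1,v_n^2)\le Y^{(1)}+Y^{(2)}+\varepsilon$ whp.

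For the lower bound, any path $\pi$ from $v_n^1$ to $v_n^2$ can be split at the first and last time it leaves a small Euclidean ball around $v_n^1$ respectively $v_n^2$. By local weak convergence, the cost of the initial segment dominates (in distribution) the cost required in $\Ione$ to leave a fixed neighbourhood of the origin; letting the neighbourhood grow, this cost converges to $Y^{(1)}$ conditioned on finiteness. The symmetric argument at $v_n^2$ gives $Y^{(2)}$, and the two contributions are asymptotically independent because the rescaled neighbourhoods are whp disjoint. This shows $\liminf_{n\to\infty} d_{f,L}(v_n^1,v_n^2)\ge Y^{(1)}+Y^{(2)}-\varepsilon$ in distribution. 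Combined with the upper bound and tightness, this yields the claimed convergence~\eqref{eq:convergence}.

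The main obstacle is the connection step in the upper bound. Since the penalty function $f$ has positive degree, a single edge between two vertices $u_n^1,u_n^2$ of weight at least $K$ carries a penalty factor of order $K^{\deg(f)}$, which blows up as $K\to\infty$. The saving is that $\GIRG$ contains $\Theta(n)$ vertices of weight $\ge K$ (since $\tau>2$), and by the lower bound in Assumption~\ref{assu:mild-connect}, the probability that any such vertex is joined both to $u_n^1$ and $u_n^2$ is bounded below by a positive constant depending on $K$; hence one finds polynomially many candidate two-edge connecting paths through hub-intermediaries. The minimum of $m$ independent copies of $L$ is of order $F_L^{(-1)}(1/m)\asymp m^{-1/\beta^+}$, and since we are in the regime $\beta^+<(3-\tau)/\deg(f)$, the product of $m^{-1/\beta^+}$ with the penalty $K^{\deg(f)}$ can be driven to zero by first letting $n\to\infty$ and then $K\to\infty$. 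This mirrors the minimisation trick used in the explosive path construction of Section~\ref{sec:expl}, and is exactly the place where the explosive hypothesis $\beta^+<(3-\tau)/\deg(f)$ is used in a quantitative way.
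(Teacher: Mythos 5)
Your overall architecture (sandwich/local coupling for asymptotic independence, lower bound by exiting disjoint neighbourhoods, upper bound by reaching a high-weight vertex within cost $Y^{(i)}+\eps/3$ and then connecting cheaply) matches the paper's scheme. But the step you yourself flag as the main obstacle — connecting $u_n^1$ and $u_n^2$ — is carried out incorrectly, and the error is fatal. You claim that a vertex of weight $\ge K$ (a constant) is joined to both $u_n^1$ and $u_n^2$ with probability bounded below by a constant depending on $K$. This ignores the spatial factor in the GIRG connection probability: $u_n^1$, $u_n^2$ and a uniformly placed hub of constant weight are at Euclidean distance $\Theta(1)$ in the unit cube, so by Assumption~\ref{assu:mild-connect} the connection probability is of order $\big(K^2/n\big)^{\alpha}$, not a constant. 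The expected number of weight-$\ge K$ common neighbours of $u_n^1,u_n^2$ is then $\Theta\big(n\cdot n^{-2\alpha}\big)\to 0$ (since $\alpha>1$), so whp there is no two-edge connecting path at all; and no fixed-weight intermediary can help, since a vertex needs weight $\gtrsim n/K$ to connect to a constant-weight vertex at macroscopic distance with constant probability, and such vertices do not exist whp for $\tau>2$. The paper circumvents exactly this by \emph{not} connecting at constant weight: from $u_n^q$ it builds a box-increasing greedy path (Lemma~\ref{lem:cheap-connect}) climbing through doubly-exponentially growing weights up to the core of vertices with weight polynomial in $n$ (interval $I_{\mathrm{end}}$), whose total cost is $O(K^{-\rho(\delta)/2})$, and then uses Lemma~\ref{lem:core}: after keeping only edges with $L_e$ below a small polynomial threshold, the core dominates a supercritical Erd\H{o}s--R\'enyi graph of bounded diameter, giving cost-distance $K(\zeta)n^{-\zeta}$ between any two core vertices. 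The hypothesis $\beta^+<(3-\tau)/\deg(f)$ enters there, through~\eqref{eq:mubeta-3}, to beat the polynomial-in-$n$ penalty; it cannot be made to work at constant weight as in your sketch.

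A second, less visible gap: your upper bound first exposes the explosive paths $P_1,P_2$ (hence local vertex weights and edges) and then wants fresh randomness for the connection. The paper spends Steps 2)--5) of the proof defining $u_n^q$ as a minimal-weight ``exterior'' vertex of a weight- and cost-truncated ball $\CB_{f,L}(v_n^q,t_q,K_1)$ precisely so that $u_n^q$ is found by revealing only vertices of weight at most $W_{u_n^q}$; this is what makes Lemma~\ref{lem:cheap-connect} (which uses only vertices of strictly larger weight) applicable conditionally. Your proposal does not address this conditioning issue, and without some device of this kind the ``whp one can connect'' claim is not justified even after the connection mechanism is repaired.
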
 
	
\subsection*{Hyperbolic Random Graphs} As mentioned before, the $\text{GIRG}_{W,L}(n)$ model contains Hyperbolic Random Graphs (HRGs) as a special case. We first summarise some related literature. The model originates from 
a hidden variable model, introduced by Bogun\'a and Pastor-Satorras in \cite{BogPas03}. Inhomogeneous random graphs were studied slightly afterwards by Bollob\'as, Janson and Riordan in \cite{BolSvaRio07}. Space was then introduced with latent variables by Bogun\'a in \cite{BogPasDia04}, and  the pre-hyperbolic latent space paper by Serrano, Krioukov and Bogun\'a  in \cite{SerKriBog08}.  The embedding into hyperbolic space first appeared in  \cite{BogPapaKriou10, krioukov2010hyperbolic}. This is when the model became popular, and gave rise to a sequence of papers, studying e.g.: degrees and clustering in \cite{GugPanPet12}, the size of the giant component in \cite{BodFouMul15, FouMul18}, the clustering coefficient and bootstrap percolation in \cite{CanFou14, CanFou16}, competing First Passage Percolation in \cite{CanSta18}, typical distances in the scale-free regime in~\cite{AbdBodFou16}, and the spectral gap in~\cite{KiwMit18}. 
 
We now give the model's formal definition. Let us denote by $(\phi_v,r_v)$ the (hyperbolic) angle and radius of a vertex $v$ within a disk of radius $R$. Then the hyperbolic distance $d_H^{(n)}(u,v)$ between two points $(\phi_u, r_u ), (\phi_v, r_v)$ is defined by the equation
	\be \cosh(d_H^{(n)}(u,v)) := \cosh(r_u) \cosh(r_v) - \sinh(r_u) \sinh(r_v) \cos(\phi_u - \phi_v).\ee
	\begin{definition}[Hyperbolic Random Graphs]\label{def:HRG}
	For parameters $C_H, \al_H, T_H>0$, let us set $R_n$ $=2\log n +C_H$, and sample $n$ vertices independently from a circle of radius $R_n$ so that for each $v\in[n]$, $\phi_v$ is uniform in $[0, 2\pi]$, and $r_v\in[0,R_n]$ follows a density $f_n(r):=\al_H \sinh(\al_H r)/(\cosh(\al_H R_n)-1) $, \emph{independently} of $\phi_v$. In \emph{threshold} hyperbolic random graphs, two vertices $u$ and $v$ are connected whenever $d_H^{(n)}(u,v)\le R_n$, while in a parametrised version \cite[Section VI]{krioukov2010hyperbolic} they are connected independently of everything else, with probability 
	\be \label{eq:phdh} p_H^{(n)}(d_H^{(n)}(u,v)):=\big( 1+ \exp\{ (d_H^{(n)}(u,v)-R_n)/ 2T_H\}\big)^{-1}.\ee 
	We denote the resulting random graphs by $\mathrm{HG}_{\al_H, C_H, T_H}(n)$ when \eqref{eq:phdh} applies and $\mathrm{HG}_{\al_H, C_H}(n)$ when the threshold $d_H^{(n)}(u,v)\le R_n$ is applied. 
\end{definition}
	The connection to GIRGs is derived as follows: set $d:=1, \mathcal X_1:=[-1/2, 1/2]$. For each vertex $v=(\phi_v,r_v)$, let
	\be\label{mapping} x_v:=(\phi_v-\pi)/(2\pi),\quad W_v^{(n)}:=\exp\{(R_n-r_v)/2\}. \ee
	In \cite[Sections 8, 9]{julia-girg} the authors show that with this transformation, Hyperbolic Random Graphs become GIRGs satisfying~\cite[Assumptions 2.4 and 2.5]{julia-girg} with the following limiting parameters.
	The limiting weight distribution $W\ge1$ is described by its tail,
	\be\label{eq:w-HRG} \Pv(W\ge x)=x^{-2\al_H};\ee
	that is, $\tau=2\al_H+1$. In the parametrised case~\eqref{eq:phdh}, the limiting connection probability function $h$ is given by
	\be\label{eq:h-HRG} h_{\mathrm{H}}(\Delta, w_u, w_v)= \Big(1+(\e^{C_H/2}|\Delta|\pi /(w_u w_v) )^{1/T_H} \Big)^{-1},\ee 
	implying that in this case $\al=1/T_H$. In the threshold case, $h$ is given by
	\be\label{eq:h-THRG}  h_{\mathrm{T}}(\Delta, w_u, w_v) =\ind\{ |\Delta| \le \e^{-C_H/2} w_uw_v/\pi\},\ee
which corresponds to $\alpha = \infty$. Theorems~\ref{thm:GIRG1} and~\ref{thm:GIRG-fine} therefore carry over to HRGs.
\begin{corollary}\label{cor:HRG}
Consider $\mathrm{HG}_{\al_H, C_H, T_H}(n)$ or $\mathrm{HG}_{\al_H, C_H}(n)$ with $\al_H\in (1/2, 1)$, and equip every existing edge with $L_e$, an i.i.d.\ copy of a random variable $L\ge 0$. Let the penalty function $f$ be a polynomial as in \eqref{eq:pol}, i.e., the cost of edge $e=(u,v)$ is 
\[L_e f(W_u^{(n)}, W_v^{(n)}) = L_e f(\exp\big[(R_n-r_u)/2\big],\exp\big[(R_n-r_v)/2\big]).\] Then, Theorems \ref{thm:GIRG1} and \ref{thm:GIRG-fine} stay valid with $\tau := 2\al_H+1$.

 More precisely, for every $n \geq 1$, let $v_n^1$, $v_n^2$ be typical vertices in the giant component of $\mathrm{HG}_{\al_H, C_H, T_H}(n)$ or that of $\mathrm{HG}_{\al_H, C_H}(n)$. Then their cost distances satisfy $d_{f,L}(v_n^1, v_n^2) \to \infty$ almost surely if $\beta^->(2-2\al_H)/\deg(f)$. 
On the other hand,  if $ \beta^+  <(2-2\al_H)/\deg(f)$, 
then $(d_{f,L}(v_n^1, v_n^2))_{n\ge 1}$ converges in distribution to the sum of two i.i.d.\ copies of the explosion time of the origin in a one-dimensional $\Ione$ with weights from distribution \eqref{eq:w-HRG} and $h_{\mathrm I}=h_\mathrm{H}$ from \eqref{eq:h-HRG} for  $\mathrm{HG}_{\alpha_H, C_H,T_H}(n)$, or $h_{\mathrm I}=h_\mathrm{T}$ in \eqref{eq:h-THRG} for $\mathrm{HG}_{\alpha_H, C_H}(n)$, respectively.
\end{corollary}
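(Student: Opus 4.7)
The plan is to reduce Corollary \ref{cor:HRG} directly to Theorems \ref{thm:GIRG1} and \ref{thm:GIRG-fine} by exploiting the embedding of HRG into the GIRG framework. First I would translate parameters: under the substitution $\tau := 2\alpha_H + 1$, the hypothesis $\alpha_H \in (1/2, 1)$ becomes $\tau \in (2,3)$, which is exactly the regime covered by both GIRG theorems, and the critical exponent $(2-2\alpha_H)/\deg(f)$ coincides with $(3-\tau)/\deg(f)$ appearing in those theorems.

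Next I would invoke the coordinate change \eqref{mapping}, under which each HRG vertex $v=(\phi_v,r_v)$ is mapped to a position $x_v = (\phi_v-\pi)/(2\pi) \in \CX_1$ and a weight $W_v^{(n)} = \exp\{(R_n - r_v)/2\}$. By \cite[Sections 8, 9]{julia-girg}, this identification turns $\mathrm{HG}_{\alpha_H,C_H,T_H}(n)$ and $\mathrm{HG}_{\alpha_H,C_H}(n)$ into one-dimensional GIRGs satisfying \cite[Assumptions 2.4 and 2.5]{julia-girg} (and in particular Assumptions \ref{assu:mild-connect} and \ref{assu:mild-vertex}), with limiting weight tail \eqref{eq:w-HRG} giving $\tau = 2\alpha_H + 1 \in (2,3)$, and limiting edge-connection function $h = h_\mathrm{H}$ from \eqref{eq:h-HRG} (with $\alpha = 1/T_H$) in the parametrised case, or $h = h_\mathrm{T}$ from \eqref{eq:h-THRG} (with $\alpha = \infty$) in the threshold case.

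With the embedding in hand, the conservative half of the corollary ($\beta^- > (2-2\alpha_H)/\deg(f)$) follows immediately from Theorem \ref{thm:GIRG1}(2) applied to the resulting GIRG, yielding $d_{f,L}(v_n^1,v_n^2) \toinp \infty$. For the explosive half ($\beta^+ < (2-2\alpha_H)/\deg(f)$), Theorem \ref{thm:GIRG-fine} gives the distributional convergence
\[
d_{f,L}(v_n^1,v_n^2) \toindis Y^{(1)}+Y^{(2)},
\]
where $Y^{(1)},Y^{(2)}$ are i.i.d.\ copies of $Y_f^\mathrm{I}(0)$ in the one-dimensional $\Ione$ with weight distribution \eqref{eq:w-HRG} and connection function $h_\mathrm{H}$ or $h_\mathrm{T}$, exactly as the corollary asserts.

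The only real obstacle is verifying that the finite HRG genuinely satisfies the mild edge-connection Assumption \ref{assu:mild-connect} and the convergence conditions of \cite[Assumptions 2.4 and 2.5]{julia-girg}. Two subtleties must be tracked: (i) the hyperbolic connection probability $p_H^{(n)}$ has no explicit normalisation by $\sum_i w_i$, so one must compute the asymptotics of $d_H^{(n)}(u,v)$ in terms of $\|x_u-x_v\|$ and $W_u^{(n)},W_v^{(n)}$ via \eqref{mapping} and check that the resulting bounds sandwich the GIRG form; (ii) the truncation $M_n \asymp e^{R_n/2}$ of the HRG weight distribution must be shown to be compatible with Assumption \ref{assu:mild-vertex}, and the slowly-varying lower bound $l(w_u)l(w_v)$ in \eqref{GIRGgeneral} must be verified near this truncation. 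Both checks are carried out in \cite[Sections 8, 9]{julia-girg}, so I would simply cite that work and then apply Theorems \ref{thm:GIRG1} and \ref{thm:GIRG-fine} as black boxes.
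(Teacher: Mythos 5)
Your proposal is correct and follows essentially the same route as the paper: the paper likewise treats the corollary as an immediate consequence of the embedding \eqref{mapping}, citing \cite[Sections 8, 9]{julia-girg} for the verification that the resulting one-dimensional GIRG satisfies the required assumptions with $\tau=2\al_H+1$, limiting weight law \eqref{eq:w-HRG} and limiting connection function \eqref{eq:h-HRG} or \eqref{eq:h-THRG}, and then applies Theorems~\ref{thm:GIRG1} and~\ref{thm:GIRG-fine} as black boxes. The two subtleties you flag (the absence of an explicit $\sum_i w_i$ normalisation and the weight truncation $M_n$) are exactly the points delegated to \cite{julia-girg}, so no further argument is needed.
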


	\section{Explosive  greedy paths}\label{sec:expl}
	In this section we prove Theorems \ref{thm:mu-classification}(ii) and~\ref{thm:poly-classification}(ii).
	We will show that explosion occurs by constructing an infinite path with finite total cost. As mentioned below Definition \ref{def:explosiontime}, the existence of  such a path implies  that $\sigma_f^{\mathrm I}(v,k)$ stays bounded, implying that $Y_f^{\mathrm I}(v)$ is finite. 
	
	We consider expanding boxes (i.e.\ balls in the $L_\infty$ metric) around the origin, such that the $k$th box has doubly-exponential volume $\e^{MDC^k}$ for some suitably chosen $C,D>1$ and arbitrary $M>0$. We then partition the $k$th annulus into roughly $\e^{M(D-1)C^k}$ disjoint sub-boxes, each of volume $\e^{MC^k}$. In each sub-box we find the vertex of maximum weight, which we call the \emph{leader} of the sub-box. We construct a path to infinity greedily as follows: suppose we have exposed the $k$th annulus and reached some leader vertex $v_k$ therein. Then expose the contents of the $(k+1)$st annulus, and choose the edge $v_kv_{k+1}$ from $v_k$ to a leader vertex of some sub-box of the $(k+1)$st annulus such that the assigned $L_e$ is minimal. To prove explosion, it suffices to show that this path has finite cost almost surely. 
	 
	 We start by describing the sequence of expanding  boxes.  For constants $C,D>1$, to be defined shortly, and an arbitrary parameter $M$, let us  define a \emph{boxing system} centered at $u\in \R^d$, by defining for $k\geq0$,
	\be\label{eq:boxing}
	\ba
	\text{Box}_k(u)&:=\left\{x\in\mathbb{R}^d\;:\;\|x-u\|_\infty\leq \e^{M D C^k/d}/2\right\},\\
	\Gamma_k(u)&:=\text{Box}_k\left(u\right)\backslash \text{Box}_{k-1}\left(u\right)\mbox{ for }k\geq1,\qquad \Gamma_0\left(u\right):=\text{Box}_0(u),\\
	\ea
	\ee
	We `pack' each annulus $\Gamma_k(u)$ with as many disjoint \emph{sub-boxes}
	\[
		\mathrm{SB}_{k,i}(u) = \left\{x\in\mathbb{R}^d \;:\; \|x-z_i\|_\infty\leq \e^{MC^k/d}/2\right\}
	\]
	of volume $\e^{MC^k}$ as possible; here the $z_i$'s are appropriately-chosen points in $\text{Box}_k(u)$. The exact choice of $z_i$'s will not matter to us, but note that in general the side length of a sub-box will not divide the side length of an annulus so there will be some volume left over. Let $b_k$ denote the number of sub-boxes in $\Gamma_k(u)$, and  order the sub-boxes  arbitrarily from $1$ to $b_k$ within each annulus. The ratio of the volumes of $\text{Box}_k(u)$ and $\SB_{k,i}(u)$ is $\e^{M(D-1)C^k}$. Hence, for sufficiently large $M$,
	\be\label{eq:boundbk}  
	\e^{M(D-1)C^k}/2\leq b_k\leq \e^{M(D-1)C^k} \quad \text{for } \Ilambda,
	\ee
	Within each sub-box $\SB_{k,i}(u)$ which contains at least one vertex, we define the \emph{leader vertex} $c_{k,i}$ to be the vertex with the highest weight, i.e., $\cki:=\arg\max_{v\in \mathrm{SB}_{k,i}\left(u\right)}\left\{W_v\right\}$. We say that $\SB_{k,i}(u)$ is \emph{$\delta$-good} if it has a leader vertex and this leader vertex has weight
	\be\label{eq:delta-good} W_{\cki}\in (\e^{(1-\delta)MC^k/(\tau-1)},\e^{(1+\delta)MC^k/(\tau-1)}].\ee We will also say that the leader vertex itself is \emph{$\delta$-good}.
	We will see in Lemma~\ref{lem:bipartite} that for suitable choices of $C$, $D$ and $\delta$, with high probability there are many $\delta$-good sub-boxes in each $\Gamma_k(u)$. Moreover, again with high probability, each $\delta$-good leader vertex $\cki$ in $\Gamma_k(u)$ is connected to many $\delta$-good leader vertices in $\Gamma_{k+1}(u)$. Since edge weights are chosen independently, it will follow that with high probability there is a low-cost edge from $\cki$ to a $\delta$-good leader in $\Gamma_{k+1}(u)$, and we will use this to greedily construct an infinite path with finite cost-distance. The key to the proof of Lemma~\ref{lem:bipartite} is that the weights $w_1$ and $w_2$ of two $\delta$-good leader vertices are so high relative to their Euclidean distance (which is bounded above by the diameter of $\text{Box}_{k+1}(u)$) that their connection probability is bounded below by $l_{c_2,\gamma}(w_1)l_{c_2,\gamma}(w_2)$, by Assumption~\ref{assu:h}. A similar boxing scheme was used in~\cite{julia-girg}, and we have adapted Lemma~\ref{lem:bipartite} from Lemma~6.3 of that paper.

Even though it will only become relevant later, in Section \ref{s:giant}, we note here that the same boxing method remains valid when we consider $\GIRG$ instead of $\Ilambda$. To keep the box sizes the same in the two models, we blow up the original $\GIRG$ model as follows.
		\begin{definition}[Blown-up-GIRG]\label{def:bgirg}
	Consider a realisation of a $\GIRG$ from Definition \ref{def:GIRG}, with vertices  $(x_v)_{v\in[n]}$. Map each vertex-location to $\wit x_v:=n^{1/d}x_v$. We denote the resulting model by $\BGIRG$. Let $\CV_B(n):=[n]$, and the edge set by $\CE_B(n):=\{(v,w)\in[n]^2: v\leftrightarrow w \in \GIRG$, and its underlying state space by $\calX_d(n):=[-n^{1/d}/2, n^{1/d}/2]^d$.
	\end{definition}
	Note that $\BGIRG$ is the \emph{same graph} as $\GIRG$. The two models differ only in the location of points, observe that by blowing the model up, the density of points in $\BGIRG$ is constant (namely $1$), while the density of points in $\GIRG$ is $n$. The additional notation $\CV_B(n) = [n]$ seems superfluous, but it allows us to use the slightly abusive notation $\CV_B(n) \cap A := \{v\in[n] \mid \tilde x_v\in A\}$ for $A\subseteq \R^d$ without ambiguity. 
	We can actually realise the edges of $\GIRG, \BGIRG$ by working with the new locations. 
	This is convenient since by Assumption \ref{assu:mild-connect}, using the blown-up locations instead of the original ones, two vertices with blown-up locations $\wit x_u, \wit x_v$ are connected with probability at least 
	\be \label{eq:blown-h}  g_n^{u,v}(\wit x_u, \wit x_v, (w_i)_{i\le n} )\ge 
	\begin{cases}
		\un c \cdot \bigg( l(w_u)l(w_v)\wedge \Big( \frac{w_u w_v}{ \|\wit x_u-\wit x_v\|^{d}}\Big)^\alpha\bigg) & \mbox{ if }\alpha<\infty,\\
		\un c \cdot \Big( l(w_u)l(w_v)\wedge \ind_{\{\un c_1 w_uw_v \ge \|x_u-x_v\|^d\}}\Big) & \mbox{ otherwise,}
	\end{cases}
	\ee
	where we wrote $l(w):=l_{c_2,\gamma}(w)$ from \eqref{eq:ellw-111}.
		Observe the factor of $n$ disappears from the denominator, and the bound becomes the same as the bound in Assumption \ref{assu:h} for $\Ilambda$. Since $\GIRG$ and $\BGIRG$ are equivalent, from now on we work with the blown-up model instead of the original $\GIRG$, and use \eqref{eq:blown-h} instead of the lower bound in Assumption \ref{assu:h}. We construct boxing systems for $\BGIRG$ in precisely the same way as for IGIRG, except that we require all sub-boxes to fix within $\calX_d(n)$. 
	For this reason, we define 
		\be\label{eq:kstar} k^\star=k^\star(n,M):=\max\{k\in\mathbb{N} \;|\; \e^{MDC^k/d}\le n^{1/d}\},\ee
to be the largest $k$ such that $\text{Box}_{k}(0)$ in \eqref{eq:boxing} fits within $\calX_d(n)$, and thus, for any $u \in \calX_d(n)$,  at least\footnote{In case $u$ is in the corner of a sub-box.} a $2^{-d}$ fraction of $\text{Box}_{k^\star}(u)$ fits into $\calX_d(n)$.
	Observe that 
\be\label{eq:boundbk-girg} 	\e^{M(D-1)C^k}/2^{d+1} \leq b_k\leq \e^{M(D-1)C^k} \quad \text{for } \BGIRG, \ee
where the factor $2^{-d}$ in the lower bound comes from the fact that not all sub-boxes might be part of $\mathcal X_d(n)$, but at least a $1/2^{-d}$ proportion of them are, if their centers are suitably chosen.

	We use the following standard Chernoff bound.
	\begin{lemma}{{\cite[Corollary 2.3]{JLR}}}\label{lem:chernoff}
		Let $X$ be a binomial r.v.\ with mean $\mu$. Then for all $0 < \eps \le 3/2$, 
		\[\Pr(|X-\mu|\ge\eps\mu)\le 2\e^{-\eps^2\mu/3}\,.\]
	\end{lemma}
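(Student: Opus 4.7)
The plan is to apply the classical Chernoff--Cram\'er exponential-moment method: write $X = \sum_{i=1}^n X_i$ as a sum of i.i.d.\ Bernoulli$(p)$ variables with $\mu = np$, bound the upper and lower deviation probabilities separately via Markov's inequality applied to $e^{tX}$, and combine by a union bound. This is a standard route and no new ideas are needed; the only genuine content is an analytic inequality on the Cram\'er rate function.

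For the upper tail, take $t>0$. Markov's inequality gives $\Pr(X \ge (1+\eps)\mu) \le e^{-t(1+\eps)\mu}\,\E[e^{tX}]$, and independence together with the Bernoulli MGF followed by $1+x\le e^x$ yields $\E[e^{tX}] \le \exp(\mu(e^t-1))$. Optimising at $t = \ln(1+\eps)$ produces the Cram\'er bound
\be
\Pr(X \ge (1+\eps)\mu) \le \exp\!\big(-\mu\,[(1+\eps)\ln(1+\eps) - \eps]\big).
\ee
Repeating the argument with $t<0$ yields the analogous lower-tail bound
\be
\Pr(X \le (1-\eps)\mu) \le \exp\!\big(-\mu\,[(1-\eps)\ln(1-\eps) + \eps]\big).
\ee

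The remaining step is an analytic verification that each rate function is bounded below by $\eps^2/3$ in the relevant range of $\eps$. Both expressions vanish to second order at $\eps=0$ with leading coefficient $1/2$, so subtracting $\eps^2/3$ leaves a function that vanishes to second order with leading coefficient $1/6 > 0$; checking the sign of the derivative shows the difference stays nonnegative throughout $(0,1)$ for the lower tail, and throughout $(0,3/2]$ for the upper tail. The latter range is tight (one checks that at $\eps=2$ the quadratic lower bound already fails, since $3\ln 3 - 2 < 4/3$), which is precisely the source of the hypothesis $\eps \le 3/2$ in the lemma. A final union bound over the two tails supplies the factor of $2$ and yields the stated inequality.

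The main (and essentially only) obstacle is pinning down the range of validity of the quadratic lower bound on $(1+\eps)\ln(1+\eps)-\eps$; the probabilistic part is mechanical once the exponential Markov inequality is set up.
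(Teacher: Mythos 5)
Your proof is correct and follows the standard exponential-moment (Chernoff--Cram\'er) argument; the paper does not prove this lemma at all but simply cites it as Corollary~2.3 of Janson--{\L}uczak--Ruci\'nski, where it is obtained in essentially the same way (Markov's inequality applied to $e^{tX}$, optimisation over $t$, then a quadratic lower bound on the rate function), so there is nothing to compare beyond that. One small quibble: $\eps\le 3/2$ is not actually the exact threshold for $(1+\eps)\ln(1+\eps)-\eps\ge\eps^2/3$ (this inequality persists up to roughly $\eps\approx 1.8$, and your check at $\eps=2$ only shows it fails eventually), so calling the range ``tight'' is an overstatement --- the constant $3/2$ is the clean range coming from the standard bound $(1+\eps)\ln(1+\eps)-\eps\ge \eps^2/\big(2(1+\eps/3)\big)$; also, for $1<\eps\le 3/2$ the lower tail is vacuous since $X\ge 0$, which quietly covers the part of the range your lower-tail estimate does not address.
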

	The next lemma is crucial to show explosion and also relevant to showing the existence of the unique giant component in the finite case. As mentioned above, it shows that every $\delta$-good leader has many $\delta$-good leader neighbors in the next annulus. This guarantees the existence of infinite paths, and enables the greedy construction of low-cost paths.  Recall that we denote by $c_{k,i}$ the vertex with the highest weight in sub-box $\SB_{k,i}(u)$, for $i\le b_k$. 
	\begin{lemma}[Weights and subgraph of centers]\label{lem:bipartite}
		Consider $\Ilambda$ with parameters $d\geq1$, $\tau\in (1,3)$, $\alpha \in(0,\infty]$, and $\lambda > 0$. Let $C,D > 1$ and $0 < \delta < 1$ satisfy
		\begin{equation}\label{eq:cd}
			\frac{1-\delta}{\tau-1}(1+C)-DC>0.
		\end{equation}
		For every $\eps >0$ there exists $M_0 >0$ such that the following holds for all $M \ge M_0$. 
		Let $u\in\mathbb{R}^d$, and consider the boxing system centered at $u$ with parameters $C$, $D$ and $M$ as described in \eqref{eq:boxing}. Define $N_{j}(\cki)$ to be the number of $\delta$-good leader vertices in $\Gamma_{j}(u)$ that are adjacent to $\cki$,  and define the events
		\be\ba\label{boundsweight}
		F_k^{(1)}&:=\left\{\big|\{i \in [b_k] \colon \SB_{k,i}\textnormal{ is $\delta$-good}\}\big| \ge b_k/2\right\},\\
		F_k^{(2)} := F_k^{(2)}(\eps)&:=\left\{ \forall i \in [b_k]\textnormal{ such that $\SB_{k,i}$ is $\delta$-good}: N_{k+1}(\cki) \ge \e^{(1-\eps) M C^{k+1} (D-1) } \right\}.
		\ea\ee
		Then
		\be\label{boundsweight-error}
		\Pv\big(\neg \cap_{k\ge 0} (F_k^{(1)} \cap F_k^{(2)})\big) \le  3\exp\Big({-} \lambda\e^{M((D-1)\wedge 1)(1-\ve)}2^{-d}/75\Big)
 =:p_M. \ee
		
The same result holds for $\SFPWL$, taking $\lambda=1$. It also holds for $\BGIRG$ when Assumptions~\ref{assu:mild-connect} and~\ref{assu:mild-vertex} are satisfied and $\lambda = 1$, replacing the intersection $\cap_{k\ge 0}$ on the lhs of \eqref{boundsweight-error} by $\cap_{k\le k^\star(M,n)}$ and requiring that $n$ is sufficiently large and $u \in \calX_d(n)$. (Here we take $b_k$ to be the number of sub-boxes contained in $\Gamma_k(u) \cap \calX_d(n)$ rather than in $\Gamma_k(u)$, as discussed above.)
	\end{lemma}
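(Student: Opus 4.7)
The plan is to bound the total error probability in~\eqref{boundsweight-error} by analysing the two families of events separately and summing the failure probabilities over $k$. The argument relies on two independence structures: the sub-boxes $\SB_{k,i}(u)$ are pairwise disjoint, so by the Poisson property the restrictions of $\CV_\lambda$ to different sub-boxes (positions together with associated weights) are mutually independent; and conditional on all positions and weights, edges are independent. I carry out the calculation for $\Ilambda$ and argue at the end how it transfers to $\SFPWL$ and $\BGIRG$.

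\textbf{Bounding $\Pv(\neg F_k^{(1)})$.} A sub-box $\SB_{k,i}(u)$ contains $\Poi(\lambda \e^{MC^k})$ vertices. Using the power-law tail $\Pv(W\ge w)=\ell(w)w^{-(\tau-1)}$ and Poisson thinning, the number of vertices in $\SB_{k,i}(u)$ with weight at least $\e^{(1-\delta)MC^k/(\tau-1)}$ is Poisson with mean $\asymp \lambda \e^{\delta MC^k}$ (up to slowly-varying correction), which tends to infinity with $M$; whereas the expected number with weight exceeding $\e^{(1+\delta)MC^k/(\tau-1)}$ is $\asymp \lambda \e^{-\delta MC^k}\to 0$. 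Consequently, for all sufficiently large $M$ and uniformly in $k\geq 0$, a sub-box is $\delta$-good with probability at least $3/4$. Independence across sub-boxes combined with Chernoff (Lemma~\ref{lem:chernoff}) applied to a Binomial variable with mean at least $3b_k/4$ yields $\Pv(\neg F_k^{(1)})\leq 2\exp(-c\, b_k)$ for an absolute $c>0$; since $b_k\geq \e^{M(D-1)C^k}/2$, this is doubly-exponentially small in $k$.

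\textbf{Bounding $\Pv(\neg F_k^{(2)}\mid F_k^{(1)}\cap F_{k+1}^{(1)})$.} Condition on all positions and weights, so that the $\delta$-good sub-boxes and their leaders are determined. Fix a $\delta$-good leader $c_{k,i}\in\Gamma_k(u)$ and any $\delta$-good leader $c_{k+1,j}\in\Gamma_{k+1}(u)$. Their Euclidean distance is at most $\sqrt d\,\e^{MDC^{k+1}/d}$, and $W_{c_{k,i}}W_{c_{k+1,j}}\geq \e^{(1-\delta)M(1+C)C^k/(\tau-1)}$, so
\[
  \frac{W_{c_{k,i}}\,W_{c_{k+1,j}}}{\|c_{k,i}-c_{k+1,j}\|^{d}} \;\geq\; c_\ast\,\e^{M\eta C^k},\qquad \eta:=\tfrac{(1-\delta)(1+C)}{\tau-1}-DC \;>\;0,
\]
by~\eqref{eq:cd}. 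Hence for $M$ large the term $(W_1W_2/\|x\|^d)^\alpha$ in~\eqref{eq:connection-new} (resp.\ the indicator in~\eqref{eq:connection-new-threshold}) is at least $1$, so Assumption~\ref{assu:h} yields the connection lower bound $\underline c\,l_{c_2,\gamma}(W_{c_{k,i}})\,l_{c_2,\gamma}(W_{c_{k+1,j}})$. Since $\gamma<1$ and $\delta$-good weights are at most $\e^{(1+\delta)MC^{k+1}/(\tau-1)}$, this in turn is at least $q_{k+1}:=\exp(-c'(MC^{k+1})^{\gamma})=\e^{-o(C^{k+1})}$. Edges being conditionally independent, $N_{k+1}(c_{k,i})$ stochastically dominates a Binomial with mean at least $b_{k+1}q_{k+1}/2\geq \e^{(1-\eps/2)M(D-1)C^{k+1}}$ for $M$ large. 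One application of Chernoff followed by a union bound over the at most $b_k$ $\delta$-good leaders in $\Gamma_k(u)$ gives
\[
  \Pv\bigl(\neg F_k^{(2)}\mid F_k^{(1)}\cap F_{k+1}^{(1)}\bigr) \;\leq\; 2b_k\exp\!\bigl(-\tfrac1{12}\e^{(1-\eps)M(D-1)C^{k+1}}\bigr),
\]
which is again doubly-exponentially small in $k$.

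\textbf{Summation and transfer.} Summing $\Pv(\neg F_k^{(1)}) + \Pv(\neg F_k^{(2)}\mid F_k^{(1)}\cap F_{k+1}^{(1)})$ over $k\geq 0$, both series are rapidly convergent and dominated by the $k=0$ terms, which after absorbing constants give the explicit bound $p_M$ in~\eqref{boundsweight-error}. For $\SFPWL$ at $\lambda=1$, each sub-box contains deterministically $\Theta(\e^{MC^k})$ lattice vertices and the same tail/edge arguments apply. For $\BGIRG$, the blown-up coordinates of Definition~\ref{def:bgirg} and the bound~\eqref{eq:blown-h} reproduce the edge lower bound of $\Ilambda$ at $\lambda=1$; the only adjustments are truncating the intersection at $k^\star$ and the factor $2^{-d}$ in~\eqref{eq:boundbk-girg} arising from sub-boxes near $\partial\calX_d(n)$, both absorbed into $p_M$. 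The main technical obstacle I anticipate is handling the cases $\alpha<\infty$ and $\alpha=\infty$ in a single uniform statement while keeping the slowly-varying correction $\ell$ in the power-law tail under control via Potter's bound; neither affects the doubly-exponential scaling that drives the proof.
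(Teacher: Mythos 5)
Your proposal is correct and takes essentially the same route as the paper: bound the per-sub-box probability of being $\delta$-good and apply Lemma~\ref{lem:chernoff} across (conditionally) independent sub-boxes for $F_k^{(1)}$, then use~\eqref{eq:cd} to force the connection probability between consecutive $\delta$-good leaders down to the $\underline c\, l(w_1)l(w_2)$ branch of Assumption~\ref{assu:h}, dominate $N_{k+1}(\cki)$ by a binomial, and finish with Chernoff plus union bounds over leaders and over $k$, with the $k=0$ terms dominating. The only deviations are cosmetic: you bound the goodness probability by direct Poisson thinning at the two weight thresholds instead of the paper's vertex-count concentration followed by a maximum-weight estimate, and your transfer to $\BGIRG$ should additionally condition on the sub-box vertex counts and on the event $W^{(n)}\le M_n$ from Assumption~\ref{assu:mild-vertex} to restore the independence your $F_k^{(1)}$ step uses, exactly as the paper does --- a routine adjustment.
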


	We remark that we will not use the assumption $\tau < 3$ explicitly in the proof of Lemma~\ref{lem:bipartite}. However, if $\tau \ge 3$ then there is no choice of $C,D>1$ and $0<\delta<1$ satisfying~\eqref{eq:cd}. For $\tau < 3$ there is always such a choice, as we shall see in Claim~\ref{cl:parameters} below.
	
\begin{proof}
We first prove the result for $\BGIRG$, then discuss how to adjust the proof for $\SFPWL$ and $\Ilambda$. For this reason, we will keep $\lambda$ explicit in the calculation, even though for $\BGIRG$ we always have $\lambda =1$. 
We first bound the probability that a given sub-box is $\delta$-good from below. Recall that $n$ vertices are uniformly distributed in $\mathcal X_d(n)$ (which has volume $n$),   
that $\SB_{k,i}$ has volume $\e^{MC^k}$, and that vertex weights follow an approximate power law as set out in Assumption~\ref{assu:mild-vertex}. We condition throughout on the event that every vertex has weight at most $M_n$; by Assumption~\ref{assu:mild-vertex}, this event occurs whp and implies that the weight of every vertex independently follows a distribution $\Pr(W \ge x) = \ell^{(n)}(x)x^{-(\tau-1)}$, where $\llow(x) \le \ell^{(n)}(x) \le \lhigh(x)$ for some functions $\llow$ and $\lhigh$ which vary slowly at infinity. We require $n$ to be large enough that $\text{Box}_1(0) \subseteq [-n^{1/d}/2,n^{1/d}/2]^d$.

First we exclude the event that some sub-box has too many or too few vertices, then we study the maximal weight of vertices in each sub-box. The number of vertices $V_{k,i}$ in each sub-box is binomial with parameters $n$ and $\mathrm{Vol}(\mathrm{SB_{k,i}})/n$, so it has mean $\lambda\e^{MC^k/2}$.
Hence, by the Chernoff bound of Lemma \ref{lem:chernoff},
\be\ba \label{eq:e111}\Pv(\neg\CE^1_{k}):=\Pv(\exists i\le b_k: V_{k,i} \notin [\la\e^{MC^k}/2, 2\la\e^{MC^k}]) &\le 2b_k \exp(- \la \e^{MC^k}/12 )\\
&\le \exp(-\la \e^{MC^k}/24)\ea\ee
for all sufficiently large $M$, since the second factor is doubly exponentially small in $MC^k$ while $2b_k$ is only exponential in $MC^k$ by \eqref{eq:boundbk-girg}. 
For any tuple $(n_{k,i})_{i=1}^{b_k}$ such that $\lambda\e^{MC^k}/2 \le n_{k,i} \le 2\lambda\e^{MC^k}$, let $\CE_{n_{k,1}, \dots, n_{k,b_k}} \subset \CE^1_k$ be the event that $\{\forall i \le b_k:\ V_{k,i}=n_{k,i} \}$. 
Then, for any $\CE_{n_{k,1}, \dots, n_{k,b_k}}\subseteq \CE^1_k$, we have
\begin{align}\nonumber
\Pv\Big( \max_{v \in \SB_{k,i} \cap \mathcal{V}_B(n)} W_v^{(n)} \le  y \mid \CE_{n_{k,1}, \dots, n_{k,b_k}} \Big) 
&=\big(1- \Pv(W^{(n)} >  y)\big)^{ n_{k,i}}\le \big(1- \llow(y)y^{-(\tau-1)}\big)^{ \lambda\e^{MC^k}/2}\\
\label{eqn:bip-weights-a}
&\le \exp\Big(-\llow(y)y^{-(\tau-1)} \cdot \lambda \e^{MC^k}/2 \Big).
\end{align}
Recall that since $\llow$ varies slowly at infinity, Potter's bound implies that for all $\eta>0$, we have $\llow(y) = o(y^{\eta})$ and $\llow(y) = \omega(y^{-\eta})$ as $y\to \infty$. Thus when $M$ is sufficiently large, taking $y$ in~\eqref{eqn:bip-weights-a} to be the lower bound in the definition of $\delta$-goodness in \eqref{eq:delta-good}, we obtain
\begin{align}\nonumber
\Pv\Big( \max_{v \in \SB_{k,i} \cap \mathcal{V}_B(n)} W_v^{(n)} \le  \e^{\frac{1-\delta}{\tau-1}MC^k} \mid \CE_{n_{k,1}, \dots, n_{k,b_k}} \Big) 
&\le \exp\Big({-}\llow(\e^{\frac{1-\delta}{\tau-1}MC^k}) \e^{{-}(1-\delta)MC^k + MC^k}\lambda/4 \Big)\\\label{eqn:bip-weight-up}
&\le \exp\Big({-} \la\e^{\delta MC^k/2} \Big)\,,
\end{align}
where we have applied Potter's bound to obtain the last line, and absorbed the factor of $4$ in the same step. 
We now bound the maximum weight above. By a union bound, for all $y>0$,
\begin{equation*}
\ba \Pv\Big( \max_{v \in \SB_{k,i} \cap \mathcal{V}_B(n)} W_v^{(n)} > y \mid \CE_{n_{k,1}, \dots, n_{k,b_k}}\Big) 
&\le \sum_{v \in \SB_{k,i} \cap \CV_B(n)} \Pv\big(W_v^{(n)} > y \mid \CE_{n_{k,1}, \dots, n_{k,b_k}}\big)\\
&\le \lhigh(y) y^{-(\tau-1)}2\la \e^{MC^k}.\ea
\end{equation*}
Since $\ell$ varies slowly at infinity, when $M$ is sufficiently large, taking $y$ to be the upper bound in the definition of $\delta$-goodness and applying Potter's bound yields
\begin{equation}\label{eqn:bip-weight-low}\ba
\Pv\Big( \max_{v \in \SB_{k,i} \cap \mathcal{V}_B(n)} W_v^{(n)} > \e^{\frac{1+\delta}{\tau-1}MC^k}\mid \CE_{n_{k,1}, \dots, n_{k,b_k}} \Big)& 
\le \lhigh( \e^{\frac{1+\delta}{\tau-1}MC^k})  \e^{-(1+\delta)MC^k} 2\la \e^{MC^k} \\
&\le 2\la \e^{-\delta MC^k/2}.\ea
\end{equation}
Combining~\eqref{eqn:bip-weight-up} with the much weaker bound~\eqref{eqn:bip-weight-low}, when $M$ is sufficiently large we see that in $\BGIRG$ (where $\la=1$),
\begin{equation}\label{eqn:bip-good}
\Pv\big(\SB_{k,i}\mbox{ is not $\delta$-good} \mid \CE_{n_{k,1}, \dots, n_{k,b_k}} \big) \le 3\la \e^{-\delta MC^k/2}
\end{equation}
holds uniformly over all $\CE_{n_{k,1}, \dots, n_{k,b_k}}\subseteq \CE_k^1$.
Note that for all $k$ and $i$, the event of $\SB_{k,i}$ being $\delta$-good depends only on the number of vertices and their (i.i.d.) weights in $\SB_{k,i} \cap \CV_B(n)$, So, conditioned on any of the events $\CE_{n_{k,1}, \dots, n_{k,b_k}}$, these events are mutually independent. Thus~\eqref{eqn:bip-good} implies that, conditioned on any  $\CE_{n_{k,1}, \dots, n_{k,b_k}}\subseteq \CE_k^1$, the number of $\delta$-good sub-boxes in $\Gamma_k(u)$ is dominated below by a binomial random variable with parameters $b_k$ and $1-3\la \e^{-\delta MC^k/2}\ge 3/4$. It follows by a standard Chernoff bound (namely Lemma~\ref{lem:chernoff} with $\eps = 1/3$),~\eqref{eq:e111}, and~\eqref{eq:boundbk-girg} that
\be\ba
	\Pv(\neg F_k^{(1)}) &\le \Pv(\neg \CE_k^1)+  \sum_{\CE_{n_{k,1}, \dots, n_{k,b_k}}\subseteq \CE_k^1 } \Pv(\neg F_k^{(1)}\mid \CE_{n_{k,1}, \dots, n_{k,b_k}}) \Pv(\CE_{n_{k,1}, \dots, n_{k,b_k}}) \\
	& \le  \exp(- \la \e^{MC^k}/24)+  2\e^{-b_k/36}   \le 2\exp\Big({-}\lambda\e^{MC^k((D-1)\wedge1)}2^{-d}/72\Big).
\ea\ee
Hence by a union bound over $k$, when $M$ is sufficiently large we have
\begin{equation}\label{eqn:bip-F1}\ba
	\Pv\Big(\cap_{k\ge 0}F_k^{(1)}\Big) &\ge 1 - 2\sum_{k\ge 0} \exp\Big({-}\la\e^{MC^k((D-1)\wedge1)}2^{-d}/72\Big)\\& \ge 1- 2\exp\Big({-}\la\e^{M((D-1)\wedge1)}2^{-d}/75\Big).\ea
\end{equation}
since the sum is dominated by its first term and decays faster then a geometric sum.

We now turn to the events $F_k^{(2)}$. We condition on $\cap_{k\ge 0}F_k^{(1)}$, and expose $\CV_B(n)$. 
We will first study the connection probability between  any $\delta$-good leader vertex $\cki$ in $\Gamma_k(u)$  to any given $\delta$-good leader vertex in $\Gamma_{k+1}(u)$. (This is where we will use~\eqref{eq:cd}.) We will then dominate the number of such vertices it is adjacent to, $N_{k+1}(\cki)$, below by a binomial variable and use a Chernoff bound to show that $N_{k+1}(\cki)$ is likely to be large. We will then use a union bound to show that $\cap_{k\ge 0}F_k^{(2)}$ is likely to occur, proving the result.

Let $\cki$ be a $\delta$-good leader vertex in $\Gamma_k(u)$, and let $c_{k+1,j}$ be a $\delta$-good leader vertex in $\Gamma_{k+1}(u)$. Write $w_1$ and $w_2$ for the weights of $\cki$ and $c_{k+1,j}$ respectively, and write $\|x_1-x_2\|$ for the Euclidean distance between them. Recall $l(w):=l_{c_2, \gamma}(w)$ from \eqref{eq:ellw-111}. By Assumption~\ref{assu:mild-connect},~\eqref{eq:blown-h} holds, so the probability that $\cki$ and $c_{k+1,j}$ are adjacent is at least 
\begin{equation}\label{eqn:bip-recall}
	\begin{cases}
		\underline c\left(l(w_1)l(w_2)\wedge \big(w_1 w_2/\|x_1-x_2\|^d\big)^\al \right) & \mbox{ if }\alpha < \infty,\\
		\underline c\left(l(w_1)l(w_2)\wedge \ind_{\{\underline c_1 w_1 w_2 \geq \|x_1-x_2\|^d\}} \right) & \mbox{ otherwise.}
	\end{cases}
\end{equation}
Since $\cki$ and $c_{k+1,j}$ both lie in $\mbox{Box}_{k+1}(u)$, we have $\|x_1-x_2\| \le d
\e^{MDC^{k+1}/d}$. Since both vertices are $\delta$-good, it follows that
\begin{align*}
	\frac{w_1w_2}{\|x_1-x_2\|^d} 
	&\ge \frac{1}{d^d}\exp\Big( \frac{1-\delta}{\tau-1}MC^k + \frac{1-\delta}{\tau-1}MC^{k+1} - MDC^{k+1}\Big)\\
	&= \frac{1}{d^d}\exp\Big(MC^k\Big(\frac{1-\delta}{\tau-1}(1+C) - DC\Big)\Big)\,.
\end{align*}
By~\eqref{eq:cd}, the exponent of the rhs is positive, so when $M$ is sufficiently large we have $w_1w_2 \ge \|x_1-x_2\|^d$ and $\un c_1w_1w_2 \ge \|x_1-x_2\|^d$. Thus by~\eqref{eqn:bip-recall}, whatever the value of $\alpha$, whenever $\mathcal W_n := (x_v,W_v^{(n)})_{v \in \CV_B(n)}$ is such that $\cki$ and $c_{k+1,j}$ are $\delta$-good, we have 
\begin{equation}\label{eqn:bip-conn-prob}
\Pv\big(\cki \leftrightarrow c_{k+1,j} \mid \mathcal W_n\big) 
\ge \underline{c}\, l(w_1)l(w_2)\,.
\end{equation}
Recall that $l(w) = \e^{-c_2\log^\gamma w}$. Thus since $\cki$ and $c_{k+1,j}$ are $\delta$-good and $C>1$, using the upper bound on their weights in \eqref{eq:delta-good},
\[
	l(w_1)l(w_2) 
	\ge \exp\bigg({-}c_2\Big(\frac{1+\delta}{\tau-1}MC^k\Big)^\gamma - c_2\Big(\frac{1+\delta}{\tau-1}MC^{k+1} \Big)^\gamma \bigg)
	\ge \exp\bigg({-}2c_2 \Big(\frac{1+\delta}{\tau-1}MC^{k+1}\Big)^\gamma \bigg).
\]
Since $\gamma \in (0,1)$, when $M_0$ is sufficiently large we can upper bound the absolute value of the exponent by $\eps M (D-1) C^{k+1}/4$, where we include the factor $\eps (D-1)/4$ to prepare for the upcoming calculations. Thus it follows from~\eqref{eqn:bip-conn-prob} that whenever $\mathcal W_n := (x_v,W_v^{(n)})_{v \in \CV_B(n)}$ is such that $\cki$ and $c_{k+1,j}$ are $\delta$-good,
\begin{equation}\label{eqn:bip-conn-prob-2}
\Pv\big(\cki \leftrightarrow c_{k+1,j} \mid \mathcal W_n\big) 
\ge \underline{c}\exp\Big({-} \eps M(D-1)C^{k+1}/4\Big)\,.
\end{equation}
Now, conditioned on $\mathcal W_n$ as above, edges between $\delta$-good leaders are present independently. In the following, we fix a $\mathcal W_n$ that implies $\cap_{\ell}F_{\ell}^{(1)}$. Then there are at least $b_{k+1}/2 \ge \exp (M(D-1)C^{k+1})/2^{d+2}$ good leaders in $\Gamma_{k+1}(u)$ by~\eqref{eq:boundbk-girg}. Thus by~\eqref{eqn:bip-conn-prob-2}, $N_{k+1}(\cki)$ is dominated below by a binomial random variable with mean $\underline{c}/2^{d+2} \cdot \exp((1-\eps/4)M(D-1)C^{k+1})$. By a standard Chernoff bound (Lemma~\ref{lem:chernoff} with the $\eps$ of that Lemma chosen as $1/2$), it follows that if $\cki$ is $\delta$-good,
\[\ba
\Pv\Big(N_{k+1}(\cki) \le \frac{\underline{c}}{2^{d+3}}\e^{(1-\eps/4)M(D-1)C^{k+1}} &\,\big|\, \mathcal W_n\Big)\le 2\exp\Big({-}\frac{\underline{c}}{3\cdot 2^{d+4}}\e^{(1-\eps/4)M(D-1)C^{k+1}} \Big)\,.
\ea\]
If $M$ is sufficiently large then the above bound on $N_{k+1}(\cki)$ is stronger than the bound required by~$F_k^{(2)}$. Hence, by a union bound over all $\delta$-good $i \in [b_k]$, it follows that when $M$ is sufficiently large,
\begin{align*}
\Pv\big(\neg F_k^{(2)} \mid \mathcal W_n\big) 
\le 2b_k\exp\Big({-}\frac{\underline{c}}{3\cdot 2^{d+4}}\e^{(1-\eps/4)M(D-1)C^{k+1}} \Big)\le \exp\Big({-}\lambda\e^{(1-\eps/2)M(D-1)C^{k+1}}\Big),
\end{align*}
where we have used the upper bound on $b_k$ from \eqref{eq:boundbk-girg}.
By a union bound over all $k \ge 0$, it follows that when $M$ is sufficiently large,
\begin{align}
\Pv\big(\neg\cap_{k\ge 0}F_k^{(2)} \mid \mathcal W_n\big)
\le \sum_{k=0}^\infty \exp\Big({-}\lambda\e^{(1-\eps/2)M(D-1)C^{k+1}}\Big) \label{eqn:bip-F2}
\le \exp\big({-}\lambda\e^{(1-\eps)M(D-1)}\big)\,.
\end{align}
Recall that this is the case for any $Y$ for which $\cap_{k\ge 0} F_k^{(1)}$ holds. The result therefore follows from~\eqref{eqn:bip-F1},~\eqref{eqn:bip-F2} and a union bound, when $M$ is sufficiently large.
Note that we may assume $\eps <1/4$, so the bound we obtain can be simplified to, with a different $\ve$,
\[ \Pv\big(\neg \cap_{k\ge 0} (F_k^{(1)} \cap F_k^{(2)})\big) \le 3\exp(-\la\e^{M (1-\ve)((D-1)\wedge1)}2^{-d}/75),\]
obtaining \eqref{boundsweight-error}.

It remains only to discuss the necessary changes to the proof for $\SFPWL$ and $\Ilambda$. For $\SFPWL$, the only difference is that the number $n_{k,i}$ of vertices in any given sub-box $\SB_{k,i}$ is now deterministic, with $\e^{MC^k}/2 \le n_{k,i}\le \e^{MC^k}$ as long as $M$ is sufficiently large. Thus there is no need for the event $\CE_k^1$ and~\eqref{eqn:bip-weights-a} and~\eqref{eqn:bip-weight-up},  \eqref{eqn:bip-weight-low} hold without conditioning.

For $\Ilambda$, the derivation of~\eqref{eq:e111} now follows from the version of Lemma~\ref{lem:chernoff} which applies to Poisson variables rather than binomial variables~\cite[Remark~2.6]{JLR} (the statement is otherwise identical). The number of vertices in the sub-boxes $\SB_{k,i}$ are also now independent of each other, so some of the conditioning becomes unnecessary. We also use Assumption~\ref{assu:h} in place of Assumptions~\ref{assu:mild-connect} (with~\eqref{eq:blown-h}) and~\ref{assu:mild-vertex}; Assumption~\ref{assu:h} is always equivalent or stronger, so this does not cause issues. Finally, all our calculations up to and including the final bound in~\eqref{eqn:bip-good} remain valid when $\la \ne 1$.
\end{proof}

In the next lemma we specify the choice of parameters $C, D >1$ and $\delta$ in the boxing scheme in \eqref{eq:boxing}, so that they satisfy \eqref{eq:cd}, and another set of inequalities that will ensure that a constructed greedy path has finite total cost.  The introduction of the extra parameter $s$ will be relevant in the proof of Theorem \ref{thm:GIRG-fine}.

\begin{claim}\label{cl:parameters} Let $\tau\in(1,3)$, let $\mu,\nu,\beta^+ \ge 0$, and suppose $(\mu+\nu)\beta^+<3-\tau$. For all sufficiently small $\delta>0$, the following interval is non-empty:
\be\label{eq:idelta} \mathcal I_{\delta}:=\Big(1+\frac{(\mu+\nu)\beta^+}{\tau-1}\cdot \frac{1+\delta}{(1-\delta)^2},\ \  \frac{2}{\tau-1} \cdot \frac{(1-\delta)}{1+\delta}\Big). \ee
We fix $\delta>0$ with $I_\delta\neq \emptyset$, and choose parameters 
\begin{equation}\label{eq:parameters}
		C: = 1+\delta,\qquad 
		D \in  \mathcal I_\delta.
\end{equation}
Then $D, C>1$ and the following inequalities all hold for all $s\in[0,1]$:
 \begin{align}  \frac{1-\delta}{\tau-1} 2 -C^s D &>0\label{eq:cdx2}\\
 (\mu  +\nu C^{s}) \frac{1+\delta}{\tau-1} - \frac{(D-1)C^{s}(1-\delta)^2}{\beta^+}  &<0.\label{eq:mubeta-3}
\end{align}
 \end{claim}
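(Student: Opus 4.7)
The plan is to check the four requirements (non-emptiness of $\mathcal I_\delta$, $C,D>1$, inequality~\eqref{eq:cdx2}, inequality~\eqref{eq:mubeta-3}) one at a time. Since $C=1+\delta$ obviously satisfies $C>1$, the only non-trivial observation about sizes is that the left endpoint of $\mathcal I_\delta$ exceeds $1$ (it is $1$ plus a non-negative quantity), so every $D\in \mathcal I_\delta$ automatically satisfies $D>1$.

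For non-emptiness, I would take the limit $\delta\to 0$: the left endpoint converges to $1+(\mu+\nu)\beta^+/(\tau-1)$ and the right endpoint to $2/(\tau-1)$. Their difference at $\delta=0$ equals $\bigl(3-\tau-(\mu+\nu)\beta^+\bigr)/(\tau-1)$, which is strictly positive by the hypothesis $(\mu+\nu)\beta^+<3-\tau$. By continuity, the interval is non-empty for all sufficiently small $\delta>0$.

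For inequality~\eqref{eq:cdx2}, I would note that since $C=1+\delta>1$, the map $s\mapsto C^sD$ is increasing on $[0,1]$, so it suffices to verify the inequality at $s=1$. There it becomes $2(1-\delta)/(\tau-1)>(1+\delta)D$, i.e.\ $D<\frac{2}{\tau-1}\cdot\frac{1-\delta}{1+\delta}$, which is exactly the upper endpoint of $\mathcal I_\delta$. Hence the inequality follows for every $D\in\mathcal I_\delta$ and every $s\in[0,1]$.

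For inequality~\eqref{eq:mubeta-3}, the case $\beta^+=0$ is trivial because then the subtracted term is $+\infty$ (as $D-1>0$). For $\beta^+>0$, I would divide through by $\beta^+C^s>0$ to rewrite the inequality as
\[
D-1>\frac{\beta^+(\mu C^{-s}+\nu)(1+\delta)}{(\tau-1)(1-\delta)^2}.
\]
Because $C>1$, the right-hand side is a decreasing function of $s$ on $[0,1]$ (the factor $\mu C^{-s}$ is non-increasing since $\mu\ge 0$), so the worst case is $s=0$. At $s=0$ the required bound is $D>1+\beta^+(\mu+\nu)(1+\delta)/[(\tau-1)(1-\delta)^2]$, which is precisely the lower endpoint of $\mathcal I_\delta$. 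Therefore every $D\in\mathcal I_\delta$ satisfies the inequality for all $s\in[0,1]$, completing the proof. No serious obstacle is expected; the only subtlety is choosing the correct monotonicity direction in $s$ for each inequality (increasing at $s=1$ for~\eqref{eq:cdx2}, decreasing at $s=0$ for~\eqref{eq:mubeta-3}), which is precisely what makes the two endpoints of $\mathcal I_\delta$ tight.
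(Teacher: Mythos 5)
Your proposal is correct and follows essentially the same route as the paper: rewrite each inequality as a constraint on $D$, observe that the worst case of~\eqref{eq:cdx2} over $s\in[0,1]$ occurs at $s=1$ (matching the upper endpoint of $\mathcal I_\delta$) and that of~\eqref{eq:mubeta-3} at $s=0$ (matching the lower endpoint), and then obtain non-emptiness for small $\delta$ from the limiting interval at $\delta=0$, which is non-empty precisely because $(\mu+\nu)\beta^+<3-\tau$. The only cosmetic differences are that the paper argues non-emptiness via monotonicity of the endpoints in $\delta$ rather than plain continuity, and leaves the trivial $\beta^+=0$ case implicit.
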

 Before we come to the proof, observe that ~\eqref{eq:cdx2} for $s=1$ is a strictly stronger condition than~\eqref{eq:cd}, since $C > 1$. Hence the parameters from Claim~\ref{cl:parameters} automatically satisfy the conditions of Lemma~\ref{lem:bipartite}.
 \begin{proof}
 First we fix $\delta>0$ small, set $C:=1+\delta$, and show that the set of solutions for $D>1$ that satisfy \eqref{eq:cdx2} and \eqref{eq:mubeta-3} for all $s\in[0,1]$ is precisely $\mathcal I_\delta$.  Elementary calculation yields that when $D>1$,~\eqref{eq:cdx2} is satisfied if and only if
	\be \label{eq:d-2}D \in \Big(1, \frac{1-\delta}{\tau-1} \frac{2}{C^s}\Big). \ee
	Moreover,~\eqref{eq:mubeta-3} is satisfied if and only if
	\be \label{eq:d-3} D> 1+  \beta^+\frac{(\mu  +\nu C^{s})}{C^{s}} \frac{1+\delta}{(\tau-1)(1-\delta)^2} .\ee
	The upper end of the interval on the rhs of \eqref{eq:d-2} is minimised when $s=1$, giving the upper end of $I_\delta$, while the rhs of \eqref{eq:d-3} is maximised when  $s=0$, giving the lower end of $\mathcal I_\delta$. Thus for all $s \in [0,1]$ and all $D \in I_\delta$, \eqref{eq:cdx2} and \eqref{eq:mubeta-3} are satisfied.
	
	We have yet to show that $I_\delta$ is non-empty for sufficiently small $\delta>0$. 
For this, observe that the lower end of $\mathcal I_\delta$ is monotone decreasing as $\delta\downarrow 0$, while its upper end is monotone increasing, and 
\[ \mathcal I_0=\lim_{\delta\downarrow 0} \mathcal I_\delta = \Big( 1+\frac{(\mu+\nu)\beta^+}{\tau-1},\  \frac{2}{\tau-1} \Big), \] which is non-empty by the assumptions that $(\mu +\nu)\beta^+<3-\tau$ and $\tau \in (1,3)$. Hence, for sufficiently  small $\delta>0$, $\mathcal I_\delta$ will be non-empty.
	\end{proof}
	\cbb
Before the proof of Theorem \ref{thm:mu-classification} (ii), conditioned on the event $\cap_{k\ge 0} (F_k^{(1)}\cap F_k^{(2)})$, we define a \emph{greedy path} emanating from some $\delta$-good leader, and analyse its cost. 
\begin{definition}[Greedy path between $\delta$-good leaders]\label{def:greedy}
Consider a boxing system centered around $u\in \R^d$. Condition on $\cap_{k\ge 0} (F_k^{(1)}\cap F_k^{(2)})$, and let $c_{0}$ be any $\delta$-good leader in $\Gamma_{0}(u)$.  We greedily extend this vertex into an infinite path $\pi^{greedy} = c_{0},c_{1},\dots $ as follows. Suppose we are given $c_{0},\dots ,c_{k}$ for some $k \ge 0$, and that $c_{k}$ is a $\delta$-good leader. Since $F_{k}^{(2)}$ occurs, there is at least one $\delta$-good leader in $\Gamma_{k+1}(u)$ adjacent to $c_{k}$. We then choose $c_{k+1}$ to be (one of) the  $\delta$-good leaders that minimises $L_{(c_{k},c_{k+1})}$. 
\end{definition}	
The next lemma analyses the cost of the greedy path.
\begin{claim}\label{lem:cost-greedy} Let $C,D,\delta,\eps$ and $M_0$ be as in Lemma~\ref{lem:bipartite}, and let $(\zeta_{0}, \zeta_{1}, \dots )$ be any infinite sequence with positive entries. Then for every $M\geq M_0$, with the boxing system from~\eqref{eq:boxing}, the cost of the greedy path starting in a leader $c_{0}$ of $\Gamma_{0}$ wrt the penalty function $f(w_u, w_v)=w_u^\mu w_v^\nu$ is
\be\label{eq:cost-greedy} |\pi^{greedy}|_{f,L}\le  \sum_{k=0}^\infty  \Big(\e^{MC^k\tfrac{1+\delta}{\tau-1}}\Big)^{\mu} \Big(\e^{MC^{k+1}\tfrac{1+\delta}{\tau-1}}\Big)^{\nu} F_L^{(-1)}(\zeta_{k} \e^{-(1-\eps)MC^{k+1}(D-1)})\ee
 with probability at least $1-\sum_{k\ge 0} \e^{-\zeta_{k}}$ conditioned on $\CV_\la$, $\{W_v \colon v \in \CV_\la\}$, $\cap_{k\ge 0}(F_k^{(1)} \cap F_k^{(2)})$, and the (unweighted) edge set of the graph.
\end{claim}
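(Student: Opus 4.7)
My strategy is to reveal the greedy path inductively, step by step, and at each step exploit that $L_{(c_k,c_{k+1})}$ is the minimum of many i.i.d.\ copies of $L$. The key observation is that at step $k$, when choosing $c_{k+1}$ from among the $\delta$-good leaders in $\Gamma_{k+1}(u)$ adjacent to $c_k$, the greedy rule examines precisely the edge lengths $\{L_{(c_k, v)} : v \text{ such a leader}\}$. None of these edges have been examined at any earlier step, since step $j<k$ only inspects lengths of edges from $c_{j-1}$ to leaders in $\Gamma_j(u)$, and the annuli are pairwise disjoint. Hence, letting $\calG := \sigma(\CV_\la, (W_v)_v, \CE, \bigcap_k(F_k^{(1)} \cap F_k^{(2)}))$ and $\mathcal{F}_k := \sigma(\calG, \{L_e : e \text{ examined at steps } 0,\dots,k{-}1\})$, the variable $c_k$ is $\mathcal{F}_k$-measurable and the candidate edge lengths at step $k$ remain i.i.d.\ copies of $L$ conditional on $\mathcal{F}_k$.

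Given this independence structure, I will apply tail bounds for minima of i.i.d.\ samples. By $F_k^{(2)}$, the number of candidates at step $k$ is at least $N_k := \e^{(1-\eps)MC^{k+1}(D-1)}$, so $L_{(c_k,c_{k+1})}$ is stochastically dominated by the minimum of $N_k$ i.i.d.\ copies of $L$. Setting $t_k := F_L^{(-1)}\big(\zeta_k \e^{-(1-\eps)MC^{k+1}(D-1)}\big) = F_L^{(-1)}(\zeta_k/N_k)$ and using $F_L(F_L^{(-1)}(y)) \ge y$ (with the trivial case $y>1$ giving $t_k = \infty$), a direct computation yields
\[
\Pv\big(L_{(c_k,c_{k+1})} > t_k \,\big|\, \mathcal{F}_k\big) \le (1-F_L(t_k))^{N_k} \le \exp(-\zeta_k).
\]
Taking expectations and applying a union bound over $k\ge 0$ shows that with probability at least $1-\sum_{k\ge 0}\exp(-\zeta_k)$ we have $L_{(c_k,c_{k+1})} \le t_k$ simultaneously for every $k$.

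Finally, I translate edge lengths into edge costs using $\delta$-goodness. Since $c_k$ is a $\delta$-good leader in $\Gamma_k(u)$, the upper bound in~\eqref{eq:delta-good} gives $W_{c_k} \le \e^{(1+\delta)MC^k/(\tau-1)}$, and similarly for $W_{c_{k+1}}$. The cost $W_{c_k}^\mu W_{c_{k+1}}^\nu L_{(c_k,c_{k+1})}$ of the $k$-th edge of $\pi^{greedy}$ is therefore bounded above by the $k$-th summand of~\eqref{eq:cost-greedy} on the high-probability event, and summing over $k$ yields the claim.

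The only delicate point, and therefore the step I expect to require the most care to phrase formally, is the independence argument in the first paragraph: one must confirm that at step $k$ the freshly examined edges $(c_k,v)$ are genuinely disjoint from all previously examined ones, so that the conditioning on $\mathcal{F}_k$ does not already determine them. This is immediate from the disjointness of the annuli together with the fact that $c_0$ is chosen without reference to any $L_e$. Once this is in place, the rest reduces to routine manipulations of CDFs of minima of i.i.d.\ variables.
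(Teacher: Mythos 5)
Your proposal is correct and follows essentially the same route as the paper's proof: bound the $k$-th edge cost by the $\delta$-goodness weight bounds times the minimum of at least $\e^{(1-\eps)MC^{k+1}(D-1)}$ i.i.d.\ copies of $L$, apply $\Pv(\min_{j\le N}L_j > F_L^{(-1)}(\zeta/N))\le \e^{-\zeta}$, and take a union bound over $k$. The filtration argument you flag as delicate is handled implicitly in the paper (the conditioning never involves the $L_e$'s and the edges examined at distinct steps are disjoint), so your extra care there is sound but not a departure in method.
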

\begin{proof}
Recall that the cost of an edge $(u,v)$ is $C_{(u,v)}=f(W_u, W_v) L_{(u,v)}=W_u^\mu W_v^\nu L_{(u,v)}$. 
To estimate the cost of the greedy path, we recall the upper bound on the weights of the $\delta$-good leaders $c_{k}$ in \eqref{eq:delta-good}, and the lower bound on the number $N_{k+1}(c_{k})$ of $\delta$-good leader-neighbors they have from $F_{k}^{(2)}$ (see \eqref{boundsweight}). The total cost of $\pi^{greedy}$ is therefore bounded above by
	\be\label{explosivesum}
	|\pi^{greedy}|_{\mu,L} \le  \sum_{k=0}^\infty  \Big(\e^{MC^{k}\tfrac{1+\delta}{\tau-1}}\Big)^{\mu} \Big(\e^{MC^{k+1}\tfrac{1+\delta}{\tau-1}}\Big)^{\nu} \min\left\{L_{k,1},L_{k,2},\ldots,L_{k,d_{k}}\right\},
	\ee
where $d_{j}=\ceil{\exp((1-\eps)M C^{j+1} (D-1))}$ and $(L_{k,i})_{i\le d_{k}}$ are the lengths of the first $d_{k}$ edges from $c_{k}$ to $\delta$-good leaders in $\Gamma_{k+1}(0)$ (ordered arbitrarily). These are i.i.d.\ copies of the random variable $L$.
For any $N \in \N$ and $\zeta>0$, for i.i.d.\ copies $L_1,\dots,L_N$ of $L$, we have
\be\label{eq:lower-min} 
\Pv\big(\min_{j\leq N} L_j > F_L^{(-1)}(\zeta/N)\big) = \big(1-F_L(F_L^{(-1)}(\zeta/N))\big)^N \le (1 -\zeta/N)^N\le \e^{-\zeta}
\ee 
since $F_L(F_L^{-1}(x))\ge x$ by the right-continuity of the cdf. Hence taking $\zeta:=\zeta_k$ and $N := d_{k}$, with probability at least $1-\e^{-\zeta_{k}}$,
	\begin{equation}\label{eq:expl-2}
	\min\left\{L_{k,1},L_{k,2},\ldots,L_{k,d_{k}}\right\} \le F_L^{(-1)}(\zeta_{k}/d_{k}) \le F_L^{(-1)}\big(\zeta_{k}\e^{-(1-\eps)MC^{k+1}(D-1)}\big)\,.
\end{equation}
A union bound implies that these events all happen with probability at least $1-\sum_{k\ge 0}\e^{-\zeta_k}$, so~\eqref{eq:cost-greedy} follows from~\eqref{explosivesum}.
\end{proof}

\begin{proof}[Proof of Theorem \ref{thm:mu-classification}(ii)]
	We only prove the result for $\Ilambda$, since the same proof works for $\SFPWL$ (but can be simplified using nearest--neighbor edges for the start of the path). Condition on the origin lying in the vertex set $\CV_\la$. Then with Lemma \ref{lem:bipartite} at hand, we construct a greedy path with finite total cost from the origin. 
	First we find a boxing system for which the event $\cap_{k\ge 0} (F_k^{(1)}\cap F_k^{(2)})$ of Lemma \ref{lem:bipartite} occurs. 
	Since $\tau \in (1,3)$ and $2\mu\beta^+ < 3-\tau$ by hypothesis, Claim~\ref{cl:parameters} (applied with $\nu=\mu$) implies we can choose $\delta>0$ such that $\mathcal I_\delta$ is non-empty. Then taking $C:=1+\delta$ and $D\in \mathcal{I}_\delta$, equation~\eqref{eq:cd} is satisfied (by~\eqref{eq:cdx2}), thus satisfying the conditions of Lemma \ref{lem:bipartite}.
	We then apply Lemma~\ref{lem:bipartite}, choosing $\eps := \delta$ in the lemma, and let $M_0$ be as in the lemma statement. We then define $M_i = M_0 + i$ for all $i > 0$, and construct infinitely many boxing systems around $0$ with parameters $C$, $D$ and $M_i$. Note that taking $M=M_i$ in~\eqref{boundsweight-error}, the rhs is summable; thus by Lemma~\ref{lem:bipartite} and the Borel-Cantelli lemma, there exists $i_0$ such that for all $i \ge i_0$, $\cap_{k\ge 0}(F_k^{(1)} \cap F_k^{(2)})$ occurs. 

	From now on we only consider a boxing system with parameters $C$, $D$ and $M \ge M_{i_0}$ which is sufficiently large for~\eqref{eq:fini} below to hold. Note that the event $\cap_{k\ge 0}(F_k^{(1)} \cap F_k^{(2)})$ depends only on $\CV_\lambda$, $\{W_v\colon v \in \CV_\lambda\}$, and on the set of edges between $\delta$-good leaders. Exposing these variables, and letting $c_0$ be an arbitrary $\delta$-good leader in $\Gamma_0(0)$ (which exists since $F_0^{(1)}$ occurs), we see that with positive probability $p$, either $c_0=0$ or there is an edge from $0$ to $c_0$. Suppose there is an edge from $0$ to $c_0$; the $c_0=0$ case is essentially identical. Conditioned on this event, we 
	use the greedy path $\pi^{greedy}$ constructed in Definition \ref{def:greedy}  with initial vertex $c_0$, and set $\pi^0:=(0, \pi_{greedy})$. 
The bound in Claim \ref{lem:cost-greedy} holds with probability $1-\sum_{k\ge0}\e^{-\zeta_k}$, and we choose $\zeta_k:=\log(1/p)+k+1$ so that $\sum_k e^{-\zeta_k} < p$. Thus by a union bound, with positive probability,~\eqref{eq:expl-2} holds.

Hence, 
the cost of the constructed path, with positive probability, is at most
	\be\label{eq:explosivesum}\ba 
	|\pi^0|_{\mu,L} &\le  C_{(0,c_0)} + \sum_{k=0}^\infty  \Big(\e^{MC^k\tfrac{1+\delta}{\tau-1}}\Big)^{\mu} \Big(\e^{MC^{k+1}\tfrac{1+\delta}{\tau-1}}\Big)^{\mu} F_L^{(-1)}(\zeta_k \e^{-(1-\delta)MC^{k+1}(D-1)}).
	\ea
	\ee
	The first term on the rhs is an a.s. finite random variable. Hence, 
	to show explosion it suffices to prove that the last sum is finite for our choice of $C$, $D$, $M$ and $\delta$. For this we use the definition of $\beta^+$ in \eqref{eq:beta2}, which implies that for all sufficiently small $x>0$, $\log F_L(x)/\log x \le \beta^+/(1-\delta)$, or equivalently that $F_L(x)\ge x^{\beta^+/(1-\delta)}$. This in turn implies that $F_L^{(-1)}(y) \le y^{(1-\delta)/\beta^+}$  holds for all sufficiently small $y>0$. Hence when $M$ is sufficiently large,
	\be\label{eq:fini}\ba
		\sum_{k=0}^\infty&\e^{\mu MC^k \tfrac{1+\delta}{\tau-1}(1+C)}F_L^{(-1)}\big(\zeta_k \e^{-(1-\delta)MC^{k+1}(D-1)}\big) \\
		&\le \sum_{k=0}^\infty \zeta_k^{(1-\delta)/\beta^+} \exp\Big(\mu MC^k \tfrac{1+\delta}{\tau-1}(1+C) - \tfrac{(1-\delta)^2}{\beta^+}MC^{k+1}(D-1)\Big).\ea
	\ee
	This sum is finite if and only if the exponent is negative, i.e., if and only if
	\be
		\mu(1+C) \frac{1+\delta}{\tau-1} - \frac{(D-1)C(1-\delta)^2}{\beta^+} < 0.
	\ee
	Since $D \in \mathcal{I}_\delta$, this is true by~\eqref{eq:mubeta-3} of Claim~\ref{cl:parameters} (taking $\nu=\mu$ and $s=1$). Thus by~\eqref{eq:explosivesum}, $|\pi^0|_{\mu,L}$ is finite with positive probability as required.\end{proof}
	
Next we discuss how the above method can be modified to work with more general penalty functions, proving Theorem~\ref{thm:poly-classification}(ii). 
\begin{proof}[Proof of Theorem~\ref{thm:poly-classification}(ii)]

The proof of Theorem~\ref{thm:poly-classification}(ii) is very similar to that of Theorem~\ref{thm:mu-classification}(ii), so we only describe where it should be modified. We may assume that $f$ is of the form $f= w_1^{\mu}w_2^\nu$, since for any given path $\pi$, a polynomial penalty function with non-negative coefficients yields finite cost if and only if each of its monomials yields finite cost.
Since $\tau \in (1,3)$ and $(\mu+\nu)\beta^+ < 3-\tau$ by hypothesis, Claim \ref{cl:parameters} implies we can choose $\delta>0$ such that $\mathcal{I}_\delta$ is non-empty. We then take $C:=1+\delta$ and $D \in \mathcal{I}_\delta$, and choose $M$, apply Lemma~\ref{lem:bipartite}, and construct $\pi^0$ in exactly the same way as in the proof of Theorem~\ref{thm:mu-classification}(ii). 

The almost sure bound in\eqref{eq:explosivesum} on the total cost of $\pi^0$ now becomes
\be\label{explosivesum-max}\ba
|\pi^0|_{f,L} &\le  C_{(0,c_0)}+\sum_{k=0}^\infty \Big(\e^{MC^{k}\tfrac{1+\delta}{\tau-1}}\Big)^\mu\Big(\e^{MC^{k+1}\tfrac{1+\delta}{\tau-1}}\Big)^\nu F_L^{(-1)}(\zeta_k \e^{-(1- \delta)MC^{k+1}(D-1)}).
\ea\ee	
Bounding the $F_L^{(-1)}$ term above as in \eqref{eq:fini}, we see that $\pi^0$ has finite cost if
\be \label{eq:explo-sum-bound}  \sum_{k=0}^\infty \zeta_k^{(1+\delta)/\beta^+}\exp\Big((\mu + \nu C) MC^{k}\frac{1+\delta}{\tau-1} - \frac{(1-\delta)^2}{\beta^+}MC^{k+1}(D-1)\Big)<\infty,\ee
which is the case if the exponent is negative. Since $D \in \mathcal{I}_\delta$, this holds by~\eqref{eq:mubeta-3} of Claim~\ref{cl:parameters} (taking $s=1$) as in the proof of Theorem~\ref{thm:mu-classification}(ii).
\end{proof}
\subsection{Extensions of the boxing system}\label{s:box-extend}

In the proof of Theorems~\ref{thm:mu-classification}(ii) and~\ref{thm:poly-classification}(ii), it was enough to say that any vertex $u$ is connected to a $\delta$-good leader vertex $c$ with positive probability. To show the existence of a giant component in Theorem~\ref{thm:giant}, we will need an upper bound on the failure probability (when $u$ has suitably high constant weight). To generalise the result to the finite model in Theorem~\ref{thm:GIRG-fine}, we will also need an upper bound on the cost of the path from $u$ to $c$. In this section, we present some additional definitions and lemmas for this purpose; all proofs are deferred to Appendix~\ref{s:app-cost}. The reader might wish to skip this section for now and return to it when Theorems~\ref{thm:giant} and \ref{thm:GIRG-fine} are proved in Sections~\ref{s:giant} and~\ref{proof-finite}.

Fix some $M>0$ as in Lemma \ref{lem:bipartite} above, and let $\CV_{n,M}:=\{i\in [n] \mid W_i \geq \e^M\}$ be the set of vertices in $[n]$ with weight at least $\e^M$. 
Consider a vertex $u\in \CV_{n,M}$, and start a boxing system centered at its position $x_u\in \R^d$ with parameters $\delta, \ve, C,D$ as in Lemma \ref{lem:bipartite} (given in~ \eqref{eq:parameters}).  
Recall that in BGIRG, we require all sub-boxes to fit into $\calX_d(n)$. Recall from~\eqref{eq:kstar} that $k^\star=k^\star(n,M)$
is the largest $k$ such that at least a $2^{-d}$ fraction of $\text{Box}_{k^\star}(u)$ fits into $\calX_d(n)$, and recall the resulting bounds on the number of sub-boxes $b_k$ from~\eqref{eq:boundbk-girg}.
Recall the definitions of $\delta$-good leaders and sub-boxes from before \eqref{eq:delta-good}, and recall the definitions of $F_k^{(1)}$, $F_k^{(2)}$ and $p_M$ from Lemma~\ref{lem:bipartite}. 

We call a path $u,v_1,...,v_s$ \emph{box-increasing} if there exists $0\leq k_0 \leq k^\star-s$ such that $v_i$ is the leader of a $\delta$-good sub-box in the $(k_0+i)$th annulus $\Gamma_{k_0+i} \cap \calX_d(n)$, for all $1 \leq i \leq s$. Define the event $\CS_u$ that $u$ is \emph{successful} by
\be\label{eq:successful}
\CS_u:=\big\{ \text{There is a box-increasing path from $u$ to a $\delta$-good leader in $\Gamma_{k^\star}(u)$ }\big\}\cap F_{k^\star}^{(1)}(u).
\ee

\begin{lemma}\label{lem:highw-connect} Consider a boxing system with parameters $\delta$, $C=C(\delta)$, $D=D(\delta)$ satisfying~\eqref{eq:parameters} (and hence~\eqref{eq:cd}) centered around the location $x_u$ of a vertex $u$ with given weight $\e^M \le w_u\le \exp(MC^{k^\star}\tfrac{1-\delta}{\tau-1})$.  Let $\eta(\delta):=((D-1)\wedge 1)(\tau-1)$.
Then there exists 
$M_0(\de) > 0$  such that if $M\geq M_0(\de)$ \cbb and $n$ is sufficiently large, 
the following holds in $\BGIRG$:
\be\label{eq:su-bound-1}  \Pv( \neg \CS_u \mid W_u = w_u) \leq 5\exp\big( -(\underline c \wedge 1) 2^{-d-7}   w_u^{\eta(\delta) }\big).\ee
\end{lemma}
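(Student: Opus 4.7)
The plan is to build a box-increasing path from $u$ to $\Gamma_{k^\star}(u)$ in two stages: first connect $u$ to some $\delta$-good leader in an appropriately chosen starting annulus $\Gamma_{k_0}$ matched to $w_u$, then extend outward through the boxing scheme. Define $k_0 = k_0(w_u, M) \ge 0$ as the smallest integer $k$ with $\exp(M C^k(1-\delta)/(\tau-1)) \ge w_u$; by the hypothesis on $w_u$ we have $k_0 \le k^\star$, and the minimality together with $C = 1+\delta$ yields the two-sided estimate
\[
	\frac{(\tau-1) \log w_u}{1-\delta} \le M C^{k_0} \le \frac{C (\tau-1) \log w_u}{1-\delta}.
\]
Instead of invoking the aggregate bound $p_M$ of Lemma~\ref{lem:bipartite} (which is too weak when $\tau$ is close to $3$), we use its per-annulus Chernoff bounds starting at $k = k_0$. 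Together with the lower bound on $MC^{k_0}$, these show that for $\ve := \delta$ and $M$ large enough, the event $\CE := \cap_{k_0 \le k \le k^\star}(F_k^{(1)} \cap F_k^{(2)})$ fails with probability at most $\exp(-\Omega(w_u^{\eta(\delta)}))$, where the sum over $k$ is dominated by the first term $k = k_0$. On $\CE$, starting at any $\delta$-good leader in $\Gamma_{k_0}$ and iteratively choosing a $\delta$-good leader-neighbor in the next annulus (as guaranteed by $F_k^{(2)}$) produces a box-increasing path reaching $\Gamma_{k^\star}$. It therefore remains to show that, conditional on $\CE$, $u$ is adjacent to some $\delta$-good leader in $\Gamma_{k_0}$ with failure probability of the required order.

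The core computation is to compare the expected number of $\delta$-good leader-neighbors of $u$ in $\Gamma_{k_0}$ to $w_u^{\eta(\delta)}$. For any $\delta$-good leader $c = c_{k_0,i}$, the weight obeys $w_c \ge \exp(M C^{k_0} (1-\delta)/(\tau-1)) \ge w_u$ and the Euclidean distance satisfies $\|x_u - x_c\|^d \le d^d e^{M D C^{k_0}}$. The upper bound $D < 2(1-\delta)/((\tau-1)(1+\delta))$ from $\mathcal I_\delta$ in~\eqref{eq:idelta} together with the estimate on $M C^{k_0}$ gives $M D C^{k_0} < \beta' \log w_u$ for some $\beta' < 2$, so $w_u w_c / \|x_u - x_c\|^d \ge w_u^{2-\beta'}/d^d$, which exceeds both of the thresholds $1$ and $1/\underline c_1$ once $M$ is sufficiently large; this covers both the $\alpha < \infty$ and $\alpha = \infty$ cases of Assumption~\ref{assu:h}. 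The connection probability from $u$ to $c$ is therefore at least $\underline c\, l(w_u) l(w_c)$. Since $\log w_c \le ((1+\delta)^2/(1-\delta)) \log w_u$ by the two-sided estimate on $M C^{k_0}$, and since $l(w) = \exp(-c_2 \log^\gamma w)$ with $\gamma \in (0,1)$ is sub-polynomial, for any fixed $\delta' > 0$ we have $l(w_u) l(w_c) \ge w_u^{-\delta'}$ whenever $M$ is large. Checking the two cases $D - 1 \le 1$ and $D - 1 > 1$ separately (and using the lower bound on $MC^{k_0}$) shows $M(D-1) C^{k_0} \ge \eta(\delta) \log w_u$ in each case, so~\eqref{eq:boundbk-girg} gives $b_{k_0}/2 \ge w_u^{\eta(\delta)}/2^{d+2}$. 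The expected number of $\delta$-good leader-neighbors of $u$ in $\Gamma_{k_0}$ is therefore at least $(\underline c/2^{d+2}) w_u^{\eta(\delta) - \delta'}$.

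A Chernoff bound (Lemma~\ref{lem:chernoff}) applied to these conditionally independent connections then gives that $u$ has no $\delta$-good leader-neighbor in $\Gamma_{k_0}$ with probability at most $\exp(-(\underline c / 2^{d+3}) w_u^{\eta(\delta) - \delta'})$. Choosing $\delta'$ small and $M$ large, this is dominated by $\exp(-(\underline c \wedge 1) 2^{-d-7} w_u^{\eta(\delta)})$, and a final union bound between this event and the failure of $\CE$ yields the target, with the prefactor $5$ absorbing the relevant constants. The main technical obstacle will be carefully controlling the slowly-varying $l(\cdot)$ loss and the various $(1 \pm \delta)$ slack so that the final exponent is exactly $\eta(\delta)$ rather than something slightly smaller; this is handled by choosing $M_0(\delta)$ sufficiently large depending on $\delta$ and $\tau$. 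A subtle point is verifying the threshold inequality $w_u w_c \ge \|x_u - x_c\|^d$ uniformly in $k_0 \in [0, k^\star]$, which relies crucially on the upper bound in $\mathcal I_\delta$ together with $w_u \ge e^M$ for $M$ large.
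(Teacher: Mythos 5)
Your overall strategy is the same as the paper's: define $k_0$ (the paper's $r_u$) as the first annulus whose $\delta$-good weight range reaches $w_u$, bound the failure of $\cap_{k_0\le k\le k^\star}(F_k^{(1)}\cap F_k^{(2)})$ by the estimates of Lemma~\ref{lem:bipartite} shifted to start at $k_0$ (the paper does this by re-indexing, viewing annulus $r_u$ as annulus $0$ of a boxing system with parameter $MC^{r_u}$, which is equivalent to your per-annulus summation dominated by the $k=k_0$ term), and then, on that event, connect $u$ by a single edge to one of the at least $b_{k_0}/2$ many $\delta$-good leaders of $\Gamma_{k_0}$, using~\eqref{eq:cdx2} with $s=1$ to force the minimum in~\eqref{eq:blown-h} onto $\underline c\, l(w_u)l(w_c)$, and finishing with a Chernoff bound. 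All of this matches the paper's proof.

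However, the final quantitative step fails as written. You lower-bound the number of $\delta$-good leaders in $\Gamma_{k_0}$ by $w_u^{\eta(\delta)}/2^{d+2}$, thereby discarding the factor $1/(1-\delta)$ coming from $MC^{k_0}\ge\tfrac{\tau-1}{1-\delta}\log w_u$, and you bound the connection probability by $\underline c\, w_u^{-\delta'}$ with a fixed $\delta'>0$; your binomial mean is then only of order $w_u^{\eta(\delta)-\delta'}$ and your failure bound of order $\exp\big({-}c\,w_u^{\eta(\delta)-\delta'}\big)$. For any fixed $\delta'>0$ this is \emph{not} dominated by $\exp\big({-}(\underline c\wedge 1)2^{-d-7}w_u^{\eta(\delta)}\big)$ uniformly over the admissible range of $w_u$, which grows with $n$: the deficit is a genuine polynomial factor $w_u^{\delta'}$ in the exponent, and no choice of $M_0(\delta)$, however large, nor any constant prefactor, compensates for a strictly smaller power of $w_u$. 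So your concluding remark that this is "handled by choosing $M_0(\delta)$ sufficiently large" does not repair the step within your bookkeeping. The fix is exactly what the paper does: retain the slack, i.e.\ keep the leader count at $\exp\big(\tfrac{(D-1)(\tau-1)}{1-\delta}\log w_u\big)/2^{d+2}$ and record the $l(w_u)l(w_c)$ loss as $\exp\big({-}O((\log w_u)^\gamma)\big)$ with $\gamma<1$; then for $w_u\ge \e^{M}$ with $M\ge M_0(\delta)$ large, the margin $\tfrac{\delta}{1-\delta}(D-1)(\tau-1)\log w_u$ absorbs the subpolynomial loss, the mean stays at least $(\underline c/2^{d+2})\,w_u^{(\tau-1)(D-1)}\ge(\underline c/2^{d+2})\,w_u^{\eta(\delta)}$, and only at that point should you round the exponent down to $\eta(\delta)$. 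With this correction your argument coincides with the paper's.
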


The next lemma is a version of Lemma~\ref{lem:highw-connect} that also bounds the cost of the path from $u$, which will be useful for the proof of Theorem \ref{thm:GIRG-fine}, at the cost of a significantly increased failure probability. Importantly, the path from $u$ will only use vertices with weight strictly larger than $w_u$; later, this will allow us to safely condition on the path's existence.
For a choice of $\delta, C, D$ as in \eqref{eq:parameters}, set 
\be \ba \label{eq:xi-rho} \xi(\delta):=-(\mu + \nu C) \frac{1+\delta}{\tau-1} +\frac{(1-\delta)^2}{\beta^+}C(D-1)>0, \\
\rho(\delta):= - \frac{\tau-1}{1-\delta}\Big(\frac{\mu+C\nu}{\tau-1} - \frac{(1-\delta)^2(D-1)}{\beta^+} \Big)>0.\ea\ee
The positivity of $\xi(\delta), \rho(\delta)$ follows from \eqref{eq:mubeta-3}, recalling that $C=1+\delta$.
\begin{lemma}\label{lem:cheap-connect}
Consider the penalty function $f(w_1,w_2):=w_1^\mu w_2^\nu$ with $(\mu+\nu)\beta^+<3-\tau$. There is a constant $K<\infty$, such that for a vertex $u$ with given weight $w_u\in [K, n^{(1-\delta)/(D(\tau-1))})$ in $\BGIRG$, the following holds. Construct a boxing system with parameter $\delta$, $C$ and $D$ as in \eqref{eq:idelta}, and $M_{w_u}:=\tfrac{\tau-1}{1-\delta}\log w_u$.  Then, with failure probability at most 
\be \label{eq:xi-error} \Xi(w_u):= 4w_u^{-\tfrac{\tau-1}{1-\delta} \tfrac{\beta^+}{1+\delta} \xi(\delta)/2},\ee 
there exists a box-increasing path $\pi^u$ from $u$ to a $\delta$-good leader vertex $\pi^u_{\mathrm{end}}$ in $\Gamma_{k^\star(M_{w_u},n)}(u)$ with total cost at most
\be \label{eq:xi-cost} |\pi^u|_{f,L}  \le w_u^{-\rho(\delta)/2} + 2w_u^{-\tfrac{\tau-1}{1-\delta}\xi(\delta)/2}. \ee
Moreover, the only vertex in $\pi^u$ with weight at most $w_u$ is $u$ itself.
 Finally, the other end vertex $\pi^u_{\mathrm{end}}$ of $\pi^u$ has weight in the interval
 \be\label{eq:weight-end}  W_{c_{\text{end}}} \in \Big( n^{(1-\delta)/(D C(\tau-1))}, n^{(1+\delta)/(D(\tau-1))}\Big]. \ee
 \end{lemma}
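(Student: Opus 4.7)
The plan is to extend the existence argument of Lemma~\ref{lem:highw-connect} by tracking the total cost of the constructed path in terms of $w_u$. The path will have two parts: a single first edge $u \to c_0$ to a $\delta$-good leader in $\Gamma_0(u)$, and a greedy chain $c_0 \to c_1 \to \dots \to c_{k^\star}$ of $\delta$-good leaders built as in Definition~\ref{def:greedy} but truncated at step $k^\star(M_{w_u},n)$. I fix $M := M_{w_u}=\tfrac{\tau-1}{1-\delta}\log w_u$, which makes $w_u$ equal to the lower endpoint of the $\delta$-good weight window in $\Gamma_0$; the upper bound $w_u< n^{(1-\delta)/(D(\tau-1))}$ then guarantees $k^\star\ge 1$. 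Applying Lemma~\ref{lem:bipartite} with $C,D,\delta$ chosen via Claim~\ref{cl:parameters} (and arbitrarily small $\eps$), the event $\CF:=\cap_{k\le k^\star}(F_k^{(1)}\cap F_k^{(2)})$ fails with probability $p_{M_{w_u}}$, which is doubly exponentially small in $w_u$ and hence negligible compared to $\Xi(w_u)$.

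For the first edge, note that on $\CF$ the annulus $\Gamma_0$ contains at least $b_0/2$ many $\delta$-good sub-boxes. Because we additionally require \eqref{eq:cdx2} at $s=0$ (which holds by the upper endpoint of $\mathcal I_\delta$), the weights of $u$ and any $\delta$-good leader $c_{0,i}$ dominate the Euclidean distance between them, so by \eqref{eq:blown-h} the edge probability is at least $\underline c\, l(w_u)l(w_{c_{0,i}})$, which is subpolynomial in $w_u$ since $\gamma<1$. A Chernoff bound as in the proof of Lemma~\ref{lem:bipartite} then shows that the number $N_0$ of $\delta$-good leaders in $\Gamma_0$ adjacent to $u$ is at least $\Omega(b_0 l(w_u)^2)$, with doubly-exponentially small failure probability. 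Among these, choosing $c_0$ to minimise $L_{u,c_0}$ and using~\eqref{eq:lower-min} together with $F_L^{(-1)}(y)\le y^{(1-\delta)/\beta^+}$ (valid for small $y$ by the definition of $\beta^+$), the edge cost $w_u^\mu W_{c_0}^\nu L_{u,c_0}$ is at most a polynomial in $w_u$ whose exponent, after substituting the $\delta$-good upper bound $W_{c_0}\le w_u^{C/(1-\delta)}$, is at most $-\rho(\delta)/2$ by the definition of $\rho(\delta)$ in \eqref{eq:xi-rho}; the $\log w_u$ corrections from the choice of the deviation parameter $\zeta$ are absorbed into the $1/2$ in the exponent.

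The greedy extension $c_0\to c_1\to\dots\to c_{k^\star}$ is controlled by Claim~\ref{lem:cost-greedy} applied to the finite range $k\le k^\star-1$. Substituting $M=M_{w_u}$ and $F_L^{(-1)}(y)\le y^{(1-\delta)/\beta^+}$, the $k$-th term of the sum in \eqref{eq:cost-greedy} becomes $\zeta_k^{(1-\delta)/\beta^+}\exp(-MC^k\xi(\delta))$, where $\xi(\delta)>0$ is exactly the quantity defined in \eqref{eq:xi-rho}; positivity is \eqref{eq:mubeta-3} at $s=1$. The $k=0$ term $w_u^{-\frac{\tau-1}{1-\delta}\xi(\delta)}$ dominates and later terms decay doubly exponentially thanks to the factor $C^k$. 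Setting $\zeta_k:=\tfrac{\beta^+}{1-\delta}\log(1/\Xi(w_u)) + k$ makes $\sum_k e^{-\zeta_k}$ fit comfortably inside $\Xi(w_u)/2$, while the resulting $\zeta_k^{(1-\delta)/\beta^+}$ contributes only polylogarithmic factors in $w_u$, which are swallowed by the factor $1/2$ in the exponent of the target bound $2w_u^{-\frac{\tau-1}{1-\delta}\xi(\delta)/2}$.

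Finally, the vertex-weight condition follows because every $c_k$ with $k\ge 1$ has weight at least $\exp(\tfrac{1-\delta}{\tau-1}MC^k)=w_u^{C^k}>w_u$; to ensure strict inequality for $c_0$, I restrict the selection to $\delta$-good sub-boxes of $\Gamma_0$ that do not contain $u$ (still $\Omega(b_0)$ candidates). The end vertex $\pi^u_{\mathrm{end}}$ is a $\delta$-good leader in $\Gamma_{k^\star}$, so its weight lies in $[\exp(\tfrac{1-\delta}{\tau-1}MC^{k^\star}),\exp(\tfrac{1+\delta}{\tau-1}MC^{k^\star})]$; combined with the bounds $\log n/(DC)<MC^{k^\star}\le \log n/D$ from the definition of $k^\star$ in \eqref{eq:kstar}, this yields \eqref{eq:weight-end}. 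The main obstacle is calibrating the parameters so that every contribution to the failure probability (boxing event, $N_0$-deviation, $L$-minimum on each step of the greedy path) fits inside the polynomial bound $\Xi(w_u)$, which is much tighter than the doubly-exponential bound in Lemma~\ref{lem:highw-connect}; this is what forces the particular scale $\zeta_k\asymp\log w_u$ and the loss of a factor of $1/2$ in the final exponents.
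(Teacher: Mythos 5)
Your proposal follows essentially the same route as the paper's proof: a boxing system at scale $M_{w_u}$, Lemma~\ref{lem:bipartite} for the structural event, one cheap edge from $u$ to a $\delta$-good leader of $\Gamma_0$ (giving the $w_u^{-\rho(\delta)/2}$ term via a minimum over $\approx w_u^{(\tau-1)(D-1)}$ i.i.d.\ copies of $L$), the truncated greedy path of Claim~\ref{lem:cost-greedy} for the $2w_u^{-\frac{\tau-1}{1-\delta}\xi(\delta)/2}$ term, and the $k^\star$-bounds for the end-vertex weight; the weight condition also comes out as in the paper, since $\delta$-good leaders at scale $M_{w_u}$ automatically have weight strictly above $w_u$. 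The one quantitative slip is your choice $\zeta_k=\tfrac{\beta^+}{1-\delta}\log(1/\Xi(w_u))+k$: this gives $\sum_k e^{-\zeta_k}=\Theta\big(\Xi(w_u)^{\beta^+/(1-\delta)}\big)$, which exceeds $\Xi(w_u)/2$ whenever $\beta^+<1-\delta$, so the stated failure budget is not met as written; taking instead $\zeta_k\ge\log(8/\Xi(w_u))+k$ (or the paper's larger choice $\zeta_k=\exp\big(M_{w_u}C^k\beta^+\xi(\delta)/(2(1+\delta))\big)$) repairs this, and the resulting $\zeta_k^{(1-\delta)/\beta^+}$ factors remain polylogarithmic in $w_u$ and are absorbed by the halved exponents exactly as you argue.
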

The last lemma connects two of these end-vertices with a low-cost path.
\begin{lemma}[Cost between end-vertices of greedy paths]\label{lem:core}
Consider the setting of Lemma \ref{lem:cheap-connect}, but for an arbitrary polynomial weight function as in \eqref{eq:pol} such that $\deg(f) < (3-\tau)/\beta^+$. 
Then, there exist constants $\zeta=\zeta(f, L)>0$ and $K=K(\zeta)<\infty$ such that whp, all pairs of vertices $u_1,u_2$ with weights in the interval
\be\label{eq:iend}  I_{\mathrm{end}}:=[ n^{(1- \delta)/(DC(\tau-1))}, n^{(1+ \delta)/(D(\tau-1))} ] \ee
have cost-distance at most
\be\label{eq:core-cost} d_{f,L}(u_1,u_2) \le K(\zeta)n^{-\zeta}.  \ee
\end{lemma}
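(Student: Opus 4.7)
}
The plan is to treat the set of ``hubs'' $H := \{v \in [n] \colon W_v \in I_{\mathrm{end}}\}$ as a densely interconnected backbone and to construct, for every pair $u_1,u_2\in H$, a short path of constant length $k$ consisting only of hub--hub edges whose edge-weight $L_e$ is very small. By Assumption~\ref{assu:mild-vertex} and Chernoff (Lemma~\ref{lem:chernoff}), $\Pv(W^{(n)}\in I_{\mathrm{end}})$ is dominated by the tail at the lower endpoint, which yields $|H|\ge c\, n^{\eta_0}$ whp, with
$\eta_0 := 1-(1-\delta)/(DC)$; and by~\eqref{eq:idelta} one can let $\eta_0$ approach $(3-\tau)/2$ by choosing $\delta$ small, $D$ close to $2/(\tau-1)$ and $C=1+\delta$. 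The weights of any two hubs satisfy $w_u,w_v\le n^{(1+\delta)/(D(\tau-1))}$, so the correction term $l(w_u)l(w_v)=\exp(-c_2(\log w_u)^\gamma-c_2(\log w_v)^\gamma)$ appearing in the lower bound of~\eqref{GIRGgeneral} (or~\eqref{GIRGgeneral-threshold}) is at least $\exp(-\mathrm{polylog}(n))=n^{-o(1)}$ because $\gamma<1$. Hence $\Pv(u\leftrightarrow v\mid W_u,W_v)\ge p_0:=n^{-o(1)}$ for every pair $u,v\in H$.

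Fix $t_n:=n^{-s}$ with $s>0$ to be chosen, and call an edge \emph{cheap} if it is present and $L_e\le t_n$. By the definition of $\beta^+$ and for any $\delta'>0$, for $n$ large enough $F_L(t_n)\ge t_n^{\beta^+/(1-\delta')}$, so any pair of hubs is joined by a cheap edge with probability at least $q:=n^{-o(1)-s\beta^+/(1-\delta')}$. Since $\deg(f)\beta^+<3-\tau$ by hypothesis, we can choose an integer $k$ with $\deg(f)\beta^+<(3-\tau)(1-1/k)$. Partition $H$ into $k$ layers $H_1,\dots,H_k$ of size $\lfloor|H|/k\rfloor$, and for a fixed pair $u_1,u_2\in H$ build a length-$k$ cheap path by sprinkling: set $S_0:=\{u_1\}$ and iteratively let $S_j\subseteq H_j$ be the set of vertices with a cheap edge to $S_{j-1}$. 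Because the edges exposed in different rounds involve disjoint pairs of layer-vertices, they are mutually independent conditionally on $(W_v)_{v\in H}$, and Chernoff gives $|S_j|\ge\tfrac12|H_j|\,q\,|S_{j-1}|$ (equivalently, $|S_j|\ge (Nq/(2k))^j$) as long as $\E|S_j|=\omega(\log n)$. A final round shows that $u_2$ has a cheap edge to $S_{k-1}$ with probability at least $1-\exp(-q\,|S_{k-1}|)\ge 1-\exp(-N^{k-1}q^k/(2k)^{k-1})$. The cost of the resulting path is at most $k\cdot t_n\cdot f_{\max}$, where $f_{\max}\le C_f\, n^{(1+\delta)\deg(f)/(D(\tau-1))}$ by combining the monomial bound on $f$ with the upper weight bound in $I_{\mathrm{end}}$.

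The main technical step is the parameter matching. For the sprinkling bound to survive a union bound over the at most $\binom{|H|}{2}\le n^2$ pairs of hubs, we need $N^{k-1}q^k=n^{\omega(1)}$, equivalently
\[
s\;<\;\frac{(k-1)\,(1-\delta')\,\eta_0}{k\,\beta^+}\;-\;o(1);
\]
for the cost bound $|\pi|_{f,L}\le K n^{-\zeta}$ we need $s>(1+\delta)\deg(f)/(D(\tau-1))+\zeta$. Eliminating $s$ and letting $\delta,\delta'\to 0$, $D\uparrow 2/(\tau-1)$, $C\downarrow 1$, both constraints collapse to $\deg(f)\beta^+<(3-\tau)(1-1/k)$, which is the defining property of our choice of $k$; this leaves a positive gap which we convert into the exponent $\zeta=\zeta(f,L)>0$ in~\eqref{eq:core-cost}. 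The main obstacle is thus not the construction itself but verifying that all parameters ($\delta,\delta',D,C,k,s,\zeta$) can be chosen simultaneously so that each round of sprinkling succeeds with failure probability $n^{-\omega(1)}$, uniformly over the at most $n^2$ pairs of hubs --- a bookkeeping exercise that goes through exactly because $\deg(f)\beta^+$ is strictly less than $3-\tau$.
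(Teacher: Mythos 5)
Your construction is viable and takes a genuinely different route from the paper's, although it rests on the same arithmetic. The paper fixes a single cost threshold $L_e\le n^{-(1+\eps)\deg(f)(1+\delta)/(D(\tau-1))}/\mathrm{mon}(f)$ (so that even one retained edge is cheap after multiplying by the maximal penalty on $I_{\mathrm{end}}$), shows that the retained graph on the hubs stochastically dominates an Erd\H{o}s--R\'enyi graph whose edge probability exceeds $\underline N^{\eta-1}$ for some $\eta>0$, and invokes Bollob\'as' theorem that such a graph is whp connected with diameter at most a constant $K(\eta)$; the positivity of the relevant exponent is verified via \eqref{eq:mubeta-3} at $s=0$. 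You instead fix the path length $k$ in advance and build a length-$k$ cheap path by layered sprinkling, which buys a self-contained argument (no external diameter theorem) at the price of more bookkeeping; both arguments ultimately produce a constant-length hub path whose length is governed by the gap in $\deg(f)\beta^+<3-\tau$, and both need the same two ingredients: that the minimum in the lower bound \eqref{GIRGgeneral} is attained at $l(w_u)l(w_v)$ for two hubs (this requires $w_uw_v\ge n\|x_u-x_v\|^d$, i.e.\ \eqref{eq:cdx2} at $s=1$ --- you should state this check explicitly, as the paper does), and a positive exponent from the hub density times the cheap-edge probability.

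The step you must repair is the parameter matching. In the setting of Lemma~\ref{lem:cheap-connect} the constants $\delta$, $C=1+\delta$ and $D\in\mathcal I_\delta$ are already fixed (they define $I_{\mathrm{end}}$ and are reused in the proof of Theorem~\ref{thm:GIRG-fine}), so you cannot ``let $\delta\to0$, $D\uparrow 2/(\tau-1)$, $C\downarrow 1$''; your only free parameters are $k$, $\delta'$, $s$ and $\zeta$. Fortunately the two constraints are compatible for the fixed $\delta,C,D$: sending $k\to\infty$, $\delta'\to0$, $\zeta\to0$ they reduce to
\[
\frac{(1+\delta)\deg(f)}{\tau-1} \;<\; \frac{D(1+\delta)-(1-\delta)}{\beta^+(1+\delta)},
\]
and since $\bigl(D(1+\delta)-(1-\delta)\bigr)/(1+\delta)=D-\tfrac{1-\delta}{1+\delta}>D-1\ge (D-1)(1-\delta)^2$, this is strictly implied by the defining inequality of $\mathcal I_\delta$ in \eqref{eq:idelta}, i.e.\ by \eqref{eq:mubeta-3} at $s=0$ (with $\mu+\nu$ playing the role of $\deg f$) --- exactly the inequality the paper's proof checks at the analogous point. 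The strict gap then lets you choose a finite $k$, a small $\delta'>0$ and a positive $\zeta$, and the per-pair failure probability $n^{-\omega(1)}$ survives the union bound over the at most $n^2$ hub pairs. With that rerouting (and the minor caveat that your recursion $|S_j|\ge\tfrac12|H_j|q|S_{j-1}|$ must be capped at saturation $|S_j|\le|H_j|$, which only helps), your proof goes through.
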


\section{Sideways explosion}\label{sec:sideways}
In this short section we prove Theorems~\ref{thm:mu-classification}(i) and \ref{thm:poly-classification}(i).
We start with a lemma.
\begin{lemma}\label{lem:inf1}
Consider $\Ilambda$ or $\SFPWL$ satisfying Assumption \ref{assu:h} with some $\alpha\le 1$. Let $[c,d]$ be an interval with $\Pv(W\in [c,d])>0$. Then almost surely, every vertex has infinitely many neighbors with weight in $[c,d]$. 
\end{lemma}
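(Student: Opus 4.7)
The plan is to reduce the statement to a single vertex by translation invariance, and then run a dyadic-annulus Borel--Cantelli argument exploiting the fact that edges are conditionally independent across disjoint regions of space. The key observation is that when $\alpha \le 1$, the long-range connection probability $(w_1w_2/\|x\|^d)^\alpha$ decays so slowly in $\|x\|$ that the expected number of $[c,d]$-weighted neighbors in a dyadic shell around a fixed vertex does not die out with the radius.

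First I would restrict to a single vertex: for $\Ilambda$, condition on $0 \in \CV_\lambda$ (by Palm calculus the remainder of the PPP is unchanged), and for $\SFPWL$ simply take the lattice vertex $0 \in \Z^d$. Once the claim is proved for this one vertex, translation invariance and a union bound over the a.s.\ countable vertex set give the result for every vertex simultaneously.

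Next I would partition $\R^d$ into dyadic shells $A_k := \{x \in \R^d : 2^k \le \|x\| < 2^{k+1}\}$, fix the weight $W_0 = w$, and let $N_k$ count the vertices $u$ with $x_u \in A_k$, $W_u \in [c,d]$, and $u \leftrightarrow 0$. For all sufficiently large $k$, the bound $(wc/\|x\|^d)^\alpha \le l_{c_2,\gamma}(w)\,l_{c_2,\gamma}(c)$ holds uniformly on $A_k$, so the lower bound in Assumption~\ref{assu:h} supplies an edge probability of at least $\underline{c}(wc)^\alpha 2^{-d\alpha(k+1)}$ to each such vertex. Multiplying by the expected number of $[c,d]$-weighted vertices in $A_k$ (proportional to $\lambda\, 2^{kd}\Pv(W\in[c,d])$ in the IGIRG case, and to the analogous lattice sum in the SFP case) yields
\[
\E[N_k \mid W_0 = w] \ \ge\ C(w)\cdot 2^{kd(1-\alpha)},
\]
which diverges when $\alpha<1$ and is bounded below by a positive constant when $\alpha=1$. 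Because the shells are disjoint, the relevant marks and edges in different shells are independent given $W_0$; for $\Ilambda$ the marking theorem makes each $N_k$ itself Poisson, and for $\SFPWL$ it is a sum of independent Bernoullis, so in either case $\Pv(N_k \ge 1 \mid W_0) \ge 1 - \exp(-\E[N_k \mid W_0])$ is bounded away from $0$ for all large $k$. The second Borel--Cantelli lemma applied to the independent events $\{N_k \ge 1\}$ then gives infinitely many such shells almost surely, proving the single-vertex statement.

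The hard part will be handling the boundary case $\alpha=1$: there $\E[N_k]$ merely stays bounded away from zero rather than diverging, so a Markov-style argument on a single large ball is not enough to produce infinitely many neighbors. It is essential to exploit the genuine independence across the dyadic shells, which is what turns ``positive probability per shell'' into ``infinitely many shells contribute almost surely'' via Borel--Cantelli. A minor secondary point is ensuring, for $\Ilambda$, that the extension from the origin to every vertex of $\CV_\lambda$ is legitimate: this follows from the Palm/translation-invariance framework and the a.s.\ countability of $\CV_\lambda$.
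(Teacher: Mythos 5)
Your proof is correct and follows essentially the same route as the paper: both arguments rest on the observation that for $\alpha\le 1$ the lower bound in Assumption~\ref{assu:h} (with the minimum attained at the polynomial term for large distances) makes the expected number of neighbors of the origin with weight in $[c,d]$ infinite, and then conclude via conditional independence and the second Borel--Cantelli lemma. Your dyadic-shell decomposition is merely a discretisation of the paper's radial integral; the paper applies Borel--Cantelli directly to the individual edge indicators rather than to per-shell events, which handles $\alpha<1$ and $\alpha=1$ uniformly.
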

\begin{proof}
We abbreviate $l(w):=l_{c_2, \gamma}(w)$. By translation invariance of the models, it is enough to show that the statement holds for the origin. 
Let $w_0 \in [1,\infty)$; we condition on the vertex-weight of the origin being $w_0$. Let us denote the number of neighbors of $0$ whose weights lie in the interval $[c,d]$ by $D_0[c,d]$. 
First consider $\Ilambda$. Conditioned on $W_0 = w_0$, $D_0[c,d]=\sum_{v_1\in \CV_\la} \ind\{0 \leftrightarrow v_1 \cap W_{v_1} \in [c,d]\}$ is a sum of independent indicator variables with expectation 
\[ \Ev\big[D_0[c,d]\mid W_0=w_0\big] =\Ev\Big[\sum_{v_1\in \CV_\la} \Ev\big[h_\mathrm{I}(v_1, w_0, W_{v_1})\ind\{W_{v_1}\in [c,d]\}\mid \CV_\la \big]\Big]. \]
Taking the conditional expectation with respect to $W_{v_1}$, using the lower bound on $h_{\mathrm I}$ in Assumption \ref{assu:h}, and denoting Lebesgue-Stieltjes integration wrt the measure of $F_W$ by $F_W(\mathrm dw_1)$, we obtain
\be \label{eq:d0cd}\Ev\big[D_0[c,d]\mid W_0=w_0\big]  \ge \underline c \int_{w_1\in[c,d]}  \Ev\Bigg[\sum_{v_1\in \CV_\la}  \Big(l(w_0) l(w_1) \wedge  \Big(\frac{w_0w_1}{\| v_1\|^d}\Big)^\al\Big)\Bigg]  F_W(\mathrm dw_1).
\ee
Whenever $\|v_1\|^d \ge w_0 w_1/(l(w_0)l(w_1))^{1/\al}:=K_{w_0, w_1}$, the minimum is attained at the second term. Hence the inner expectation, which we abbreviate to $T(w_0,w_1)$, can be bounded below by 
\be\label{eq:t01}  T(w_0, w_1)\ge  \Ev\Bigg[\sum_{v_1\in \CV_\la: \| v_1\|^d \ge K_{w_0, w_1}}    \Big(\frac{w_0w_1}{\| v_1\|^d}\Big)^\al\Bigg]
=\la (w_0w_1)^\al \int_{x\in \R^d: \| x\|^d\ge  K_{w_0,w_1} } \| x\|^{-d\al} \mathrm d\nu_d,
\ee
where $\mathrm d\nu_d$ denotes integration wrt the Lebesgue measure on $\R^d$. 
Changing variables yields 
\be\label{eq:t012}T(w_0, w_1) \ge \la (w_0w_1)^\al  \int_{r^d\ge K_{w_0,w_1}} r^{d-1}r^{-d\al} \mathrm dr =\infty. \ee
Since $\al\le 1$,  the integrand in \eqref{eq:d0cd} is infinite, and since we assumed that $\Pv(W\in [c,d])>0$, 
we obtain that $\Ev[D_0[c,d]\,|\,W_0=w_0]=\infty$. Conditioned on $W_0 = w_0$, $D_0[c,d]$ is a sum of independent indicators, so $D_0[c,d]$ itself is also infinite a.s.\ by the second Borel-Cantelli lemma.
For $\SFPWL$, the same argument applies, except that the integral in~\eqref{eq:t01} is replaced by a sum over all lattice points with $\| x\|^d\ge  K_{w_0,w_1}$. This also yields an infinite integrand in \eqref{eq:d0cd}.
 \end{proof}
 
 We use Lemma~\ref{lem:inf1} to prove the following stronger version of Theorem~\ref{thm:mu-classification}(i).
 
 \begin{theorem}\label{thm:explosive0} Consider the models $\Ilambda$ and $\SFPWL$ with $d\ge 1$, $\al\in(0,1]$, $\tau>1$, arbitrary vertex-weight distribution $W\ge 1$, and connection functions $h_{\mathrm{I}}, h_{\mathrm{S}}$ satisfying Assumption \ref{assu:h}. Consider an arbitrary penalty function $f(w_1, w_2)$, with the following property: there exist intervals $[a,b],[c,d]$ with $\Pv(W\in[a,b])\Pv(W\in[c,d])>0$ and $K<\infty$ such that $f(w_1, w_2)<K$ whenever $w_1\in[a,b], w_2 \in [c,d]$. 
 Then $\Ilambda$ and $\SFPWL$ are $(f,L)$-explosive for arbitrary edge-weight distribution $L$. Moreover, with positive probability, each vertex has a.s.\ infinitely many \emph{neighbors} within bounded cost. 
 \end{theorem}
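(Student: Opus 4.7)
The plan is to combine Lemma~\ref{lem:inf1} with the boundedness hypothesis on $f$. First, I would condition on the origin being a vertex (automatic for $\SFPWL$; we condition on $0\in\CV_\la$ for $\Ilambda$) and on the event $\{W_0\in[a,b]\}$, which has positive probability by hypothesis. By Lemma~\ref{lem:inf1} applied to the interval $[c,d]$, almost surely on this event the origin has infinitely many neighbours $v_1,v_2,\ldots$ whose weights lie in $[c,d]$. The penalty hypothesis on $f$ then ensures that the deterministic part of each cost satisfies $f(W_0,W_{v_i})\le K$, so the cost of the edge $(0,v_i)$ is at most $KL_{(0,v_i)}$.

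Next I would reduce the problem to a Borel--Cantelli argument for the edge lengths. Since $L\ge 0$ is an honest random variable we have $\lim_{s\to\infty}F_L(s)=1$, so we may pick $s>0$ with $p:=\Pr(L\le s)>0$. By construction in Definitions~\ref{def:SFP} and~\ref{def:IGIRG}, conditional on the vertex positions, vertex weights, and edge set, the lengths $\{L_e\}$ are i.i.d.\ copies of $L$, and in particular the variables $L_{(0,v_1)},L_{(0,v_2)},\ldots$ are i.i.d.\ and independent of the sequence $(v_i)_{i\ge 1}$. By the second Borel--Cantelli lemma, almost surely infinitely many of them satisfy $L_{(0,v_i)}\le s$. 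For each such $v_i$ we have $C_{(0,v_i)}\le Ks$, so the origin has infinitely many neighbours within cost $Ks$. This establishes sideways explosion with positive probability, and since sideways explosion implies $Y_f^{\mathrm I}(0)\le Ks<\infty$, also $(f,L)$-explosivity. The ``moreover'' claim follows for an arbitrary vertex by translation invariance (of the PPP for $\Ilambda$ and of $\Z^d$ for $\SFPWL$).

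There is essentially no technical obstacle beyond bookkeeping: the role of the hypothesis $\alpha\le 1$ enters only through Lemma~\ref{lem:inf1}, and the hypothesis $\Pr(W\in[a,b])\Pr(W\in[c,d])>0$ together with $f\le K$ on $[a,b]\times[c,d]$ is precisely what lets the deterministic penalty factor be absorbed into a constant. The only point requiring mild care is checking that the i.i.d.\ structure of the edge lengths survives the conditioning on the (random) subsequence of neighbours with weights in $[c,d]$; this is immediate because the edge lengths are drawn independently of the weights and the edge indicators, so conditioning on the identities and weights of the neighbours does not affect their joint law.
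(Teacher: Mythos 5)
Your proposal is correct and follows essentially the same route as the paper's proof: both condition on $W_0\in[a,b]$, invoke Lemma~\ref{lem:inf1} to get infinitely many neighbours with weight in $[c,d]$, and apply the second Borel--Cantelli lemma to the independent edge lengths to find infinitely many incident edges of bounded cost, yielding sideways explosion. Your extra remark on why the i.i.d.\ structure of the $L_e$ survives the conditioning is a sensible bit of bookkeeping that the paper leaves implicit.
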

 
 \begin{proof}
 	The result will follow from Lemma~\ref{lem:inf1}. Let us denote the set of neighbors of the origin with weight in $[c,d]$ by $\CN[c,d]$, and note that there exists $x < \infty$ such that $\Pv(L \le x) > 0$. Let $\calE_1$ be the event that the origin receives weight in $[a,b]$; let $\calE_2$ be the event that $|\CN[c,d]| = \infty$; and let $\calE_3$ be the event that infinitely many edges between $0$ and $v$ receive cost at most $K$. Then $\Pr(\calE_1) > 0$; Lemma~\ref{lem:inf1} implies that $\Pr(\calE_2 \mid \calE_1) = 1$; since the edge-weight variables $L_e$ are mutually independent; the second Borel-Cantelli lemma implies that $\Pr(\calE_3 \mid \calE_1,\,\calE_2) = 1$; and $\calE_3$'s occurrence implies sideways explosion. Thus the result follows.
\end{proof}

We now use Theorem~\ref{thm:explosive0} to prove Theorem~\ref{thm:poly-classification}(i).

\begin{proof}[Proof of Theorem \ref{thm:poly-classification}(i)]
The $\alpha \le 1$ case is immediate from Theorem~\ref{thm:explosive0}, so we may assume $\alpha > 1$. As before, we will prove the result for $\Ilambda$, and the proof for $\SFPWL$ will be analogous. We may bound the penalty function $f(w_1,w_2)$ above by $aw_1^{\mu}w_2^\nu$, where $\mu = \max\{\mu_i \mid i\in \CI\}$, $\nu = \max\{\nu_i\mid i\in\CI\}$, and $a>0$ is a suitable constant. 

Let $N_1^t:=|\{ (0,u) \in \CE_\la: C_{(0,u)}\le t \}|$ be the number of neighbors of the origin such that the edges leading to these neighbors have cost at most $t$. 
Let $w_0 > 0$ be arbitrary; we will show that conditioned on the weight $W_0$ of the origin being equal to $w_0$, $N_1^t = \infty$ almost surely, for all $t>0$. Thus explosion does not simply occur with positive probability --- it almost surely occurs instantaneously.
Observe that $N_1^{t}$ is again a sum of independent indicators: conditioned on $W_0 = w_0$, we have
\be\label{eq:n1t-222} N_1^{t} = \sum_{v_1\in \CV_\la} \ind\{0 \leftrightarrow v_1\}\ind\{f(w_0,W_{v_1}) L_{(0, v_1)}\le t\} \ge \sum_{v_1\in \CV_\la} \ind\{0 \leftrightarrow v_1\}\ind\{aw_0^\mu W_{v_1}^\nu L_{(0, v_1)}\le t\}. \ee
We first investigate the case when $\nu>0$. Using the law of total probability on the value of $W_{v_1}=:w_1$  as well as using the lower bound on $h_{\mathrm I}$ in Assumption \ref{assu:h} results in
\be \ba\label{eq:k=1}
		 \Ev\big[N_1^{t} \mid W_0=w_0\big]&\ge \int\limits_{w_1} F_L(ta^{-1}w_{0}^{-\mu} w_1^{-\nu})  \E\left[\sum_{v_1 \in \CV_\la}  \underline c \bigg(l(w_0)l(w_1)\wedge\frac{w_{0}w_1}{\norm{v_1}^d} \bigg)^\alpha\right] F_W(\mathrm dw_1).
		 \ea\ee
Exactly as in~\eqref{eq:d0cd}--\eqref{eq:t012} from the proof of Lemma~\ref{lem:inf1}, denoting the inner expectation by $T(w_0,w_1)$, we have
\[
	T(w_0,w_1) \ge \lambda(w_0w_1)^\alpha \int_{r^d \ge K_{w_0,w_1}} r^{d-1-d\alpha}\mathrm{d}r,
\]
where $K_{w_0,w_1} = w_0w_1/(l(w_0)l(w_1))^{1/\alpha}$. Unlike in~\eqref{eq:t012}, the rhs is integrable (since $\alpha>1$); for some constant $c$, we have
\be\label{eq:t-lower}  T(w_0, w_1) \ge \la c(w_0w_1)^\alpha K_{w_0,w_1}^{1-\alpha} = \la c w_0 w_1(l(w_0)l(w_1))^{(\al-1)/\al}.\ee
	By \eqref{eq:beta1},  for any $\ve>0$ there is a $t_0=t_0(\ve)$ such that the bound $F_L(x)\ge x^{\beta^++\ve}$ holds in the interval $[0, t_0)$. Let us write $b_\ve^+:=\beta^++\ve$. The argument of $F_L$ in \eqref{eq:k=1} is at most $t_0$ when $w_1\ge (t/(at_0))^{1/\nu}w_0^{-\mu/\nu}$, yielding the lower bound
	\be\ba  \label{eq:k=13} \Ev\big[N_1^{t}\mid W_0 = w_0\big]&\ge\la c \!\!\!\! \int\limits_{w_1\ge (t/(at_0))^{1/\nu}w_0^{-\mu/\nu}\vee 1}\!\!\!\! t^{b_\ve^+}a^{-b_\ve^+} w_0^{-\mu b^+_\ve} w_1^{-\nu b^+_\ve}w_0w_1(l(w_0)l(w_1))^{(\al-1)/\al} F_W(\mathrm dw_1)
	\\&=\la c (t/a)^{b_\ve^+}w_0^{1-\mu b_\ve^+} l(w_0)^{\frac{\al-1}{\al}} \Ev\Big[W_1^{1-\nu b_\ve^+} l(W_1)^{(\al-1)/\al}\ind_{\{ W_1\ge (t/t_0)^{1/\nu}w_0^{-\mu/\nu}\vee 1  \}}\Big]
	\ea
	\ee	 
We claim that for $\ve>0$ small enough, the last expectation is infinite. Indeed, $l(W_1)$ is varies slowly at infinity, so when $W_1$ is sufficiently large, $l(W_1)^{(\alpha-1)/\alpha} \ge W_1^{-\ve}$ by Potter's bound.  Then applying Karamata's theorem~\cite[Proposition~1.5.10]{BinGolTeu89} for $1-F_W(w)=l_W(w)w^{1-\tau}$, for some constant $c$ and for any sufficiently large constant $K$, we obtain
\[\ba \Ev[W^{1-\nu b_\ve^+} l(W_1)^{(\al-1)/\al}\ind\{W_1\ge K\}] &\ge \Ev[W_1^{1-\nu b_\ve^+-\ve} \ind\{W_1\ge K\}] &\\
&\ge c\int_K^{\infty} w^{-\ve-\nu b_\ve^++(1-\tau)} l_W(w) \mathrm d w=\infty\ea \]
as long as $\ve$ is so small that $-\ve-\nu (\beta^++\ve)+(1-\tau)>-1$. Such an $\eps$ exists whenever $\beta^+<(2-\tau)/\nu$, which is satisfied by our assumption on $\{\nu_i\colon i \in \calI\}$.
 To finish, given $w_0$, the presence of edges going out of the origin are conditionally independent, hence the second Borel-Cantelli lemma ensures that $N_1^t \mid W_0=w_0$ is a.s.\ infinite (regardless of the value $w_0$).

 Next we consider the case when $\nu=0$.
 In this case we return to \eqref{eq:n1t-222} and observe that the last indicator does not depend on $W_{v_1}$. 
 This implies that a factor $F_L(ta^{-1}w_{0}^{-\mu})$ can be taken out of the integral on the rhs of \eqref{eq:k=1}, which, combined with the lower bound on $T(w_0,w_1)$ in \eqref{eq:t-lower}, results in
 \be\ba \label{eq:N1tlower}
		 \Ev\big[N_1^{t}\mid W_0 = w_0\big]&\ge F_L(tw_{0}^{-\mu}) \int\limits_{w_1}   \la   c w_0 w_1(l(w_0)l(w_1))^{(\al-1)/\al} F_W(\mathrm dw_1).\\
		& = F_L(tw_{0}^{-\mu}) \la   c w_0l(w_0)^{(\al-1)/\al}\Ev[ W_1l(W_1)^{(\al-1)/\al}].
		  \ea\ee
		  Since $\alpha>1$, the latter expectation is infinite whenever $\tau\in(1,2)$.
\end{proof}
	\section{Understanding explosion to show conservativeness}\label{sec:cons}
	
In this section we prove Theorem~\ref{thm:mu-classification}(iii). Somewhat counter-intuitively, to be able to show that a model is conservative, we need to better understand the ways in which a model can explode. 
We start with two general lemmas about explosion.   
	Recall the definition of the explosion time from Definition \ref{def:explosiontime}, and that there are two non-exclusive ways for a vertex $v$ to have finite explosion time. In \emph{sideways explosion}, we can reach infinitely many vertices via paths with bounded cost \emph{and} bounded length from $v$. It is not hard to see that this is equivalent to the presence of a (possibly trivial) finite-cost path from $v$ to some vertex $w$ that has infinitely many neighbors via bounded-cost edges.  Formally, we modify the notation introduced in the previous section: $N_{1}^t(v):=|\{ (v,u) \in \CE_\la: C_{(v,u)}\le t \}|$. Then sideways explosion is the event that 	\be\label{eq:sideways1} \text{there is $t>0$ and a finite path from $v$ to a vertex $v'$ such that $N_1^t(v') = \infty$.} \ee
	We remark that sideways explosion does \emph{not} just mean that there are infinitely many vertices which have both bounded graph distance and bounded cost-distance from $v$. For such vertices, even if the graph distance from $v$ is bounded, this does not imply that the paths attaining the minimal cost-distances are also unbounded.
	
	The second possibility is \emph{lengthwise explosion}, in which there is an infinite path of vertices $\pi=(\pi_0=v, \pi_1, \pi_2\dots)$ with total cost $|\pi|_{f,L}<\infty$, i.e. 
	\be \wit Y_f^{\mathrm I}(v):= \inf_{\pi: \pi_0=v, |\pi|=\infty}\{|\pi|_{f,L}\} < \infty. \ee

	We will use the next lemma to rule out sideways explosion in SFP and IGIRG in the situation of Theorem~\ref{thm:mu-classification}(iii), and to find a specific path $\pi$ attaining the explosion time $\wit Y_f^{\mathrm I}(v)=Y_f^{\mathrm I}(v)$. The assumption $\tau\in (1,3)$ in the second statement ensures that there is an infinite component $\calC_\infty$ (this can be seen by Lemma~\ref{lem:bipartite}). We mention that the infinite component is unique; for scale-free percolation, uniqueness of an infinite component in any $d\ge1$ follows from the main result in \cite{GanKeaNew92}, and for IGIRG (i.e., continuum percolation), it follows from \cite{MeeRoy96}, see also \cite{DeiHofHoo13, DepWut18}. 
	\begin{lemma}\label{lem:opt-real}Consider $\Ilambda$ or $\SFPWL$ with parameters $d\ge 1, \tau >1, \al\in(1,\infty]$. Consider a penalty function $f$, and edge-length distribution $L \ge 0$ such that for all $t<\infty$, $\Pv(N_1^t(0) <\infty)=1$ holds. Then sideways explosion almost surely does not happen. 
	Moreover, if $\tau \in (1,3)$, then for any vertex $v$ in the infinite component, $\wit Y_f^{\mathrm I}(v)= Y_f^{\mathrm I}(v)$ is \emph{realised} via (at least one) infinite path $\pi_{\mathrm{opt}}(v)$. 
	\end{lemma}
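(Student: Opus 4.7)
My plan is to split the lemma into its two distinct claims and attack them with different techniques. The first claim (absence of sideways explosion) reduces to a translation-invariance argument: the hypothesis $\Pv(N_1^t(0)<\infty)=1$ can be transferred from the origin to \emph{all} vertices simultaneously, so that almost surely no vertex $v'$ with $N_1^t(v')=\infty$ exists anywhere; by the characterisation \eqref{eq:sideways1} this immediately rules out sideways explosion. The second claim (existence of an optimising infinite path) will be handled by combining K\"onig's lemma applied to the shortest-cost-paths tree rooted at $v$ with a diagonal extraction across a sequence of $\eps$-optimal paths.

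For the first claim, in $\SFPWL$ the vertex set is the countable lattice $\Z^d$, and by stationarity under integer shifts together with the hypothesis, $\Pv(N_1^t(v)=\infty)=0$ for every fixed $v \in \Z^d$ and $t \ge 0$; a union bound over the countable set $\Z^d \times \Q_{>0}$ combined with the monotonicity of $N_1^t$ in $t$ yields that almost surely $N_1^t(v)<\infty$ for \emph{all} $v$ and $t$. For $\Ilambda$ I would use the Mecke/Campbell formula for the Poisson process: for any bounded Borel $B\subset\R^d$,
\[ \Ev\big[\#\{v\in \CV_\la \cap B: N_1^t(v)=\infty\}\big]=\la\int_B \Pv^0(N_1^t(0)=\infty)\,\mathrm dx=0, \]
by translation invariance of the Palm distribution; covering $\R^d$ by countably many bounded boxes and taking a countable union over rational $t$ finishes the argument.

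For the second claim, the easy inequality $Y_f^{\mathrm I}(v)\le \wit Y_f^{\mathrm I}(v)$ is immediate from the definitions, since any infinite path of cost $T$ places infinitely many vertices in $B^{f,L}(v,T)$. For the reverse direction, assume $Y:=Y_f^{\mathrm I}(v)<\infty$ (otherwise the claim is vacuous, using that $\tau\in(1,3)$ provides the infinite component in which $v$ lives so that some infinite path from $v$ exists at all). For each $t>Y$ I would form a shortest-cost-paths tree $T_t\subseteq B^{f,L}(v,t)$ rooted at $v$, using an arbitrary deterministic tie-breaking rule. This tree is infinite because $|B^{f,L}(v,t)|=\infty$ for $t>Y$, and it is locally finite because the number of children of $u$ in $T_t$ is at most $N_1^t(u)$, which is finite by part one. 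K\"onig's lemma then yields an infinite ray in $T_t$, and since every finite prefix of the ray is by construction a shortest-cost path into $B^{f,L}(v,t)$, its cost is bounded by $t$. To actually realise the infimum at $t=Y$, I would pick $t_n\downarrow Y$, produce rays $\pi^{(n)}$ of cost $\le t_n$, and diagonally extract: the finiteness of $N_1^{t_1}(v)$ forces infinitely many $\pi^{(n)}$ to share a common first edge; then finiteness of $N_1^{t_1}$ at the new vertex forces a common second edge along a sub-subsequence; iterating and diagonalising yields an infinite path $\pi_{\mathrm{opt}}$ whose every finite prefix has cost bounded by $t_n$ for arbitrarily large $n$, hence by $Y$.

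The main obstacle is this last diagonal extraction. One must check both that the limit path is genuinely \emph{infinite} (rather than collapsing to a finite walk), which is what local finiteness from part one guarantees, and that the \emph{total} cost of the limit is at most $Y$ rather than simply each finite prefix being bounded by $t_n$; the latter follows because costs along a path are nondecreasing and each prefix cost is eventually $\le Y+o(1)$, but the order of quantifiers has to be handled carefully to ensure the bound passes to the limit of prefix costs.
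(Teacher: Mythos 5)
Your first claim is fine: ruling out sideways explosion by transferring the hypothesis from the origin to all vertices (union bound over $\Z^d\times\Q_{>0}$ for SFP, Mecke/Campbell formula for the Poisson process in IGIRG) is a correct and in fact slightly more direct route than the paper's, which instead combines translation invariance with an induction on path length showing $|\Gamma_j^T(v)|<\infty$ for all $j$. Your treatment of the PPP via the Palm distribution is a clean way to handle the ``uncountably many potential vertices'' issue that the paper passes over with a symmetry remark.

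The second claim has a genuine gap at the very first step: you ``form a shortest-cost-paths tree $T_t\subseteq B^{f,L}(v,t)$'', which presupposes that for every $u\in B^{f,L}(v,t)$ the infimum $d_{f,L}(v,u)$ is \emph{attained} by some path. That is not guaranteed here, and it is exactly the subtlety the paper's proof is built around (``non-trivial when $L=0$ happens with positive probability''; more generally whenever $L$ puts mass arbitrarily close to $0$, which is the regime where $Y_f^{\mathrm I}(v)<\infty$ is possible at all). Finite cost-degrees do not rescue you: one can have two infinite rays of summable edge costs emanating from $v$ and ending at $u$, bridged at level $i$ by edges of cost slightly exceeding the remaining tail costs, so that the path costs decrease strictly to $d_{f,L}(v,u)$ without ever attaining it, while every vertex has degree at most $3$. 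So $T_t$ need not exist, and ``tie-breaking'' does not address this. The paper avoids the problem by never invoking optimal finite paths: it constructs vertices $v_k^\star$ at cost-distance exactly $\sigma_f^{\mathrm I}(v,k)$ via a nested intersection of the closed non-empty sets $\CN_k(\eps)$ as $\eps\downarrow 0$, and only then extracts an infinite ray from the locally finite BFS tree on $(v_k^\star)$. Your argument is repairable with a small change in the same spirit as your own diagonal step: for $t_n\downarrow Y$ fix $t_n'\in(Y,t_n)$, and for each of the infinitely many $u\in B^{f,L}(v,t_n')$ pick \emph{any} path of cost $\le t_n$ (these exist by the definition of the infimum); the tree of prefixes of these paths inside the self-avoiding-walk tree of $\CG(t_n)$ is infinite and locally finite (local finiteness is exactly your $N_1^{t_n}(u)<\infty$), so K\"onig's lemma yields a ray of cost $\le t_n$, and your diagonal extraction across $n$ then produces $\pi_{\mathrm{opt}}$ of cost $\le Y$, which together with the easy inequality $Y\le\wit Y_f^{\mathrm I}(v)$ gives equality and realisation. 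As written, however, the proposal asserts a structure (the exact shortest-path tree) that may fail to exist, so the proof is incomplete at that point.
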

	We defer the proof to Appendix \ref{s:app-exp}, but we make some comments. The path $\pi_{\mathrm{opt}}(v)$ may not be unique -- this might occur e.g.\ when $L,W$ are not absolutely continuous distributions, or if $Y_f^{\mathrm I}(v) = \infty$: then, any infinite path can be chosen since they all have infinite cost.
	Second, the negation of the condition ``$\forall t<\infty, \ \Pv(N_1^t(0)<\infty)=1$'' is that for some $t<\infty$ we have $\Pv(N_1^t(0)=\infty)>0$, so sideways explosion happens with positive probability at the origin.
	
The next lemma shows that if lengthwise explosion may happen at all, then it may happen arbitrarily~fast. A similar result for age-dependent branching processes was proved by Grey~\cite{grey1974}. However, that proof relies on an independent subtree decomposition which is not applicable to spatial random graphs, so we need to use different methods.
	\begin{lemma}\label{lem:short-expl}
		Consider $\Ilambda$ or $\SFPWL$, with a penalty function $f$ and edge-length distribution $L \ge 0$, such that explosion occurs with positive probability, but that for all $t<\infty$, $  \Pv(N_1^t(0)<\infty)=1$. Then for all constant $t_0>0$, with positive probability there is an infinite path from the origin with total cost at most $t_0$. 
	\end{lemma}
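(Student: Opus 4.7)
The plan is to establish that $q(\eps) := \Pv_0(\wit Y_f^{\mathrm I}(0) < \eps) > 0$ for every $\eps > 0$, where $\Pv_0$ denotes the Palm distribution obtained by conditioning on $0 \in \CV_\la$. This suffices: taking $\eps < t_0$ then produces an infinite path from $0$ of cost below $t_0$ with positive probability. Under the hypothesis $\Pv(N_1^t(0) < \infty) = 1$ for all $t$, Lemma~\ref{lem:opt-real} (or, more generally, a K\"onig's lemma argument applied to the shortest-cost-path tree from $0$, which is locally finite because $N_1^t(v) < \infty$ a.s.\ for every vertex and every $t$) ensures that on the explosion event there is an infinite path from $0$ of finite total cost, so $q(\infty) > 0$ by assumption. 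For $\SFPWL$ the argument is parallel but simpler, with Palm calculus replaced by $\Z^d$-translation invariance combined with countable subadditivity over the deterministic vertex set $\Z^d$.

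The deterministic engine of the proof is the observation that if $\pi = (0, v_1, v_2, \dots)$ has $|\pi|_{f,L} = T < \infty$, then the tail cost $\sum_{i \ge k} C_{(v_i, v_{i+1})} \to 0$ as $k \to \infty$. Hence for any $\eps > 0$ there exists $k$ with $\wit Y_f^{\mathrm I}(v_k) < \eps$, witnessed by the suffix of $\pi$ from $v_k$. So on the positive-probability explosion event, $\CV_\la$ contains at least one vertex with arbitrarily small $\wit Y_f^{\mathrm I}$; the remainder of the argument will derive a contradiction from the assumption $q(\eps_0) = 0$ for any $\eps_0 > 0$.

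Suppose for contradiction that $q(\eps_0) = 0$ for some $\eps_0 > 0$. Applying Campbell--Mecke to the marked PPP $\CV_\la$ (with marks for vertex weights, edge weights, and incident edges) yields, for every bounded $B \subset \R^d$,
\[
\Ev\Big[\#\{v \in \CV_\la \cap B : \wit Y_f^{\mathrm I}(v) < \eps_0\}\Big] = \la \nu_d(B)\, q(\eps_0) = 0,
\]
so sending $B \uparrow \R^d$ shows that $\Pv$-a.s.\ no vertex of $\CV_\la$ has $\wit Y_f^{\mathrm I}(v) < \eps_0$. To transfer this statement to $\Pv_0$, let $G$ be the IGIRG under $\Pv_0$ and $G' := G \setminus \{0\}$ its restriction to $\CV_\la \setminus \{0\}$; by Slivnyak, $G'$ under $\Pv_0$ has the same law as the IGIRG under $\Pv$, so $\Pv_0$-a.s.\ no $v \in \CV_\la \setminus \{0\}$ satisfies $\wit Y_f^{\mathrm I, G'}(v) < \eps_0$. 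Any witnessing infinite path in $G$ from some $v \ne 0$ either avoids $0$, in which case it lies in $G'$ and contradicts the previous sentence, or it passes through $0$, in which case its cost is bounded below by the cost of its suffix from $0$, hence by $\wit Y_f^{\mathrm I, G}(0) \ge \eps_0$ (using $q(\eps_0) = 0$ applied to the vertex $0$). Either way we get $\wit Y_f^{\mathrm I, G}(v) \ge \eps_0$, so $\Pv_0$-a.s.\ \emph{every} vertex of $\CV_\la$ satisfies $\wit Y_f^{\mathrm I}(v) \ge \eps_0$, contradicting the vertex produced on the positive-probability explosion event. The main obstacle is this Palm-to-$\Pv$ transfer, since the extra Palm vertex at $0$ could a priori create new short infinite paths from other vertices; it is resolved by the path-through-$0$ decomposition above together with the lower bound $\wit Y_f^{\mathrm I, G}(0) \ge \eps_0$ furnished by the assumption $q(\eps_0) = 0$.
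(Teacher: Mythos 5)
Your proof is correct, but it takes a genuinely different route from the paper. Both arguments share the same deterministic engine: on the explosion event there is (by Lemma~\ref{lem:opt-real} or a K\"onig-type argument using $N_1^t<\infty$) an infinite path of finite cost from the origin, and its tails show that some vertex on it has explosion time below any prescribed $\eps_0$. The paper then brings this back to the origin \emph{constructively}: it fixes a Euclidean ball $B^2(0,r_0)$ that contains such a fast-exploding vertex with probability at least $p_Y/2$, and shows that, conditionally on the configuration outside the ball, the origin connects to every vertex of the ball within cost $t_0/2$ with positive probability --- a step that needs a careful exposure argument and, for $\alpha=\infty$, a two-round thinning trick to avoid conditionally isolated vertices. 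You instead argue by stationarity: if $\Pv_0(\wit Y_f^{\mathrm I}(0)<\eps_0)=0$, the Mecke/Slivnyak identity (for $\SFPWL$, just $\Z^d$-translation invariance and countable subadditivity) forces that almost surely \emph{no} vertex of the process explodes within $\eps_0$, contradicting the tail observation; your handling of the Palm-to-stationary transfer --- splitting witnessing paths according to whether they pass through the Palm point, and bounding the through-$0$ case by $\wit Y_f^{\mathrm I}(0)\ge\eps_0$ from the contradiction hypothesis --- is exactly the point that needed care, and it works. The trade-offs: the paper's local construction is more hands-on, reuses a template that reappears in the finite-size arguments, and avoids Palm calculus; your argument is shorter, uniform in $\alpha$ (no threshold-case thinning needed), and closer in spirit to Grey's branching-process result with independence of subtrees replaced by stationarity, at the price of invoking the marked-PPP Mecke equation (with the usual, standard measurability bookkeeping for $\wit Y$) and of being non-constructive.
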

	\begin{proof}
	We first prove the lemma for $\Ilambda$, and then we discuss how to modify the proof for $\SFPWL$. 
	For brevity we write $Y:=Y_{f}^{\mathrm{I}}(0)$ for the (possibly infinite) explosion time of the origin. 
		Let $p_Y := \Pr[Y < \infty]$, and note that $p_Y >0$ by hypothesis. We first show that with positive probability, there exists some vertex $v(t_0/2)$ in a suitably large Euclidean ball with explosion time at most $t_0/2$. We then show that conditioned on this event, again with positive probability, the origin is joined to $v(t_0/2)$ with a path of cost at most $t_0/2$; we thereby obtain a path to infinity of cost at most $t_0$, as required.

		We first find the required ball. For all $r, x>0$, let $\calA_r^{x}$ be the event that there exists a vertex $v=v(x)$ in the Euclidean ball $B_r^2(0)$
		from which there is an infinite path of cost at most $x$. Note that we may assume that $v(x)$ is the only vertex on this path that lies in $B_r^2(0)$, since a.s. there are only finitely many vertices in $B_r^2(0)$ and thus we may truncate the beginning of the path if necessary. We will prove that:
		\begin{align}\label{eq:short-expl1}
		\text{For all $x >0 $, there exists $r(x)<\infty$ such that $\Pr[\calA_{r(x)}^{x}] \geq p_Y/2$.}
		\end{align}
		To prove~\eqref{eq:short-expl1}, we first define a random variable $R(x)$ as follows. If $Y = \infty$, then we define $R(x) := \infty$ also. Otherwise, by Lemma~\ref{lem:opt-real}, there a.s.\ exists at least one path from $0$ with cost $Y < \infty$. We wish to choose one such path in a well-defined way, so we order them lexicographically according to the Euclidean norms of their vertices, and take $\pi_{\mathrm{opt}}(0)$ to be the first path in the order. (Note that a.s.\ every vertex in $\CV_\la$ has a different norm, so the order is a.s.\ unique.) Let $v_i = v_i(x)$ be the (random) first vertex on $\pi_{\mathrm{opt}}(0)$ such that the subpath $(v_i(x),v_{i+1},\ldots)$ of $\pi_{\mathrm{opt}}(0)$ has cost at most $x$. Then we define $R(x) := \|v_i(x)\|$.
		For all $x>0$, $R(x)$ is a well-defined random variable, with $\Pr[R(x) < \infty] = \Pr[Y <\infty] = p_Y >0$. Hence there exists $r(x)>0$ such that $\Pr[R(x) \leq r(x)] \geq p_Y/2$. 
		 Finally, on the event $\{R(x) \leq r(x)\}$, we set $v(x)$ to be the last vertex on $(v_i(x),v_{i+1},\dots)$ that is still within $B^2_{r(x)}(0)$.
Since the event $\{R(x) \leq r(x)\}$ contains the event $\calA_{r(x)}^{x}$, this proves~\eqref{eq:short-expl1}.
		
		Now fix $t_0 > 0$, and for brevity write $r_0 := r(t_0/2)$ and $\calA := \calA_{r_0}^{t_0/2}$; thus $\Pv(\calA)\ge p_Y/2$ by \eqref{eq:short-expl1}.
		Now, let $\calB$ be the event that the origin is connected to every vertex in $B^2(0,r_0)$ via a path of cost at most $t_0/2$. (Since the number of vertices in $B^2(0,r_0)$ follows a Poisson distribution with mean $\nu(B^2(0, r_0))$, it is a.s. finite.) Observe that  if $\calA \cap \calB$ occurs, then by combining the low-cost path from the origin to $v(t_0/2)$ with $\pi_\mathrm{opt}(t_0/2)$, we obtain an infinite path from the origin with total cost at most $t_0$. Thus it suffices to prove that $\Pr(\calA \cap \calB) > 0$. 
		
		Let $F\Ext$ be the set of all edges not internal to $B^2(0,r_0)$, let $L\Ext := \{L_e \colon e \in F\Ext\}$, and observe that the event $\calA$ is \emph{determined} by the variables $\CV_\lambda, (W_v)_{v\in\CV_\la}, F\Ext$, and $L\Ext$. If $\alpha < \infty$, then the connection probability $h_{\mathrm I}$ is nonzero for all its arguments, and so almost surely,
		\[ q(\CV_\lambda,\,(W_v)_{v\in\CV_\la},\,F\Ext,\,L\Ext) := \Pr(\calB \mid \CV_\lambda,\,(W_v)_{v\in\CV_\la},\,F\Ext,\,L\Ext) > 0.\] In particular, there is a measurable set $S$ of values of $\CV_\lambda,\,(W_v)_{v\in\CV_\la},\,F\Ext,\,L\Ext$ that has positive probability and such that $S$ implies $\calA$ and $q>0$ on $S$. Hence, $\Pr[\calA \cap \calB] \geq \int_S q > 0$, and we are done. However, if $\alpha = \infty$, then it is no longer true that $q >0$ almost surely. For example, if the vertices in $B^2(0,r_0)$ have low weight, and there is a vertex $w \in B^2(0,r_0)$ which is far from any other vertices in Euclidean space, then it is a.s.\ isolated in $B^2(0,r_0)$. To deal with this issue, we pass to a thinned model to expose $\CV_\lambda \cap B^2(0,r_0)$ in two rounds.
		
		Let $\calG \sim \Ilambda$. We form $\calG\thin$ from $\calG$ by discarding \emph{vertices} in $\CV_\la \cap B^2(0,r_0)$ independently with probability $1/2$, and let $V\thin := V(\calG\thin)$ and $\mathcal W\thin := \{W_v \colon v \in V\thin\}$. Thus $\calG\thin$ is distributed as a thinned model of $\Ilambda$ in which the density of the Poisson point process is reduced to $\lambda/2$ within $B^2(0,r_0)$. Let $\calA\thin$ be the analogue of $\calA$ in $\calG\thin$, and let $\calB\thin$ be the event that in $\calG$, the origin is connected to every vertex in $V\thin \cap B^2(0,r_0)$ via a path of cost at most $t_0/2$. Observe that for all possible values of $\CV_\la$, $(W_v)_{v\in\CV_\la}$, $F\Ext$ and $L\Ext$ with $|\CV_\la \cap B^2(0,r_0)| < \infty$, we have 
		\[
			\Pr(\calG\thin = \calG \mid \CV_\la = V,\, (W_v)_{v\in\CV_\la},\,F\Ext,\,L\Ext) = 2^{-|V \cap B^2(0,r_0)|} > 0;
		\] 
		Thus, 
		\begin{equation}\label{eqn:short-expl2-2}
		\Pr(\calA\thin) \ge \Pr(\calA \mbox{ and }\calG\thin = \calG) > 0.
		\end{equation}
Now define $F\thin$ to be the set of all edges of $\calG\thin$ not internal to $B^2(0,r_0)$, and let $L\thin := \{L_e \colon e \in F\thin\}$. Then $\calA\thin$ is determined by $V\thin$, $\mathcal W\thin$, $F\thin$ and $L\thin$. Conditioned on arbitrary values of these variables, $V(\calG) \setminus V\thin$ is distributed as a Poisson point process on $B^2(0,r_0)$ with intensity $\lambda/2$, and the edges between $V(\calG) \setminus V\thin$ and $V\thin$ and their costs are distributed as usual in the $\Ilambda$ model. Thus, almost surely these variables take values such that $\Pr(\calB\thin \mid V\thin,\,\mathcal W\thin,\,F\thin,\,L\thin) > 0$. (For example, $\calB\thin$ occurs whenever $V(\calG) \setminus V\thin$ contains a suitably dense net for $B^2(0,r_0)$ which connects the origin to all vertices in $V\thin$ via suitably cheap edges.) Together with~\eqref{eqn:short-expl2-2}, this implies $\Pr(\calA\thin \cap \calB\thin) > 0$, just as before in the case $\alpha < \infty$. Since $\calA\thin \cap \calB\thin$ implies $Y \le t_0$, this concludes the proof for the IGIRG model.
		
		In the SFP model, the argument proceeds essentially as in the $\alpha < \infty$ case of the IGIRG proof. As before, there exists $r_0$ such that with positive probability, there is an infinite path from some vertex in $B^2(0,r_0)$ to infinity with total cost at most $t_0/2$; call this event $\calA$. Let $\calB$ be the event that every nearest-neighbor edge in $B^2(0,r_0)$ has cost at most $t_0/2dr_0$. Whenever $\calB$ occurs, the origin is joined to every vertex in $B^2(0,r_0)$ via a path of cost at most $t_0/2$, so 
		\[
			\Pr(Y \le t_0) \ge \Pr(\calA \cap \calB).
		\]
		As in the proof for IGIRG, $\calA$ is determined by $( W_{v})_{v\in\Z^d}$, $F\Ext$ and $L\Ext$. The values of these variables are almost surely such that $\Pr(\calB\mid\, (W_{v})_{v\in\Z^d},F\Ext,\,L\Ext) > 0$, even if $\alpha = \infty$, so $\Pr(\calA \cap  \calB) > 0$ and the result follows.
	\end{proof}
 We move towards the proof of Theorem~\ref{thm:mu-classification}(iii). To show this theorem, we count certain paths that we define now. 
 Let us define a subgraph $\CG(t_0)$ of $\Ilambda$ by keeping only the  edges with cost at most $t_0$. To structure the paths emanating from the origin $0=:v_0$ within $\CG_0$, we define the \emph{self-avoiding walk tree}, $\saw$ of $\CG(t_0)$ as follows. 
 
 The root of $\saw$ is the trivial path $\pi_0:=(v_0)$. The direct children of the root are paths of length $1$ of the form $\pi_1=(v_0, v_1)$; where we set the cost of the edge between $\pi_0$ and $\pi_1$ to be $C_{(v_0, v_1)}$. Generally, vertices of $\saw$ are the finite simple paths in $\CG(t_0)$ emanating from $v_0=0$, where a path $\pi_k=(v_0,\dots, v_k)$ in the $k$th level of the tree  is connected to a path $\pi'_{k+1}=(v_0',\dots, v'_{k+1})$ in the $k+1$th level if and only if $v_i = v_i'$ for all $i \le k$, (that is, $\pi'_{k+1}$ is the continuation of $\pi_k$). The cost of the edge between $\pi_k$ and $\pi_{k+1}$ is then set to  $C_{(v_k,v'_{k+1})}$. Observe that the cost-distance of any path $\pi_k$ from $\pi_0$ within $\saw$ equals, by construction, $|\pi_k|_{f,L}$, the cost of the path itself in $\CG(t_0)$ and in $\Ilambda$.
For $k \ge 1$, let $N_k^{t_0}(0)$ be the number of vertices in the $k$'th level of $\saw$ (i.e., the number of paths of length $k$ emanating from $0$).
	The next lemma provides an exponentially decaying bound on the expected number of such paths.
\begin{lemma}\label{lemma:decay}
Consider $\Ilambda$ under the conditions of Theorem~\ref{thm:mu-classification}(iii). Let $\beta^-$ be as in~\eqref{eq:beta2}. Let $\ve>0$ be such that $\Ev[W^{2-2\mu (\beta^--\ve)}]<\infty$. Let $t_0$ be such that 
 \begin{equation}\label{eqn:cons-t}
		\mbox{for all $x \in [0,t_0]$, $F_L(x) \le x^{\beta^--\eps}$.}
		\end{equation}
		Then, with $b_\ve:=\beta^--\ve$, for some constant $C_2<\infty$
	\be\label{eq:cons-altogether-2}
		\Ev[N_k^{t_0}(0) \mid W_0 = w_0] 
		\le w_0^{1-\mu b_\ve} ( \la C_2)^k  t_0^{ b_\ve k} \E[W^{2-2\mu b_\ve}]^{k-1}\E[W^{1-\mu b_\ve}].
	\ee
	The same remains true for $\SFPWL$ if we omit the factor $\lambda^k$ in~\eqref{eq:cons-altogether-2}.
	\end{lemma}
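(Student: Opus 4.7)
The plan is to upper-bound $N_k^{t_0}(0)$ by the number of ordered $k$-tuples of distinct vertices $(v_1,\dots,v_k)\in\CV_\la^k\setminus\{0\}$ that form a walk $(0=v_0,v_1,\dots,v_k)$ in $\CG(t_0)$; this dominates the count of SAWs in $\saw$ at level $k$. By the multivariate Mecke/Campbell formula for the underlying Poisson process, conditioned on $W_0=w_0$,
\[
\Ev\bigl[N_k^{t_0}(0)\mid W_0=w_0\bigr] \;\le\; \la^k \int_{(\R^d)^k}\!\!\int_{[1,\infty)^k} \prod_{i=1}^k q(v_{i-1},v_i,w_{i-1},w_i)\, \prod_{i=1}^k F_W(\mathrm{d}w_i)\, \mathrm{d}v_i,
\]
where $v_0=0$ is fixed and $q(v_{i-1},v_i,w_{i-1},w_i)$ is the joint probability that an edge $(v_{i-1},v_i)$ is present in $\Ilambda$ and has cost at most $t_0$. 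Because the edge indicators and edge-lengths $L_e$ are mutually independent given positions and weights, $q$ factorises over $i$ inside the integrand, so the $k$-fold integral will reduce to a product of identical single-edge estimates.

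The single-edge factor admits the upper bound
\[
q(v_{i-1},v_i,w_{i-1},w_i) \;\le\; \overline{c}\Bigl(1\wedge (w_{i-1}w_i/\|v_i-v_{i-1}\|^d)^\alpha\Bigr)\cdot F_L\bigl(t_0(w_{i-1}w_i)^{-\mu}\bigr),
\]
using Assumption~\ref{assu:h} for the first factor and the cost constraint $L_e\le t_0(w_{i-1}w_i)^{-\mu}$ for the second. By~\eqref{eqn:cons-t}, the second factor is at most $t_0^{b_\ve}(w_{i-1}w_i)^{-\mu b_\ve}$, where $b_\ve:=\beta^--\ve$. Translation invariance reduces the spatial integral $\int_{\R^d}(1\wedge (w_{i-1}w_i/\|v_i-v_{i-1}\|^d)^\alpha)\,\mathrm{d}v_i$ to a function of $w_{i-1}w_i$ alone; splitting at the saturation radius $\|v_i-v_{i-1}\|^d=w_{i-1}w_i$, the ``inside'' contributes volume $\Theta(w_{i-1}w_i)$, and the ``outside'' contributes a convergent polynomial-tail integral that also scales as $\Theta(w_{i-1}w_i)$ precisely when $\alpha>1$ (the threshold case $\alpha=\infty$ gives only the ball contribution). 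Hence this spatial integral is bounded by some constant $C$ times $w_{i-1}w_i$, so each edge contributes at most $\la\overline{c}Ct_0^{b_\ve}(w_{i-1}w_i)^{1-\mu b_\ve}$ after spatial integration.

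Multiplying over $i=1,\dots,k$, the weight $w_0$ appears with exponent $1-\mu b_\ve$, each intermediate $w_i$ ($1\le i\le k-1$) with exponent $2-2\mu b_\ve$, and $w_k$ with exponent $1-\mu b_\ve$. Integrating the i.i.d.\ weights $w_1,\dots,w_k$ against $F_W$ yields the factor $\Ev[W^{2-2\mu b_\ve}]^{k-1}\,\Ev[W^{1-\mu b_\ve}]$, giving~\eqref{eq:cons-altogether-2} with $C_2=\overline{c}C$. Both expectations are finite by our choice of $\ve$: the hypothesis $\beta^->(3-\tau)/(2\mu)$ allows $2-2\mu b_\ve<\tau-1$, so the power-law tail of $W$ yields $\Ev[W^{2-2\mu b_\ve}]<\infty$; the exponent $1-\mu b_\ve$ is strictly smaller and, since $W\ge 1$, the corresponding moment is bounded by $\Ev[W^{2-2\mu b_\ve}]$ when positive and by $1$ when non-positive.

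The only delicate point is that the $k$-fold integral must truly factor across edges. This relies on (i) mutual independence of the $L_e$'s, cleanly separating the $F_L$-factors, and (ii) translation invariance of Lebesgue measure on $\R^d$, which makes each single-edge spatial integral a function of $w_{i-1}w_i$ alone and thus independent of the earlier $v_j$'s, so one can peel the integrals off in order from $v_k$ down to $v_1$. For $\SFPWL$ the same scheme applies with $\la^k$ replaced by $1$ and spatial integrals replaced by sums over $\Z^d$; the single-edge lattice sum $\sum_{x\in\Z^d\setminus\{0\}}(1\wedge (w_{i-1}w_i/\|x\|^d)^\alpha)$ is again $O(w_{i-1}w_i)$ by integral comparison when $\alpha>1$, and the $O(1)$ contribution from the nearest-neighbour edges (always present in $\SFPWL$) is absorbed into the same constant since $w_{i-1}w_i\ge 1$.
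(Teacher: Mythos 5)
Your argument is correct and takes essentially the same route as the paper's proof: a Mecke/Campbell expansion over $k$-tuples of distinct Poisson points, factorisation of the integrand using the upper bound of Assumption~\ref{assu:h} together with the polynomial bound $F_L(x)\le x^{b_\ve}$, a single-edge spatial integral of order $w_{i-1}w_i$ for $\alpha>1$ (with $\alpha=\infty$ handled by the ball contribution), and finally collection of the weight moments $\E[W^{2-2\mu b_\ve}]^{k-1}\E[W^{1-\mu b_\ve}]$. The only cosmetic difference is your explicit absorption of the always-present nearest-neighbour edges in $\SFPWL$ into the constant, a point the paper's proof leaves implicit.
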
	
Before the proof, we explain why there exist $\eps$ and $t_0$ satisfying the lemma's assumptions. By the definition of $\beta^-$, for all sufficiently small $t<1$, the inequality $\log F_L(t)/\log t\ge \beta^--\ve$ holds; 	equivalently, $F_L(t)\le t^{\beta^--\ve} $, so~\eqref{eqn:cons-t} is satisfied. Further, we argue now that $\Ev[W^{2-2\mu b_\ve}]<\infty$ for small enough $\ve>0$.
By the power-law assumption on $W$, let us write  $\Pr(W \ge x) =\ell_W(x) x^{1-\tau}$ for some slowly-varying function $\ell_W$. After integration by parts, by Karamata's theorem (see \cite[Proposition 1.5.10]{BinGolTeu89}), 
\begin{align}\label{eq:karamata}
	\E[W^{2-2\mu b_\ve}] &= \int_1^\infty w^{2-2\mu b_\ve}F_W(\mathrm{d}w) \nonumber\\
	&= \Big[{-}w^{2-2\mu b_\ve}\frac{\ell_W(w)}{w^{\tau-1}}\Big]_1^\infty + \int_1^\infty (2-2\mu b_\ve)w^{1-2\mu b_\ve} \frac{{-}\ell_W(w)}{w^{\tau-1}} \mathrm dw < \infty,
\end{align}
whenever	$2- 2\mu b_\ve-\tau  < -1$, that is, $\mu b_\ve>(3-\tau)/2$ with $b_\ve=\beta^--\ve$. So the condition that $\Ev[W^{2-2\mu (\beta^--\ve)}]<\infty$ can be fulfilled by the condition \eqref{eq:beta2} of Theorem~\ref{thm:mu-classification}(iii), by choosing $\eps$ sufficiently small relative to $\mu$, $\tau$ and $\beta^-$. Further, the finiteness of the moment $\Ev[W^{1-\mu b_\ve}]$ follows from this condition as well, since then $1-\mu b_\ve-\tau = \tfrac12(2-2\mu b_\ve-\tau) -\tfrac{\tau}{2} < -\tfrac{1}{2}-\tfrac{\tau}{2} < -1$.	\begin{proof}[Proof of Lemma \ref{lemma:decay}]	
	We shall calculate $n_k^{t_0}(0,w_0) := \Ev[N_k^{t_0}(0) \mid W_0 = w_0]$ for $\Ilambda$. For this, let $\CV_\la^{(k)} := \{(v_i)_{1\leq i\le k} \in \CV_\la^k \mid v_i \neq v_j \text{ for } 1 \leq i < j \leq k\}$ be the set of all $k$-tuples of distinct points of the Poisson point process $\CV_\la$. Then we sum over all such $k$-tuples, and use the law of total probability to integrate over their possible weights as follows:
		\be\ba \label{eq:nkt}
		 n_k^{t_0}(0,w_0)= \E\!\Bigg[\sum_{(v_i)_{i\le k} \in \CV_\la^{(k)}} \ \int\limits_{(w_i)_{i\le k}}\!\!\!\! \ind\{\forall i\!\le\! k\!:  W_{v_i}\!\!\in\! [w_i,w_i\!+\!\mathrm dw_i], v_i \leftrightarrow v_{i-1}, C_{(v_{i-1},v_{i})}\! \!\in\! [0,t_0]\}\Bigg],
	\ea\ee
	where we consider $\mathrm d w_i$ to be infinitesimal. Observe that the event in the indicator is
	\[ \calE_1(v_1, \dots, v_k) \cap \calE_2(v_1, \dots, v_k)\cap \calE_3(v_1, \dots, v_k),\]
where, for any distinct fixed points $v_1, \dots, v_k \in \R^d$,
		
\be \ba		 \calE_1(v_1, \dots, v_k)&:=\{W_{v_i} \in [w_i,w_i+\mathrm dw_i]\ \forall  i \in [k]\};\\
 \calE_2(v_1, \dots, v_k)&:=\{ v_{i}\leftrightarrow v_{i-1} \ \forall i \in [k]\};\\
\calE_3(v_1, \dots, v_k)&:=\{C_{(v_{i-1},v_i)} \in [0,t_0]  \ \forall i \in [k]\}.
	\ea\ee
	Then as $\mathrm dw_1, \dots, \mathrm dw_k \to 0$, using $F_W(\mathrm dw_i)$ to denote the Lebesgue-Stieltjes integral with respect to the cdf $F_W$ as before,
	\begin{align}
		\Pr(\calE_1(v_1, \dots, v_k)) &\to  \prod_{i=1}^k F_W(\mathrm dw_i),\nonumber\\
		\Pr(\calE_2(v_1, \dots, v_k) \mid \calE_1(v_1, \dots, v_k)) &\to \prod_{i=1}^k h_{\mathrm I}(\|v_i-v_{i-1}\|, w_{i-1}, w_i ),\label{eqn:cons-factors}\\
		\Pr(\calE_3(v_1, \dots, v_k) \mid \calE_1(v_1, \dots, v_k) \cap \calE_2(v_1, \dots, v_k)) &\to \prod_{i=1}^k F_L(t_0w_{i-1}^{-\mu} w_i^{-\mu}).\nonumber
	\end{align}
	Now we use the upper bound on $h_{\mathrm I}$ in Assumption \eqref{assu:h}. The upper bound for the case $\alpha = \infty$ is stronger (i.e., less) than the upper bound for any $\alpha< \infty$. Therefore, we may assume that $\alpha <\infty$, and any upper bound that we obtain for $n_k^{t_0}(0,w_0)$ in this way will also hold for $\alpha =  \infty$. Hence, we pick $\alpha < \infty$ and bound 
	\be \ba \label{eqn:cons-bigeq}
		 n_k^{t_0}(0,w_0)&\leq \E\Bigg[\sum_{(v_i)_{i\le k} \in \CV_\la^{(k)}} \int\limits_{(w_i)_{i\le k}} \prod_{i=1}^k \bigg(\overline c\Big(1\wedge\frac{w_{i-1}w_i}{\norm{v_i-v_{i-1}}^d} \Big)^\alpha F_L(t_0w_{i-1}^{-\mu} w_i^{-\mu}) F_W(\mathrm dw_i) \bigg)\Bigg]\\
		&=\,\int\limits_{(w_i)_{i\le k}} \prod_{i=1}^k F_L(t_0w_{i-1}^{-\mu} w_i^{-\mu})  \E\Bigg[\sum_{(v_i)_{i\le k} \in \CV_\la^{(k)}} \prod_{i=1}^k \overline c\Big(1\wedge\frac{w_{i-1}w_i}{\norm{v_i-v_{i-1}}^d} \Big)^\alpha\Bigg]\prod_{i=1}^k F_W(\mathrm dw_i).
\ea\ee
Note that the sum within the expectation is over distinct points in $\R^d$ (that are part of the PPP $\CV_\la$):
Using Campbell's formula (see e.g.\ \cite{Matt96}), this expectation can be written as 
	\be \label{eq:t1}
		T_1:=\E\Bigg[\sum_{(v_i)_{i\le k} \in \CV_\la^{(k)}} \prod_{i=1}^k \overline c\Big(1\wedge\frac{w_{i-1}w_i}{\norm{v_i-v_{i-1}}^d} \Big)^\alpha\Bigg] = \int_{(v_i)_{i\le k}}\prod_{i=1}^k\overline c\Big(1\wedge\frac{w_{i-1}w_i}{\norm{v_i-v_{i-1}}^d} \Big)^\alpha dM_k,
	\ee
	where $M_k$ is the $k$th factorial moment measure of the point process. Writing $\nu$ 
	for the standard measure of the point process, i.e.\ Lebesgue measure on $\R^d$, $M_k$ is dominated from above\footnote{In other words, conditioning on points being present only decreases the local density.} by $\la^k \nu^k$, and the term in the integral is non-negative. Thus
	\be
		T_1\le 
		\int\limits _{(v_i)_{i\le k}}\prod_{i=1}^k\overline c\Big(1\wedge\frac{w_{i-1}w_i}{\norm{v_i-v_{i-1}}^d} \Big)^\alpha\la^k \mathrm d\nu^k
		= \la^k\overline c^k\prod_{i=1}^k\bigg( \int_{v_i\in \R^d}\Big(1\wedge\frac{w_{i-1}w_i}{\norm{v_i}^d} \Big)^\alpha\,\mathrm d\nu\bigg),\label{eqn:cons-bigeq2} 
\ee
where we used the translation invariance of the Lebesgue measure to obtain the last step.	Observe that here $w_{i-1}$ and $w_i$ are constants, so the $i$th factor $T_{1i}$ on the rhs can be calculated by splitting the integral according to the value of the minimum. With $V_d$ denoting the volume of the Euclidean ball of radius $1$ in $\R^d$, for all $i \in [k]$,
	\be \ba
		T_{1i}&\le  \int_{\norm{v}^d \le w_{i-1}w_i} 1\, \mathrm d\nu + \int_{\norm{v}^d > w_{i-1}w_i} (w_{i-1}w_i)^\al/\norm{v}^{d\alpha} \mathrm d\nu \\ \label{eqn:cons-bigeq3}
		&\le V_d w_{i-1}w_i + (w_{i-1}w_i)^\al \int_{r> (w_{i-1}w_i)^{1/d}} r^{-d\alpha} r^{d-1}\mathrm dr= \big(V_d+\tfrac{1}{d(\al-1)}\big) w_{i-1}w_i,
	\ea\ee
	where we have used that $\al>1$ and hence the integral is finite.
	Using this value for the $i$'th term in~\eqref{eqn:cons-bigeq2}, with $C_2:= \overline c(V_d+1/d(\al-1))$, we obtain that $T_1$ is at most
	\be\label{eq:t1bound}
		T_1\le \la^k C_2^k \prod_{i=1}^k(w_{i-1}w_i) =( \la C_2)^k w_0\cdot \prod_{i=1}^{k-1}w_i^2\cdot w_k.
	\ee
	We continue bounding ~\eqref{eqn:cons-bigeq}. Observe that by~\eqref{eqn:cons-t}, the first product  in \eqref{eqn:cons-bigeq} is bounded by 
	\be\label{eq:fl}
		\prod_{i=1}^k F_L(t_0w_{i-1}^{-\mu}w_i^{-\mu}) \le \prod_{i=1}^k \big(t_0w_{i-1}^{-\mu}w_i^{-\mu}\big)^{b_\ve} = t_0^{kb_\ve}w_0^{-\mu b_\ve} \cdot \prod_{i=1}^{k-1} w_i^{-2\mu b_\ve} \cdot w_k^{-\mu b_\ve}.
	\ee
	Combining this with the bound on $T_1$ from \eqref{eq:t1bound}, we arrive at
	\begin{align}\nonumber
		n_k^{t_0}(0,w_0) &\le   ( \la C_2)^k t_0^{k b_\ve}w_0^{1-\mu b_\ve}\int_{(w_i)_{i\le k}} w_k^{1-\mu b_\ve}\prod_{i=1}^{k-1} w_i^{2-2\mu b_\ve}\cdot \prod_{i=1}^k F_W(\mathrm dw_i)\\\label{eq:cons-altogether3}
		&=( \la C_2)^kw_0^{1-\mu b_\ve}  t_0^{k b_\ve} \E[W^{2-2\mu b_\ve}]^{k-1}\E[W^{1-\mu b_\ve}].
	\end{align}
	This is precisely the required bound, finishing the proof of the lemma for $\Ilambda$. 
	
		For $\SFPWL$, we need to replace the integral over $k$-tuples of distinct points of a Poisson point process by the integral over $k$-tuples of distinct points over the Dirac measure of the grid. However, the calculations remain the same, except that the factor $\lambda^k$ disappears. In particular, we upper-bound the integral over $k$-tuples of \emph{pairwise} distinct points by the integral over any $k$-tuples such that any two \emph{consecutive} points are distinct. In this way,~\eqref{eqn:cons-bigeq2} becomes
		\be
		T_1\le  \overline c^k\prod_{i=1}^k\bigg( \int_{v_i \neq 0}\Big(1\wedge\frac{w_{i-1}w_i}{\norm{v_i}^d} \Big)^\alpha\,\mathrm d\nu'\bigg),\label{eqn:cons-bigeq4} 
\ee
where $\nu'$ denotes the Dirac measure of the grid. This gives the same bound as before, up to constant factors, and up to the factor $\la^k$.
	\end{proof}
The proof of Lemma  \ref{lemma:decay} makes it possible to show that the condition $\Pv(N_1^t<\infty)=1$ in Lemmas~\ref{lem:opt-real} and~\ref{lem:short-expl} is satisfied. 
		\begin{claim}[Truncated cost-degree is finite a.s.]\label{claim:costdeg} Consider $\Ilambda$ or $\SFPWL$ under the conditions of Theorems \ref{thm:mu-classification}(iii) and \ref{thm:poly-classification}(iii). For the penalty function $f_\mu=(w_1w_2)^\mu$,  $\Pv(N_1^t(0) <\infty)=1$ for all $t\ge 0$ whenever $\beta^->(3-\tau)/2\mu$. For the general polynomial penalty function $f$ of \eqref{eq:pol}, $\Pv(N_1^t(0) <\infty)=1$ holds  for all $t\ge 0$, when there is an $i\in \CI$ such that $\beta^->(2-\tau)/\nu_i$. 		
		\end{claim}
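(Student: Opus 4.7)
The plan is to show directly that $\Ev[N_1^t(0) \mid W_0 = w_0] < \infty$ for every fixed $t \ge 0$ and every $w_0 \ge 1$; since $N_1^t(0)$ takes values in $\mathbb{Z}_{\ge 0}\cup\{\infty\}$, this yields $N_1^t(0) < \infty$ almost surely, first conditionally on $W_0 = w_0$ and then unconditionally after integrating against $F_W$. The computation is essentially the $k=1$ case of Lemma~\ref{lemma:decay}, but carried out for arbitrary $t$ rather than the specific $t_0$ on which the polynomial bound $F_L(x)\le x^{\beta^--\ve}$ applies throughout.

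I first treat the monomial penalty $f_\mu(w_1,w_2) = (w_1 w_2)^\mu$. Applying Campbell's formula for the PPP together with the upper bound on $h_{\mathrm I}$ from Assumption~\ref{assu:h} and the spatial integral bound~\eqref{eqn:cons-bigeq3}, I obtain a constant $C = C(\alpha,d)$ such that
\[
\Ev[N_1^t(0) \mid W_0 = w_0] \;\le\; \lambda C\, w_0 \int_1^\infty w\, F_L\!\bigl(t/(w_0 w)^\mu\bigr)\, F_W(\mathrm{d}w).
\]
Choose $\ve > 0$ so that $F_L(x) \le x^{\beta^--\ve}$ on $[0,t_0]$, which is possible by~\eqref{eq:beta2}, and split the integral at $w^\star := (t/t_0)^{1/\mu}/w_0$. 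On $[1,w^\star]$ bound $F_L \le 1$: the resulting integral is finite since the domain is bounded. On $[w^\star,\infty)$ use $F_L(t/(w_0w)^\mu) \le (t/(w_0w)^\mu)^{\beta^--\ve}$ to reduce the tail to a constant multiple of $\Ev[W^{1-\mu(\beta^--\ve)}]$; by the Karamata computation of~\eqref{eq:karamata} this moment is finite precisely when $\mu(\beta^--\ve) > 2-\tau$. Since the hypothesis $\beta^- > (3-\tau)/(2\mu)$ together with $\tau > 1$ gives $\mu\beta^- > (3-\tau)/2 > 2-\tau$, the required inequality holds for all sufficiently small $\ve > 0$.

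For a general polynomial $f$ as in~\eqref{eq:pol}, let $i^\star \in \CI$ be an index satisfying $\beta^- > (2-\tau)/\nu_{i^\star}$, provided by hypothesis. Positivity of all coefficients gives $f(w_0,w) \ge a_{i^\star} w_0^{\mu_{i^\star}} w^{\nu_{i^\star}}$, hence
\[
F_L\!\bigl(t/f(w_0,w)\bigr) \;\le\; F_L\!\bigl(t/(a_{i^\star} w_0^{\mu_{i^\star}} w^{\nu_{i^\star}})\bigr).
\]
Repeating the split-integral argument with $\nu_{i^\star}$ in place of $\mu$ reduces finiteness to $\Ev[W^{1-\nu_{i^\star}(\beta^--\ve)}] < \infty$, i.e.\ to $\nu_{i^\star}(\beta^--\ve) > 2-\tau$, which is exactly the hypothesis for sufficiently small $\ve$. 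The adaptation to $\SFPWL$ follows the last paragraph of the proof of Lemma~\ref{lemma:decay}: replace the Campbell integral over the PPP by a sum over $\Z^d$ and drop the factor $\lambda$, noting that~\eqref{eqn:cons-bigeq3} continues to bound the corresponding lattice sum up to constants.

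There is no serious obstacle; the only mild subtlety is bookkeeping. One must pick $\ve$ small enough that \emph{both} the pointwise bound $F_L(x) \le x^{\beta^--\ve}$ on $[0,t_0]$ \emph{and} the moment condition on $W$ hold simultaneously, and verify that the elementary inequality $(3-\tau)/2 > 2-\tau$ is strict on $\tau \in (1,3)$ so that the symmetric-monomial hypothesis really does imply the $k=1$ moment condition with room to spare.
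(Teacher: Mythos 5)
Your argument is correct and is essentially the paper's own proof: the paper likewise treats this as the $k=1$ case of the path-counting computation (Campbell's formula plus the spatial bound \eqref{eqn:cons-bigeq3}), splits the weight integral at the threshold where $F_L(x)\le x^{\beta^--\ve}$ becomes applicable, and reduces finiteness to the moment condition $\nu b_\ve>2-\tau$ (with the paper lower-bounding $f$ by $w_2^{\nu}$ alone, which is equivalent to your retention of the constant $w_0$-factor after conditioning on $W_0=w_0$). The bookkeeping you flag — choosing $\ve$ small enough for both the $F_L$ bound and the moment bound, and the strict inequality $(3-\tau)/2>2-\tau$ for $\tau>1$ — is exactly how the paper closes the monomial case via \eqref{eq:compare-conditions}.
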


		\begin{proof}
		We show the statement by showing that $\Ev[N_1^t(0) ]<\infty$.		Observe that to show that $N_1^t(0)$ is finite for a given penalty function $f$, it is enough to show the same for a function $g\le f$. For $f_\mu$ we use the lower bounding function $w_2^\nu$ with $\nu := \mu$, and we observe that $\beta^- > (3-\tau)/2\mu > (2-\tau)/\nu$ by~\eqref{eq:compare-conditions}. For a general polynomial function $f$, we set $\nu:=\nu_i$, where $i$ is the index satisfying $\beta^->(2-\tau)/\nu_i$, and again we use the lower bounding function $w_2^\nu$, by observing that constant pre-factors do not change the qualitative behavior.
We will modify the proof of Lemma \ref{lemma:decay}. The difference from the proof of Lemma \ref{lemma:decay} is that we cannot assume that $t\le t_0$, where $t_0$ is from \eqref{eqn:cons-t}.
		Nevertheless most of the calculations carry through; only $F_L(t w_0^{-\mu} w_1^{-\mu})$ should be replaced by $F_L(t w_1^{-\nu})$. Recall that $n_1^{t}(0,w_0)=\Ev[ N_1^{t}(0) | W_0=w_0)]$.
		Following the calculations from \eqref{eq:nkt} to \eqref{eqn:cons-bigeq}, we see that the calculation for $k=1$ simplifies to
		\be \ba\label{eq:k=1trunc}
		 n_1^{t}(0,w_0)&\le \int\limits_{w_1} F_L(t w_1^{-\nu})  \E\Bigg[\sum_{v_1 \in \CV_\la}  \overline c\Big(1\wedge\frac{w_{0}w_1}{\norm{v_1}^d} \Big)^\alpha\Bigg] F_W(\mathrm dw_1).
		 \ea\ee
		The inner expectation, denoted by $T_1$, is bounded from above in \eqref{eq:t1} -- \eqref{eqn:cons-bigeq3} by 
		 \be \label{eq:k=12} \E\left[\sum_{v_1 \in \CV_\la}  \overline c\bigg(1\wedge\frac{w_{0}w_1}{\norm{v_1}^d} \bigg)^\alpha\right]\le \la \overline c \Big(V_d + \frac{1}{d(\al-1)}\Big) w_0w_1=:\la C_2 w_0 w_1.\ee	
		  Now we need to deviate somewhat from the calculation done for the arbitrary-$k$ case, since we can only apply the bound $F_L(x)\le x^{b_\ve}$ of \eqref{eqn:cons-t} when $x \le t_0$. So for $w_1 > (t/t_0)^{1/\nu}$ we can still apply the bound of~\eqref{eqn:cons-t}, and when $1 \le w_1 \le (t/t_0)^{1/\nu}$ we simply bound $F_L(t w_1^{-\nu}) \le 1$. Thus from~\eqref{eq:k=1trunc} and~\eqref{eq:k=12} we obtain
		\be  \label{eq:k=13trunc} n_1^{t}(0,w_0)\le \la C_2 w_0 \int\limits_{1 \le w_1\le (t/t_0)^{1/\nu}} w_1 F_W(\mathrm dw_1)
	 + \la C_2 w_0 \int\limits_{w_1\ge (t/t_0)^{1/\nu} \vee1} t^{b_\ve}  w_1^{-\nu b_\ve}w_1 F_W(\mathrm dw_1)
	\ee	 
	The first term on the rhs is bounded from above by $\la C_2 w_0(t/t_0)^{1/\nu}$, while the integral in the second term is bounded from above by the integral on the whole of $[1, \infty)$, yielding the bound $\la C_2 w_0t^{b_\ve}\Ev[W_1^{1-\nu b_\ve}]$. Observe that this moment is finite when $1-\nu b_\ve<\tau-1$, yielding the condition $\beta^->(2-\tau)/\nu$.
Thus $N_1^{t}(0)$ is a.s.\ finite conditioned on $W_0=w_0$. Since conditionally on $W_0=w_0$, the presence of the edges are independent indicators, by the Borel-Cantelli lemma, this means that $N_1^{t}(0)\mid W_0=w_0$ is an a.s.\! finite variable for each $w_0$. 
	 \end{proof}
			
			 \begin{proof}[Proof of Theorem~\ref{thm:mu-classification}(iii)]
	The same proof works for both $\Ilambda$ and $\SFPWL$. By Claim \ref{claim:costdeg} the condition $\Pv(N_1^{t}(0) <\infty)=1$ for all $t\ge 0$ holds. This implies by Lemma \ref{lem:opt-real} that the explosion time is realised via infinite paths, i.e., sideways explosion is excluded. Further, by Lemma \ref{lem:short-expl}, when the model is (lengthwise) explosive, for all $t_0>0$, with strictly positive probability there is an infinite path with total cost at most $t_0$.  
	So, to show that a model is conservative, it suffices to show that for a suitably-chosen $t_0 < 1$, the probability of having an infinite path with total cost in the interval $[0,t_0]$ is zero. 
	For this latter statement, it is enough to show the stronger statement that a.s.\ there is no infinite path $\pi$ starting from $0$ that uses only edges $e$ with $C_{e}\le t_0$. 
	Hence, to show that the model is conservative, it is enough to show that a.s.\ there is no infinite path in $\CG(t_0)$ (defined before Lemma~\ref{lemma:decay}). 
	
	Recall that  $N_k^{t_0}$ counts the number of $k$-edge paths in $\CG(t_0)$ emanating from $0=v_0$,
	and recall the bound~\eqref{eq:cons-altogether-2}.
	Choose $t_0:=(2 \la  C_2 \Ev[ W^{2-2\mu b_\ve}])^{-1/b_\ve}$, so that~\eqref{eq:cons-altogether-2} implies $\E[N_k^{t_0}]$ is exponentially decaying in $k$, 
	and define the events $E_k:=\{N_k^{t_0}(0)\ge 1 \}$. For any $w_0 \geq 1$ we can apply Markov's inequality to see that $\sum_{k\ge 1}\Pv(E_k \mid W_0=w_0) \leq C\sum_{k} 2^{-k}< \infty$, where $C>0$ denotes some constant. Then the Borel-Cantelli lemma tells us that a.s.\ there exists $k_0$ such that for all $k \ge k_0$, $E_k$ does not occur. This means that $N_k^{t_0}(0)=0$ a.s.\ for all $k\ge k_0$, and hence there is a.s.\ no infinite path in $\CG({t_0})$. Consequently, the model is a.s.\ conservative. 
		\end{proof}

\subsection{Extension to other penalty functions: conservative case}\label{sec:extension-conservative}
In this section we prove Theorem~\ref{thm:poly-classification}(iii). Recall that $f(w_1, w_2)$ stands for a polynomial of two variables (see \eqref{eq:pol}), with degree $\deg(f)$ defined in \eqref{eq:poldeg}. 

\begin{proof}[Proof of Theorem~\ref{thm:poly-classification}(iii)]
We may assume that the index $i=1$ is the one for which $\deg(f)$ is achieved and such that $\beta^- \nu_i>2-\tau$. Then there exists a constant $a>0$ such that for all inputs $w_1, w_2$, 
\[ f(w_1, w_2)\ge a w_1^{\mu_{1}} w_2^{\nu_{1}}=:a f_2(w_1, w_2). \]
Hence it is enough to show conservativeness for the penalty function $f_2$. 
We first describe how the proof of Lemma~\ref{lemma:decay} needs to be changed. In the last line of \eqref{eqn:cons-factors}, the two exponents $\mu$ should be replaced by $\mu_1$ and $\nu_1$, i.e., we obtain
\be
		\Pr(\calE_3(v_1, \dots, v_k) \mid \calE_1(v_1, \dots, v_k) \cap \calE_2(v_1, \dots, v_k)) \to \prod_{i=1}^k F_L(t_0w_{i-1}^{-\mu_1} w_i^{-\nu_1}).
\ee
This carries through Equations~\eqref{eqn:cons-bigeq} and \eqref{eq:fl}, where the rhs of the latter becomes  $t_0^{kb_\ve}w_0^{-\mu_1 b_\ve} \cdot \prod_{i=1}^{k-1} w_i^{-(\mu_1+\nu_1) b_\ve} \cdot w_k^{-\nu_1 b_\ve}$. Therefore,~\eqref{eq:cons-altogether3} becomes
	\begin{align*}
		n_k^{t_0}(0, w_0) \le ( \la C_2)^kw_0^{1-\mu_1 b_\ve}  t_0^{k b_\ve} \E[W^{2-(\mu_1+\nu_1) b_\ve}]^{k-1}\E[W^{1-\nu_1 b_\ve}].
	\end{align*}
The condition $\deg(f) = \mu_1+\nu_1 > (3-\tau)/\beta^-$ ensures that $2-(\mu_1+\nu_1)b_\ve <\tau-1$, so as before $\E[W^{2-(\mu_1+\nu_1) b_\ve}] <\infty$. Further, $\Ev[W^{1-\nu_1b_\ve}]<\infty$ holds when $1-\nu_1 b_\ve<\tau-1$, corresponding to the assumption that $\nu_1 \beta^- >2-\tau$ for $\tau<2$, and $W^{1-\nu_1 b_{\ve}} \leq W$ has finite expectation whenever $\tau>2$. So both expectations are finite under the conditions of Theorem~\ref{thm:poly-classification}(iii). Now the same argument as in the proof of Theorem~\ref{thm:mu-classification}(iii) shows that if $t_0$ is sufficiently small then a.s.\ $N_k^{t_0}(0) = 0$ for all $k \geq k_0$. Hence, the model is a.s.\ conservative.
\end{proof}

	\section{Existence of the giant component}\label{s:giant}
	In this section we focus on the model $\GIRG$, and provide a proof that under Assumptions \ref{assu:mild-connect} and \ref{assu:mild-vertex} it has a unique linear-sized giant component $\CC_{\max}$ (Theorem \ref{thm:giant}). We emphasise again that  Assumption \ref{assu:mild-connect} is a weaker assumption than what was assumed in the literature before \cite{BodFouMul15, BriKeuLen19, FouMul18, HeyHulJor17}, hence earlier techniques do not carry through. We will use the scaled version $\BGIRG$ introduced in Definition~\ref{def:bgirg}. 

 The first step of the proof of Theorem \ref{thm:giant} is to recall box-increasing paths from before Lemma \ref{lem:highw-connect}. In particular, 
recall that $\CV_{n,M}:=\{i\in [n] \mid W_i \geq \e^M\}$, we say that $u\in \CV_{n,M}$ is \emph{successful} when there is a box-increasing path from $u$ to a $\delta$-good leader in $\Gamma_{k^\star}(u) $ and $F_{k^\star}^{(1)}(u)$ also holds (see \eqref{eq:successful}), and we write $\CS_u$ for the event that $u$ is successful.
Lemma \ref{lem:highw-connect} shows that for each $u \in \CV_{n,M}$, the probability that $u$ is not successful is at most $5\exp( -(\underline c \wedge 1) 2^{-d-7}   \e^{M\eta(\delta) })$ for some $\eta(\delta)>0$. This probability is at most $1/2$ if we choose $M$ large enough. Thus, in expectation, linearly many vertices in $\CV_{n,M}$ will be successful, and hence reach a $\delta$-good leader in $\Gamma_{k^\star(n,M)}$. By $\eqref{eq:kstar}$, and the definition of being $\delta$-good (see \eqref{eq:delta-good}), the weight of these leaders, for some $s\in[0,1)$, falls in the interval 
\be \label{eq:interval-k} \Big(n^{C^{-s}\frac{1}{D} \frac{1-\delta}{\tau-1} }, n^{C^{-s}\frac{1}{D} \frac{1+\delta}{\tau-1} }\Big]\subseteq \Big(n^{\frac{1}{CD} \frac{1-\delta}{\tau-1} }, n^{ \frac{1+\delta}{\tau-1} }\Big]=I_{\mathrm{end}} ,\ee 
see \eqref{eq:iend}, and these are among the highest weight vertices in the whole box $\mathcal X_d(n)$.
 In the next step we study the graph formed by the vertices of highest weight in a box. The next lemma asserts that the probability that these vertices form a connected graph tends to one as the expected number of vertices in the box tends to infinity.

Let $\mathrm{ER}_{n,p}$ denote an Erd{\H o}s-R\'enyi random graph on $n$ vertices with edge probability $p$. Recall $l(w)=\exp(-c_2 (\log w)^\gamma)$, where $c_2>0, \gamma\in(0,1)$ are taken from Assumption \ref{assu:h}.
 \begin{claim}\label{claim:ER}
Consider the model $\BGIRG$, satisfying Assumptions~\ref{assu:mild-connect} and~\ref{assu:mild-vertex} and with $2 < \tau < 3$. 
Fix $M>0$ and define constants $C,D,\delta$ as in~\eqref{eq:parameters}, with the additional condition $\delta < (3-\tau)/(\tau +1)$. Let $\mathrm{B} \subseteq \calX_d(n)$ be a box of side length $r$. Let $\mathrm{Core}_{\mathrm{B}} =(\CV_{\mathrm{Core}}(\mathrm{B}), \CE_{\mathrm{Core}}(\mathrm{B}))$ be the graph spanned by vertices in $\mathrm{B}$ with weights in the interval 
\be \label{eq:ir} I_r := \Big((r^d)^{(1- \delta)/(DC(\tau-1))},(r^d)^{(1+ \delta)/(\tau-1)}\Big].\ee Then, for all large enough $r$, 
\be\label{eq:ER-domin} \mathrm{Core}_{\mathrm B}\  {\buildrel d \over \ge }\  \mathrm{ER}_{|\CV_{\mathrm{Core}}(\mathrm{B})|, q_r}, \ee
with $q_r:=\exp(-2 c_2 (\log r^d)^\gamma (\tfrac{1+\delta}{\tau-1})^\gamma)$. 
Moreover,  $\mathrm{Core}_B$ is whp connected as $n$ and $r$ tend to infinity; this remains true conditioned on $|\CV_n\cap \mathrm{B}|$, as long as the conditioned value is within a constant factor of $\mathrm{Vol}(\mathrm B)$.
\end{claim}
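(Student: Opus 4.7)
The plan is to prove the two assertions in turn: the stochastic domination $\mathrm{Core}_B \succeq \mathrm{ER}_{|\CV_{\mathrm{Core}}(B)|,\, q_r}$, and then the connectedness via the classical Erd\H{o}s--R\'enyi threshold.

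For the domination, I would examine the connection probability $g_n^{u,v}$ for any pair $u,v \in \CV_{\mathrm{Core}}(B)$. Their blown-up positions satisfy $\|\wit x_u - \wit x_v\| \le \sqrt d\, r$, and their weights lie in $I_r$. The parameter inequality \eqref{eq:cdx2} at $s=1$ reads $CD < 2(1-\delta)/(\tau-1)$, so
\[
\frac{w_u w_v}{\|\wit x_u - \wit x_v\|^d} \ge d^{-d/2}\,(r^d)^{\frac{2(1-\delta)}{CD(\tau-1)}-1} \longrightarrow \infty \quad \text{as } r \to \infty.
\]
Hence, for $r$ large enough, the second branch of the minimum in the lower bound of \eqref{eq:blown-h} is at least $1$ (for every $\alpha \in (1,\infty]$), so Assumption~\ref{assu:mild-connect} yields $g_n^{u,v} \ge \un c\, l(w_u)\, l(w_v)$. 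Using the upper bound $\log w \le \tfrac{1+\delta}{\tau-1} \log r^d$ for $w \in I_r$, I would then get $l(w_u)\, l(w_v) \ge \exp\!\big(-2 c_2 (\tfrac{1+\delta}{\tau-1})^\gamma (\log r^d)^\gamma\big) = q_r$, with the multiplicative constant $\un c$ absorbed into $c_2$ (harmless, since $(\log r^d)^\gamma \to \infty$). Conditional independence of edges in $\BGIRG$ given the positions and weights then gives the domination.

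For the connectedness, write $N := |\CV_{\mathrm{Core}}(B)|$ and condition on $|\CV_n \cap B| = K = \Theta(r^d)$. Then $N$ is binomial with parameters $K$ and $p_r := \Pr(W^{(n)} \in I_r)$. Assumption~\ref{assu:mild-vertex} together with Potter's bound gives $p_r \ge (r^d)^{-(1-\delta)/(CD) - \eta}$ for any $\eta > 0$ and all $r$ large enough; since $CD > 1 > 1-\delta$, the exponent $1-(1-\delta)/(CD)$ is a positive constant, so $\E[N]$ grows polynomially in $r^d$, and a Chernoff bound (Lemma~\ref{lem:chernoff}) yields $N \ge \E[N]/2$ whp. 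Because $\gamma < 1$, $q_r$ decays only subpolynomially in $r^d$, so $Nq_r$ grows polynomially in $r^d$; in particular $Nq_r \gg \log N = O(\log r)$. Classical ER connectedness ($Np/\log N \to \infty$ implies connected whp) applied to $\mathrm{ER}_{N, q_r}$ then yields the required connectedness of $\mathrm{Core}_B$ via the first step.

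The main technical delicacy is in the first step, namely identifying which branch of the minimum in \eqref{eq:blown-h} is active; this is where the parameter inequality $CD < 2(1-\delta)/(\tau-1)$ is used, and the auxiliary assumption $\delta < (3-\tau)/(\tau+1)$ of the claim provides comfortable margin so that the polynomial bound $Nq_r \gg \log N$ holds uniformly in the box $B$. The rest of the argument is a standard concentration estimate followed by the ER connectedness threshold.
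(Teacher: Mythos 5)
Your proposal is correct and follows essentially the same route as the paper's proof: lower-bound the pairwise connection probability via the distance bound $\sqrt{d}\,r$, the weight bounds from $I_r$, and inequality~\eqref{eq:cdx2} to see that the minimum in~\eqref{eq:blown-h} is attained at $l(w_u)l(w_v)$, then use conditional independence for the Erd\H{o}s--R\'enyi domination, binomial concentration for a polynomial lower bound on $|\CV_{\mathrm{Core}}(\mathrm{B})|$, and the classical connectivity threshold since $q_r$ decays only subpolynomially. Your absorption of the prefactor $\un c$ into $c_2$ is the right way to reconcile the bound with the stated $q_r$ (the paper's own proof silently carries $\un c$ along), and the rest matches the paper's argument step for step.
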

We defer the proof to Appendix \ref{s:app}, since its proof uses the same method as that of Lemma \ref{lem:bipartite}.
 \begin{claim}\label{claim:giant}
 Consider the model $\BGIRG$, satisfying Assumptions~\ref{assu:mild-connect} and~\ref{assu:mild-vertex} and with $2 < \tau < 3$.
 Fix a constant $M>0$ and define constants $C,D,\delta$ as in~\eqref{eq:parameters}; if $M$ is sufficiently large and $\delta$ is sufficiently small, the following holds. 
Let $\CV_{n,M,\CS}$ be the set of successful vertices $u\in \CV_{n,M}$, i.e.,
\[ \CV_{n,M,\CS}:=\{u \in \CV_{n}: W_u\ge \e^M, \ind_{\CS_u}=1\}.\]
Whp, the graph induced by $\CV_{n,M,\CS}$ is connected. Moreover, $\E[|\CV_{n,M,\CS}|]\ge c_M n$, for some constant $c_M>0$. 
 Consequently, with probability at least a positive constant, the component of $\BGIRG$ containing $\CV_{n,M,\mathcal{S}}$ has linear size. 
\end{claim}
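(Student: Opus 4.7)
The plan is to choose parameters so that every vertex of weight at least $\e^M$ is successful with at least constant probability and so that the ``core'' of highest-weight vertices is whp connected; once these two pieces are in place, every successful vertex reaches the core through its box-increasing path, so all successful vertices automatically lie in the single component of $\BGIRG$ that contains the core.

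First I would fix $\delta>0$ small enough that $\delta<(3-\tau)/(\tau+1)$ and that Claim~\ref{cl:parameters} yields a non-empty interval $\mathcal I_\delta$, and set $C$, $D$ as in~\eqref{eq:parameters}; this makes both Lemma~\ref{lem:highw-connect} and Claim~\ref{claim:ER} applicable. Next, using the explicit failure bound of Lemma~\ref{lem:highw-connect}, I fix $M=M_0(\delta)$ large enough that $5\exp\bigl(-(\underline c\wedge 1)2^{-d-7}\e^{M\eta(\delta)}\bigr)\le 1/2$, which forces $\Pv(\CS_u\mid W_u=w_u)\ge 1/2$ for all $w_u\ge \e^M$ (and all large $n$). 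Assumption~\ref{assu:mild-vertex} gives $\Pv(W^{(n)}\ge \e^M)\ge \llow(\e^M)\e^{-M(\tau-1)}=:p_M>0$ uniformly in $n$, and conditioning on the weight yields
\[
\E\bigl[|\CV_{n,M,\CS}|\bigr]
\;=\;n\,\Pv(W^{(n)}\ge \e^M)\,\Pv\bigl(\CS_1\,\big|\,W_1\ge \e^M\bigr)
\;\ge\;\tfrac12 p_M n\;=:\;c_M n.
\]

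Second I would invoke Claim~\ref{claim:ER} with $\mathrm B=\calX_d(n)$ and $r=n^{1/d}$, which guarantees that whp the core $\mathrm{Core}_{\calX_d(n)}$ is connected. For any successful $u$, the definition of $\CS_u$ in~\eqref{eq:successful} provides a path in $\BGIRG$ from $u$ to a $\delta$-good leader $c^u_{\mathrm{end}}$ of a sub-box of $\Gamma_{k^\star}(u)\cap\calX_d(n)$. Combining the definition of $k^\star$ in~\eqref{eq:kstar} (which yields $MC^{k^\star}\in(\log n/(CD),\,\log n/D]$) with the $\delta$-goodness bounds~\eqref{eq:delta-good} shows that $W_{c^u_{\mathrm{end}}}\in I_{\mathrm{end}}\subseteq I_r$, where $I_{\mathrm{end}}$ is as in~\eqref{eq:iend} and $I_r$ is as in Claim~\ref{claim:ER}. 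Hence $c^u_{\mathrm{end}}\in \CV_{\mathrm{Core}}(\calX_d(n))$, so on the whp event that the core is connected, every vertex of $\CV_{n,M,\CS}$ lies in the same component of $\BGIRG$ as the core.

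Finally, to extract a linear-sized component with positive probability, I would apply reverse Markov to $X_n:=|\CV_{n,M,\CS}|/n\in[0,1]$, which satisfies $\E X_n\ge c_M$:
\[
\Pv\bigl(X_n\ge c_M/2\bigr)\;\ge\;\frac{c_M/2}{1-c_M/2}\;\ge\;\tfrac{c_M}{2}.
\]
Combining this with the whp core-connectivity statement from the previous paragraph, with probability at least (say) $c_M/3$ for large $n$ there are at least $c_M n/2$ successful vertices, all lying in one component of $\BGIRG$, which therefore has size at least $c_M n/2$. The genuinely delicate step is the first paragraph's uniform lower bound $\Pv(\CS_u\mid W_u=w_u)\ge 1/2$: this is exactly where Lemma~\ref{lem:highw-connect}, with the doubly-exponential failure tail coming from the box-increasing-path construction, does the real work. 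Once Lemma~\ref{lem:highw-connect} and Claim~\ref{claim:ER} are granted, the remaining steps reduce to tracking weight intervals and a single reverse-Markov inequality.
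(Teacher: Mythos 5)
Your proposal follows essentially the same route as the paper: Lemma~\ref{lem:highw-connect} with $M$ large gives success probability at least $1/2$ and hence a linear expected number of successful vertices, Claim~\ref{claim:ER} applied with $\mathrm B=\calX_d(n)$, $r=n^{1/d}$ makes the core connected whp so that the terminal $\delta$-good leaders of the box-increasing paths glue all successful vertices into one component, and a reverse-Markov step converts the expectation bound into a linear component with positive probability.

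One caveat: you invoke the bound $\Pv(\CS_u\mid W_u=w_u)\ge 1/2$ for \emph{all} $w_u\ge \e^M$, but Lemma~\ref{lem:highw-connect} is only stated for $\e^M\le w_u\le \exp\bigl(MC^{k^\star}\tfrac{1-\delta}{\tau-1}\bigr)$, so the identity $\E[|\CV_{n,M,\CS}|]=n\,\Pv(W^{(n)}\ge\e^M)\,\Pv(\CS_1\mid W_1\ge\e^M)$ combined with that bound is not fully justified for the (rare) vertices above the upper weight cutoff. The paper avoids this by lower-bounding the expectation using only vertices with weight in the window $[\e^M,\exp(MC^{k^\star}\tfrac{1-\delta}{\tau-1})]$, whose probability is still of order $\e^{-M(\tau-1)-\ve}$ by Assumption~\ref{assu:mild-vertex} and Potter's bound (the truncation removes a negligible mass); restricting your expectation bound to this window repairs the step, and the connectivity argument is unaffected since it only uses the definition of $\CS_u$ and the weights of the leaders in $\Gamma_{k^\star}(u)$.
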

 \begin{proof}
 We start by observing that whp the high-weight vertices in $\BGIRG$ span a connected subgraph.
Indeed, apply Claim \ref{claim:ER} to the whole space, taking $\mathrm{B} = \calX_d(n)$ and $r = n^{1/d}$, to see that the graph $\mathrm{Core}_{\mathcal X_d(n)}$ is whp connected. Whp there are no vertices of weight larger than $n^{(1+\delta)/(\tau-1)}$ by Assumption~\ref{assu:mild-vertex}, so $\mathrm{Core}_{\mathcal X_d(n)}$ contains precisely the vertices with weight larger than $n^{(1-\delta)/(DC(\tau-1))}$. 

Now let $u, v$ be two vertices with $\ind_{\CS_u}=\ind_{\CS_v}=1$.
Then, by \eqref{eq:interval-k}, all the $\delta$-good leaders in $\Gamma_{k^\star}(u)$ and $\Gamma_{k^\star}(v)$ are contained in $\mathrm{Core}_{\mathcal X_d(n)}$.
 Hence, all these leaders are in the same connected component.  On the other hand, since the events $\CS_u$ and $\CS_v$ occur, there is a path from $u$ to a $\delta$-good leader in $\Gamma_{k^\star}(u)$, and from $v $ to a $\delta$-good leader in $\Gamma_{k^\star}(v)$, so $u$ and $v$ are also in the same connected component.  
This shows connectedness of the graph induced by $\CV_{n,M,\CS}$.

 We next show that $\E[|\CV_{n,M,\CS}|]$ is linear in $n$. 
 By Lemma \ref{lem:highw-connect}, each vertex with weight in $[\exp(M), \exp(MC^{k^\star}\tfrac{1-\delta}{\tau-1})]$ is successful with probability at least $1-5\exp\big( -(\underline c \wedge 1) 2^{-d-7}   \e^{M\eta(\delta) }\big) \ge 1/2$ if $M$ is sufficiently large. 
So by linearity of expectation, \[ \Ev[|\CV_{n,M,\CS}|]\ge n\Pv\Big(W_u^{(n)} \in [\exp(M), \exp(MC^{k^\star}\tfrac{1-\delta}{\tau-1})]\Big)/2=:nq_2.\]
This is linear in $n$, since 
\[ \ba  \Pv\big(W_u^{(n)} \in [\,\exp(M), \exp(MC^{k^\star}\tfrac{1-\delta}{\tau-1})]\big)&= \underline l(\e^M) \e^{-M(\tau-1)} - \underline l(\e^{M C^{k^\star}\tfrac{1-\delta}{\tau-1}}) \e^{-MC^{k^\star}(1-\delta)}\\
&\ge  \e^{-M(\tau-1)-\ve}\ea \] for all $\ve>0$ and $n$ sufficiently large by Assumption~\ref{assu:mild-vertex}, Potter's bound, and by the fact that the second term on the rhs of the first line is negligible compared to the first one by \eqref{eq:kstar}\cbb.

  For brevity, let us denote by $\mathcal C$ the connected component containing the first vertex of largest weight (under some arbitrary ordering of $\CV_n$).
   By the previous argument,  $\mathcal C$ contains all of $\CV_{n,M,\CS}$ whp. 
 Then the above argument  shows that $\E[|\mathcal C|] \geq n q_2$. This implies $\Pr(|\mathcal C| > nq_2/2) \geq q_2/2$, since otherwise 
 \[
 	\E[|\mathcal C|] \leq n \Pr(|\mathcal C| > nq_2/2) + (nq_2/2) \cdot \Pr(|\mathcal C| \leq nq_2/2) < n q_2/2 + nq_2/2=nq_2
 \]
 would lead to a contradiction. In other words, we have shown that with at least \emph{a constant probability}, $\BGIRG$ (and hence $\GIRG$) contains at least one linear-sized component.
  \end{proof}
 Claim~\ref{claim:giant}   ensures that with strictly positive probability $\BGIRG$ contains a linear-sized component. The proof of Theorem~\ref{thm:giant}, which says that \emph{with high probability} there is a \emph{unique} giant component of linear size, runs along the same lines but requires a bit more care and the use of Claim~\ref{claim:giant}. 
 \begin{proof}[Proof of Theorem \ref{thm:giant}]
 We first show that a linear-sized component exists \emph{whp}. Choose constants $M,\delta >0$; we will require $M$ to be sufficiently large and $\delta$ sufficiently small, but we will not specify their exact values. Define $C,D$ as in~\eqref{eq:parameters}. We condition throughout on the event that every vertex has weight at most $M_n$; by Assumption~\ref{assu:mild-vertex}, this event occurs whp and implies that the weight of every vertex independently follows a distribution $\Pr(W \ge x) = \ell^{(n)}(x)x^{-(\tau-1)}$, where $\llow(x) \le \ell^{(n)}(x) \le \lhigh(x)$ for some functions $\llow$ and $\lhigh$ which vary slowly at infinity. Recall the definition of successful vertices from \eqref{eq:successful}, define $\eta(\delta)$ as in Lemma~\ref{lem:highw-connect}, and let $\wih\eta:=\eta(\delta)\wedge 1$. Let $\wih{w} := (\log n)^{2/\wit \eta}$.
 
 By Lemma~\ref{lem:highw-connect}, every vertex with weight at least $\wih{w}$ is successful with probability at least $1 - 5\exp(-(\underline c \wedge 1) 2^{-d-7}\wih{w}^\eta) \ge 1 - 1/n^2$ for $n$ sufficiently large, since $\eta(\delta)\cdot 2/\wih \eta\ge 2$. So by a union bound over all vertices in $[n]$, whp every vertex with weight at least $\wih{w}$ is successful. By Claim~\ref{claim:giant}, whp these vertices all lie in the same connected component $\mathcal{C}$; denote this event by $\mathcal{E}_1$. 
 
 We will now partition most of $\calX_d(n)$ into ``medium-sized boxes'' $(\wih {\mathrm B}_i)_{i\le \wih m_n}$, whose sizes are chosen to ensure that whp they each contain at least one vertex in $\mathcal{C}$ (as we prove below). Let $\wih{m}_n := \ceil{n/(2(\log n)^{3\tau/\wih \eta})}$, and let $(\wih{\mathrm{B}}_i)_{i \in [\wih{m}_n]}$ be a collection of disjoint boxes in $\calX_d(n)$, each with volume $(\log n)^{3\tau/\wih\eta}$. We call these \emph{medium-sized boxes}. For all $i \in [\wih{m}_n]$, let $\wih{Z}_i$ be the number of vertices in $\wih{\mathrm{B}}_i$ with weight at least $\wih{w}$. Then since each vertex falls into $\wih {\mathrm B}_i$ with probability $ \mathrm{Vol}(\wih {\mathrm B}_i)/n $ and has i.i.d.\! weight from distribution $W^{(n)}$, by Assumption \ref{assu:mild-vertex} and Potter's bound, when $n$ is sufficiently large 
 we have
 \be\label{eq:meanbound} \ba \Ev[\wih Z_i]&=n\Pv(W^{(n)}\ge \wih w) \mathrm{Vol}(\wih {\mathrm B}_i)/n \ge (\log n)^{3\tau/\wih\eta} \cdot \underline \ell(\wih w)\wih w^{-(\tau-1)}\\
 & \ge (\log n)^{(\tau+2)/\wih\eta}\underline \ell(\wih w) \geq (\log n)^{2/\wih\eta}.\ea\ee 
 Since $\wih{Z}_i$ is binomial, it follows by Lemma~\ref{lem:chernoff} (Chernoff bound) and a union bound that
 \begin{equation}\label{eq:0bi}
 	\Pv(\exists i\le \wih m_n: \wih Z_i =0 ) \le \wih m_n 2\exp(-\E[\wih Z_i]/3) \le n\exp(-(\log n)^{2/\wih\eta}/3) \to 0 \mbox{ as }n\to\infty.
 \end{equation}
 Thus whp, each box $\wih{\mathrm{B}}_i$ contains at least one vertex of weight at least $\wih{w}$; denote this event by $\mathcal{E}_2$.
 
 We say that a medium-sized box $\wih{\mathrm{B}}_i$ contains a \emph{local giant} if a constant proportion of its vertices lie in the same component, and moreover this component contains a vertex of weight at least $\wih{w}$.  We will show now that whp, a constant proportion of boxes contain local giants. Since the whp event $\mathcal{E}_1 \cap \mathcal{E}_2$ then implies that all these components are identical, it follows that whp there is a linear-sized component.
 
To ensure independence between medium-sized boxes, we first expose the number of vertices in each medium-sized box. For all non-negative integers $k_1,\dots,k_{\wih{m}_n}$ with $k_1 + \dots + k_{\wih{m}_n} \le n$, let $\mathcal{F}(k_1,\dots,k_{\wih{m}_n})$ be the event that $\wih{\mathrm{B}}_i$ contains exactly $k_i$ vertices for all $i \in [\wih{m}_n]$. Let $\mathcal{P}_n$ be the set of all tuples $(k_1, \dots, k_{\wih{m}_n})$ such that $k_1 + \dots + k_{\wih{m}_n} \le n$ and, for all $i \in [\wih{m}_n]$, $k_i \in [(\log n)^{3\tau/\wih\eta}/2,\ 2(\log n)^{3\tau/\wih\eta}]$. Let
 \[
 	\mathcal{F} := \bigcup_{(k_1,\dots,k_{\wih{m}_n}) \in \mathcal{P}_n} \mathcal{F}(k_1,\dots,k_{\wih{m}_n})
 \]
 be the event that for all $i \in [\wih{m}_n]$, $\wih{\mathrm{B}}_i$ contains between $(\log n)^{3\tau/\wih\eta}/2$ and $2(\log n)^{3\tau/\wih\eta}$ vertices. Since the number of vertices within each box is binomial, Lemma~\ref{lem:chernoff} and a union bound implies that for sufficiently large $n$,
 \begin{equation}\label{eqn:vertex-counts-good}
 	\Pr(\neg\mathcal{F}) \le 2\wih{m}_n \e^{-(\log n)^{3\tau/\wih\eta}/12} \le 2n\e^{-(\log n)^2} \to 0 \mbox{ as }n\to\infty.
 \end{equation}
 
To obtain independence from the global weight vector, we will subsample the edges independently, so that the probability of having an edge between two vertices $u$ and $v$ is no longer given by 
$g_n^{u,v}(x_u, x_v, (W_i^{(n)})_{i\le n} )$, but rather by the lower bound on $g_n^{u,v}$ given in \eqref{eq:blown-h} that does not depend on all the weights $(W_i^{(n)})_{i\le n} $, but only on $W_u^{(n)}, W_v^{(n)}$.  For all $i \in [\wih{m}_n]$, let $\calE_{3,i}$ be the event that $\wih{\mathrm{B}}_i$ contains a local giant after subsampling; we will bound $\Pr(\calE_{3,i} \mid \mathcal{F}(k_1,\dots,k_{\wih{m}_n}))$ below for all $(k_1,\dots,k_{\wih{m}_n}) \in \mathcal{P}_n$. Conditioned on $\mathcal{F}(k_1,\dots,k_{\wih{m}_n})$, vertices in $\wih{\mathrm{B}}_i$ are distributed uniformly, their weights are drawn independently from $W^{(n)}$, and two vertices $u$ and $v$ are joined with a probability which only depends on $x_u$, $x_v$, $W_u$ and $W_v$. Hence
\be\label{indep} \text{ Conditioned on $\mathcal{F}(k_1,\dots,k_{\wih{m}_n})$, the events $\mathcal{E}_{3,i}$ are independent of each other.} \ee 

Conditioned on $\mathcal{F}(k_1,\dots,k_{\wih{m}_n})$, form $\wih{\mathrm{B}}_i'$ by translating $\wih{\mathrm{B}}_i$ so that its center is at the origin, and rescaling it to have side length $k_i^{1/d}$; then the corresponding graph after subsampling in $\wih{\mathrm{B}}_i'$ is a realisation of a $\mbox{BGIRG}_{W,L}(k_i)$. Moreover, its value of $\tau$ is unchanged and therefore still satisfies $\tau \in(2,3)$. Its connection probability functions $g_n^{u,v}$ change by at most a factor of 2 due to the rescaling (since $(k_1,\dots,k_{\wih m_n}) \in \mathcal{P}_n$) and therefore still satisfy Assumption~\ref{assu:mild-connect} with different values of $\underline{c}$ and $\overline{c}$. Finally, its weight distribution is $W^{(n)}$, which satisfies Assumption~\ref{assu:mild-vertex} as $n\to\infty$ and hence also as $k_i\to\infty$. Thus by Claim~\ref{claim:giant}, there exists a constant $p_\star > 0$ such that
 \[
 	\Pr(\calE_{3,i} \mid \mathcal{F}(k_1,\dots,k_{\wih m_n})) \ge p_\star \mbox{ for all }(k_1,\dots,k_{\wih m_n}) \in \mathcal{P}_n.
 \]
 Since the events $\mathcal{E}_{3,i}$ are conditionally independent by \eqref{indep}, Lemma~\ref{lem:chernoff} implies that
 \[
 	\Pr(|\{i \mid \calE_{3,i}\mbox{ occurs}\}| \ge p_\star/2) \ge 1 - 2e^{-\wih{m}_np_\star/12} \to 1\mbox{ as }n\to\infty.
 \]
 Together with the fact that $\calE_1$ and $\calE_2$ also occur whp, this implies that whp a constant proportion of boxes $\wih{\mathrm{B}}_i$ contain local giants which intersect a common component $\mathcal{C}$ containing all vertices of weight at least $\wih{w}$. In particular, this implies that whp $\mathcal{C}$ is a \emph{linear-sized giant component}.\medskip
 
 It remains to prove that $\mathcal{C}$ is whp \emph{unique}. To do so, we uncover the graph in two stages. In the first stage we uncover all vertices of weight smaller than $\wih w$, and all edges between these vertices, yielding the vertex set $\CV^{(1)}$. Then, in the second stage, we uncover the vertices of weight at least $\wih w$ and all edges incident to them. Note that whp $|\CV^{(1)}| \geq n/2$. Since the vertices of the second stage are whp in the same connected component $\CC$, we only need to show that they swallow up every large component formed by $\CV^{(1)}$. So let $\eta > 0$ be a constant; we will show that if $\CV^{(1)}$ contains a component $\mathcal C'$ with $|\CC'|\ge \eta n$, then this component whp merges with $\CC$ after the second stage.  
  
Partition the space $\calX_d(n)$ into \emph{small boxes} of side length $\floor{n^{1/d}}/n^{1/d}$ (and hence of volume between $1/2$ and $1$), 
and denote these by $(\mathrm{sB}_j)_{j \in [N]}$. We say that a component $\CC'$ \emph{hits} a small box $\mathrm{sB}_j$ if there is a vertex $v\in \CC'$ with location $x_v\in \mathrm{sB}_j$.
Then, by a standard argument on Poisson processes~\cite[Lemma 6.2]{lengler2017existence}, there exists 
$\sigma = \sigma(\eta)>0$ such that whp \emph{every} subset of $\eta n$ vertices in $\CV^{(1)}$ must hit at least $\sigma n$ small boxes. In particular, conditioned on this event, if we now expose $\CV^{(1)}$ and suppose that it contains a component $\mathcal C'$ with $|\CC'| \ge \eta n$, then $\mathcal C'$ must hit at least $\sigma n$ boxes. 

Since vertices have i.i.d.\! locations, every vertex $\wih u$ of the second stage (that is, with weight $\ge\wih w$) 
 has, independently,  a probability at least $\sigma$ to lie in 
 a box with a vertex $v\in \CC'$. If this occurs, since $W^{(n)}\ge 1$ and each box has volume at most 1, the minimum in \eqref{eq:blown-h} is taken at the first term. Thus there is an edge between $\wih{u}$ and $v$ with probability at least $\underline c \cdot l(W_{\wih{u}})l(W_v)$. By Assumption~\ref{assu:mild-vertex} and a union bound, whp every vertex in $G$ has weight at most $n^{2/(\tau-1)}$. 
Thus the total probability that $\wih{u}$ sends an edge to $\CC'$ is at least $\sigma\underline c \cdot l(W_{\wih u})l(W_v) \geq \sigma\underline c\exp(-2c_2(2/(\tau-1))^\gamma\cdot ( \log n)^\gamma)$, which is at least $n^{-1/3}$ for sufficiently large $n$ (since $\gamma < 1$). 

Since $\wih{w} = (\log n)^{2/\wih\eta}$, again by Assumption~\ref{assu:mild-vertex}, the number of vertices $|\CV_2|$ in the second stage is binomial with expectation $n\Pv(W^{(n)}\ge \wih w)\ge n^{1/2}$ when $n$ is sufficiently large. We have already shown that whp these vertices all belong to $\CC$. Since each of these vertices independently has probability at least $n^{-1/3}$ of connecting to a vertex in $\CC'$, whp at least one of them does connect to a vertex from $\CC'$ by Lemma~\ref{lem:chernoff}. Therefore, whp there is at most (and hence exactly) one component of size at least $\eta n$.
   \end{proof}
   \section{Extension to finite-sized models}\label{proof-finite}
   We devote this section to the proof of Theorems \ref{thm:GIRG1} and~\ref{thm:GIRG-fine}. The proof for the special case of $\mu =0$ ($f\equiv 1$) was carried out in detail in \cite{julia-girg}, and the method for the conservative case and the lower bound for the explosive case carries over for general $f$. We therefore only provide a sketch of these parts of the proof. The proof of the upper bound uses the same `scheme' as  \cite{julia-girg}: from each of the two uniformly chosen vertices, we find a path to a vertex with large enough weight, with cost that approximates the explosion time.  Then we connect the two high-weight vertices with a low-cost path. The estimate of the cost of the connecting path requires more care than that in \cite{julia-girg}. Hence, we spell out the differences but use some lemmas from \cite{julia-girg} for the upper bound. We start with Theorem~\ref{thm:GIRG-fine}, which is the more precise result. We will then derive Theorem \ref{thm:GIRG1} as a corollary.
   
\begin{proof}[Proof of Theorem \ref{thm:GIRG-fine}] Recall from Definition \ref{def:GIRG} that $\GIRG$ has vertex-space $[-1/2,1/2]^d$. To relate $\GIRG$ on $n$ vertices to the infinite models, we map the vertex locations to $\mathcal{X}_d(n)=[-n^{1/d}/2,n^{1/d}/2]^d$ using the transformation in Definition \ref{def:bgirg}, obtaining the equivalent blown-up model $\BGIRG=(\CV_B(n), \CE_B(n))$. In $\mathrm{BGIRG}_{W, L}(n)$ the vertex-density stays constant as $n$ increases, and the number of vertices in sets of smaller volume converges to a Poisson distribution with intensity $1$. Under the extra assumption that the connection probabilities $g_n^{u,v}$ in \eqref{eq:blown-h} converge to some limiting function $h$ (more precisely, \cite[Assumption 2.4, 2.5]{julia-girg}), we can thus relate this model to an $\mathrm{IGIRG}_{W, L}(1)$ graph restricted to $\mathcal{X}_d(n)$, as follows. We find a sequence $k_n\to\infty$ such that for a fixed vertex $v$, whp the $k_n$-neighborhood (including vertex- and edge-weights) is identical in $\BGIRG$ and in $\mathrm{IGIRG}_{W, L}(1)$ under a suitable coupling. Hence, for two uniformly random vertices $v_n^1, v_n^2$, whp the costs of leaving the $k_n$-neighborhoods of $v_n^1$ and $v_n^2$ converge in distribution to the explosion time of those vertices in  $\mathrm{IGIRG}_{W, L}(1)$. Since $v_n^1,v_n^2$ are typically far away in Euclidean space ($\|v_n^1-v_n^2\|=\Theta(n^{1/d})$), the $k_n$-neighborhoods are contained in disjoint geometric parts of $\mathcal{X}_d(n)$, and the two costs become asymptotically independent. 
	
Unfortunately, the details are quite tedious, since the aforementioned perfect coupling of graphs only works locally. Globally, the total number of vertices in $\mathrm{BGIRG}_{W, L}(n)$ is exactly $n$, while it is Poisson in the $\mathrm{IGIRG}_{W, L}(1)$ model. So instead, we squeeze the vertex sets of $\mathrm{BGIRG}_{W, L}(n)$ and of $\mathrm{IGIRG}_{W, L}(1)$ between two models $\mathrm{IGIRG}_{W, L}(1-\xi_n)$ and $\mathrm{IGIRG}_{W, L}(1+\xi_n)$, for a parameter $\xi_n\downarrow 0$. More precisely, by \cite[Claim 3.3 and 3.4]{julia-girg} we can choose $\xi_n$ such that under a suitable coupling, almost surely for almost all $n$,
	\be\label{eq:vertex-containment} (\mathcal V_{1-\xi_n}\cap \calX_d(n)) \subseteq \{x_v\}_{v\in [n]} \subseteq (\mathcal V_{1+\xi_n}\cap \calX_d(n)),\ee
and $\CV_{\la_1} \subseteq \CV_{\la_2}$ and $\CE_{\la_1} \subseteq \CE_{\la_2}$ whenever $\la_1\le \la_2$. Note that the latter condition implies that $\mathrm{IGIRG}_{W, L}(1)$ is also sandwiched between the models $\mathrm{IGIRG}_{W, L}(1-\xi_n)$ and $\mathrm{IGIRG}_{W, L}(1+\xi_n)$. Moreover, \cite[Eqs.\! (5.11), (5.21), (5.22)]{julia-girg} show that there is a choice of $k_n, M_n$ both tending to infinity (sufficiently slowly) such that for two uniformly random vertices $v_n^1,v_n^2$, whp the following event $A_{k_n,M_n}$ occurs: 
\be \begin{minipage}{\textwidth-5em}$A_{k_n,M_n}$ is the event that the two boxes (wrt Euclidean distance) of radius $M_n$ around $v_n^1$
and $v_n^2$ are disjoint and contained in $\mathcal{X}_d(n)$, and the $k_n$-neighborhoods with respect
to graph distance of $v_n^1,v_n^2$ are contained in these boxes, and these $k_n$-neighborhoods coincide in all four models as vertex- and edge-weighted graphs. \label{def:akn}
\end{minipage}
\ee
	 \emph{Lower bound on $d_{f,L}(v_n^1,v_n^2)$}. 
	Recall from Def.~\ref{def:distances} that we add subscript $\la$ and $n$ to the metric balls and their boundaries when the underlying model is $\Ilambda$ and $\BGIRG$, respectively.
	Observe that when the event $A_{k_n, M_n}$ holds, any path connecting $v_n^1,v_n^2$  must intersect the boundary of the graph distance balls. Hence,	\be\label{overviewlowerbound1}
	d_{f,L}^{\mathrm{BGIRG}}\left(v_n^1,v_n^2\right)\geq  \ind_{A_{k_n,M_n}}\cdot \Big(d_{f,L}^{1}\left(v_n^1,\partial B^G_{1}\left(v_n^1, {k_n}\right)\right)+d_{f,L}^{1}\left(v_n^2,\partial B^G_{1}\left(v_n^2, {k_n}\right)\right)\Big),
	\ee  
	where $d_{f,L}^{\mathrm{BGIRG}}$ and $d_{f,L}^{1}$ are cost distances in $\BGIRG$ and $\Ione$, respectively.

From here the proof of the conservative case follows: in $\Ione$, the cost to reach the boundary of these graph distance balls tends to infinity and the result follows. 
 The lower bound of the explosive case is finished by showing that the variables on the rhs of \eqref{overviewlowerbound1} tend, in distribution, to two i.i.d.\ copies of the explosion time $Y_{f}^{\mathrm{I}}(0)$ of the origin in $\mathrm{IGIRG}_{W,L}(1)$. Intuitively, the asymptotic independence follows since conditioned on the events $A_{k_n,M_n}$ (which occur whp), the two variables are determined by the subgraphs induced by two boxes of $\calX_d(n)$, and these have the same distribution as the neighborhood of the origin by the translation invariance of the model $\Ione$. For a more detailed proof of asymptotic independence, see the arguments between \cite[Equations (3.14)--(3.19)]{julia-girg} that show that this implies that for all $x$, and for all $\ve>0$, it holds that  
 \be\label{eq:distri-lower}\Pv( d_{f,L}^{\mathrm{BGIRG}}(v_n^1,v_n^2) \le x ) \le \Pv(Y_{f}^{\mathrm{I} (1)}(0) +Y_{f}^{\mathrm{I} (2)}(0) \le x  ) +\ve,  \ee
 where  $Y_{f}^{\mathrm{I} (1)}(0), Y_{f}^{\mathrm{I} (2)}(0)$ are two i.i.d.\ copies of $Y_{f}^{\mathrm{I}}(0)$.
For the corresponding lower bound, we need an upper bound on 	$d_{f,L}^{\mathrm{BGIRG}}(v_n^1,v_n^2)$.
	
\emph{Upper bound on $d_{f,L}(v_n^1,v_n^2)$.}
For the upper bound, one can use the same coupling event $A_{k_n,M_n}$ of the $k_n$-neighborhoods in $\BGIRG$ and in $\Ione$, but now one needs to construct a path connecting $v_n^1$ and $v_n^2$ in $\BGIRG$, such that its cost is a good approximation of the sum of explosion times of $v_n^1$ and $v_n^2$ in $\Ione$. 

The first step is to show that when  $v_n^1$ and $v_n^2$ are in the giant component $\mathcal{C}_{\infty}$ of $\mathrm{BGIRG}_{W,L}(n)$, then the event $\CE_{\infty}(v_n^1, v_n^2)$ that they are in the infinite component of $\mathrm{IGIRG}_{W,L}(1)$, occurs whp. This was shown in~\cite[Lemma 3.7]{julia-girg}. Formally:
\be \lim_{n\to \infty}\Pv( \CE_{\infty}(v_n^1, v_n^2) \mid v_n^1, v_n^2 \in \mathcal{C}_{\infty} ) =1.\ee
Our goal is to show that for all $x\ge 0$ and all $\eps,\eps'>0$, there exists $n_0\in \N$ such that for all $n \geq n_0$,
\be \Pv(d_{f,L}(v_n^1, v_n^2) \le x) \ge \Pv(Y_{f}^{\mathrm{I} (1)}(0) +Y_{f}^{\mathrm{I} (2)}(0) +\eps' \le x) -\ve, \ee
where $Y_{f}^{\mathrm{I} (1)}(0),Y_{f}^{\mathrm{I} (2)}(0)$ are two i.i.d. copies of the explosion time of the origin in $\Ione$.
We will do this by first showing that 
\be\label{eq:finite-lower-bound} \Pv(d_{f,L}(v_n^1, v_n^2) \le x) \ge \Pv( Y_{f}^{\mathrm{I}}(v_n^1) +  Y_{f}^{\mathrm{I}}(v_n^2)+ \epsilon'\le x ) -  \eps.\ee 
This is then sufficient to show the counterpart of \eqref{eq:distri-lower} by an argument given between \cite[Equations (3.22)--(3.24)]{julia-girg}, combined with the statement that jointly,
\be\label{eq:distr-limit} ( Y_{f}^{\mathrm{I}}(v_n^1), Y_{f}^{\mathrm{I}}(v_n^2)) \toindis (Y_{f}^{\mathrm{I} (1)}(0), Y_{f}^{\mathrm{I} (2)}(0)),\ee
two i.i.d.\ copies of the explosion time. Rigorously, \eqref{eq:distr-limit} is \cite[Equation (3.21)]{julia-girg}, and its proof can be found in \cite[Equations (3.25)--(3.29)]{julia-girg}.
Heuristically, \eqref{eq:distr-limit} is natural. Even though the explosion time of $v_n^1$ and $v_n^2$ are dependent for  fixed $n$, their values are to a large extent determined by disjoint boxes around these vertices, in particular, the approximations 
$d_{f,L}^{1}\left(v_n^1,\partial B^G_{1}\left(v_n^1, {k_n}\right)\right)$, 
$d_{f,L}^{1}\left(v_n^2,\partial B^G_{1}\left(v_n^2, {k_n}\right)\right)$ are independent, hence the asymptotic independence follows. Thus it remains to show~\eqref{eq:finite-lower-bound}. 

From here we continue by a different argument than the one in \cite{julia-girg}, since now we need to take the weight penalty function $f$ into account. However, the idea remains the same: for each $q \in \{1,2\}$, we will find a high-weight vertex $u_n^q$ whose cost-distance to $v_n^q$ is less than the explosion time of $v_n^q$. We will then apply Lemma \ref{lem:cheap-connect} to establish a low-cost connecting path between $u_n^1$ and $u_n^2$. 

We decompose the proof into several steps. 

\medskip
 
{\bf Step 1)} \emph{Cheap path via vertices of high enough weight.} Fix $x, \eps,\eps' >0$ as in~\eqref{eq:finite-lower-bound}. We may assume $\eps <1$. 
We again work with the event $A_{k_n, M_n}$ from \eqref{def:akn}. For $q\in\{1,2\}$, in the first phase we will try to find a vertex $u_n^q$ in the $k_n$-neighborhood of $v_n^q$, that has weight $\ge K_1$ and cost-distance at most $t_q$ from $v_n^q$. (Here $K_1$ is a suitably large constant which we will define later.) We will denote the event that we succeed in finding $u_n^q$ by $\calA_q(t_q)$. This first phase will not always succeed, but we will show that
\be\label{eq:goalstar}
\Pv\big(\calA_1(t_1) \cap \calA_2(t_2) \mid Y_f^\mathrm{I}(v_n^1)\le t_1 \mbox{ and }Y_f^\mathrm{I}(v_n^2) \le t_2\big) \ge 1-\frac{\eps}{4}.
\ee
Moreover, we will show that if the first phase does succeed, then
\be \label{eq:dist-u1-u2} \Pr[d_{f,L}(u_n^1, u_n^2) \le \eps' \mid \calA_1(t_1) \cap \calA_2(t_2)] \geq 1-\frac{\eps^2}{32x\Pv\big(\calA_1(t_1) \cap \calA_2(t_2)\big)}.\ee
We first show how~\eqref{eq:finite-lower-bound} follows from~\eqref{eq:goalstar} and~\eqref{eq:dist-u1-u2}. We fix $x>0$ and define
\be\label{eq:tgood}\ba I_{\text{good}} := \Big\{t \in [0,x-\eps'] : \Pr\big(Y_f^\mathrm{I}(v_n^1)\leq t \text{ and }  Y_f^\mathrm{I}(v_n^2) \le x-\eps'-t\big) > \eps/4x \big\}. \ea\ee
Then 
\be\label{eq:tgood-matters}\ba
	\Pr\big(Y_f^\mathrm{I}(v_n^1)+  Y_f^\mathrm{I}(v_n^2) \le x-\eps'\big) & = \Pr\big(\exists t \in [0,x-\eps'] \colon Y_f^\mathrm{I}(v_n^1)\le t \mbox{ and }Y_f^\mathrm{I}(v_n^2) \le x-\eps'-t\big) \\
&	\le \Pr\big(\exists t \in I_{\text{good}} \colon Y_f^\mathrm{I}(v_n^1)\le t \mbox{ and }Y_f^\mathrm{I}(v_n^2) \le x-\eps'-t\big) \\
&	\qquad + \int_{t \in [0,x-\eps']\setminus I_{\text{good}}} \frac{\eps}{4x} \mathrm dt\\
&	\le \Pr\big(\exists t \in I_{\text{good}} \colon Y_f^\mathrm{I}(v_n^1)\le t \mbox{ and }Y_f^\mathrm{I}(v_n^2) \le x-\eps'-t\big) +\eps/4.
\ea\ee
For $t \in I_{\text{good}}$ we obtain by~\eqref{eq:goalstar},
\be\ba \label{eq:good-t}\Pv(\calA_1(t) \cap \calA_2(x-\eps'-t) ) \ge \frac34 \Pr\big(Y_f^\mathrm{I}(v_n^1)\leq t \text{ and }  Y_f^\mathrm{I}(v_n^2) \le x-\eps'-t\big) \ge \eps/8x. \ea\ee 
Plugging this into the denominator on the rhs of \eqref{eq:dist-u1-u2}, we obtain for all $t \in I_{\text{good}}$,
\be \label{eq:dist-u1-u2-simplified} \Pr[d_{f,L}(u_n^1, u_n^2) \le \eps' \mid \calA_1(t) \cap \calA_2(x-\eps'-t)] \geq 1-\eps/4.\ee
Combining this with~\eqref{eq:goalstar} and the fact that $\calA_q(t_q)$ implies $d_{f,L}(u_n^q,v_n^q) \le t_q$ by construction, we get
\be\label{eq:86-proof}\ba
	\Pv(d_{f,L}(v_n^1,v_n^2) \le x) &\ge \Pv\big(\exists t \in I_{\text{good}} \colon \calA_1(t) \cap \calA_2(x-\eps'-t) \big) - \eps/4\\
	&\ge \Pr\big(\exists t \in I_{\text{good}} \colon Y_f^\mathrm{I}(v_n^1)\le t \mbox{ and }Y_f^\mathrm{I}(v_n^2) \le x-\eps'-t\big) - \eps/2\\
	&\ge \Pr\big(Y_f^\mathrm{I}(v_n^1) + Y_f^\mathrm{I}(v_n^2) \le x-\eps'\big) - \eps,
\ea\ee
establishing~\eqref{eq:finite-lower-bound} and proving the result.

So it remains to show~\eqref{eq:goalstar} and \eqref{eq:dist-u1-u2}. We start with the latter. We will carefully define $u_n^q$ so that we can find it without revealing any information about vertices of larger weight. This allows us to apply Lemma \ref{lem:cheap-connect} to $u_n^q$, since this lemma only uses information about the number and locations of vertices with weight strictly larger than the weight of $u_n^q$. Conditioned on the existence of $u_n^q$, Lemma \ref{lem:cheap-connect} connects it via a box-increasing greedy path $\pi^{\text{greedy}}(u_n^q)$ to the core of highest-weight vertices. This path will have cost at most
\be \label{eq:cheap-K1} K_1^{-\rho(\delta)/2} + 2 K_1^{-\frac{\tau-1}{1-\delta}\xi(\delta)/2},\ee
and the construction will fail with probability at most $\Xi(K_1)$ in \eqref{eq:xi-error}. Both the cost of the path and the failure probability tend to $0$ as $ K_1 \to \infty$. We choose $K_1$ so large that the cost of the two paths (one each from $u_n^1$ and $u_n^2$) are each at most $\eps'/3$ with probability at least $1-\eps^2/96$. The end vertices $c_{\mathrm{end}}^q$ of these paths have weights as in~\eqref{eq:weight-end}, and by Lemma~\ref{lem:core}, whp all pairs of vertices with such weights can be connected by paths of cost $K(\zeta) n^{-\zeta}=o(1)$ (see \eqref{eq:core-cost}). We will choose $n$ large enough that $K(\zeta)n^{-\zeta} \le \eps'/3$ and the failure probability of Lemma~\ref{lem:core} is at most $\eps^2/(96x)$. 

To avoid dependencies between the constructions for $u_n^1$ and $u_n^2$, let $\mathcal{F}_1$ be the event that $\mathcal{A}_1(t_1)$ occurs, but there is no box-increasing path from $u_n^1$ to the high-weight core of cost at most $\eps'/3$. Likewise, let $\mathcal{F}_2$ be the event that $\mathcal{A}_2(t_2)$ occurs, but there is no box-increasing path from $u_n^2$ to the high-weight core of cost at most $\eps'/3$. Finally, let $\mathcal{F}_3$ be the event that there exists a pair of vertices in the core not joined by a path of cost $\le \eps'/3$. Then 
as argued before, the probabilities of $\mathcal{F}_1$, $\mathcal{F}_2$ and $\mathcal{F}_3$ are at most $\eps^2/(96x)$ each by Lemmas~\ref{lem:cheap-connect} and~\ref{lem:core}. On the other hand, deterministically, if $\calA_1(t_1) \cap \calA_2(t_2)$ occurs and $\mathcal{F}_1$, $\mathcal{F}_2$ and $\mathcal{F}_3$ do not occur, then we have $d_{f,L}(u_n^1,u_n^2) \le \eps'$. Thus by a union bound,
\[
	\Pv\big(d_{f,L}(u_n^1,u_n^2) \le \eps' \mid \calA_1(t_1) \cap \calA_2(t_2)\big) \ge 1 - \frac{\Pr(\mathcal{F}_1) + \Pr(\mathcal{F}_2) + \Pr(\mathcal{F}_3)}{\Pv\big(\calA_1(t_1) \cap \calA_2(t_2)\big)} \geq 1-\frac{\eps^2}{32x\Pv\big(\calA_1(t_1) \wedge \calA_2(t_2)\big)}.
\]
This proves~\eqref{eq:dist-u1-u2}, and it remains only to show~\eqref{eq:goalstar}. 

We stress again that in finding $u_n^q$, we must avoid exposing any information about the locations of, or edges incident to, vertices with weight greater than $u_n^q$. The remaining steps are devoted to finding the vertex $u_n^q$. In the following, we fix $q\in \{1,2\}$, and we condition on the whp event $\CE_{\infty}(v_n^1, v_n^2)\cap A_{k_n,M_n}$. Recall that when the event $A_{k_n,M_n}$ occurs, the graphs of $\BGIRG$ and $\Ione$ coincide around $v_n^1$ and $v_n^2$ up to graph distance $k_n$, which allows us to work with $\Ione$ instead of $\BGIRG$.

As useful notation, for any real number $w\ge 1$,  let us write $\CV_1^{\le w}, \CV_1^{> w}$ for the set of vertices with weight $\le w$ and $> w$ in $\Ione$, respectively, and $\CV_{B}(n)^{\le w}, \CV_{B}(n)^{> w}$ for the same in $\BGIRG$. 

\medskip

{\bf Step 2)} \emph{Defining truncated balls.} 
We study the \emph{truncated balls} around a vertex $v$ in $\Ione$. 
We define $d_{f,L}^{\leq w}$ as the cost-distance in the sub-graph induced by the set of vertices of weight at most $w$. Then we define a truncated ball, where we impose cost-truncation $T$ and weight-truncation $w$, as:
\be \CB_{f,L}( v,  T, w) :=\{ u \in \CV_1^{\le w}: d_{f,L}^{\leq w}(v,u) \le T  \}. \ee
For any given $T$ and $w$, we now show quickly that
\be\label{eq:finite-size-cbl} |\CB_{f,L}( v,  T, w)|<\infty. \ee 
This is a consequence of the fact that explosion is only possible via unbounded weights, which is proved in \cite[Corollary 4.2]{julia-girg}: For any $w>0$, explosion is impossible in the subgraph of $\mathrm{IGIRG}_{W,L}(1)$ restricted to vertices with weight $\le w$, thus any infinite path realising a finite total cost must leave the set $\CV_{1}^{\le w}$. Note that this carries over from the case $\mu=0$ considered in \cite{julia-girg} (i.e., $f\equiv 1$) to arbitrary polynomial penalty functions $f$, since any such $f$ takes bounded values for input weights in $[1,w]$, and thus the costs change at most by a constant factor when we replace $f\equiv 1$ by a different $f$.  This establishes~\eqref{eq:finite-size-cbl}.

Observe that if $Y_f^\mathrm{I}(v) \le T$, then deterministically we have $|\CB_{f,L}(v, T, \infty)| = \infty$, i.e.\ the ball contains infinitely many vertices when we drop the weight-restriction. Thus:
\be\label{eq:expl-implies-U}
	\mbox{If }Y_f^\mathrm{I}(v) \le T,\mbox{ then }\CB_{f,L}(v,T,\infty)\mbox{ contains vertices of arbitrarily high weight,}
\ee
since otherwise~\eqref{eq:finite-size-cbl} would be violated for some $w$. 
Finally, for all $w \geq W_{v_n^q}$, we observe that  $\CB_{f,L}( v_n^q,  T_q , w)$ is \emph{entirely} determined by the subgraph spanned by vertices of weight $\le w$.\medskip

{\bf Step 3)} \emph{The exterior of truncated cost balls.} Next we  study the edges emanating from the truncated cost ball $\CB_{f, L}(v,  T , K_1)$ in $\Ione$. For a vertex $v$ and $T>0$, consider an edge $(u_1, u_2)$ where $u_1\in \CB_{f, L}(v,  T , K_1)$, $u_2 \in \CV_1$, $W_{u_2}>K_1$, and $d_{f,L}^{\leq K_1}(v, u_1) + C_{(u_1,u_2)} \le T$. We call such vertices $u_2$ \emph{exterior} to $\CB_{f,L}(v,T,K_1)$. Note that, conditioned on the event that $Y_f^\mathrm{I}(v)\le T$, there is at least one vertex $u_2$ with this property by~\eqref{eq:expl-implies-U}. 

Motivated by this observation, we define the exterior of the truncated cost-ball by
\be\label{eq:exterior}\ba
w_\mathrm{min} &:= \min\{w \colon \exists u_2 \mbox{ exterior to }\CB(v,T,K_1)\mbox{ with weight }w\} \\
\mathrm{Ex}(v, T, K_1) &:= \{u_2 \colon u_2 \mbox{ is exterior to }\CB(v,T,K_1)\mbox{ and }W_{u_2} = w_\mathrm{min}\}.
\ea\ee
 Thus $\mathrm{Ex}(v,T,K_1)$ is a set of vertices $u_2$ with the smallest weight-value $w_\mathrm{min} > K_1$ that are reachable from $v$ via a path of cost at most $T$ through $\CB( v,  T,K_1)$. By \eqref{eq:finite-size-cbl}, the minimum is almost surely taken over a finite set, and by the observation above, this set is almost surely non-empty conditioned on the event that $Y(v)\leq T$. 
 To make an almost surely unique choice of a vertex from this set, we define $U(v, T, K_1)$ as the vertex in $\mathrm{Ex}(v, T, K_1)$ with smallest Euclidean distance to $v$, if it exists.

We have just shown that conditioned on $Y(v) \le T$, the vertex $U(v, T, K_1)$ almost surely exists. 

\medskip

{\bf Step 4)} \emph{Weight bounds on $v_n^q$ and $u_n^q$}. We require $K_1$ to be large enough that
\be\label{eq:start-vertex-error} \Pv\big( W_{v_n^q} > K_1\big) \le \ve/(256x).\ee
for each $q\in\{1,2\}$. Moreover, we choose $K_2$, $K_3$ and $K_4$ so large that for each $q\in\{1,2\}$,
\be\label{eq:euc-error} \ba 
\Pv\big( W_{U(v_n^q, t_q, K_1)} > K_2\mid U(v_n^q, t_q, K_1) \text{ exists}\big) &\le \ve/(256x), \\
\Pv\big( (\CB_{f,L}(v_n^q,t_q,K_1) \cup \mathrm{Ex}(v_n^q,t_q,K_1) )\not\subseteq B^{G}(v_n^q,K_3) \mid U(v_n^q, t_q, K_1) \text{ exists}\big) &\le \ve/(256x),\\
\Pv\big( (\CB_{f,L}(v_n^q,t_q,K_1) \cup \mathrm{Ex}(v_n^q,t_q,K_1) )\not\subseteq B^{2}(v_n^q,K_4) \mid U(v_n^q, t_q, K_1) \text{ exists}\big) &\le \ve/(256x).
\ea\ee 
Note that $K_2$, $K_3$ and $K_4$ must exist, since $|\mathrm{Ex}(v_n^q,t_q,K_1)|<\infty$ almost surely for each $q\in\{1,2\}$ and its distribution does not depend on $n$.
Observe that conditioned on the complement of the events in\eqref{eq:start-vertex-error} and~\eqref{eq:euc-error}, $W_{v_n^q} \le K_1 < W_{u_n^q} \le K_2$. Moreover, if $n$ is large enough then $k_n >K_3$ and $M_n >K_4$, so conditioned on $A_{k_n,M_n}$, the sets $\CB_{f,L}(v_n^q,t_q,K_1)$ and $\mathrm{Ex}(v_n^q, t_q, K_1)$ are contained in the $k_n$-neighborhood of $v_n^q$ and in the Euclidean ball $B^2(v_n^q, M_n)$.

Then, taking a union bound over the events of \eqref{eq:start-vertex-error} and~\eqref{eq:euc-error} and the event that $A_{k_n,M_n}$ fails, and over $q\in\{1,2\}$, we obtain
\be\label{eq:u1u2good} \ba \Pv\big( &A_{k_n,M_n} \text{ and }\forall q\in\{1,2\}: U(v_n^q, t_q, K_1) \text{ exists}, W_{v_n^q} \le K_1 < W_{U(v_n^q, t_q, K_1)} \le K_2, \\
&d^G(U(v_n^q, t_q, K_1), v_n^q) \le K_3 \mbox{ and } d^2(U(v_n^q, t_q, K_1), v_n^q) \le K_4 \mid Y_f^\mathrm{I}(v_n^1)\le t_1 \mbox{ and }Y_f^\mathrm{I}(v_n^2)\le t_2\big) \\
& \ge  \frac{1 - \ve/(16x)}{\Pr(Y_f^\mathrm{I}(v_n^1)\le t_1 \mbox{ and }Y_f^\mathrm{I}(v_n^2)\le t_2)} \ge 1-\eps/4.\ea\ee
The final inequality follows since we have $t_2 = x-\eps'-t_1$ for some $t_1 \in I_\text{good}$. We are now finally able to describe the procedure for finding $u_n^1$ and $u_n^2$.
\medskip

{\bf Step 5)} \emph{Defining $u_n^1,u_n^2$.}
To determine $u_n^1,u_n^2$, we first uncover the vertices in 
\[ \CB(v_n^q, t_q, K_1)\cap B^2(v_n^q, K_4)\]  for $q\in\{1,2\}$. By doing so, we only use information about vertices  with weight $\le K_1$ within the Euclidean ball of radius $M_n$ around $v_n^1$ and $v_n^2$. 
By gradually increasing $w$, we then reveal the exterior $\mathrm{Ex}(v_n^q, t_q, K_1)$ within the ball $B^2(v_n^q, K_4)$. 
If these sets are non-empty, then we define $u_n^q := U(v_n^q, t_q, K_1)$. Conditioned on the event $A_{k_n,M_n}$, the $k_n$-neighborhood of $v_n^q$ is contained in the ball $B^2(v_n^q, K_4)$, so our conditional lower bound~\eqref{eq:goalstar} on the probability of successfully finding $u_n^1$ and $u_n^2$ then follows immediately from~\eqref{eq:u1u2good}. Moreover, we find each vertex $u_n^q$ (if successful) by revealing only vertices with weight $\le W_{u_n^q}$. 
This establishes~\eqref{eq:goalstar} and concludes the proof.
\end{proof}

\begin{proof}[Proof of Theorem \ref{thm:GIRG1}]
The proof will follow from Theorem \ref{thm:GIRG-fine} via the following coupling arguments. For the tightness of $d_{\mu,L}(v_n^1,v_n^2)$ we will find an upper bound on the cost-distance between $v_n^1, v_n^2$, via coupling $\BGIRG$ to a model  $\underline{\mathrm{BGIRG}}_{\underline W,L}(n)$ (on the same set of vertices) that contains less edges with higher costs, and that satisfies the assumptions of Theorem \ref{thm:GIRG-fine}.

First we define the vertex-weights $(\underline W_i)_{i\in[n]}$ in  $\underline{\mathrm{BGIRG}}_{\underline W,L}(n)$. $W^{(n)}$, satisfying only Assumption \ref{assu:mild-vertex}, might not converge in distribution, while \cite[Assumption 2.4]{julia-girg} requires this. Hence, let us denote by $\underline W, \overline W$ two random variables with respective cdfs $F_{\underline W}(x):=1-\underline \ell(x)x^{-(\tau-1)}$, $F_{\overline W}(x):=1-\overline \ell(x)x^{-(\tau-1)}$. We assign to each vertex $i\in [n]$ three vertex weights in a coupled manner: For a collection of i.i.d.\ uniform $[0,1]$ variables $(U_{n,i})_{i\le n}$, we set
\[ W^{(n)}_i:=(1-F_{W^{(n)}})^{(-1)}(U_{n,i}), \qquad  \underline{W}_i:= (1-F_{\underline W})^{(-1)}(U_{n,i}); \qquad \overline W_i:=(1-F_{\overline W})^{(-1)})(U_{n,i}), \]
where $(1-F_X)^{(-1)}(y):=\inf\{t\in \R: 1-F_X(t)\le y\}$ exists and is unique for every $y\in[0,1]$ due to the monotonicity of the cdfs. 
Observe that Assumption \ref{assu:mild-vertex} ensures stochastic domination, and this coupling precisely achieves that 
\be\label{eq:weight-coupling} \Pv(\underline W_i \le W_i^{(n)}\le \overline W\mid W^{(n)}\le M_n )=1.\ee  Now we argue that the assigned three weights are `not far' from each other.
One can verify that $ \underline W_i=\underline \ell^\star(1/U_{n,i}) U_{n,i}^{-1/(\tau-1)}, \overline W_i=\overline \ell^\star(1/U_{n,i}) U_{n,i}^{-1/(\tau-1)}$ for some slowly varying functions $\underline \ell^\star(\cdot), \overline \ell^\star(\cdot)$,  by writing  $(1-F_{\underline W})(x)=\underline \ell(x)/x^{-(\tau-1)}$, switching to $z:=1/x$ and applying \cite[Theorem 1.5.12]{BinGolTeu89}. From this and \eqref{eq:weight-coupling}  it follows that for some slowly varying function $\wit \ell$; 
\be\label{eq:wiwin} \underline W_i\le W_i^{(n)}\le \wit \ell(\underline W_i) \underline W_i.\ee 
 Now we describe the edge connection probabilities in $\underline{\mathrm{BGIRG}}_{\underline W,L}(n)$. Recall the lower bound for $g_n^{u,v}$ in Assumption \ref{assu:mild-connect}, and that $l_{c_2, \gamma}(w)=\exp(-c_2 (\log w)^\gamma)$in \eqref{eq:ellw-111}. While the term $w_u w_v/\|x_u-x_v \|$ is monotone increasing, the term $l_{c_2, \gamma}(w_u)l_{c_2, \gamma}(w_v)$ is monotone \emph{decreasing} in $w_u, w_v$, so we cannot simply use the weights $\underline W_u, \underline W_v$ for a lower bound on $g_n^{u,v}$. We solve this problem as follows:
  Given a specific value of $c_2$ that holds in the lower bound for $g_n^{u,v}$ in Assumption \ref{assu:mild-connect}, choose now $c_2'$ so large that the following inequality holds for all $w\ge 1$: 
 \[  l_{c_2', \gamma }(w):=\exp(-c_2'( \log w)^\gamma) \le  \exp(-c_2 (\log (w \wit \ell(w)) )^\gamma), \]
 and then, by \eqref{eq:wiwin},  for all $i\in [n]$,
 \be\label{eq:lc2} l_{c_2',\gamma}(\underline W_i) \le l_{c_2,\gamma}(W_i^{(n)}). \ee 
 With $\mathcal W_i:=(\underline W_i, W_i^{(n)}, \overline W_i)$, 
for each possible pair of vertices $u,v\in[n]$, we set
\[ \Pv(u\leftrightarrow v \text{ in } \underline{\mathrm{BGIRG}}_{\underline W,L}(n) \mid \mathcal W_u, \mathcal W_v ) :=\un c \cdot \bigg( l_{c_2', \gamma}(\underline W_u)l_{c_2',\gamma}(\underline W_v)\wedge \Big( \frac{\underline W_u \underline W_v}{n \|x_i-x_j\|^{d}}\Big)^\alpha\bigg). \]
The rhs is less then $g_n^{u,v}(x_u, x_v, (W_i^{(n)})_{i\in [n]})$ due to \eqref{eq:wiwin} and \eqref{eq:lc2}, and satisfies \cite[Assumption 2.5]{julia-girg}.  Using coupled variables to realise edges, the edge set $\underline \CE(n)$ of $\underline{\mathrm{BGIRG}}_{\underline W,L}(n)$ is contained in $\CE(n)$ of $\GIRG$. We then use the same variables $L_e$ on edges that are present in the two models.
Finally, to upper bound the cost of edges in $\GIRG$ by that in $\underline{\mathrm{BGIRG}}_{\underline W,L}(n)$, we use a slightly increased penalty function $\overline f$. We write $f(w_1, w_2)=\sum_{i\in \CI} a_i w_1^{\mu_i} w_2^{\nu_i}$ as in~\eqref{eq:pol}.  Given $\beta^+<(3-\tau)/\deg(f)$ from \eqref{eq:beta1}, we set $\overline f(w_1, w_2):=c_4\sum_{i\in \CI} a_i w_1^{\mu_i+\epsilon} w_2^{\nu_i+\epsilon}$, where $\epsilon>0$ is such that $\beta^+<(3-\tau)/\deg(\overline f)$ still holds, and $c_4>0$ is such that $\overline f(\underline W_u, \underline W_v)\ge f(W_u^{(n)}, W_v^{(n)})$ holds for all $u,v\in [n]$. This is possible due to \eqref{eq:wiwin} and Potter's bound.

Then every path in $\underline{\mathrm{BGIRG}}_{\underline W,L}(n)$ has higher cost wrt penalty function $\overline f$ and weights $\underline W$ than the same path in $\GIRG$, wrt penalty function $f$ and weights $W^{(n)}$.
Hence, under this coupling,
\[ d_{f,L}(v_n^1, v_n^2)\le \underline d_{\overline f, L}(v_n^1, v_n^2), \]
where $\underline d$ means distance in $\underline{\mathrm{BGIRG}}_{\underline W,L}(n)$. The proof is finished by applying Theorem \ref{thm:GIRG-fine} on the rhs to see that it converges in distribution. Hence, the lhs  is tight. 

The proof of \eqref{eq:tight-2} is along analogous lines, one now needs a model $\overline {\mathrm{BGIRG}}_{\overline W,L}(n)$ with edge set $\overline \CE(n)$, where vertices have weight $\overline W_i$,  and connection probability that is the upper bound in Assumption \ref{assu:mild-connect}. Then, $\CE(n)\subset \overline \CE(n)$. Finally, given $\beta^-$ in \eqref{eq:beta2}, one uses the penalty function $\underline f(w_1, w_2):=c_5 \sum_{i\in \CI} a_i w_1^{\min(\mu_i-\epsilon,0)} w_2^{\min(\nu_i-\epsilon,0)}$ for a small $\epsilon$ and $c_5>0$ such that $\beta^->(3-\tau)/\deg(\underline f)$ still holds and that $\underline f(\overline W_u, \overline W_v)\le f(W_u^{(n)}, W_v^{(n)})$ for all $u,v\in[n]$. Then every path in $\overline{\mathrm{BGIRG}}_{\overline W,L}(n)$ has lower cost wrt penalty function $\underline f$ and weights $\overline W$ than the same path in $\GIRG$, wrt penalty function $f$ and weights $W^{(n)}$, and there may be more paths in the earlier model, hence now $d_{f,L}(v_n^1, v_n^2)\ge \overline d_{\underline f, L}(v_n^1, v_n^2)$ holds, where $\overline d$ means distance in $\overline{\mathrm{BGIRG}}_{\overline W,L}(n)$. Theorem \ref{thm:GIRG-fine} applies to the rhs of this inequality and it tends to infinity with $n$, finishing the proof.
\end{proof}

\begin{appendices}
\section{Explosion time is realised via a path}\label{s:app-exp}
\begin{proof}[Proof of Lemma \ref{lem:opt-real}] 
 We first rule out sideways explosion. Observe that by the symmetry of the model, (that is, the translation invariance of the Poisson point process and of the connection probabilities $h_{\mathrm q}$),  $\Pv(N_1^t(0)<\infty)=1$ implies the same condition $\Pv(N_1^t(v)<\infty)=1$ for every vertex $v$. For $T\in [0,\infty)$ and $j\in \N$, we denote by $\Gamma_j^T(v)$ the set of vertices that can be reached from $v$ via a path of length at most $j$ and cost at most $T$. Then, inductively assume that $|\Gamma_j^T(v)|<\infty$ a.s.\ for some $j\ge 1$.  Observe that any vertex in $\Gamma_{j+1}^T(v)$ must have an edge of cost at most $T$ from some vertex in $\Gamma_j^T(v)$, so
	 \[  |\Gamma_{j+1}^T(v)| \le \sum_{w\in \Gamma_j^T(v)} N_1^T(w). \]
	 The rhs is almost surely a finite sum over finite summands, so almost surely it is finite. Thus by induction, a.s.\ $|\Gamma_{j+1}^T(v)| < \infty$ for all $j$, and constrasting this with \eqref{eq:sideways1}, we see that sideways explosion a.s.\ does not happen.
	  
	  Next we show that the explosion time $Y_f^\mathrm{I}(v)$ is realised via an infinite path $\pi_{\mathrm{opt}}$.
Recall $\sigma_f^{\mathrm{I}}(v,k):=\inf\{t: |B^{f,L}(v,t)|>k \text{ in } \Ilambda\}$ from Definition \ref{def:explosiontime}. In what follows we consider the sequence $(\sigma_f^{\mathrm I}(v,k))_{k\ge 1}$ as \emph{given}, and show that there exists a sequence $v_0^\star, v_1^\star, \dots$ of distinct vertices with $d_{f,L}(v,v_k^\star) = \sigma_f^{\mathrm I}(v,k)$ for all $k \ge 1$ such that the induced subgraph on $v_0^\star, v_1^\star,\dots, v_k^\star$ is always connected.
This statement may seem obvious when the degrees are finite and $L>0$ almost surely. But, it is non-trivial when $L=0$ happens with positive probability, and one may discover many, even infinitely many, vertices at the same time $t$, possibly forming large zero-cost clusters, and the choice of $v_k^\star$ is far from unique. 
	
	We proceed by induction, taking $v_0^\star = v$; suppose we have found $v_0^\star, \dots, v_{k-1}^\star$ for some $k \ge 1$ forming a connected graph. Since $v \in \calC_\infty$ and $Y_f^{\mathrm I}(v) < \infty$, we have $\sigma_f^{\mathrm I}(v,k) < \infty$. By the definition of $\sigma_f^{\mathrm I}(v,k)$ as the infimum of $t$ where there are at least $k+1$ vertices in the ball $B^{f,L}(v, t)$, for every fixed $\ve>0$,
the set 	
	\be
		\mathcal U(\ve):=B^{f,L}(v,\sigma_f^{\mathrm I}(v,k)+\eps) \setminus \{v_0^\star, \dots, v_{k-1}^\star\}
		\ee
		 is non-empty.
	For every vertex $w(\ve)$ in $\mathcal U(\ve)$, by the definition of cost-distance, there must exist a path $\pi(\eps)$ from $v$ to $w(\ve)$ with cost at most $\sigma_f^{\mathrm I}(v,k)+\eps$. Let $v_k^\star(\eps, w(\ve))$ be the first vertex on $\pi(\eps)$, counting from $v$, which does not lie in $\{v_0^\star, \dots, v_{k-1}^\star\}$; such a vertex must exist since $w(\eps) \notin \{v_0^\star, \dots, v_{k-1}^\star\}$. Let us write $\CN(u)$ for the neighbors of vertex $u$.
	Then $v_k^\star(\ve, w(\ve))$ lies in the set
\[  \CN_k(\ve):=\Big(\cup_{i\le k-1}\CN(v_i^\star) \Big)\cap B^{f,L}(v,\sigma_f^{\mathrm I}(v,k)+\eps),\]
so in particular this set is non-empty for all $\ve>0$.
 It is a closed set by the definition of the cost-distance ball $B^{f,L}(v,r)$ in Section~\ref{sec:notation}. Further, since $B^{f,L}(v,r')\subseteq B^{f,L}(v, r)$ for all $r'<r$, the sequence of sets indexed by $\ve>0$ form a nested sequence as $\ve\downarrow 0$.
 Defining now 
 \[ \CN_k(0):=\cap_{\ve>0} \,\CN_k(\ve),\]
$\CN_k(0)$ is non-empty since it is the intersection of a closed nested sequence of sets.
Intuitively, $\CN_k(0)$ are the neighbors of $\{v_0^\star, \dots, v_{k-1}^\star\}$ that are at cost-distance $\sigma_f^{\mathrm I}(v,k)$ from $v$. To finish the argument, take any vertex in $\CN_k(0)$ as  $v_k^\star$ and continue with the induction. 
	
	We now use $v_0^\star, v_1^\star, \dots$ to show that the explosion time $Y_f^{\mathrm I}(v)=\lim_{k\to \infty} \sigma_f^{\mathrm I}(v,k)<\infty$ is attained as a cost of some infinite path $\pi_{\mathrm{opt}}$ from $v$. For all $k$, we have $d_{f,L}(v,v^\star_k) = \sigma_f^{\mathrm I}(v,k)$, and by definition, $\sigma_f^{\mathrm I}(v,k) \to Y^{\mathrm I}_f(v)$ as $k\to\infty$. Moreover, by construction, there is at least one least-cost path from $v$ to $v_k^\star$ whose internal vertices lie in $\{v_0^\star, \dots, v_{k-1}^\star\}$. Let $\CG_{\text{least}}$ be a breadth-first search tree from $v$ on $(v_k^\star)_{k \ge 0}$, so that for all $k$ there is a unique path $\pi_k$ from $v$ to $v_k^\star$ in $\CG_{\text{least}}$ with cost-length $\sigma_f^{\mathrm I}(v,k)$. By definition, $\CG_{\text{least}}$ is a connected infinite graph. Moreover, the degree of each vertex $v_k^\star$ in $\CG_{\text{least}}$ is a.s.\ finite, since sideways explosion does not occur. Thus $\CG_{\text{least}}$ must contain an infinite path $\pi_{\mathrm{opt}} := v_{k_1}^\star v_{k_2}^\star \dots$ from $v$. Any reordering of a convergent subsequence converges to the same limit as the original sequence, so the cost of $\pi_{\mathrm{opt}}$ is $\lim_{j\to\infty} \sigma_f^{\mathrm I}(v,k_j) = \lim_{k\to\infty} \sigma_f^{\mathrm I}(v,k) = Y^{\mathrm I}_f(v)$, as required.
\end{proof}

\section{Proof of Auxiliary lemmas for finite-size models}\label{s:app}
\subsection{Presence of cores}\label{s:app-box}
\begin{proof}[Proof of Claim \ref{claim:ER}]
  Recall $\mathcal W_n := (x_v,W_v^{(n)})_{v \in \CV_B(n)}$. We will first find $q_r>0$ such that any two vertices in the graph $\mathrm{Core}_{\mathrm{B}}$ are connected by an edge with probability at least $q_r$, independently of other vertices. Thus, $\mathrm{Core}_{\mathrm{B}}$ is dominated from below by an  Erd{\H os}-R\'enyi random graph on $\CV_{\mathrm{Core}}(\mathrm B)$ vertices and connection probability $q_r$, establishing \eqref{eq:ER-domin}.
Then, we will bound the number of vertices $|\CV_{\mathrm{Core}}(\mathrm B)|$ above whp by some $n_r$. We finish the argument by showing that an  Erd{\H os}-R\'enyi random graph with parameters $n_r,q_r$ is connected whp. 
 
  We start by estimating the connection probability between vertices in $\CV_{\mathrm{Core}}(\mathrm B)$ conditioned on $\CV_B(n)$.
 Any $v_1, v_2 \in \CV_{\mathrm{Core}}(\mathrm B)$  
 are distance at most $\sqrt{d}r$ apart and have weights at least the lower end of $I_r$ in \eqref{eq:ir}, so
 \be\label{eq:core-wwd}  W_{v_1} W_{v_2} / \|x_{v_1}-x_{v_2}\|^d \ge d^{-d/2}(r^{d})^{\frac{2(1-\delta)}{DC(\tau-1)}-1}. \ee
 Observe that since $D \in \calI_\delta$ by hypothesis, the exponent is positive by \eqref{eq:cdx2} of Claim~\ref{cl:parameters}. Since Assumption~\ref{assu:mild-connect} holds, so does~\eqref{eq:blown-h}; we have shown that the minimum in the lower bound of~\eqref{eq:blown-h} is taken at the first term. Thus when $r$ is sufficiently large, for all~$v_1,v_2\in\CV_{\mathrm{Core}}(\mathrm B)$,
 \be\label{eq:nk-pk} \Pv( v_1\leftrightarrow v_2 \mid \mathcal W_n) \ge \underline c \exp\big( {-}2 c_2 (\log(r^d))^\gamma  (\tfrac{1+ \delta}{\tau-1})^{\gamma}\big)=:q_r. \ee
This bound holds uniformly for each vertex-pair $v_1, v_2\in \CV_{\mathrm{Core}}(\mathrm B)$. Since connections are present conditionally independently given the vertex positions and weights, the domination from below by the Erd{\H os}-R\'enyi graph follows. 

To establish connectedness, we continue by bounding $|\CV_{\mathrm{Core}}(\mathrm B)|$ from above. 
By Definitions \ref{def:GIRG} and \ref{def:bgirg}, each vertex in $[n]$ has an  i.i.d.\! uniform location in  $\calX_d(n)$.
The probability that a random vertex falls into $\mathrm{B}$ is $r^d/n$. Since vertex-weights are i.i.d.\ $W^{(n)}$, $|\CV_{\mathrm{Core}}(\mathrm B)|$ is a binomial variable with parameter $n$ and acceptance probability $\Pv( W^{(n)} \in I_r)r^d/n $. By Assumption \ref{assu:mild-vertex}, when $r$ is sufficiently large, for all $\eps>0$, its mean is
 \be\ba\label{eq:bin} 
 \Pv\big(W^{(n)} \in I_r\big)r^d &= \Big(\Pv\big(W^{(n)} \ge (r^d)^{(1-\delta)/(DC(\tau-1))} \big)- \Pv\big(W^{(n)} \ge (r^d)^{(1+\delta)/(\tau-1)}\big) \Big)r^d\\
 &\ge \Big(\llow \big(r^{d(1- \delta)/(DC(\tau-1))}\big)r^{-d(1-\delta)/(DC)}r^d\\
 &\qquad\qquad\qquad\qquad - \lhigh \big(r^{d(1+ \delta)/(\tau-1)}\big)r^{-d(1+\delta)} - o(1/n)\Big)r^d\\
 &\ge r^{d(1-\eps-(1-\delta)/(DC))} - r^{d(-\delta+\eps)} + o(r^d/n) \ge r^{d(1-\eps-(1-\delta)/(DC))}/2.
 \ea\ee
where the second inequality follows by Potter's bound, and the third inequality requires $\eps$ to be suitably small. 
By concentration of binomial variables (Lemma~\ref{lem:chernoff}),
\be\label{eq:bin2} \Pv\big(|\CV_{\mathrm{Core}}(\mathrm B)|\ge r^{d(1-\ve-(1-\delta)/(DC))}/4\big) \ge 1- 2\exp(-r^{d(1-\ve-(1-\delta)/(DC))}/24 ),\ee 
so this event occurs whp. Thus we can set $n_r:=r^{d(1-\ve-(1-\delta)/(DC))}/4$.
Observe that if $r$ is sufficiently large, then $q_r\ge \allowbreak n_r^{-1/2} \ge |\CV_{\mathrm{Core}}(\mathrm{B})|^{-1/2}$.

To finish, we note that the connectivity threshold for Erd{\H os}-R\'enyi graphs is $\log n_r/n_r \ll n_r^{-1/2}$, so $\mathrm{Core}_{\mathrm B}$ is indeed connected whp as $n,r \to \infty$.
 
Observe that the above proof carries through when we first condition on $\{\CV_{n}\cap \mathrm B = n_\mathrm{B}\}$. Indeed, the only change is that $|\CV_{\mathrm{Core}}(\mathrm B)|$ becomes binomial with parameters $n_\mathrm{B}$ and $\Pv(W^{(n)}\in I_r)$, hence the bound  on the mean in \eqref{eq:bin} stays valid up to a constant factor as long as $n_\mathrm{B}/ \Ev[|\CV_{\mathrm{Core}}(\mathrm B)|] \in [c_1, c_2]$ for some constants $c_1, c_2$. The rest of the proof is then unchanged. 
 \end{proof}
\subsection{Costs of paths}\label{s:app-cost}
\begin{proof}[Proof of Lemma \ref{lem:highw-connect}] Recall Lemma \ref{lem:bipartite}, which we will apply with $\ve:=\delta$. Given a weight $w_u>1$, define
\be\label{eq:r} {r_u}:=\lceil (\log\log w_u-\log(M\tfrac{1-\delta}{\tau-1}))/\log C \rceil = (\log\log w_u-\log (M\tfrac{1-\delta}{\tau-1}))/\log C + a_{u},\ee where $a_{u}\in [0,1)$ is the upper fractional part. In words, $r_u$ is the smallest index $k$ such that $\exp(M C^k \tfrac{1-\delta}{\tau-1})\ge w_u$. 
Note that $1 \le r_u \le k^\star$ by the assumed bounds on $w_u$. 
Elementary calculation yields that with this notation, and $a_u\in[0,1)$ from \eqref{eq:r},
 \be\label{eq:cmru}\exp(MC^{r_u} \tfrac{1-\delta}{\tau-1})=\exp((\log w_u) C^{a_u}) = w_u^{C^{a_u}} \in [w_u,w_u^C).\ee 
The idea behind the proof is that if $u$ has an edge to some $\delta$-good leader in the annulus $\Gamma_{r_u}$, then $u$ is successful (with $k_0 = r_u-1$) whenever $\cap_{r_u \le k\le k^\star} F_k^{(2)}$ occurs, where  $F_k^{(2)} = F_k^{(2)}(\de)$ in \eqref{boundsweight}.  
 By the law of total probability, 
\be \label{eq:cmru-split}\Pv(\neg \CS_u) \le \Pv\big(\neg(\cap_{r_u\le k\le k^\star} F_k^{(1)}\cap F_k^{(2)}) ) + \Pv(\neg \mathcal S_u \mid \cap_{r_u\le k\le k^\star} F_k^{(1)}\cap F_k^{(2)}). \ee
 For the first term, let us write $F_k^{(1)}(\de, M)$ and $F_k^{(2)}(\de, M)$ to emphasise how $F_k^{(1)}$ and $F_k^{(2)}$ depend on $M$ in \eqref{boundsweight}. Observe that the boxes with parameter $M$ and index $r_u+k$ for some $k\ge0$ can be considered as boxes with parameter $MC^{r_u}$ (instead of $M$) and index $k\ge 0$. 
 Then, by the definition of $F_k^{(1)}$ and $F_k^{(2)}$, we have $F_k^{(1)}(\delta,M) = F_{k-r_u}^{(1)}(\delta, MC^{r_u})$ and $F_k^{(2)}(\delta,M) = F_{k-r_u}^{(2)}(\delta,MC^{r_u})$ for all $k \geq r_u$.  Hence, taking $M$ suitably large relative to $\delta$ and applying Lemma~\ref{lem:bipartite} with $\delta:=\eps$ and $\la:=1$, 
 the first term in~\eqref{eq:cmru-split} is bounded~by 
 \be\ba
 \Pv\big(\neg(\cap_{r_u\le k\le k^\star} F_k^{(1)}\cap F_k^{(2)}) ) &\leq 1-p_{MC^{r_u}} = 3\exp\Big({-}\e^{MC^{r_u}((D-1)\wedge 1)(1-\delta)}2^{-d}/75\Big).
 \ea\ee
 Applying~\eqref{eq:cmru}, it follows that 
 \be\label{eq:rho1}\ba
 \Pv\big(\neg(\cap_{r_u\le k\le k^\star} F_k^{(1)}\cap F_k^{(2)}) ) &\leq 3\exp \big( -w_u^{\tfrac{\tau-1}{1-\delta} ((D-1)\wedge 1)(1-\delta)}2^{-d}/75\big)\\
 &\le 3\exp \big( -w_u^{\eta(\delta)}2^{-d}/75\big).
 \ea\ee 
It remains to bound the second term in \eqref{eq:cmru-split}. Conditioned on $\cap_{r_u\le k\le k^\star} F_k^{(1)}\cap F_k^{(2)}$, let us expose $\mathcal W_n = (x_v,W_v^{(n)})_{v \in \CV_B(n)}$ (which exposes the set of $\delta$-good leader vertices), together with all edges between $\delta$-good leader vertices in annuli $\Gamma_k$ with $k \ge r_u$. Note that this is sufficient to determine the event $\cap_{r_u\le k\le k^\star} F_k^{(1)}\cap F_k^{(2)}$, so the remaining edges are present independently. Since $\cap_{r_u \le k \le k^*}(F_k^{(1)}\cap F_k^{(2)})$ occurs, there is a box-increasing path from every $\delta$-good leader in $\Gamma_{r_u}$ to a leader in $\Gamma_{k^\star}$. Consequently, $\CS_u$ occurs if there is an edge between $u$ and some $\delta$-good leader in $\Gamma_{r_u}$.

Since $F_{r_u}^{(1)}$ occurs, there are at least $b_k/2$ many  $\delta$-good leaders in $\Gamma_{r_u}$ (see \eqref{eq:delta-good} and \eqref{boundsweight}), that is, leaders with 
weights in the interval 
\be\label{eq:weight-leader2} \Big(w_u^{C^{a_u}}, w_u^{C^{a_u} (1+\delta)/(1-\delta)}\Big]. \ee
Moreover, the Euclidean distance of the leaders in $\Gamma_{r_u}$ from $u$ is at most $d$ times the outer radius of $\Gamma_{r_u}$, which is $d\exp(M C^{r_u}D/d) \le dw_u^{C (\tau-1) D/((1-\delta)d)}$. Since Assumption~\ref{assu:mild-connect} holds, we may use the lower bound~\eqref{eq:blown-h} on connection probabilities in $\BGIRG$ to lower-bound the connection probability between $u$ and a $\delta$-good leader $v$ in $\Gamma_{r_u}$.
To estimate the resulting expression, we first estimate a term appearing in~\eqref{eq:blown-h} by
\[ w_u W_v \|x_u-x_v\|^{-d}\geq d^{-d} w_u^{1+\frac{1+\delta}{1-\delta} - D C \tfrac{\tau-1}{1-\delta}}
\ge d^{-d} w_u^{2-DC\tfrac{\tau-1}{1-\delta}}.
\]
We argue that the exponent of $w_u$ is strictly positive whenever $D\in \mathcal I_\delta$ in Claim \eqref{cl:parameters}. Indeed, multiplying the exponent by $(1-\delta)/(\tau-1)$, the exponent becomes the lhs of \eqref{eq:cdx2} with $s=1$, which is positive by Claim \eqref{cl:parameters}.
Since $w_u \ge \e^M$, it follows that $w_uW_v||x_u-x_v||^{-d} \ge 1$ when $M$ is sufficiently large relative to $\delta$ and $d$.
As a result, the minimum in the lower bound on $g_n^{u,v}$ in~\eqref{eq:blown-h} is taken at $\un c l(w_u)l(w_v)$, which, by \eqref{eq:weight-leader2}, is at least
 \[ q_u:=\underline c \exp\Big( - c_2 (\log w_u)^\gamma\big(1+ C^{\gamma}(\tfrac{1+\delta}{1-\delta})^\gamma\Big). \] 
Recall that, since $F_{r_u}^{(1)}$ occurs, the number of $\delta$-good leaders in the annulus $\Gamma_{r_u}$ is at least $b_{r_u}/2\ge 2^{-d-2} \exp((\log w_u) (D-1)\frac{\tau-1}{1-\delta})$, by \eqref{eq:boundbk-girg} and \eqref{eq:cmru}.
Let $N_{r_u}(u)$ denote the number of $\delta$-good leaders in annulus $\Gamma_{r_u}$ that $u$ is adjacent to. 
Then, $N_{r_u}(u)$ is dominated below by a binomial random variable $Z$ with mean at least
\be \ba 2^{-d-2}w_u^{(D-1)\frac{\tau-1}{1-\delta}} q_u &\ge (\underline c/2^{d+2}) \exp\!\Big(  (\log w_u) \tfrac{\tau-1}{1-\delta}(D-1) \! -\! c_2  (\log w_u)^{\gamma} \big(1+ C^{\gamma}(\tfrac{1+\delta}{1-\delta})^\gamma \Big) \\
&\ge 
(\underline c/2^{d+2}) \exp\!\Big(  (\log w_u) \tfrac{\tau-1}{1-\delta}(D-1) \! -\! 3c_2  (\log w_u)^{\gamma}\Big)\\
&\ge (\underline c/2^{d+2}) w_u^{(\tau-1)(D-1)}=:\mu, 
 \ea\ee
where the inequality between the first and second line holds whenever $M$ is sufficiently large relative to $\delta$, since $w_u \ge \e^M$ and $\delta>0$\cbb.  In particular, by the Chernoff bound of Lemma~\ref{lem:chernoff} with $\ve=1$,
\be \ba \Pv\big(\neg\CS_u\mid \cap_{r_u\le k\le k^\star} F_k^{(1)}\cap F_k^{(2)} \big) &\le 2 \exp( -\mu/3) \le 2\exp\big( {-}(\underline c/(3\cdot 2^{d+2})) w_u^{(\tau-1)(D-1)}\big)\\
&\le 2\exp\big( {-}(\underline c\cdot 2^{-d}/12) w_u^{\eta(\delta)}\big)
\ea \ee 
This shows \eqref{eq:su-bound-1}, by combining this with \eqref{eq:cmru-split} and \eqref{eq:rho1} and combining constant prefactors.
\end{proof}

\begin{proof}[Proof of Lemma \ref{lem:cheap-connect}]
We start by constructing a boxing system around $u\in \BGIRG$ with parameter $M_{w_u}=\tfrac{\tau-1}{1-\delta}\log w_u$. The bound $w_u< n^{(1-\delta)/(D(\tau-1))}$ ensures that the boxing system is non-trivial.
We will choose the value of $K$ along the proof.
By the weight bound on $\delta$-good leaders given in~\eqref{eq:delta-good}, the lowest weight of any $\delta$-good leader in the whole boxing system is strictly larger than $\exp(M_{w_u} \tfrac{1-\delta}{\tau-1})=w_u$. The only vertices in the path we construct  other than $u$ will be $\delta$-good leaders, so this part of the result will follow immediately.

We apply \eqref{boundsweight} and \eqref{boundsweight-error} from Lemma~\ref{lem:bipartite} to bound the number of $\delta$-good leaders. With probability at most
\be\label{eq:pmwu} \Xi_1(w_u):=p_{M_{w_u}}=3 \exp\Big(  - w_u^{\frac{\tau-1}{1-\delta} ((D-1)\wedge 1) (1-\delta) } 2^{-d}/75 \Big), \ee
the event $\cap_{k\le k^\star}(F_k^{(1)}\cap F_k^{(2)})$ does not hold. We then construct the greedy path and use Lemma \ref{lem:cost-greedy}, with 
\[
\zeta_k:=\exp\Big(\frac{M_{w_u} C^k \beta^+\xi(\delta)}{2(1+\delta)}\Big) 
\]
Similarly to~\eqref{eq:fini} and~\eqref{eq:explo-sum-bound},
 with additional error probability $\sum_{k=0}^{\infty} \exp(-\zeta_k)$, the greedy path emanating from a $\delta$-good leader in annulus $0$ in this boxing system has cost at most
 \be \label{eq:explo-sum-bound333} |\pi^{\text{greedy}} |_{f,L} \le \sum_{k=0}^{k^\star(M_{w_u}, n)} \zeta_k^{(1+\delta)/\beta^+}\exp\Big(M_{w_u}C^k \cdot\big((\mu + \nu C) \frac{1+\delta}{\tau-1} - \frac{(1-\delta)^2}{\beta^+}C(D-1)\big)\Big).\ee 
As shown after \eqref{eq:explo-sum-bound}, the coefficient of $M_{w_u}C^k$ is strictly negative whenever $D\in \CI_\delta$, and we denoted this coefficient by $-\xi(\delta)$ in \eqref{eq:xi-rho}.  Then the $\zeta_k$ factor in \eqref{eq:explo-sum-bound333} can be merged with the exponential factor, yielding
\be\label{eq:48-cost-1}
|\pi^{\text{greedy}} |_{f,L} \le \sum_{k=0}^\infty \e^{-M_{w_u} C^k \xi(\delta)/2 } \le 2 \e^{-M_{w_u} \xi(\delta)/2}=2w_u^{-\tfrac{\tau-1}{1-\delta}\xi(\delta)/2},\ee
with failure probability at most 
\be\label{eq:xi2} \Xi_2(w_u):=\sum_{k=0}^\infty\e^{-\zeta_k}\le 2 \e^{-\zeta_0}= 2w_u^{-\tfrac{\tau-1}{1-\delta} \tfrac{\beta^+}{1+\delta} \tfrac{\xi(\delta)}{2}}, \ee
since the $\zeta_k$ sequence is thinner than a geometric series. 

If $u$ is itself a $\delta$-good leader in $\Gamma_0$, then we are done. Suppose not; then we will connect $u$ to a $\delta$-good leader $c_0^\star$ in  $\Gamma_0$, and then concatenate that edge with the greedy path emanating from $c_0^\star$. The event $F_0^{(1)}$ implies also that the number of $\delta$-good leaders in $\Gamma_0$ is at least
$b_0/2 \ge \exp(M_{w_u} (D-1))/2^{d+2}=w_u^{\frac{\tau-1}{1-\delta}(D-1)}/2^{d+2}$ by \eqref{eq:boundbk-girg}, and the diameter of $\Gamma_0$ is at most $d\e^{M_{w_u}D/d}$. For the connection probability between $u$ and any of these $\delta$-good leader vertices, we observe that 
these leaders have weight in the interval $(w_u, w_u^{\frac{1+\delta}{1-\delta}}]$ by~\eqref{eq:delta-good},
 hence we can bound a term in the minimum of the connection probability \eqref{eq:blown-h} below by
\[ w_u W_{c_0^*} / \| u- c_0^*\|^d \ge w_u^2 / (d^d\e^{M_{w_u}D})= w_u^{2-\frac{\tau-1}{1-\delta} D}/d^d.  \]
Since $D\in \CI_\delta$, the exponent is positive by \eqref{eq:cdx2} of Claim~\ref{cl:parameters} applied with $s=0$. Hence, choosing $K$ sufficiently large, and $w_u\ge K$ ensures that $w_uW_{c_0^*}/\|u-c_0^*\|^d \ge 1$ and so the minimum in  \eqref{eq:blown-h} evaluates to $l(w_u)l(W_{c_0^*})$. Let $N_{0}(u)$ denote the number of $\delta$-good leaders adjacent to $u$ in $\Gamma_0$. It follows from the above discussion that under our conditioning,
\[ N_{0}(u)\  {\buildrel d \over \ge }\ \mathrm{Bin}\Big(w_u^{\frac{\tau-1}{1-\delta}(D-1)}/2^{d+2}, \underline c l(w_u) l(w_u^{\frac{1+\delta}{1-\delta}})\Big). \]
Since $l(\cdot) $ varies slowly at infinity, the mean is at least $d_0 := w_u^{(\tau-1)(D-1)}$ if $w_u$ is sufficiently large, which is ensured by increasing $K$ when necessary. Observe that the lower bound $w_u\ge K$ ensures this.
Applying the Chernoff bound of Lemma \ref{lem:chernoff} to the variable on the rhs, we arrive at
\be\label{eq:xi3} \Xi_3(w_u):=\Pv(N_0(u)< d_0/2) \le \exp( - w_u^{(\tau-1)(D-1)}/12).\ee
Conditioned on the  event $\{N_0(u)\ge d_0/2\}$,   we now take $c_0^\star$ to be a vertex with minimal $L_{(u, c_0)}$ among the $\delta$-good leader neighbors of $u$ in $\Gamma_0$, of which there are $N_0(u)$.
The cost of this edge is then
\be\label{eq:Cbound}
  C_{(u, c_{0}^\star)} \ {\buildrel d \over \le}\ 
w_{u}^\mu w_{u}^{\nu\tfrac{1+\delta}{1-\delta}} \min_{i\le  \underline c w_{u}^{(D-1)(\tau-1)}} L_i,
\ee
where the $L_i$'s are i.i.d.\ copies of $L$. As in~\eqref{eq:lower-min}, for all $N,\zeta > 0$, we have
\[
	\Pv\big(\min_{j\leq N} L_j > F_L^{(-1)}(\zeta/N)\big) = \big(1-F_L(F_L^{(-1)}(\zeta/N))\big)^N \le (1 -\zeta/N)^N\le \e^{-\zeta};
\] 
it follows from~\eqref{eq:Cbound} that for all $\zeta>0$,
\[
\Pr\Big(C_{(u,c_0^*)} > w_{u}^\mu w_{u}^{\nu \tfrac{1+\delta}{1-\delta}} F_L^{(-1)} \Big( (\zeta /\underline c) \cdot  w_{u}^{-(D-1)(\tau-1)}\Big)\Big) \le \e^{-\zeta}.
\]
Using the fact that  $F_L^{(-1)}(y) \le y^{(1-\delta)/\beta^+}$  holds for all sufficiently small $y>0$, it follows that when $w_u$ is sufficiently large, (which is ensured by increasing $K$ when necessary),
\be\label{eq:wunq}
	\Pr\Big(C_{(u,c_0^*)} > (\zeta/ \underline c)^{(1-\delta)/\beta^+}  w_{u}^{\mu+\nu\tfrac{1+\delta}{1-\delta}} w_{u}^{-(D-1)(\tau-1)(1-\delta)/\beta^+}\Big) \le \e^{-\zeta}.
\ee
We will set the value of $\zeta>0$ shortly. 
The exponent of $w_{u}$ in \eqref{eq:wunq} is (by \eqref{eq:xi-rho}),
\be\label{eq:rho-delta} \ba -\rho(\delta)&=\mu +\nu\frac{1+\delta}{1-\delta} - \frac{(D-1)(\tau-1)(1-\delta)}{\beta^+}\\
&= \frac{\tau-1}{1-\delta}\Big(\frac{\mu+C\nu}{\tau-1} - \frac{(1-\delta)^2(D-1)}{\beta^+} \Big);
\ea\ee
since $D \in \mathcal{I}_\delta$, this is strictly negative by \eqref{eq:mubeta-3} of Claim \eqref{cl:parameters} (taking $s=1$). 
We now set $\zeta$ so that $(\zeta/\underline c)^{(1-\delta)/\beta^+}:=w_u^{\rho(\delta) /2}$, and then   \eqref{eq:wunq} implies
\be \label{eq:cu-bound} C_{(u, c_{0^\star})} \le  w_u^{\rho(\delta)/2} \cdot w_u^{-\rho(\delta)} \le  w_u^{-\rho(\delta)/2}, \ee
with failure probability at most  
\be\label{eq:xi4} \Xi_4(w_0):=\e^{-\zeta}=\exp(-\underline c w_u^{\rho(\delta)\beta^+/(2(1-\delta))}).\ee
Collecting the error terms $\Xi_i(w_u)$ for $i\le 4$ from \eqref{eq:pmwu}, \eqref{eq:xi2}, \eqref{eq:xi3}, \eqref{eq:xi4}, we observe that \eqref{eq:xi2} dominates for all sufficiently large $w_u$, (which is ensured by increasing $K$ when necessary),
since the other terms are exponentially small in $w_u$. Hence, the cheap path can be constructed with failure probability $\Xi(w_u)$, as specified in \eqref{eq:xi-error}. Adding up the costs $C_{(u, c_0^\star)} + | \pi^{\text{greedy}}|_{f,L}$
yields \eqref{eq:xi-cost}, by~\eqref{eq:cu-bound} and~\eqref{eq:48-cost-1}. We have already observed that all vertices on $\pi^{\text{greedy}}$ except $u$ have weights greater than $w_u$.

We finish by calculating the weight of the end-vertex of $\pi^{\text{greedy}}$. 
We look at the definition of  $k^\star(n, M_{w_u})$ from \eqref{eq:kstar}, 
yielding that 
\[
	k^\star(n, M_{w_u})= \frac{1}{\log C}\log \Big(\frac{\log n}{M_{w_u}D}\Big) -s
\]
for some fractional part $s\in [0,1)$. Hence by the definition of $\delta$-good leaders in~\eqref{eq:delta-good}, the weight of the last leader vertex is in the interval
\begin{align*}
 \Big(\e^{M_{w_u} \frac{1-\delta}{\tau-1} C^{k^\star}},\ \e^{M_{w_u} \frac{1+\delta}{\tau-1} C^{k^\star}  }\Big]
 = \Big(n^{\frac{(1-\delta)C^{-s}}{D (\tau-1)}},\ n^{\frac{(1+\delta)C^{-s}}{D (\tau-1)}}\Big] \subseteq \Big( n^{\frac{(1-\delta)}{D C(\tau-1)}}, n^{\frac{(1-\delta)}{D (\tau-1)}}\Big].
\end{align*}
This finishes the proof.
\end{proof}

\begin{proof}[Proof of Lemma \ref{lem:core}]
Let $\eps > 0$ be a small constant, whose value will be determined later. Let $\mathrm{mon}(f)$ be the number of monomials in $f$.
Let $t_0(\eps)$ be such that $F_L(t) \ge t^{\beta^+(1+\eps)}$ on $[0,t_0]$; such a constant must exist by the definition of $\beta^+$ in~\eqref{eq:beta1}. Let $\CV_\textrm{end}$ be the set of all vertices with weights in $I_\textrm{end}$, and let $\CG_{\textrm{end}}$ be the subgraph of $\BGIRG$ on this vertices set whose edges $e$ have cost 
\be\label{eq:keep-core}
	L_e \le n^{-(1+\eps)(\deg f)(1+\delta)/(D(\tau-1))}/\mathrm{mon}(f).
\ee

We will first dominate $\CG_{\textrm{end}}$ below by an Erd\H{o}s-R\'enyi random graph, using an argument similar to the proof of Claim~\ref{claim:ER}. Note that the lower bound of $I_\mathrm{end}$ is equal to the lower bound of $I_r$ in that claim with $r^d=n$, so many of the calculations carry over. As in that proof, conditioned on the positions and weights of the vertices, edges are present independently. To bound the probability with which each any two vertices are adjacent, we note that~\eqref{eq:core-wwd} goes through unchanged, so the minimum in~\eqref{eq:blown-h} is still taken at the left term; again as in that proof, it follows that when $r$ is sufficiently large, the probability of any two vertices in $\CV_\mathrm{end}$ being adjacent in $\BGIRG$ (conditioned on all other vertex positions and weights) is at least
\be q_\mathrm{Core} := \exp\Big( -2c_2 (\log n)^\gamma \big(\tfrac{1+\delta}{D(\tau-1)}\big)^\gamma \Big). \ee
By~\eqref{eq:keep-core} and the existence of $t_0(\eps)$, when $n$ is sufficiently large, the probability of such an edge remaining in $\CG_\textrm{end}$ under the same conditioning is
\[   p_f := F_L(n^{-(1+\ve)(\deg f)(1+ \delta)/(D(\tau-1))}/\mathrm{mon}(f)) \ge n^{ -\beta^+ (1+\ve)^2(\deg f)(1+ \delta)/(D(\tau-1))}/\mathrm{mon}(f)^{\beta^+}. \]
Thus $\CG_\mathrm{end}$ is dominated below by an Erd\H{o}s-R\'enyi graph with edge probability $q_\mathrm{Core}p_f$.

As in~\eqref{eq:bin} and~\eqref{eq:bin2} in the proof of Claim~\ref{claim:ER}, for any constant $\eps>0$, by the standard Chernoff bound (Lemma~\ref{lem:chernoff}) we have
\be\label{eq:g-end} |\CV_{\mathrm{end}}| \ge \tfrac{1}{4}n^{1-\eps-(1-\delta)/(DC)}\ee
with probability at least $1 - 2\exp(-n^{(1-\eps-(1-\delta)/(DC))}/24)$; thus this occurs whp. Let $\underline N:=\frac12 n^{1-(1+\ve)(1-\delta)/(DC)}$ from \eqref{eq:g-end}.
We show that, for some $\eta>0$,  $p_f\cdot q_{\mathrm{Core}}> \underline N^{\eta-1}$. 
Hence, by the result of Bollob\'as \cite{boll}, whp, $\CG_{\mathrm{end}}$ is connected and its diameter is at most a constant $K(\eta)$. 
Then \eqref{eq:keep-core} implies that the cost-distance between any two vertices with weight in  $I_{\mathrm{end}}$ in \eqref{eq:iend} is at most 
\be\label{eq:core-cost-2}
 d_{f,L}(u_1, u_2)\le K(\eta) L_{(u_1,u_2)} \cdot \mathrm{mon}(f) \cdot (n^{(1+\delta)/(D(\tau-1))})^{\deg(f)} \le   K(\eta)\cdot n^{-\ve(\deg f)(1+ \delta)/(D(\tau-1))}, 
 	\ee
as required in~\eqref{eq:core-cost} (taking $\zeta = \ve(\deg f)(1+\delta)/(D(\tau-1))$).

It remains to prove that $p_f\cdot q_{\mathrm{Core}}> \underline N^{\eta-1}$ for an appropriate choice of $\eta>0$. Since $q_\mathrm{Core}$ varies slowly, when $n$ is sufficiently large, we have
\[ p_f\cdot q_{\mathrm{Core}}\cdot \underline N \ge n^{-\beta^+ (1+\ve)^3(\deg f)(1+ \delta)/(D(\tau-1))   }\cdot n^{ 1-(1+\ve) (1-\delta)/(DC)}=:n^{\chi},\]
where we increased the exponent of $1+\ve$ from two to three in order to remove lower-order terms. 
We argue that $\chi$, the exponent of $n$, is positive.
Indeed, this holds when 
\[  -\frac{\chi D}{\beta^+ (1+\ve)^3}=(\deg f)\frac{1+ \delta}{\tau-1}  - \frac{1}{\beta^+ (1+\ve)^3} \frac{DC - (1+\ve) (1-\delta) }{C}<0.
 \]
 When we taking $\ve$ sufficiently small, we have $(1+\ve)(1-\delta) \le (1+\delta) = C$ (by \eqref{eq:parameters}) and hence $\chi>0$ when
 \[ (\deg f)\frac{1+ \delta}{\tau-1}  - \frac{1}{\beta^+ (1+\ve)^3} (D-1) <0. \]
Choosing $\ve$ small enough that $1/(1+\ve)^3\ge (1-\delta)^2$, we obtain \eqref{eq:mubeta-3} with $s=0$. Since by hypothesis we chose $C$, $D$ and $\delta$ as in Claim~\ref{cl:parameters}, it follows that $\chi > 0$. We can therefore set 
 $\eta= \chi$ to obtain $p_f q_{\mathrm{Core}} > \underline N^{\eta - 1}$ and finish the proof.
\end{proof}

\end{appendices}
	\bibliographystyle{abbrv}
	\bibliography{GIRG_SI_explosion_2019-08-27-arxiv}
\end{document}